\documentclass[a4paper]{article}
\usepackage{preamble}

\usepackage[style=alphabetic, 
maxbibnames=3, backend=biber, 
doi=false, isbn=false, url=false]{biblatex}
\addbibresource{literature.bib}

\title{Chromatic Homotopy is Monoidally Algebraic at Large Primes}
\author{Shaul Barkan}

\begin{document}
	\maketitle
	\begin{abstract}
            Fix a prime $p$ and a chromatic height $h$. 
            We prove that the homotopy $(k,1)$-category of $L_h$-local spectra $\mathrm{h}_k\big(\mathrm{Sp}_{p,h}\big)$ is algebraic as a symmetric monoidal category when $p > O(h^2+kh)$.
            To achieve this, we develop a general tool for investigating such algebraicity questions, based on an operadic variant of Goerss-Hopkins obstruction theory.
            Other applications include the monoidal algebraicity of modules over the Lubin-Tate spectrum $\mathrm{h}_k\big(\mathrm{Mod}_{E_{p,h}}\big)$ whenever $p >O(kh)$, from which we deduce that $\mathrm{h}_1 \big(\mathrm{Mod}_{KU_{(p)}}\big)$ and $\mathrm{h}_1\big(\mathrm{Mod}_{KO_{(p)}}\big)$ are algebraic as tt-categories if and only if $p$ is odd.
	\end{abstract}
        \setcounter{page}{1}
	\tableofcontents
	
\section{Introduction}

A remarkable theorem of Quillen produces a canonical isomorphism between the complex bordism ring and the Lazard ring, classifying one dimensional formal group laws.
A couple of years prior to Quillen's result, Novikov constructed an analog of the Adams sseq in which mod $p$ homology is replaced with complex bordism.
Combining these ideas one obtains a sseq with signature
\[ \prescript{AN}{}{E}_2^{s,t}(\bbS) \simeq \rmH^s(\Mfg;\omega^{\otimes t}) \implies \pi_{2t-s} \bbS,\]
where $\Mfg$ denotes the moduli stack of formal groups and $\omega$ denotes the sheaf of invariant differentials.
For a fixed prime $p$ the points of $\Mfgp$ are classified by height $h \in \{0,1,\dots\} \cup \{\infty\}$ and there is a stratification:
\[\Mfgp \supset \cdots \supset \Mfgp^{\le 1} \supset \Mfgp^{\le 0}\]
On the sphere, this manifests as the chromatic tower:
\[\bbS_{(p)}\too  \cdots \too L_1 \bbS_{(p)} \too L_0\bbS_{(p)} \simeq \HQ. \]
The chromatic convergence theorem of Hopkins-Ravenel
\cite{ravenel1995nilpotence} tells us that
$\bbS_{(p)} \simeq \lim_h L_h \bbS_{(p)}$, hence, at least in principle, knowledge of $\pi_\ast L_h \bbS_{(p)}$ should lead to better understanding of $\pi_\ast \bbS_{(p)}$.
The Adams-Novikov sseq of $L_h \bbS_{(p)}$ has signature
\[ \prescript{AN}{}{E}_2^{s,t}(L_h \bbS_{(p)}) \simeq \rmH^s(\Mfgp^{\le h};\omega^{\otimes t}) \implies \pi_{2t-s}L_h \bbS_{(p)}.\]
The proof of the smash product theorem of Hopkins-Ravenel \cite{ravenel1995nilpotence} produces a finite page horizontal vanishing line for this sseq.
In other words, the (entirely algebraic) cohomology groups
$\rmH^s(\Mfgp^{\le h};\omega^{\otimes t})$ 
are only finitely many differentials (and extensions) away from the homotopy groups of $L_h \bbS_{(p)}$.
Generally, the larger the prime is compared to the height, the simpler this sseq becomes.
For example, when $p > h+1$, the horizontal vanishing line 
already appears on the $\mrm{E}_2$-page,
and once we reach $p > \frac{1}{2}h^2+\frac{1}{2}h +1$, there is no more room for differentials.

\subsection{Main results}
Let $\hl{E_{p,h}}$ denote Morava $E$-theory at height $h$ and prime $p$ and let $\hl{\Sp_{p,h}} \subseteq \Sp$ denote the full category of $E_{p,h}$-local spectra.
The Adams-Noikov sseq categorifies to a symmetric monoidal equivalence:
\begin{equation*}\label{eq:descent-Eh}
    \Sp_{p,h} \simeq \Tot\left( \Mod_{E_{p,h}^{\otimes [\bullet]}}(\Sp)\right).\tag{$\mbf{I}$}
\end{equation*}
Analogously to the Adams-Novikov sseq, the \category{} $\Sp_{p,h}$ becomes increasingly algebraic as $p$ grows.
To make this precise we use the notion of formality.
Recall that a spectrum $X \in \Sp$ is called \textit{\hl{formal}} if there exists an equivalence
$X \simeq \hl{\Hpi_\ast X} \coloneq \bigoplus_{n \in \bbZ} \Sigma^n \mrm{H} \pi_n X$.
The assignment $X \longmapsto \Hpi_\ast X$ naturally extends to a lax symmetric monoidal functor $\Hpi_\ast \colon \Sp \to \Sp$ 
and thus we may speak about formality of ring spectra.
We now define \hl{\textit{Franke's category}} as the formal analog of the right hand side of \hyperref[eq:descent-Eh]{($\mbf{I}$)}:
\[\hl{\Fr_{p,h}} \coloneq \Tot\big( \Mod_{\Hpi_\ast \big(E_{p,h}^{\otimes [\bullet]}\big)}(\Sp)\big) 
\] 
(When $p>h+1$ this agrees with Franke's definition \cite{franke} - see \cref{lem:fibers-of-chromatic}.)
Franke's category can also be described as the derived \category{} of certain differential-graded quasi-coherent sheaves on $\Mfgp^{\le h}$ (see \cite[\S 3]{Piotr}).
Given an \category{} $\calC \in \Cat_\infty$ we denote by $\hl{\ho_k \calC}$ its \hl{\textit{homotopy $k$-category}}, whose mappings spaces are the $(k-1)$-truncation of the mappping spaces of $\calC$.
We now state the main results of the paper.
\begin{thmA}[\ref{thm:proof-main-theorem}]\label{intro:main-theorem}
    Whenever 
    $p>\frac{1}{2} h^2 +\frac{k+3}{2} h + \frac{k+1}{2}$ 
    there exists a symmetric monoidal equivalence:
    \[\ho_{k}\Sp_{p,h}	 \simeq \ho_{k} \Fr_{p,h}.\]
\end{thmA}

We do not expect the bound in \cref{intro:main-theorem} to be optimal. 
In fact, an expected strengthening of \cite[Theorem A]{ArityApprox} would already yield a slight improvement (see \cref{rem:maclane-coherence}).
For example, choosing $k=1$ in \cref{intro:main-theorem} we get a sm equivalence $\ho_1\Sp_{p,h}	 \simeq \ho_1 \Fr_{p,h}$ for $p > \frac{1}{2}h^2+2h+1$ but in fact we can do better.

\begin{thm}[\ref{cor:tt-chromatic}]\label{thmA:tt-main-theorem}
    Whenever $p >\frac{1}{2} h^2 + \frac{3}{2} h +1$ 
    there exists a tensor-triangulated equivalence:
    \[\ho_1 \Sp_{p,h} \simeq \ho_1\Fr_{p,h}.\] 
\end{thm}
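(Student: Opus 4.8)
The plan is to leverage Theorem~\ref{intro:main-theorem} with $k=1$, which already gives a symmetric monoidal equivalence $\ho_1\Sp_{p,h}\simeq\ho_1\Fr_{p,h}$ in the range $p>\tfrac12 h^2+2h+1$, and then sharpen the bound from $\tfrac{k+3}{2}h+\tfrac{k+1}{2}=2h+1$ down to $\tfrac32 h+1$ in the special case $k=1$. The key observation is that for $k=1$ the homotopy category $\ho_1\calC$ only remembers objects, morphisms (as a set), and the triangulated/symmetric monoidal structure up to isomorphism; in particular the higher coherence data of the symmetric monoidal structure is discarded, and only the \emph{monoidal} (i.e.\ associativity and unit) constraints together with the symmetry isomorphism survive at the level of a ``tt-structure''. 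So one expects that the operadic obstruction theory driving Theorem~A, which is set up to control $E_\infty$-structures through a range depending on $k$ via arity filtration, collapses for $k=1$ to something that only sees binary operations, cutting the ``$+h$'' contribution of the operadic layers roughly in half.

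Concretely, I would revisit the Goerss--Hopkins-style obstruction tower used to prove Theorem~A. The equivalence \hyperref[eq:descent-Eh]{($\mbf I$)} presents $\Sp_{p,h}$ as a totalization of module categories over the cosimplicial $E_\infty$-ring $E_{p,h}^{\otimes[\bullet]}$, and $\Fr_{p,h}$ as the analogous totalization over its formalization $\Hpi_\ast(E_{p,h}^{\otimes[\bullet]})$. Comparing the two amounts to comparing these two cosimplicial $E_\infty$-rings after applying $\ho_k\Mod_{(-)}$. For $k=1$, I would argue that the relevant comparison map is controlled by the obstruction groups that govern the binary multiplication and the symmetry swap only --- i.e.\ by $E_\infty$-cohomology in operadic weights $\le 2$ --- rather than by all operadic weights $\le k+1$ or so. The input is the finite vanishing line in the Adams--Novikov spectral sequence for $L_h\bbS_{(p)}$ (equivalently, the sparsity of $\rmH^\ast(\Mfgp^{\le h};\omega^{\otimes\ast})$, which vanishes above cohomological degree $\sim h^2+h$ and is $(2p-2)$-sparse), combined with the fact that the relevant André--Quillen-type obstruction groups for the binary layer are supported in a range that only exceeds the vanishing line once $p\le\tfrac12 h^2+\tfrac32 h+1$. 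Matching these two numerical constraints gives the stated bound.

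The steps in order: (1) recall the totalization presentations of both sides and reduce the theorem to a statement about the cosimplicial $E_\infty$-rings $E_{p,h}^{\otimes[\bullet]}$ versus $\Hpi_\ast(E_{p,h}^{\otimes[\bullet]})$ at the level of $\ho_1$ of module categories; (2) set up the truncated obstruction theory, isolating the layers that contribute to the monoidal-plus-symmetry data visible in a tt-category; (3) identify the obstruction and ambiguity groups for the binary layer with explicit cohomology of $\Mfgp^{\le h}$ (of the shape $\rmH^s(\Mfgp^{\le h};\omega^{\otimes t})$ with $s$ in a controlled range); (4) invoke the horizontal vanishing line / sparsity to conclude these groups vanish once $p>\tfrac12 h^2+\tfrac32h+1$, so that the obstructions vanish and the ambiguities are trivial, yielding the tt-equivalence. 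I expect step (2) --- precisely pinning down which obstruction groups a tt-equivalence (as opposed to a full symmetric monoidal equivalence of $(k,1)$-categories) actually requires to vanish, and verifying that the symmetry isomorphism does not secretly drag in a higher-arity obstruction --- to be the main obstacle; it is exactly the place where the improvement over the $k=1$ case of Theorem~A is won or lost, and it is presumably why this is stated as a separate, sharper result rather than as a corollary of plugging $k=1$ into Theorem~\ref{intro:main-theorem}.
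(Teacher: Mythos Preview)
Your overall instinct --- that for $k=1$ one needs less operadic coherence, hence fewer arities, hence a weaker numerical bound --- is correct and is exactly the mechanism behind the paper's proof. But the specific guess you make in step~(2) is wrong, and this is the crux of the argument. You claim that a tt-equivalence is controlled by ``operadic weights $\le 2$'', i.e.\ binary data only. This is false: the associator is an arity-$3$ datum, and a symmetric monoidal $1$-category is \emph{not} determined by its arity-$2$ truncation. The correct input is MacLane's coherence theorem, which in the language of arity restriction says that $\Cat_1^\otimes \hookrightarrow \Cat_1^{\otimes \le 3}$ is fully faithful. So one must control arities through $3$, not $2$.

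In the paper's framework this plays out as follows. The connectivity estimate (\cref{thm:general-algebraicity-theorem-actually}) says the comparison map on multi-mapping spaces is $(m-d-(r-1)e)$-connected through arity $r$, with $m=2p-3$, $d=h^2+h$, $e=h$. The general Theorem~A for $k=1$ uses arity $r=4$, giving the condition $m\ge d+3e$, i.e.\ $2p-3\ge h^2+4h$. The tt-version (\cref{thm:general-algebraicity-theorem-1cat}) uses MacLane to get away with arity $r=3$, giving $m\ge d+2e$, i.e.\ $2p-3\ge h^2+3h$, which is exactly the stated bound. Your ``arity $\le 2$'' proposal would give $m\ge d+e$, a bound the paper does \emph{not} claim and which would be too optimistic. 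Finally, the triangulated structure is handled separately: it comes from a \emph{non-monoidal} equivalence of homotopy $2$-categories, for which the weaker condition $m\ge d+1$ suffices; you do not address this at all, but it is why the final condition reads $\max(d+1,d+2e)$ rather than just $d+2e$.
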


\begin{corA}
    There exists a tensor triangulated equivalence $\ho_1 \Sp_{p,1} \simeq \ho_1 \Fr_{p,1}$ if and only if $p\ge 5$.
\end{corA}
\begin{proof}
    By \cref{thmA:tt-main-theorem} the minimal $p$ for which there is a tt-equivalence $\ho_1 \Sp_{p,1} \simeq \ho_1 \Fr_{p,1}$ is either $2$, $3$ or $5$. 
    The case $p=2$ is ruled out by \cite[Example 2.4]{balchin2023tensor} which shows using \cite{roitzheim2007rigidity} that any tt-equivalence $\ho_1\Sp_{2,1} \simeq \ho_1 \calC$, where $\calC \in \CAlg(\PrLst)$, lifts to a symmetric monoidal equivalence $\Sp_{2,1} \simeq \calC$. 
    To rule out $p=3$ we argue that $M\coloneq L_1\bbS_{(3)}/3 \in \ho_1 \Sp_{3,1}$ is not an associative algebra. (We thank Constanze Roitzheim for suggesting this approach.)
    Let $m \colon M^{\otimes 2} \to M$ denote the unique unital multiplication and $q \colon \Sigma^{-1} M \to L_1\bbS_{(3)}$ the projection to the top cell.
    Hoffman \cite{hoffman1968relations} shows that $m(m\otimes \id_M) - m(\id_M \otimes m)=\pm \alpha \cdot q \otimes q \otimes q \neq 0$. 
\end{proof}

We are led to the following natural question.

\begin{ques}
    For fixed $h$ and $k$, what is the minimal $p$ such that there is a sm equivalence 
    $\ho_k \Sp_{p,h} \simeq \ho_k \Fr_{p,h}$?
\end{ques}

This is related to 'uniqueness of enhancements' in tt-geometry - a question recently investigated in a general context by Balchin-Roitzheim-Williamson \cite{balchin2023tensor}.

\begin{rem}[Historical overview]
    Theorem \ref{intro:main-theorem} is part of a long and complicated history. 
    In \cite{bousfield} Bousfield showed that $\ho_1\Sp_{p,1}$ is algebraic as a plain category (disregarding the monoidal structure).
    Franke \cite{franke} introduced the category $\Fr_{p,h}$ and conjectured that $\ho_1\Sp_{p,h}$ and $\ho_1\Fr_{p,h}$ are equivalent as plain categories whenever $2(p-1)>h^2+h$ which specializes to Bousfield's result at $h=1$. 
    Franke's proof of his conjecture contained a gap, found by Patchkoria \cite[Remark 6.3.1]{patchkoria2013algebraic} (see also \cite[9-11]{piotr-irakli}).
    The next step forward was taken by Barthel-Schlank-Stapelton \cite{BSS} who proved an asymptotic formulation of Franke's conjecture using categorical ultraproducts.
    An explicit bound was obtained by Pstragowski  \cite{Piotr} who proved a (non-monoidal) equivalence of categories $
    \ho_1 \Sp_{p,h} \simeq \ho_1 \Fr_{p,h}$ for $p>h^2+h+1$. 
    This bound was recently improved by Patchkoria-Pstragowski \cite{piotr-irakli} to Franke's conjectured bound $p > \frac{1}{2}h^2+\frac{1}{2}h+1$ (still in the non-monoidal setting).
    \cref{intro:main-theorem} is the first monoidal extension of Franke's conjecture.
\end{rem}

\begin{rem}[Relation to Franke's functor]
    We do not know whether \textit{Franke's functor} \cite{franke} induces a tt-equivalence in the range suggested by \cref{thmA:tt-main-theorem}. 
    The question of monoidality of Franke's functor was investigated by Ganter \cite{ganter2007smash} who showed that although it is typically \textit{not} monoidal,
    it \textit{does} respect the monoidal product.~Ganter's result was recently generalized by Nikandros-Roitzheim
    \cite{nikandros2023monoidal}.
\end{rem}

We also prove algebraicity results for module categories and in particular the following $\bbE_\infty$-variant of a theorem about $\bbE_1$-rings due to Patchkoria-Pstragowski \cite[Theorem 8.2]{piotr-irakli}.
(We also recover the latter in  \cref{cor:piotr-irakli}.)

\begin{thmA}[\ref{thm:proof-ring-application}]\label{intro:ring-application}
    Let $R$ be an $\bbE_\infty$-ring spectrum whose homotopy ring $\pi_\ast R$
    \begin{enumerate}
        \item 
        is concentrated in degrees divisible by $w$, and
        \item
        has graded global dimension $d < \frac{w}{4}$.
    \end{enumerate}
    Then there exists a canonical symmetric monoidal equivalence:
    \[\ho_{\left\lfloor\frac{w+4}{d+1}\right\rfloor-4}\Mod_R \simeq \ho_{\left\lfloor\frac{w+4}{d+1}\right\rfloor-4} \pcal{D}( \pi_\ast R).\] 
\end{thmA}

The formalism we develop can, in principle, be used to obtain $\calO$-monoidal algebraicity results for $\calO$ an arbitrary \operad{}.
For lack of concrete motivation, we only give this a cursory treatment in the present paper.
To demonstrate the flexibility in the choice of operad we mention one result of this sort.

\begin{corA}[\ref{cor:E4-tt}]\label{corA:E4-tt}
    Let $R$ be an $\bbE_4$-algebra spectrum whose homotopy ring $\pi_\ast R$
    \begin{enumerate}
        \item
        has graded global dimension $d < \infty$, and
        \item 
        is concentrated in degrees divisible by $w$ for some $w > \max\!\big(d+1,d+2e\big)$.
    \end{enumerate}
    Then there exists a tensor-triangulated equivalence:
    \[\ho_1 \Mod_R \simeq \ho_1 \pcal{D}(\pi_\ast R).\]
\end{corA}

We now give a couple of examples to which these theorems apply.

\begin{example}\label{ex:Morava-E-theory}
    Recall that the Morava stabilizer group acts on $E_{p,h}$ by $\bbE_\infty$-ring automorphisms. 
    The $\mbb{F}_p^\times$-fixed points $R_{p,h} \coloneq E_{p,h}^{h\mbb{F}_p^\times}$ is an $\bbE_\infty$-ring spectrum whose homotopy ring
    $\pi_\ast R_{p,h} = \pi_0(E_{p,h})[u^{\pm(p-1)}] 
    $ 
    has graded global dimension $h$ and is concentrated in degrees divisible by $2p-2$.
    Applying \cref{intro:ring-application} to $R_{p,h}$ we deduce that whenever $p > \left(\frac{k}{2}+2\right)\cdot h + \frac{k}{2}+1$ there is a symmetric monoidal equivalence:
    \[\ho_k\Mod_{R_{p,h}} \simeq \ho_k\pcal{D}\big(\pi_0 (E_{p,h})[u^{\pm(p-1)}]\big).\]
\end{example}

To get a statement about $\Mod_{E_{p,h}}$ we need to work slightly harder. 

\begin{obs}\label{obs:picard-recover-2-periodic}
    The picard element 
    $[\Sigma^2 R_{p,h}]\in \Pic(R_{p,h})$ lifts to a morphism of connective spectra
    $\rho \colon \bbZ/(p-1) \to \pic(R_{p,h})$
    whose Thom spectrum recovers $2$-periodic Lubin-Tate theory $\colim_{\bbZ/(p-1)} \rho \simeq E_{p,h} \in \CAlg(\Sp)$.
    We thus have a canonical symmetric monoidal equivalence $\Mod_{E_{p,h}} \simeq \colim_{B\bbZ/(p-1)} \Mod_{R_{p,h}}$
    where the colimit can be taken in $\PrLst$ (see \cref{lem:comparing-thom-constructions}). 
    Since $\ho_k$ preserves colimits get canonical symmetric monoidal equivalences:
    \[ \ho_k \Mod_{E_{p,h}} \simeq 
     \colim_{B\bbZ/(p-1)}  \ho_k \Mod_{R_{p,h}}, \quad  \ho_k \pcal{D}\big(\pi_0 (E_{p,h})[u^{\pm}]\big) \simeq \colim_{B\bbZ/(p-1)}  \ho_k\pcal{D}\big(\pi_0 (E_{p,h})[u^{\pm(p-1)}]\big).\]
\end{obs}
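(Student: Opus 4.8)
The plan is to exhibit $E_{p,h}$ as a Thom $\bbE_\infty$-algebra over $R_{p,h}$, read off its module category from \cref{lem:comparing-thom-constructions}, and then apply $\ho_k$. To begin, since $p-1=|\mbb{F}_p^\times|$ is invertible in $\pi_0 E_{p,h}$, the homotopy fixed point spectral sequence computing $\pi_\ast R_{p,h}$ collapses, so $\pi_\ast R_{p,h}=(\pi_\ast E_{p,h})^{\mbb{F}_p^\times}$; combined with the fact, recalled in \cref{ex:Morava-E-theory}, that $\pi_\ast E_{p,h}=\pi_0(E_{p,h})[u^{\pm 1}]$ with $\mbb{F}_p^\times$ acting on $\pi_{2k}E_{p,h}$ through the $k$-th power of the Teichm\"uller character $\chi\colon\mbb{F}_p^\times\hookrightarrow(\pi_0E_{p,h})^\times$, this gives $\pi_\ast R_{p,h}=\pi_0(E_{p,h})[u^{\pm(p-1)}]$. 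In particular $\Sigma^{2(p-1)}R_{p,h}\simeq R_{p,h}$, so the class $[\Sigma^2R_{p,h}]$ is $(p-1)$-torsion in $\Pic(R_{p,h})$.

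The heart of the matter is the construction of $\rho$. I would diagonalize the residual $\mbb{F}_p^\times$-action on the $R_{p,h}$-algebra $E_{p,h}$. Since $\pi_0R_{p,h}$ contains the $(p-1)$-st roots of unity, the group algebra $R_{p,h}[\mbb{F}_p^\times]$ splits, via orthogonal idempotents, into a product of copies of $R_{p,h}$ indexed by the character group $\widehat{\mbb{F}_p^\times}$; this upgrades to a symmetric monoidal equivalence $\mathrm{Fun}(B\mbb{F}_p^\times,\Mod_{R_{p,h}})\simeq\mathrm{Fun}(\widehat{\mbb{F}_p^\times},\Mod_{R_{p,h}})$, where the left side carries its standard (diagonal) structure and the right side Day convolution. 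Transporting $E_{p,h}$, regarded as an $\mbb{F}_p^\times$-equivariant $\bbE_\infty$-$R_{p,h}$-algebra, across this equivalence yields a $\widehat{\mbb{F}_p^\times}$-graded $\bbE_\infty$-$R_{p,h}$-algebra with isotypic summands $E^{(\psi)}$, and a computation on homotopy groups using $\pi_\ast E_{p,h}=\pi_0(E_{p,h})[u^{\pm1}]$ identifies $E^{(\chi^k)}\simeq\Sigma^{2k}R_{p,h}$ and shows the multiplication maps $E^{(\psi)}\otimes_{R_{p,h}}E^{(\psi')}\to E^{(\psi\psi')}$ are equivalences. Thus $E_{p,h}$ is a $\widehat{\mbb{F}_p^\times}$-graded $\bbE_\infty$-$R_{p,h}$-algebra all of whose pieces are invertible; such data is exactly a symmetric monoidal functor from the discrete group $\widehat{\mbb{F}_p^\times}$ into invertible $R_{p,h}$-modules, equivalently a map of connective spectra $\rho\colon\widehat{\mbb{F}_p^\times}\to\pic(R_{p,h})$. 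Identifying $\widehat{\mbb{F}_p^\times}\cong\bbZ/(p-1)$ by sending the Teichm\"uller character to $1$, one gets $\rho(1)=[\Sigma^2R_{p,h}]$, and the underlying $\bbE_\infty$-ring $\colim_{\bbZ/(p-1)}\rho=\bigoplus_\psi E^{(\psi)}$ is $E_{p,h}$.

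Now \cref{lem:comparing-thom-constructions} applies directly: the Thom construction of $\rho$ computes $\Mod_{E_{p,h}}=\Mod_{M\rho}\simeq\colim_{B\bbZ/(p-1)}\Mod_{R_{p,h}}$, symmetric monoidally in $\CAlg(\PrLst)$, with $\bbZ/(p-1)$ acting through $\rho$, i.e.\ by the shifts $M\mapsto\Sigma^{2k}M$. The algebraic statement is proved identically: $\pi_0(E_{p,h})[u^{\pm1}]$ is the underlying graded ring of the $\widehat{\mbb{F}_p^\times}$-graded $\bbE_\infty$-algebra in $\pcal{D}(\pi_0(E_{p,h})[u^{\pm(p-1)}])$ whose classifying map $\bbZ/(p-1)\to\pic\big(\pcal{D}(\pi_0(E_{p,h})[u^{\pm(p-1)}])\big)$ carries $1$ to $[\Sigma^2]$, so the same lemma gives $\pcal{D}(\pi_0(E_{p,h})[u^{\pm1}])\simeq\colim_{B\bbZ/(p-1)}\pcal{D}(\pi_0(E_{p,h})[u^{\pm(p-1)}])$. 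Finally $\ho_k$ preserves colimits, and since $p-1$ is invertible this colimit over $B\bbZ/(p-1)$ is $\bbZ/(p-1)$-ambidextrous, hence unaffected by passing to $\ho_k$; whence, applying $\ho_k$ to the two equivalences above, one obtains the two displayed symmetric monoidal equivalences.

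I expect the one genuinely non-formal step to be the construction of $\rho$: lifting the $(p-1)$-torsion Picard class $[\Sigma^2R_{p,h}]$ to a \emph{coherent} map out of the Eilenberg-MacLane spectrum $H\bbZ/(p-1)$. A torsion Picard class need not lift this way in general; what rescues the situation is precisely the extra input that $E_{p,h}$ already carries an $\mbb{F}_p^\times$-action by $\bbE_\infty$-ring maps and that $\pi_0E_{p,h}$ has enough roots of unity to diagonalize it. Granting \cref{lem:comparing-thom-constructions}, everything downstream — the identification $M\rho\simeq E_{p,h}$, the module-category computation, and the passage to homotopy $k$-categories — is bookkeeping.
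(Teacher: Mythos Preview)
Your proposal is correct and follows the same outline the paper sketches: exhibit $E_{p,h}$ as a Thom spectrum over $R_{p,h}$, invoke \cref{lem:comparing-thom-constructions}, and pass to $\ho_k$. The paper states this as an observation without spelling out the construction of $\rho$; your character-decomposition argument (splitting $R_{p,h}[\mbb{F}_p^\times]$ via idempotents and reading off the isotypic pieces of $E_{p,h}$) is a clean and correct way to supply that missing detail, and your diagnosis that this is the only non-formal step is accurate.

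One small remark: your final appeal to ambidexterity (``since $p-1$ is invertible'') is slightly garbled as written---invertibility of $p-1$ in $\pi_0$ is not directly what makes a colimit of \emph{categories} over $B\bbZ/(p-1)$ ambidextrous. The paper simply asserts that $\ho_k$ preserves colimits (it is a localization, hence a left adjoint on symmetric monoidal $\infty$-categories), and that is already enough; the colimit on the right-hand side should be read in $\CAlg(\Cat_\infty)$ rather than $\PrLst$. Your instinct that something needs to be said about compatibility of the $\PrLst$-colimit with $\ho_k$ is reasonable, but the paper does not address it either, so you are matching its level of rigor.
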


Combining \cref{obs:picard-recover-2-periodic} and \cref{ex:Morava-E-theory} yields the following corollary.
\begin{corA}
    Whenever 
    $p > \left(\frac{k}{2}+2\right)\cdot h + \frac{k}{2}+1$ 
    there is a canonical symmetric equivalence:
    \[\ho_k \Mod_{E_{p,h}}  \simeq \ho_k \pcal{D}(\pi_0 E_{p,h}[u^{\pm}]).\footnotemark\]\footnotetext{The right hand side can be identified with the derived category of differential $\bbZ/2$-graded $\pi_0 (E_{p,h})$-modules.}%
\end{corA}
\vspace{-5mm}
Lastly we study the question of algebraicity of $KO$ and $KU$-modules.
This question goes back to Bousfield \cite{bousfield} whose results were revisted by Wolbert in modern language \cite{wolbert1998classifying}. 
Bousfield's results were improved by Patchkoria \cite{patchkoria2013algebraic} who showed in particular that $\ho_1\Mod_{KU_{(p)}}$ is algebraic as a triangulated category when $p>2$.
Here we give a complete answer to the tensor-triangulated formulation of the question.

\begin{corA}
    For $p>2$ there are tensor-triangulated equivalences:
        \[\ho_1\Mod_{KO_{(p)}} \simeq \ho_1\pcal{D}(\bbZ_{(p)}[w_1^{\pm}]) \quad |w_1|=4, \qquad \qquad \ho_1\Mod_{KU_{(p)}} \simeq \ho_1\pcal{D}(\bbZ_{(p)}[v_1^{\pm}]) \quad |v_1|=2.\]
        Moreover, this is sharp, i.e.~at $p=2$ neither of these tt categories are algebraic:
    \[\ho_1\Mod_{KO_{(2)}} \not\simeq \ho_1\pcal{D}(\pi_\ast KO_{(2)}), \qquad \ho_1\Mod_{KU_{(2)}} \not\simeq \ho_1\pcal{D}(\pi_\ast KU_{(2)}).\]
\end{corA}
\begin{proof}
    For odd primes, the argument in \cref{ex:Morava-E-theory} puts us in the situation of \cref{corA:E4-tt} with $w = 4$ and $d=1$ from which the result immediately follows.
    It remains to disprove algebraicity at $p=2$.
    Let $E \coloneq KU_{(2)}$ and $K \coloneq E/2$.
    We will show that $K_\ast \coloneq \pi_\ast(K \otimes_E (-)) \colon \ho_1 \Mod_{E} \to \gr\Mod^\heart_{K_\ast}$
    factors through a symmetric monoidal functor $\ho_1 \Mod_{E} \to \calW$ where $\calW$ is a certain exotic symmetric monoidal category which does not admit an exact sm functor to $\mrm{Vect}_{\mbm{F}_2}$.
    This will deal with both cases as we may precompose with $E \otimes_{KO_{(2)}} (-) \colon \ho_1 \Mod_{KO_{(2)}} \to \ho_1 \Mod_{ E}$ to obtain a similar functor from $KO_{(2)}$-modules. 

    Let $m \colon K \otimes_{E} K \to K$ and $\sigma \colon K \otimes_{E} K \simeq K \otimes_{E} K$ denote the multiplication and switch map respectively and let $\delta v_0 \colon K \to \Sigma K$ denote the mod $p$ bockstein.\footnote{i.e.~$\delta v_0$ is the composite of the edge map in the cofiber sequence $E\xrightarrow{2\cdot} E \to K$ with the quotient map $E \to K$.}
    In \cite{wurgler1986commutative}, Wurgler shows that $m \circ \sigma \simeq m + v_1 \left( m \circ \delta v_0  \otimes \delta v_0 \right)$.
    In particular $K_\ast \colon \ho_1\Mod_{E} \to \gr \Mod^\heart_{K_\ast}$ can \textit{not} be made symmetric monoidal but rather factors through an "exotic" sm category
    $\calW$ whose underlying category is $\bbZ/2$-graded $\Lambda_{\mbm{F}_{\!2}}(q)$-modules where $|q|=1$, 
    whose monoidal structure is the usual tensor product of $\bbZ/2$-graded vector spaces and whose braiding is given by
    $x \otimes y \longmapsto y \otimes x + q(y)\otimes q(x)$ (see \cite[\S 3.3]{kuhn2020chromatic}).
    To see why there is no exact symmetric monoidal functor 
    $\Phi \colon \calW \to \Vect_{\mbm{F}_{\!2}}$ we note that the $\Ctwo$-norm $\Lambda_{\mbm{F}_{\!2}}(q)^{\otimes2}_{\Ctwo} \to \left(\Lambda_{\mbm{F}_{\!2}}(q)^{\otimes2}\right)^{\Ctwo}$ is an isomorphism in $\calW$ forcing $\Phi (\Lambda_{\mbm{F}_{\!2}}(q))=0$ 
    and thus $\Phi (\mbm{F}_{\!2})= 0$ contradicting unitality.\qedhere
\end{proof}

\subsection{Acknowledgements}
I would like to express deep gratitude to my advisor, Prof.~Tomer Schlank, for sharing his wisdom and expertise at every stage of this project, and for his continued support in my academic growth.~My appreciation extends to Robert Burklund for his valuable feedback on this work and for his openness to exchanging ideas.~I would like to thank Jan Steinebrunner, Christian Carrick and Shay Ben-Moshe for helpful conversations.~I would like to thank Irakli Patchkoria, Piotr Pstragowski for taking interest in this work and giving insightful comments on an early draft.
Special thanks goes to Jordan Williamson and 
Constanze Roitzheim for directing to some useful results in the literature.~Part of this work was written during the "Spectral Methods in Algebra, Geometry, and Topology" trimester program in fall 2022\footnote{Funded by the Deutsche Forschungsgemeinschaft (DFG, German Research Foundation) under Germany's Excellence Strategy – EXC-2047/1 – 390685813.} at the Hausdorff Research Institute for Mathematics.

\section{A setting for filtered stable homotopy theory}

\subsection{Overview and definitions}

The applications mentioned in the introduction follow from (variations on) an abstract algebraicity theorem in the setting of \textit{"filtered stable homotopy theory"}.
A proper formulation of it requires a considerable amount of setup.
The goal of this section is to give a non-technical overview.

Our setting is heavily inspired by Pstragowski's synthetic spectra \cite{Synth} and his interpretation of Goerss-Hopkins \cite{goerss2005moduli} obstruction theory within it \cite{Piotr}. 
Substantial parts of the theory we develop is in fact parallel to the \textit{"abstract Goerss-Hopkins"} formalism developed by Pstragowski-VanKoughnett \cite{piotr-paul}.
Our view of this topic is also heavily influenced by the work of Burklund-Hahn-Senger \cite{burklund2019boundaries}.
Our approach is nevertheless self-contained and no familiarity with the contents of the aforementioned papers is assumed.

\subsubsection{Filtered and graded spectra}

A \hl{\textit{filtered spectrum}} is a functor $X \colon \bbZ^\downarrow \to \Sp$ where $\bbZ^\downarrow$ denotes the poset of integers  with the decreasing order:
\[\cdots \too X^1 \too X^{0} \too X^{-1} \too \cdots\]
We write $\hl{\Filnc} \coloneq \Fun(\bbZ^\downarrow,\Sp)$ for the \category{} of filtered spectra.
Integer addition extends uniquely to a presentably symmetric monoidal structure on $\Filnc$ with underlying bifunctor:
\[\otimes \colon \Filnc \times \Filnc \too \Filnc, \qquad  \big(X,Y\big) \longmapsto \big(s \longmapsto \colim_{i+j\ge s} X^i \otimes Y^j\big).\]
The spectral Yoneda embedding $\Sigma_+^\infty  y(-) \colon \bbZ^\uparrow \coloneq \left(\bbZ^{\downarrow}\right)^\op  \to \Fun(\bbZ^\downarrow,\Sp)$
gives rise to an equivalence of $\bbE_\infty$-spaces:
\[\Pic(\Sp) \times \bbZ \iso \Pic(\Filnc), \qquad \left(\Sigma^m \bbS,n\right) \longmapsto \Sigma^{\infty+m}_+\Map_{\bbZ^\downarrow}(n,-).\]
We let $\hl{\unit[n]} \coloneq \Sigma^{\infty +n}_+ \Map_{\bbZ^\downarrow}(n,-)$ and more generally for $X\in \Filnc$ we write $X[n]\coloneq \unit[n] \otimes X \in \Filnc$. 
We denote the Yoneda image of $(0<1) \in \bbZ^\uparrow$ by $\hl{\tau} \coloneq \Sigma^\infty_+y(0<1) \colon \Sigma \unit [-1] \to \unit$ and more generally for $X \in \Filnc$ we write $\hl{\tau_X} \coloneq \tau \otimes X \colon \Sigma X [-1] \to X$.
Evaluating $\tau_X$ at $s \in \bbZ$ recovers the map $X^{s+1} \to X^s$ and thus:
\[ X\tauinvbra \simeq \xconst(X^{-\infty}) \coloneq \left(\cdots \xrightarrow{=} X^{-\infty} \xrightarrow{=} X^{-\infty} \xrightarrow{=} \cdots \right)  \in \Filnc,\] 
where $\hl{X^{-\infty}} \coloneq \colim_{s \in \bbZ^\downarrow} X^s$.
In other words, inverting $\tau$ takes a filtered spectrum to the constant filtration on its abutment.
On the other hand, we have
\[
\cofib\left(\tau_X \colon \Sigma X[-1] \to X\right) \simeq \left(\cdots \xrightarrow{\,\,\,0\,\,\,} \gr_{s+1} X\xrightarrow{\,\,\,0\,\,\,} \gr_s X \xrightarrow{\,\,\,0\,\,\,} \cdots\right) \in \Filnc, 
\]
where $\hl{\gr_s X} \coloneq \cof(X^{s+1} \to X^s) \in \Sp$, hence coning off $\tau$ sends a filtered spectrum to its associated graded. 
Coning off an element in an $\bbE_\infty$-algebra rarely results in an $\bbE_\infty$-algebra.
Nevertheless,
$\hl{\Ctau{}} \coloneq \cofib(\tau \colon \Sigma\unit[-1] \to \unit)$ 
admits a natural structure of an $\bbE_\infty$-algebra \cite[Proposition 3.2.5]{Lur-K-Theory} such that its modules are precisely graded spectra as we explain below.

Let $\bbZ$ denote the integers regarded as a discrete abelian group and write $\hl{\Grnc} \coloneq \Fun(\bbZ,\Sp)$ for the \category{} of \textit{\hl{graded spectra}}.
Restricting along the inclusion $\iota \colon \bbZ \hookrightarrow \bbZ^\downarrow$ defines a lax symmetric monoidal functor $\iota^\ast \colon \Filnc \to \Grnc$ and the composite
\[\begin{tikzcd}
	{\ModCtau{}(\Filnc)} & \Filnc & \Grnc,
	\arrow["{\iota^\ast}", from=1-2, to=1-3]
	\arrow["\forget",from=1-1, to=1-2]
	\arrow["\simeq", curve={height=18pt}, from=1-1, to=1-3]
\end{tikzcd}\]
is a symmetric monoidal equivalence \cite[Proposition 3.2.7]{Lur-K-Theory}.
This formalizes the idea that a graded spectrum is the same as a filtered spectrum on which $\tau$ acts by $0$.
Conversely, a filtered spectrum is equivalent to a graded spectrum equipped with a degree $-1$ self map.
This can be seen by considering instead the left Kan extension 
$\iota_! \colon \Grnc \to \Filnc$ which can be factored as
\[\iota_! \colon \Grnc \xrightarrow{\bbS\taubra \otimes(-)} \Mod_{\bbS\taubra}(\Grnc) \simeq \Filnc,\]
where $\hl{\bbS\taubra}\footnote{As an $\bbE_1$-algebra, $\bbS\taubra$ is freely generated by a single copy of the sphere placed in degree $-1$.} \coloneq \iota^\ast \iota_! \bbS(0) \in \CAlg(\Grnc)$\cite[Proposition 3.1.6]{Lur-K-Theory}.
These complementary observations can often be confusing as they put $\Filnc$ and $\Grnc$ on equal footing - each can be viewed as enriched over the other.
To avoid such confusion we will stick to the $\Filnc$-enriched perspective in the present paper.

\subsubsection{The $\tau$-adic tower}
The cofiber of any $\tau$-power 
$\hl{C(\tau^k)} \coloneq \cofib(\tau^{k} \colon \Sigma^k\unit[-k] \to \unit)$ 
is also an $\bbE_\infty$-algebra and together they assemble to a tower of square zero extensions\footnote{We prove this in \cref{sect:sqz}.} which we call the \hl{\textit{$\tau$-adic tower}}:
\begin{equation*}
    \unit \simeq \lim_{k} \Ctau{k} \too \cdots  \too \Ctau{2}  \too  \Ctau{} \quad \in \Fun\left((\bbZ^\downarrow_{\ge 0})^\triangleleft,\CAlg(\Filnc)\right).
\end{equation*}
Passing to modules gives a tower interpolating between graded and filtered spectra:
\begin{equation*}\label{eq:tau-adic-modules}
    \Filnc
    \too \cdots  \too \Mod_{\Ctau{2}}(\Filnc) \too \Mod_{\Ctau{}}(\Filnc) \simeq \Grnc, \tag{$\mbf{II}$}
\end{equation*}
Square zero obstruction theory \cite{square-zero} allows us to identify the successive fibers of this tower. 
For example, given $X \in \ModCtau{n+1}(\Filnc)$ we have (under suitable connectivity assumptions) a pullback square
\[\begin{tikzcd}
	{\{\theta_k(X) \simeq 0\}} & {\Mod_{\Ctau{k+1}}(\Filnc)} \\
	{\{X\}} & {\Mod_{\Ctau{k}}(\Filnc),}
	\arrow[from=2-1, to=2-2]
	\arrow[from=1-1, to=2-1]
	\arrow[from=1-1, to=1-2]
	\arrow[from=1-2, to=2-2]
	\arrow["\lrcorner"{anchor=center, pos=0.125}, draw=none, from=1-1, to=2-2]
\end{tikzcd}\]
whose top left corner is the space of paths in $\Map_{\Ctau{}}(\cof(\tau_X),\Sigma^{k+2}\cof(\tau_X)[-k])$ from $0$ to a certain canonical obstruction map $\theta_k(X) \colon \cof(\tau_X) \to \Sigma^{k+2}\cof(\tau_X)[-k]$.
Heuristically, a $\Ctau{n}$-module can be thought of as an approximate version of a filtered spectrum which holds information about the first $(n-1)$-pages of the associated spectral sequence.

\begin{rem}
    A variant of the above obstruction theory was explored under a different guise by Ariotta-Krause \cite{ariotta}.
    In op.~cit.~what we call $\theta_n(X)$ is identified for $n\ge 2$ (under suitable assumptions) with the $(n+1)$-fold Toda bracket of the $d_1$-differential $ \langle d_1,\dots,d_1\rangle$
    where the Toda bracket is formed using the null homotopies provided by the previous choices.
    This way of presenting a filtered spectrum bares resemblance to the mechanics of a spectral sequence.
    This is not a coincidence. 
    Indeed, a standard construction associates to $X$ a spectral sequence whose abutment is $\pi_\ast X^{-\infty}$ and whose $E_1$-page is the bigraded abelian group 
    $\bigoplus_{t,s} \pi_t \gr_s X$.
    Tracing through this construction reveals that the $d_k$-differential is in fact extracted from the choice of null homotopy of the $k$-th obstruction map. 
    We refer the reader to \cite{ariotta} for further details.
\end{rem}

\subsubsection{Deformations}

\begin{defn}
    A \hl{\textit{($1$-parameter) deformation}} (of stable \categories{}) is an $\Filnc$-module in $\PrLst$.\!\footnote{An $\Filnc$-module structure on stable presentable $\infty$-category $\calE$, is equivalent to a monoidal functor $\big(\bbZ^\downarrow,+\big) \to \big(\End_{\PrLst}(\calE),\circ\big)$.}%
    \,\,Note that since $\Filnc \in \CAlg(\PrLst)$ its category of modules $\Mod_{\Filnc}({\PrLst})$ is symmetric monoidal.
    Given an $\infty$-operad $\calO$ we define an \hl{\textit{$\calO$-monoidal deformation}} to be an $\calO$-algebra in $\Mod_{\mathcal{F}}(\mathbf{Pr}^{\mrm{L}}_{\mrm{st}})$.
\end{defn}

To justify the term "\textit{deformation}" we show how to extract from $\calE \in \Mod_{\Filnc}(\PrLst)$, a pair of stable \categories{}, which we think of as the \textit{generic} and \textit{special fiber}.
Both will be defined by base-changing along a symmetric monoidal functor $\Filnc \to \Sp$.
To avoid confusion we introduce different notations for $\Sp$ according to how we regard it as an $\Filnc$-algebra.
\begin{defnA}
    We let $\hl{\Filnc\tauinvbra} \in \CAlg(\PrLst)_{\Filnc/}$
    denote $\Sp$ regarded as a $\bbE_\infty\hyphen\Filnc\hyphen$algebra via the sm functor:
        \[\hl{(-)^{-\infty}} \coloneq  \colim_{\bbZ^\downarrow}(-) \colon \Filnc \too \Sp , \qquad X \longmapsto X^{-\infty}.\]
    We let $\hl{\Filnc\taunull{}} \in \CAlg(\PrLst)_{\Filnc/}$
    denote $\Sp$ regarded as a $\bbE_\infty\hyphen\Filnc\hyphen$algebra via the sm functor:
        \[\hl{\gr_{\oplus}(-)} \colon \Filnc \xrightarrow{\Ctau{} \otimes (-)} \ModCtau{}(\Filnc) \simeq \Grnc \xrightarrow{\colim_{\bbZ}(-)} \Sp, \qquad X \longmapsto \hl{\gr_{\oplus}X} \coloneq  \bigoplus_{s \in \bbZ} \gr_s X.\]
\end{defnA}

\begin{defn}\label{defnA:fibers}
    Let $\calE \in \Mod_{\Filnc}(\PrLst)$ be a filtered deformation.
    \begin{enumerate}
    \item 
    The \hl{\textit{generic fiber}} of $\calE$ is the stable \category{} $\hl{\calE \tauinvbra} \coloneq \Filnc \tauinvbra \otimes_{\Filnc} \calE \in \PrLst$.
    
    \item\label{item:special-fiber} 
    The \hl{\textit{special fiber}}\footnote{Our use of term \textit{special fiber} conflicts with some of the existing literature where the term is often used to refer to $\ModCtau{}(\calE)$.
    This ambiguity is not too severe as $\calE \taunull{}$ is simply the "periodification" of the latter in the sense that 
    $\calE\taunull{} \simeq \ModCtau{}(\calE)^{h\bbZ}$
    where the $\bbZ$-action on $\ModCtau{}(\calE)$ comes from the identification $\ModCtau{}(\Filnc) \simeq \Grnc$.} of $\calE$ is the stable \category{} $\hl{\calE \taunull{}} \coloneq \Filnc \taunull{} \otimes_{\Filnc} \calE \in \PrLst$.
    \end{enumerate}
\end{defn}

\begin{example}\label{ex:filtered-algebra}
    Let $A \in \Alg(\Filnc)$ be a filtered $\bbE_1$-algebra.
    The \category{} of (left or right) $A$-modules $\Mod_A(\Filnc)$ is a presentable $\Filnc$-module.
    Its generic and special fibers are given respectively by:
    \[
        \Mod_{A}(\Filnc)\tauinvbra \simeq \Mod_{A^{-\infty}}, \qquad \Mod_A(\Filnc)\taunull{} \simeq \Mod_{\gr_\oplus A}.
    \]
\end{example}

The $\tau$-adic tower can be generalized to arbitrary deformations $\calE \in \Mod_{\Filnc}(\PrLst)$ by simply tensoring with \hyperref[eq:tau-adic-modules]{$(\mbf{II})$}:
\[\calE \too \lim_n \Mod_{\Ctau{n}}(\calE) \too \cdots  \too \Mod_{\Ctau{2}}(\calE) \too \Mod_{\Ctau{}}(\calE).\]
This gives an obstruction theory for the objects and mapping spaces of $\calE$ where the obstructions live in ext groups computed in $\ModCtau{}(\calE)$.
In practice there are at least two obstacles to effectively using this obstruction theory.
Firstly, it is not immediately obvious how to apply it to study the generic fiber $\calE\tauinvbra$ (which is usually the main object of interest).
(Most naively we have $\ModCtau{n}(\Filnc) \otimes_{\Filnc} \calE\tauinvbra \simeq 0$.)
Secondly, (and more severely) an obstruction theory is only useful when we can say something about the obstruction groups.
Ideally, we would like the obstruction groups to be algebraic, i.e.~ext groups in some abelian category.
Both of these issues will be addressed by equipping $\calE$ with a $t$-structure.
To decide which properties we should require of this $t$-structures we take inspiration from Whitehead towers.

\subsubsection{The diagonal $t$-structure}

The \hl{\textit{Whitehead tower}} of a spectrum $Y \in \Sp$ is the filtered spectrum:
\[\hl{Y_{\ge \star}} \coloneq \left(\cdots  \too Y_{\ge s+1} \too Y_{\ge s} \too Y_{\ge s-1} \too \cdots\right) \in \Filnc.\]
The full subcategory of $\Filnc$ spanned by Whitehead towers is equivalent to $\Sp$ but also admits an intrinsic description: a filtered spectrum 
$X \in \Filnc$ is equivalent to a Whitehead tower if and only if for every $s \in \bbZ$,  
$X^s$ is $s$-connective and $X^{s+1} \to X^s$ is an $s$-connected cover.
We turn the latter into a definition.
\begin{defn}
    A filtered (graded) spectrum $X$ is called \hl{\textit{diagonally (co-)connective}} if $X^s$ is $s$-(co-)connective for all $s \in \bbZ$.
    The full subcategory of diagonally connective objects $\hl{\Fil} \subseteq \Filnc$ ($\hl{\Gr} \subseteq \Grnc$) 
    is the connective part of a monoidal $t$-structure which we call the \hl{\textit{diagonal $t$-structure}}.
\end{defn}

With this definition, $X \in \Filnc$ is equivalent to a Whitehead tower if and only if $X \in \Fil$ and $\cofib(\tau_X) \in \Filnc^\heart$.
The functor
$\iota^\ast \colon \Filnc \to \Grnc$ restricts to an equivalence $\Filnc^\heart \simeq \Grnc^\heart$ 
identifying the heart of the diagonal $t$-structure with the category of graded abelian groups equipped with the sm structure whose braiding obeys the Koszul sign rule. 
To familiarize the reader with the diagonal $t$-structure we use it to express some familiar constructions.

\begin{example}
    The Whitehead tower of a spectrum can be expressed as the composite:
    \[\hl{(-)_{\ge \star}}\colon \Sp \xrightarrow{\xconst(-)} \Filnc \xrightarrow{(-)_{\ge 0}} \Fil \subseteq \Filnc.\]
\end{example}

\begin{example}
    The graded homotopy groups of a spectrum can be expressed as the composite:
    \[\hl{\pi_\star} \colon \Sp \xrightarrow{(-)_{\ge \star}} \Filnc \xrightarrow{\pi_0^\heart} \Filnc^\heart \simeq \Grnc^\heart.\]
\end{example}

\begin{example}
    The Eilenberg-Maclane spectrum of a graded abelian group can be expressed as the composite:
    \[ \hl{H} \colon \Grnc^\heart \simeq \Filnc^\heart \hookrightarrow \Filnc \xrightarrow{(-)^{-\infty}} \Sp, \qquad A_\star \longmapsto \hl{H(A_\star)} \simeq \bigoplus_{\infty> s >-\infty}\Sigma^s H A_s.\]
\end{example}

\subsubsection{Goerss-Hopkins deformations}

The following notion is the main object of study in the body of the paper.

\begin{defn}\label{defn:Goerss-Hopkins deformation}
    A \hl{\textit{Goerss-Hopkins deformation}} is a deformation $\calE$ equipped with a $t$-structure which is
    \begin{enumerate}
        \item 
        \textit{right complete:} $\calE^{\le -\infty} \coloneq \bigcap_{n} \calE^{\le n} = \{0\}$,
        \item
        \textit{Grothendieck:} the functor
        $(-)_{\ge 0} \colon \calE \to \calE^{\ge 0}$ 
        preserves filtered colimits, and
        \item 
        \textit{diagonal:} for every $X \in \Fil$ and $E \in \calE^{\ge 0}$ we have $X \otimes E \in \calE^{\ge 0}$.
    \end{enumerate}
    More generally, given an \operad{} $\calO \in \Op$ we define an \hl{\textit{$\calO$-monoidal Goerss-Hopkins deformation}} to be an $\calO$-monoidal deformation $\calE$ equipped with a compatible $t$-structure \cite[Example 2.2.1.3]{HA} satisfying the above.
\end{defn}

If $\calE$ is a Goerss-Hopkins deformation, then the full subcategory of connective objects $\calE^{\ge 0} \subseteq \calE$ is a Grothendieck prestable $\Fil$-module.
Conversely, the stabilization of a Grothendieck prestable $\Fil$-module is naturally a Goerss-Hopkins deformation.
These operation set up an equivalence:
\[(-)^{\ge 0}\colon \bigg\{ \text{Goerss-Hopkins deformations}\bigg\} \simeq \bigg\{ \text{Grothendieck prestable $\Fil$-modules} \bigg\} \colon \Sp(-). \]
More generally $\calO$-monoidal Goerss-Hopkins deformations are equivalent to $\calO$-algebras in $\Mod_{\Fil}(\PrL)$ whose underlying \category{} is Grothendieck prestable.
We shall henceforth freely alternate between these perspectives.
We will often abuse notation and write $\calE$ for a Goerss-Hopkins deformation leaving the $t$-structure implicit.

\begin{defn}
    Let $\calE$ be a Goerss-Hopkins deformation. 
    The \hl{\textit{synthetic analog}} is the right adjoint in the composite adjunction:
    \[\begin{tikzcd}
	{\calE^{\ge 0}} && \calE && {\calE \tauinvbra \,: \hl{\nu}.}
	\arrow[""{name=0, anchor=center, inner sep=0}, shift left=2, hook', from=1-5, to=1-3]
	\arrow[""{name=1, anchor=center, inner sep=0}, "{\tau^{-1}}", shift left=2, from=1-3, to=1-5]
	\arrow[""{name=2, anchor=center, inner sep=0}, "{}", shift left=2, hook, from=1-1, to=1-3]
	\arrow[""{name=3, anchor=center, inner sep=0}, "{(-)_{\ge 0}}", shift left=2, from=1-3, to=1-1]
	\arrow["\dashv"{anchor=center, rotate=-90}, draw=none, from=2, to=3]
	\arrow["\dashv"{anchor=center, rotate=-90}, draw=none, from=1, to=0]
    \end{tikzcd}\]
\end{defn}

Just like the Whitehead tower, the synthetic analog embeds $\calE \tauinvbra$ as a full subcategory of $\calE^{\ge 0}$ with a simple characterization: $X \in \calE^{\ge 0}$ lies in the essential image of $\nu \colon \calE \tauinvbra \to \calE^{\ge 0}$ if and only if $\cofib(\tau_X) \in \calE^\heart$.

\begin{example}\label{ex:filtered-modules}
    Let $R \in \Alg(\Sp)$ be an $\bbE_1$-algebra spectrum with Whitehead tower $R_{\ge \star} \in \Alg(\Filnc)$ and define:
    \[\hl{\calE_R} \coloneq \Mod_{R_{\ge \star}}(\Filnc)\]
    Setting $\hl{\calE^{\ge0}_R} \coloneq \Mod_{R_{\ge\star}}(\Fil)$ gives a $t$-structure on $\calE_R$ which makes it into a Goerss-Hopkins deformation with whose heart is the abelian category 
    $\calE^\heart_R \simeq \gr\Mod_{\pi_\star R}^\heartsuit$ 
    of graded $\pi_\star R$-modules.
    Since $\left(R_{\ge \star}\right)^{-\infty} \simeq R$ and $\gr_\oplus R_{\ge \star} \simeq H(\pi_\ast R)$, 
    we have by \cref{ex:filtered-algebra} equivalences:
    \[
        \calE_R\tauinvbra \simeq \Mod_R(\Sp), \qquad \calE_R\taunull{} \simeq \Mod_{H(\pi_\ast R)}.
    \]
    If $R$ is an $\bbE_\infty$-ring, $\calE_R$ is symmetric monoidal and so are the above identifications.
\end{example}

The primary example of a Goerss-Hopkins deformation used in this paper is the following. 

\begin{defn}
    Let $E(p,h)$ denotes Morava $E$-theory at prime $p$ and height $h$ and consider the Amitzur complex:
    \[ \left([k] \longmapsto E(p,h)_{\ge \star}^{\otimes [k]} \right) \in \Fun(\Delta,\CAlg(\Filnc))\]
    Pasisng to module categories level-wise and taking the limit we define:
    \[ \hl{\calE_{p,h}} \coloneq \lim_{[k] \in \Delta}\Mod_{E(p,h)^{\otimes [k]}_{\ge \star}}(\Filnc) \in \CAlg(\Mod_{\Filnc}(\PrLst)).\]
    This is a symmetric monoidal Goerss-Hopkins deformation with $\calE_{p,h}^\heart \simeq \gr\Comod^\heart_{E(p,h)_\star E(p,h)}$ and whose generic and special fiber are given respectively by:
    \[\calE_{p,h}\tauinvbra \simeq \Sp_{p,h}, \qquad \calE_{p,h}\taunull{} \simeq \Fr_{p,h}.\]
\end{defn}

\begin{rem}
    $\calE_{p,h}$ is in fact equivalent as a symmetric monoidal \category{} to hypercomplete $E(p,h)$-based synthetic spectra as defined in \cite{Synth}.
    We will neither prove nor use this comparison in any way.
\end{rem}

\begin{rem}
    Let $\calC$ be a stable, presentable \category{}, $\pcal{A}$ a Grothendeick abelian category and $H \colon \calC \to \pcal{A}$ a homological functor commuting with direct sums.
    Given this data,
    Patchkoria-Pstragowski \cite[Theorem 6.40]{piotr-irakli} define a derived \category{} $\widecheck{\pcal{D}}(\calC)$.
    It will be interesting to know that the thread structure \cite[\S 4.4]{piotr-irakli} on $\widecheck{\pcal{D}}(\calC)$
    extends to an $\Fil$-module structure making it into a Goerss-Hopkins deformation.
    We expect this to be the case but do not pursue this here.
\end{rem}

\subsubsection{The abstract algebraicity theorem}

The main technical result of the paper is an abstract algebraicity theorem for Goerss-Hopkins deformations.
Roughly speaking, we give conditions which guarantee that a Goerss-Hopkins deformation $\calE$ is "approximately constant" in the sense that $\ho_k \calE\tauinvbra \simeq \ho_k \calE \taunull{}$ for some $k$.
The conditions will be formulated using yet another notion of "approximately constant" which, although subtle to define, is much easier to verify in practice.

\begin{defn}
    We define the \hl{\textit{degenerate analog}} functor $\hl{\delta} \colon \Filnc \to \Filnc$
    as the symmetric monoidal composite:
    \[\delta \colon \Filnc \xrightarrow{\Ctau{} \otimes(-)} \ModCtau{}(\Filnc) \simeq \Grnc \xrightarrow{\iota_!} \Filnc.\]
    On objects it is given by:
    \[\delta \colon \bigg(\cdots \too X^{s+1} \too X^{s} \too \cdots \bigg) \longmapsto \bigg(\cdots \too \bigoplus_{k \ge s+1} \gr_k X \too \bigoplus_{k \ge s} \gr_k X \too \cdots \bigg).\]
    We let 
    $\hl{\Filnc^\delta} \in \CAlg(\PrL)_{\Filnc/}$
    denote $\Filnc$ regarded as an $\bbE_\infty$-algebra over itself via $\delta$ and we define:
    \[\hl{(-)^{\delta}} \coloneq \Filnc^\delta \otimes_{\Filnc} (-) \colon \Mod_{\Filnc}(\PrLst) \too \Mod_{\Filnc}(\PrLst).\] 
\end{defn}

\begin{war}
    A confusing feature of $\delta$
    is that its restriction to the Picard is canonically equivalent to the identity map:
    $\id_{\Pic(\Filnc)} \simeq \delta|_{\Pic(\Filnc)} \coloneq \Pic(\Filnc) \to \Pic(\Filnc)$.
    Note however that although $\Filnc$ is Picard-generated, this does \textit{not} imply that $\delta$ is the identity functor.
    For example, we have $\delta(Y_{\ge \star}) \simeq H\left(\pi_\ast Y\right)_{\ge \star}$, so that on Whitehead towers $\delta$ has the effect of killing all $k$-invariants. 
\end{war}

\begin{example}
    Let $\calE_R$ be the Goerss-Hopkins deformation considered in \cref{ex:filtered-modules}.
    Then there is a canonical equivalence $\calE_R^\delta \simeq \calE_{H(\pi_\ast R)}$.
\end{example}

\begin{defn}
    Let $\calE$ be a symmetric monoidal Goerss-Hopkins deformation.
    An \textit{\hl{$(\bbE_\infty,m)$-degeneracy structure}}
    on $\calE$ is a choice of $t$-exact, $\ModCtau{m+1}(\Filnc)$-linear symmetric monoidal equivalence:
    \[\hl{\gamma} \colon \ModCtau{m+1}(\calE) \simeq \Mod_{\Ctau{m+1}}(\calE^\delta).\]
\end{defn}

The identification $\delta(\Ctau{}) \simeq \Ctau{}$ gives a canonical $0$-degeneracy structure for any Goerss-Hopkins deformation. 
Furthermore note that any $(m+1)$-degenracy structure in particular determines an $m$-degeneracy structure.

\begin{rem}
    An $m$-degenerate deformation is, in an informal sense, a categorified instance of a spectral sequence whose first non-vanishing differential appears on the $E_{m+1}$-page.
    Indeed, if $X\in \Filnc$ is such that there exists an equivalence of $\Ctau{m+1}$-modules 
    $\Ctau{m+1} \otimes X \simeq \Ctau{m+1} \otimes \delta(X)$, then the first $m$-differentials in the spectral sequence
    $\pi_t \gr_s \left(X\right) \Rightarrow \pi_{t-s} X^{-\infty}$ vanish.
\end{rem}

\begin{example}
    Let $R$ be an $\bbE_\infty$-ring spectrum.
    The space of $(\bbE_\infty,m)$-degeneracy structure on $\calE_R$ is equivalent to the space of equivalences of $\Ctau{m+1}$-$\bbE_\infty$-algebras $R_{[\star,\star+m]} \simeq H(\pi_\ast R)_{[\star,\star+m]} \in \CAlg(\Filnc)_{\Ctau{m+1}/}$.
\end{example}

Recall that a symmetric monoidal abelian category $\calA$ is said to have \textit{enough flat objects} if every object admits a resolution by flat objects. 
We define the \textit{ext dimension} of $\calA$ as the smallest $d$ such that $\Ext^k_\calA(-,-)=0$ for $k > d$ and the \textit{tor dimension} as the smallest $e$ such that $\mrm{Tor}^k_\calA(-,-)$ for $k >e$.
Recall that a Grothnedieck prestable \category{} $\calC^{\ge 0}$ is called \textit{Postnikov-complete} if the natural map $\calC^{\ge 0} \to \lim_n \calC^{[0,n]}$ is an equivalence and \textit{$0$-complicial} if it is generated by discrete objects (see \cite[\S C]{SAG}).
We can now state the abstract algebraicity theorem.

\begin{thmA}\label{thmA:abstract-algebraicity}
    Let $\calE$ be a symmetric monoidal Goerss-Hopkins deformation such that: 
    \begin{enumerate}
        \item $\calE^{\ge 0}$ is Postnikov-complete, i.e.~the natural map $\calE^{\ge 0} \to \lim_n \calE^{[0,n]}$ is an equivalence.
        \item 
        $\ModCtau{}(\calE)$ is $0$-complicial, i.e.~generated under colimits by discrete objects.
        \item
        $\calE^\heart$ has enough flat objects, ext dimension $d<\infty$ and tor dimension $e<\infty$.
    \end{enumerate}
    Then an $(\bbE_\infty,m)$-degeneracy structure $\gamma$ on $\calE$ determines  a symmetric monoidal equivalence:
        \[
        \ho_{\lfloor\frac{m-d+4}{e+1} \rfloor-3}\calE \tauinvbra
	 \overset{(\gamma)}{\simeq} \ho_{\lfloor\frac{m-d+4}{e+1} \rfloor-3} \calE\taunull{}.
        \]
\end{thmA}

We prove \cref{thmA:abstract-algebraicity} in \cref{sect:abstract-algebraicity}.
The proof can be split into two distinct parts: a connectivity result for the operadic Goerss-Hopkins tower and an arity approximation theorem for \operads{}.
The latter is thoroughly explained in \cite{ArityApprox} which we heavily rely on in the present paper.
We shall now briefly summarize the former.

Recall that a symmetric monoidal \category{} $\calC \in \CAlg(\Cat_\infty) \simeq \Seg_{\Fin_\ast}(\Cat_\infty)$ has an \hl{\textit{underlying \operad{}}} given by its unstraightening 
$\hl{\calC^\otimes} \coloneq \Un_{\Fin_\ast}\left(\calC\colon \Fin_\ast \to \Cat_\infty\right) \in \Op_\infty$.
The inclusion $\nu \colon \calE \tauinvbra \hookrightarrow \calE^{\ge 0}$ is lax symmetric monoidal, i.e.~defines a morphism of \operads{} 
$\nu \colon \calE\tauinvbra^\otimes \hookrightarrow \left(\calE^{\ge 0}\right)^\otimes$.
The characterization of the essential image of $\nu$ now gives a pullback square of \operads{}:
\[\begin{tikzcd}
    \calE\tauinvbra^\otimes & \left(\calE^{\ge 0}\right)^\otimes \\
    {\left(\calE^\heartsuit\right)^\otimes} & {\ModCtau{}(\calE)^\otimes.}
    	\arrow["{\Ctau{} \otimes(-)}", from=1-2, to=2-2]
    	\arrow["\nu", hook, from=1-1, to=1-2]
    	\arrow[from=1-1, to=2-1]
    	\arrow[""{name=0, anchor=center, inner sep=0}, hook, from=2-1, to=2-2]
    	\arrow["\lrcorner"{anchor=center, pos=0.125}, draw=none, from=1-1, to=0]
\end{tikzcd}\]
Pulling back the entire $\tau$-adic tower $\ModCtau{\bullet+1}(\calE)^\otimes$ along the inclusion 
$\left(\calE^\heartsuit\right)^\otimes \hookrightarrow \ModCtau{}(\calE)^\otimes$ produces a tower of \operads{} which we call the \hl{\textit{operadic Goerss-Hopkins tower}} of $\calE$:
\[\calE\tauinvbra^\otimes \too \lim_k \calM^\otimes_k(\calE) \too \cdots  \too \calM^\otimes_1(\calE) \too \calM^\otimes_0(\calE) \simeq \left(\calE^\heart\right)^\otimes \]
An $(\bbE_\infty,m)$-degeneracy structure on $\calE$ restricts to an equivalence of \operads{} $\calM^\otimes_m(\calE) \simeq \calM^\otimes_m(\calE^\delta)$.
Combining this with the identification
$\calE^\delta\tauinvbra \simeq \calE \taunull{}$
we see that in order to prove
\cref{thmA:abstract-algebraicity}
it suffices to show
$\Ctau{m+1} \otimes(-)\colon \calE\tauinvbra^\otimes \to \calM_m^\otimes(\calE)$ 
becomes an equivalence after applying $\ho_\alpha(-)$.
Unfortunately, obstruction theory alone does not guarantee such a statement.\footnote{In fact, we suspect this is false in sufficiently interesting examples.}
This is where we rely on \cite{ArityApprox} which reduces instead to the claim that 
$\Ctau{m+1} \otimes(-)\colon \ho_\alpha\calE\tauinvbra^\otimes \to \ho_\alpha\calM_m^\otimes(\calE)$ 
looks like an equivalence in arities $\le \alpha +3$. 
We then check that for the given value of $\alpha$ all relevant obstruction groups vanish.

	\subsection{Constructing deformations}
The purpose of this section is to develop basic tools for constructing Goerss-Hopkins deformations.
We begin with the study of limits.
\subsubsection{Limits of deformations}
Broadly speaking, our ability to form limits of Goerss-Hopkins deformations is constrained by our ability to form limits of Grothendieck prestable \categories{}.
The latter are unfortunately quite subtle unless all functors in question are left exact (see the discussion around \cite[Remark C.3.1.7]{SAG}).
The lemma below shows that, as expected, Goerss-Hopkins deformations are closed under limits along left exact functors.

\begin{lem}\label{prop:limits-of-GH}
    Let $J$ be an \category{} and $\calO$ an \operad{}.
    Suppose we have a $J$-indexed diagram of $\Fil$-linear $\calO$-monoidal \categories{} 
    \[\calE^{\ge 0}_{(-)} \colon J \too \Alg_\calO\big(\Mod_{\Fil}\big(\PrL\big)\big),\]
    such that 
    \begin{enumerate}
        \item 
        for each $\alpha \in J$ the \category{} $\calE^{\ge 0}_\alpha$ is Grothendieck prestable.
        \item
        for each $f \colon \alpha \to \beta \in J$ the functor $f_! \colon \calE^{\ge 0}_\alpha \to \calE^{\ge 0}_\beta$ is left exact. 
    \end{enumerate}
    Then the limit $\calE_{\infty} \coloneq \lim_{\alpha \in J} \calE_\alpha$ 
    is an $\calO$-monoidal Goerss-Hopkins deformation with $\calE_{\infty}^{\ge 0} \simeq \lim_{\alpha \in J} \calE^{\ge 0}_\alpha$.
    Furthermore, if $\calE^{\ge 0}_\alpha$ is separated (resp.~Postnikov-complete) for all $\alpha\in J$ then so is $\calE_\infty$.
\end{lem}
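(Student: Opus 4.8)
The plan is to reduce everything to the analogous statement for Grothendieck prestable categories and then transport the $t$-structure and module structure along the identification of Goerss--Hopkins deformations with Grothendieck prestable $\Fil$-modules established earlier in the excerpt.

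\textbf{Step 1: Compute the limit at the level of prestable categories.} First I would observe that the hypotheses are exactly those under which limits of Grothendieck prestable categories are well-behaved: all transition functors $f_!$ are left exact (hence the diagram $\calE^{\ge 0}_{(-)}$ lands in the non-full subcategory $\mathbf{Groth}^{\mathrm{lex}}_\infty \subseteq \mathbf{Pr}^{\mathrm L}$), so by \cite[Remark C.3.1.7]{SAG} (and the fact that $\mathbf{Groth}_\infty$ is closed under such limits) the limit $\calE^{\ge 0}_\infty \coloneq \lim_{\alpha \in J} \calE^{\ge 0}_\alpha$, computed in $\Cat_\infty$, is again Grothendieck prestable and the projections $\calE^{\ge 0}_\infty \to \calE^{\ge 0}_\alpha$ are left exact. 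Since the diagram was given in $\Alg_\calO(\Mod_{\Fil}(\PrL))$ and the forgetful functors $\Alg_\calO(\Mod_{\Fil}(\PrL)) \to \Mod_{\Fil}(\PrL) \to \PrL$ both preserve limits, the limit $\calE^{\ge 0}_\infty$ inherits a canonical $\calO$-monoidal $\Fil$-linear structure refining the underlying limit in $\PrL$.

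\textbf{Step 2: Produce the Goerss--Hopkins deformation.} Now I would invoke the equivalence, recorded just before \cref{defn:Goerss-Hopkins deformation}, between $\calO$-monoidal Goerss--Hopkins deformations and $\calO$-algebras in $\Mod_{\Fil}(\PrL)$ whose underlying category is Grothendieck prestable. Applying $\Sp(-)$ (stabilization) to $\calE^{\ge 0}_\infty$ yields $\calE_\infty$, an $\calO$-monoidal Goerss--Hopkins deformation with $\calE_\infty^{\ge 0} \simeq \calE^{\ge 0}_\infty \simeq \lim_\alpha \calE^{\ge 0}_\alpha$; equivalently, one checks directly that $\calE_\infty \coloneq \lim_\alpha \calE_\alpha$ (limit of the stabilized deformations in $\Alg_\calO(\Mod_{\Fil}(\PrLst))$) carries the $t$-structure whose connective part is $\lim_\alpha \calE^{\ge 0}_\alpha$ — the three axioms (right completeness, Grothendieck, diagonal) are each closed under limits along left-exact functors: right completeness and the Grothendieck condition because the projections are left exact and $t$-exact and jointly conservative, and the diagonal condition because $X \otimes (-)$ commutes with the limit projections for $X \in \Fil$.

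\textbf{Step 3: Separatedness and Postnikov-completeness.} For the last sentence I would use that both ``separated'' and ``Postnikov-complete'' are limit-closed properties of Grothendieck prestable categories under left-exact transition maps: separatedness says $\bigcap_n \calE^{\ge 0}[{\ge n}] = 0$, which passes to limits since the projections are $t$-exact and conservative; Postnikov-completeness, $\calE^{\ge 0} \xrightarrow{\sim} \lim_n \calE^{[0,n]}$, follows by interchanging the two limits $\lim_{\alpha} \lim_n$ and using that each $\calE^{\ge 0}_\alpha \xrightarrow{\sim} \lim_n \calE^{[0,n]}_\alpha$ together with left-exactness of $f_!$ guaranteeing the truncation functors are compatible with the diagram. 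The main obstacle, and the step deserving the most care, is Step 1 and the part of Step 2 verifying that the limit $t$-structure genuinely satisfies the \emph{diagonal} and \emph{Grothendieck} axioms rather than just existing: one must make sure the $\Fil$-module structure on the limit is the ``pointwise'' one so that $\otimes$ commutes with the projections, and that filtered colimits in the limit are computed pointwise — both of which hold precisely because the transition functors are left exact (so the limit is also a colimit-compatible limit in the relevant sense). Everything else is formal manipulation of the dictionary between deformations and prestable $\Fil$-modules.
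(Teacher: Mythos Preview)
Your proposal is correct and takes essentially the same approach as the paper: both reduce to the closure of Grothendieck prestable categories under limits along left-exact functors and then transport the result through the dictionary with Goerss--Hopkins deformations. The paper is simply more economical with citations, invoking \cite[Proposition C.3.2.4 and Corollary C.3.2.5]{SAG} for the prestable limit and the comparison $\lim_\alpha \Sp(\calE^{\ge 0}_\alpha) \simeq \Sp(\lim_\alpha \calE^{\ge 0}_\alpha)$, and \cite[Corollary C.3.6.2 and C.3.6.4]{SAG} for separatedness and Postnikov-completeness, rather than reverifying the axioms by hand as you do in Steps 2--3.
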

\begin{proof}
    By \cite[Proposition C.3.2.4]{SAG} the limit $\lim_{\alpha \in J} \calE_\alpha^{\ge 0}$ 
    is Grothendieck prestable.
    The natural $\Filnc$-linear, $\calO$-monoidal comparison map
    $\calE_\infty \simeq \lim_{\alpha \in J} \Sp(\calE_\alpha^{\ge 0}) \to \Sp(\lim_\alpha \calE_\alpha^{\ge 0})$ is an equivalence by \cite[Corollary C.3.2.5]{SAG}.
    It follows that $\calE_\infty$ is indeed an $\calO$-monoidal Goerss-Hopkins deformation with $\calE_{\infty}^{\ge 0} \simeq \lim_{\alpha \in J} \calE^{\ge 0}_\alpha$. 
    Separatedness and completeness are stable under limits by \cite[Corollary C.3.6.2 \& C.3.6.4]{SAG}.
\end{proof}

\begin{rem}
    In the context of \cref{prop:limits-of-GH} the diagonal $t$-structure on $\calE_\infty$ is equivalently characterized by declaring an object $\{X_\alpha\} \in \calE_\infty$ connective if and only if
    $X_\alpha \in \calE_\alpha$ is connective for every $\alpha \in J$.
\end{rem}
We now describe how the various operations on Goerss-Hopkins deformations behave with respect to limits. 

\begin{lem}\label{cor:generic-special-limits}
    There are canonical symmetric monoidal functors:
    \begin{align}
        \tauinvbra & \colon \Mod_{\Filnc}\big(\PrL\big) \to \PrLst & \calE & \longmapsto \calE \tauinvbra \\
        \taunull{} & \colon \Mod_{\Filnc}\big(\PrL\big) \to \PrLst & \calE &\longmapsto \calE \taunull{} \\
         \ModCtau{k}(-)&\colon \Mod_{\Fil}\big(\PrL\big) \to \Mod_{\ModCtau{k}\big(\Fil\big)}\big(\PrL\big) & \calE^{\ge 0} &\longmapsto \ModCtau{k}(\calE^{\ge 0}) \\
       (-)^\delta &\colon \Mod_{\Fil}\big(\PrL\big) \to \Mod_{\Fil}\big(\PrL\big) & \calE^{\ge 0} &\longmapsto \calE^\delta\\
        (-)^\heart & \colon \Mod_{\Fil}\big(\PrL\big) \to \Mod_{\Filnc^\heart}\big(\PrL\big) & \calE^{\ge 0} & \longmapsto \calE^\heart 
    \end{align}
    Functors $(1-4)$ preserve arbitrary limits.
    Functor $(5)$ preserves limits along left exact functors. 
\end{lem}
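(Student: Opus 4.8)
The plan is to realize each of the five functors as an extension‑of‑scalars (base change) functor and then analyze limit‑preservation case by case.

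First, observe that $(1)$--$(5)$ are extension of scalars along the structure maps $\Filnc \to \Filnc\tauinvbra$, $\Filnc \to \Filnc\taunull{}$, $\Fil \to \ModCtau{k}(\Fil)$, $\delta\colon \Fil \to \Filnc^\delta$, and $\pi_0^\heart\colon \Fil \to \Filnc^\heart$ respectively; for $(3)$ one first notes that $\ModCtau{k}(\Fil)$ is an $\bbE_\infty$‑algebra in $\Mod_\Fil(\PrL)$ because $\Ctau{k}\in\CAlg(\Fil)$, and for $(4)$ that $\delta$ is symmetric monoidal by construction. Since extension of scalars along a map of $\bbE_\infty$‑algebras in a presentably symmetric monoidal $\infty$‑category is itself symmetric monoidal, all five functors are symmetric monoidal as asserted; in particular each is a left adjoint, with right adjoint the corresponding restriction of scalars. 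The content of the lemma is therefore the statement about limits.

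For $(3)$: $\Ctau{k}$ is a two‑cell complex in $\Filnc$ — the cofiber of a map between invertible objects — hence a smooth and proper, in particular dualizable, $\Filnc$‑algebra, so $\ModCtau{k}(\Filnc)$ is a dualizable object of $\Mod_\Filnc(\PrL)$; tensoring with a dualizable object admits a right adjoint and so preserves all limits, and since the forgetful functor from modules over a fixed algebra creates limits, $(3)$ preserves limits (the same argument runs over $\Fil$, as $\Ctau{k}$ and its cells lie in $\Fil$). Functor $(2)$ reduces to $(3)$: via the identification $\calE\taunull{}\simeq\ModCtau{}(\calE)^{h\bbZ}\simeq\lim_{B\bbZ}\ModCtau{}(\calE)$ (with $\bbZ$ acting by the grading shift), $\taunull{}$ is a limit of the limit‑preserving functor $\ModCtau{}(-)$. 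Functor $(4)$ also reduces to $(3)$: unwinding the definition of $\delta$ gives a natural equivalence $\calE^\delta\simeq\iota_!\big(\ModCtau{}(\calE)\big)$, and the left Kan extension $\iota_!\colon\Gr\to\Fil$ — sending a graded object with pieces $Z_k$ to the filtered object $s\mapsto\bigoplus_{k\ge s}Z_k$ — preserves limits of Grothendieck prestable $\Fil$‑modules: diagonal connectivity makes $\bigoplus_{k\ge s}Z_k$ locally finite (only finitely many summands contribute in each homotopy degree), and locally finite direct sums commute with limits.

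The remaining case $(1)$ is the crux and is genuinely subtle: $\Filnc\tauinvbra$ is a filtered colimit of invertible $\Filnc$‑modules and is \emph{not} dualizable over $\Filnc$, so the argument above fails, and moreover $(-)\tauinvbra$ is a (finite, smashing) localization, and such localizations need not preserve limits — for instance $\bbZ\to\bbZ[1/p]$ does not preserve $\lim\big(\bbZ\xleftarrow{p}\bbZ\xleftarrow{p}\cdots\big)$. The plan here is a hands‑on comparison: using the description of $\calE\tauinvbra\subseteq\calE$ as the reflective full subcategory of $\tau$‑invertible objects (with reflection $X\mapsto X\tauinvbra$), show that for a limit diagram $\calE=\lim_j\calE_j$ in $\Mod_\Filnc(\PrL)$ the canonical functor $\calE\tauinvbra\to\lim_j\calE_j\tauinvbra$ is an equivalence. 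It should be essentially surjective because $\tau$‑invertibility of an object of $\lim_j\calE_j$ is detected by the jointly conservative projections to the $\calE_j$ and is preserved both by the $\Filnc$‑linear structure functors and by their right adjoints — the latter because $\tau$ acts invertibly on $\tau$‑local objects, so the relevant lax‑linearity maps are equivalences — and it should be fully faithful because mapping spaces on both sides are computed as the same limit of mapping spaces in the $\calE_j$. The main obstacle is making this last point rigorous: verifying that the reflective subcategory of $\tau$‑invertibles is cut out compatibly along the limit diagram, rather than appealing to a formal adjunction argument, which as noted is not available.

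Finally, $(5)$ follows by combining \cref{prop:limits-of-GH}, which along a left‑exact diagram identifies $\big(\lim_j\calE_j\big)^{\ge 0}$ with $\lim_j\calE_j^{\ge 0}$ (computed among Grothendieck prestable categories, with the objectwise t‑structure), with the observation that a left‑exact functor commutes with truncation and hence carries hearts to hearts; the diagram $j\mapsto\calE_j^\heart$ with the restricted transition functors is thus well‑defined, and the objectwise description of the heart of the limit yields $\big(\lim_j\calE_j\big)^\heart\simeq\lim_j\calE_j^\heart$. Left‑exactness is exactly what prevents the heart of the limit from acquiring extra objects.
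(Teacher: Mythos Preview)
Your argument has a genuine gap rooted in a single missing fact, and it shows up most visibly in case~(1). You correctly identify that tensoring with a dualizable object of $\Mod_{\calV}(\PrL)$ preserves limits, but you conflate dualizability of the \emph{algebra} $A \in \calV$ with dualizability of the \emph{module category} $\Mod_A(\calV) \in \Mod_{\calV}(\PrL)$. The key input, which the paper invokes via \cite[Remark~4.8.4.8]{HA}, is that $\Mod_A(\calV)$ is \emph{always} self-dual in $\Mod_{\calV}(\PrL)$ for any $A \in \CAlg(\calV)$, with no hypothesis on $A$ whatsoever. Once you have this, cases (1)--(4) all fall out uniformly: each functor is $\Mod_A(\calV) \otimes_{\calV}(-)$ for an appropriate commutative algebra $A$ (namely $\unit\tauinvbra$, $\delta^R(\unit)\tauinvbra$, $\Ctau{k}$, and $\delta^R(\unit)$ respectively, where the paper first records that $\delta^R$ is monadic so that $\Filnc^\delta \simeq \Mod_{\delta^R(\unit)}(\Filnc)$), and each such module category is dualizable. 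Your claim that $\Filnc\tauinvbra$ is ``not dualizable over $\Filnc$'' confuses the non-dualizability of $\unit\tauinvbra$ as an object of $\Filnc$ with the (automatic) dualizability of $\Sp \simeq \Mod_{\unit\tauinvbra}(\Filnc)$ as an object of $\Mod_{\Filnc}(\PrL)$; the analogy with $\bbZ \to \bbZ[1/p]$ is a red herring for the same reason. Consequently the hands-on argument you sketch for~(1), which you yourself flag as not rigorous, is simply unnecessary.

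The same confusion reappears in~(4): your ``local finiteness'' justification for why extension of scalars along $\iota_!$ preserves limits is not valid --- you are reasoning about the behaviour of $\iota_!$ on individual objects, not about the induced functor on module categories, and locally finite direct sums do not commute with arbitrary limits in any case. The correct reason is again that $\Filnc \simeq \Mod_{\bbS\taubra}(\Grnc)$ is a module category over $\Grnc$ and hence dualizable; equivalently, the paper bypasses the factorisation through $\Grnc$ entirely by writing $\Filnc^\delta \simeq \Mod_{\delta^R(\unit)}(\Filnc)$. Your treatments of~(2), (3), and~(5) are essentially fine (though in~(3) the two-cell / smooth-and-proper argument is more than is needed), but the uniform module-category-dualizability argument is both shorter and what makes~(1) and~(4) go through.
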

\begin{proof}
    We first collect a fact about the degenerate analog functor
    $\delta \colon \Filnc \to \Filnc$ 
    (which is easily deduced from \cite[Proposition 2.5 \& 2.12]{barthel2022chromatic}):
    the right adjoint $\delta^R$ is monadic and moreover gives rise to a ($t$-exact) symmetric monoidal equivalence $\Mod_{\delta^R(\unit)}(\Filnc) \simeq \Filnc^\delta$.
    We now proceed with the proof.    
    In cases $(1-2)$ the functor in question can be written as $\Mod_A(-) \colon  \Mod_{\Filnc}\big(\PrL\big)  \to \Mod_{\Mod_A(\Filnc)}(\PrL) \simeq \PrLst$
    where for $(1)$ we take $A= \unit \tauinvbra$ and for $(2)$ we take $A = \delta^R\big(\unit\big) \tauinvbra$ both of which are $\bbE_\infty$-algebras.
    Since
    $\Mod_A(-) \simeq \Mod_A(\Filnc) \otimes_{\Filnc} (-)$ and $\Mod_A(\Filnc)\in \CAlg(\Mod_{\Filnc}(\PrL))$ is dualizable (see \cite[Remark 4.8.4.8]{HA}) the claim holds.
    Cases $(3-4)$ follow from the same argument since there we can write the functor as
    $\Mod_A(-)\colon \Mod_{\Fil}\big(\PrL\big)\too \Mod_{\Mod_A(\Fil)}(\PrL)$ 
    where for $(3)$ we take $A = \Ctau{k}$ and for $(4)$ we take $A= \delta^R(\unit)$. 
    Finally, for $(5)$ we can write the functor as $(-)^\heart \simeq \Filnc^\heart \otimes_{\Fil}(-) \colon \Mod_{\Fil}(\PrL) \to \Mod_{\Filnc^\heart}(\PrL)$ which is manifestly symmetric monoidal.
    For the claim about limits note that since both $\Mod_{\calV}\big(\PrL\big)^\lex \to \Mod_{\calV}\big(\PrL\big)$ and $\Mod_{\calV}\big(\PrL\big) \to \PrL$ create and preserve limits for any $\calV \in \PrL$, it suffices to show that $(-)^{\rm disc} \colon \xPrL{,\lex}{} \xrightarrow{} \xPrL{,\lex}{\le 0}$ preserves limits which follows from \cite[Corollary 5.5.6.22.]{HTT}.
\end{proof}

\begin{obs}\label{obs:modtau-same}
    Note that $\delta(\unit) \otimes \Ctau{} \simeq \Ctau{} \in \CAlg(\Filnc)$, hence we have a canonical equivalence of symmetric monoidal functors:
    \[\ModCtau{}(-) \simeq \ModCtau{}((-)^\delta) \colon \Mod_{\Fil}(\PrL) \to     \Mod_{\ModCtau{}(\Fil)}(\PrL) \simeq \Mod_{\Gr}(\PrL)\]
\end{obs}

\subsubsection{Canonical deformations}

We now introduce the simplest examples of Goerss-Hopkins deformations.

\begin{defn}
    Let $R$ be an $\bbE_1$-ring spectrum. 
    We define the \hl{\textit{(left) canonical deformation}} of $R$ as:
    \[\hl{\calE_R} \coloneq \LMod_{R_{\ge \star}}(\Filnc) \in \Mod_\Filnc(\PrLst).\]
    This is a Goerss-Hopkins deformation with connective subcategory $\hl{\calE_R^{\ge 0}} \coloneq \LMod_{R_{\ge \star}}(\Fil)$.
\end{defn}

\begin{prop}\label{prop:filtered-deform-modules}
    The assignment $R \longmapsto \calE_R^{\ge 0}$
    defines a lax symmetric monoidal functor:
    \[\hl{\calE_{(-)}^{\ge 0}}: \Alg \coloneq \Alg(\Sp) \xrightarrow{(-)_{\ge \star}} \Alg(\Fil) \xrightarrow{\Mod_{(-)}(\Fil)}  \Mod_{\Fil}(\PrL).\]
    and there are equivalences of lax symmetric monoidal functors:
    \begin{align}
        \calE_{(-)}\tauinvbra \simeq \LMod_{(-)} &\colon \Alg \to \PrLst, && R \longmapsto \LMod_R, \\[0.6em]
        \calE_{(-)}\taunull{} \simeq \LMod_{H\left(\pi_\star (-)\right)} & \colon \Alg \to \PrLst, && R \longmapsto \LMod_{H\left(\pi_\star R\right)},\\[0.6em]
        \ModCtau{m+1}\big(\calE_{(-)}^{\ge 0}\big) \simeq \LMod_{(-)_{[\star,\star+m]}}\left(\Fil\right) & \colon \Alg \to \Mod_{\Mod_{\Ctau{m+1}}(\Fil)}\big(\PrL\big), && R \longmapsto \LMod_{R_{[\star,\star+m]}}\left(\Fil\right), \\[0.6em]
        \big(\calE_{(-)}^{\ge 0}\big)^\delta \simeq \calE^{\ge 0}_{H\left(\pi_\star (-)\right)} & \colon \Alg \to \Mod_{\Fil}(\PrL), && R \longmapsto \LMod_{H\left(\pi_\ast R\right)_{\ge\star}}\left(\Fil\right),\\[0.6em]
        \calE_{(-)}^\heartsuit \simeq \gr\LMod^\heart_{\pi_\star(-)} & \colon \Alg \to \Mod_{\Grnc^\heart}(\PrL), && R \longmapsto \gr\LMod^\heart_{\pi_\star R}.
    \end{align}
    We summarize this in a table:
    {\renewcommand{\arraystretch}{1.2} 
    \begin{table}[ht!]
    \begin{center}
        \label{tab:table1}
        \begin{tabular}{c|c|c|c|c|c|c}
           & $\tauinvbra$ & $\taunull{}$  & $\ModCtau{m+1}(-)$& $(-)^\delta$   &$(-)^\heart$   \\
          \hline
          $\calE_R$ & $\LMod_R$ & $\LMod_{H(\pi_\star R)}$ & $\LMod_{R_{[\star,\star+m]}}(\Filnc)$ &  $\LMod_{H\left(\pi_\ast R\right)_{\ge\star}}(\Filnc)$   &$\gr \LMod_{\pi_\star R}^\heartsuit$
        \end{tabular}
      \end{center}
    \end{table}}
\end{prop}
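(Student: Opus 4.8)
The plan is to realize $\calE_{(-)}^{\ge 0}$ as a composite of lax symmetric monoidal functors, and then to obtain each of the five displayed equivalences by recognizing the relevant operation as a \emph{base change} along a symmetric monoidal functor out of $\Filnc$ (resp.\ $\Fil$), as already recorded in the proof of \cref{cor:generic-special-limits}. For the construction, note that the constant-filtration functor $\xconst\colon\Sp\to\Filnc$ is lax symmetric monoidal, being right adjoint to the symmetric monoidal functor $(-)^{-\infty}=\colim_{\bbZ^\downarrow}$, and that the diagonal connective cover $(-)_{\ge0}\colon\Filnc\to\Fil$ is lax symmetric monoidal, being the right adjoint of the symmetric monoidal inclusion $\Fil\hookrightarrow\Filnc$ (the diagonal $t$-structure is monoidal). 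Their composite $(-)_{\ge\star}$ is therefore lax symmetric monoidal and lands in the diagonally connective objects (a Whitehead tower $Y_{\ge\star}$ has $Y_{\ge s}$ $s$-connective), hence induces a lax symmetric monoidal functor $\Alg(\Sp)\to\Alg(\Fil)$. Post-composing with the lax symmetric monoidal functor $A\longmapsto\LMod_A(\Fil)$ (see \cite[\S 4.8]{HA}) produces $\calE_{(-)}^{\ge0}$; this also gives $R\longmapsto\calE_R=\LMod_{R_{\ge\star}}(\Filnc)$ as a lax symmetric monoidal functor into $\Mod_{\Filnc}(\PrLst)$.

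The engine for the five identifications is the compatibility of the $\LMod$-construction with base change: for a map $g\colon\calV\to\calW$ in $\CAlg(\PrL)$ the functor $\calW\otimes_\calV(-)$ carries $\LMod_A(\calV)$ to $\LMod_{g(A)}(\calW)$, naturally and lax symmetric monoidally in $A\in\Alg(\calV)$ \cite[\S 4.8]{HA}. By (the proof of) \cref{cor:generic-special-limits}, each of $\tauinvbra$, $\taunull{}$, $\ModCtau{m+1}(-)$, $(-)^\delta$, $(-)^\heart$ is precisely such a base change — along $(-)^{-\infty}\colon\Filnc\to\Sp$, along $\gr_\oplus\colon\Filnc\to\Sp$, along $\Ctau{m+1}\otimes(-)\colon\Fil\to\ModCtau{m+1}(\Fil)$, along the degenerate analog $\delta$, and along $\pi_0^\heart\colon\Fil\to\Filnc^\heart$ respectively. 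Feeding $A=R_{\ge\star}$ into this reduces each row of the table to identifying the image of $R_{\ge\star}$ as a lax symmetric monoidal functor of $R$.

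Those images are then computed directly: $(R_{\ge\star})^{-\infty}\simeq R$, i.e.\ $(-)^{-\infty}\circ(-)_{\ge\star}\simeq\id$ (the colimit of a Whitehead tower is the spectrum); $\gr_\oplus R_{\ge\star}\simeq H(\pi_\star R)$ and $\delta(R_{\ge\star})\simeq H(\pi_\star R)_{\ge\star}$ (passing to the associated graded of a Whitehead tower kills all $k$-invariants, as in the warning following the definition of $\delta$); $\Ctau{m+1}\otimes R_{\ge\star}\simeq R_{[\star,\star+m]}$ (evaluated in filtration degree $s$, the cofiber of $\tau^{m+1}$ is $\cof(R_{\ge s+m+1}\to R_{\ge s})=R_{[s,s+m]}$); and $\pi_0^\heart(R_{\ge\star})\simeq\pi_\star R$ (the definition of graded homotopy groups, which identifies $\calE_R^\heart$ with $\LMod_{\pi_\star R}(\Grnc^\heart)=\gr\LMod^\heart_{\pi_\star R}$). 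The underlying, non-natural versions of all of these are exactly \cref{ex:filtered-algebra} and \cref{ex:filtered-modules}; reading them off through the base-change engine yields rows $(1)$--$(5)$, and the table merely records the outcome.

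The one genuinely technical point — the part I would expect to take the most care — is the coherence bookkeeping behind the phrases ``naturally and lax symmetric monoidally''. Concretely: (i) verifying that $\calW\otimes_\calV\LMod_{(-)}(\calV)\simeq\LMod_{g(-)}(\calW)$ holds as an equivalence of lax symmetric monoidal functors $\Alg(\calV)\to\Mod_\calW(\PrL)$, and not merely objectwise; and (ii) upgrading the objectwise identities $(R_{\ge\star})^{-\infty}\simeq R$, $\gr_\oplus R_{\ge\star}\simeq H(\pi_\star R)$, $\delta(R_{\ge\star})\simeq H(\pi_\star R)_{\ge\star}$, $\pi_0^\heart(R_{\ge\star})\simeq\pi_\star R$ to equivalences of lax symmetric monoidal functors $\Alg(\Sp)\to\Alg(\calV)$. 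Each objectwise statement is immediate, and the requisite coherence data can be assembled by hand from the universal properties — of connective covers, colimits, cofibers, and relative tensor products of module categories — that define the functors in play; but carrying this out cleanly, rather than pointwise, is where the work lies. Everything else is unwinding definitions.
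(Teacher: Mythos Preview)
Your proposal is correct and follows essentially the same approach as the paper: both realize each of the five operations as base change along a symmetric monoidal functor $\Phi$ out of $\Fil$ (or $\Filnc$), rewrite the composite as $\LMod_{(-)}(\calV)\circ\Phi\circ(-)_{\ge\star}$, and then identify $\Phi\circ(-)_{\ge\star}$ in each case. The one place the paper is sharper is your point (i): rather than leaving the coherence of $\calW\otimes_\calV\LMod_{(-)}(\calV)\simeq\LMod_{g(-)}(\calW)$ as ``bookkeeping to be assembled by hand'', the paper invokes \cite[Theorem 4.8.5.16]{HA}, which supplies the commuting square of symmetric monoidal functors directly.
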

\begin{proof}
    In the proof we use several times the following naturality property of  module categories which follows from \cite[Theorem 4.8.5.16]{HA}:
    for any morphism $\Phi \colon \calU \to \calV \in \CAlg(\PrL)$, there is a canonically commuting square of symmetric monoidal functors
    \[\begin{tikzcd}
	{\Alg(\calU)} && {\Alg(\calV)} \\
	{\Mod_\calU(\PrL)} && {\Mod_\calV(\PrL).}
	\arrow["{\LMod_{(-)}(\calU)}"', from=1-1, to=2-1]
	\arrow["{\LMod_{(-)}(\calV)}", from=1-3, to=2-3]
	\arrow["{\Phi}", from=1-1, to=1-3]
	\arrow["{(-)\otimes_\calU \calV}", from=2-1, to=2-3]
    \end{tikzcd}\]  
    We proceed with the proof. 
    The functors in $(1-5)$ are all of the form
    \[\Alg \xrightarrow{(-)_{\ge \star}}  \Alg(\Fil) \xrightarrow{\LMod_{(-)}(\Fil)} \Mod_{\Fil}\big(\PrL\big)\xrightarrow{\calV \otimes_{\Fil}(-)} \Mod_{\calV}\big(\PrL\big),\]
    for some morphism $\Phi \colon \Fil \to \calV \in \CAlg\big(\PrL\big)$.
    We may rewrite such a functor as
    \[\Alg \xrightarrow{(-)_{\ge \star}}  \Alg(\Fil) \xrightarrow{\Phi} \Alg(\calV) \xrightarrow{\LMod_{(-)}(\calV)} \Mod_{\calV}\big(\PrL\big).\]
    At this point, depending on $\Phi$, the claim follows by rewriting the composition
    $\Phi \circ (-)_{\ge \star} \colon \Sp \to \calV$ in terms of more familiar constructions.
    It remains to specify $\Phi$ and describe the composite $\Phi \circ (-)_{\ge \star}$, for each of the $5$ cases.
    \begin{align}
        \Phi &\colon \Fil \hookrightarrow \Filnc \xrightarrow{(-)^{-\infty}} \Sp, & \Phi \circ (-)_{\ge \star} \simeq \Id_\Sp &\colon \Sp \to \Sp, \\
        \Phi &\colon \Fil \hookrightarrow \Filnc \xrightarrow{\gr_\oplus} \Sp, & \Phi \circ (-)_{\ge \star} \simeq H(\pi_\star(-)) &\colon \Sp \to \Sp,\\
        \Phi = \Ctau{m+1} \otimes &\colon \Fil \to \ModCtau{m+1}\left(\Fil\right), & \Phi \circ (-)_{\ge \star} \simeq (-)_{[\star,\star+m]} &\colon \Sp \to  \ModCtau{m+1}\left(\Fil\right), \\
        \Phi = \delta|_{\Fil} & \colon \Fil \to \Fil, & \Phi \circ (-)_{\ge \star} \simeq (-)_{\ge \star} \circ H(\pi_\star (-)) &\colon \Sp \to \Fil, \\
        \Phi = \pi_0^\heartsuit &\colon \Fil \to \Filnc^\heart, & \Phi \circ (-)_{\ge \star} \simeq \pi_\star(-) &\colon \Sp \to \Filnc^\heart.\qedhere
    \end{align}
\end{proof}

The corollary below follows directly from \cref{prop:filtered-deform-modules}.

\begin{cor}\label{cor:fibers-of-module-deformation}
    For an $\bbE_\infty$-ring spectrum $R \in \CAlg(\Sp)$ we have canonical symmetric monoidal equivalences:
    \[\calE_R \tauinvbra \simeq \Mod_R, \quad \text{and} \quad \calE_R \taunull{} \simeq \Mod_{H (\pi_\star R)}.\]
\end{cor}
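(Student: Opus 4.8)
The plan is to invoke \cref{prop:filtered-deform-modules}, which packages all five associated-deformation constructions as lax symmetric monoidal functors of $R$. Since $R$ is an $\bbE_\infty$-ring, the usual trick applies: an $\bbE_1$-algebra in a symmetric monoidal $\infty$-category is the same as an object, and a lax symmetric monoidal functor sends $\CAlg$ to $\CAlg$, so evaluating the functors $(1)$ and $(2)$ of \cref{prop:filtered-deform-modules} at $R \in \CAlg(\Sp)$ yields the desired equivalences as equivalences in $\CAlg(\PrLst)$, i.e.~symmetric monoidal equivalences. Concretely, for the generic fiber I would read off from line $(1)$ of \cref{prop:filtered-deform-modules} the equivalence of lax symmetric monoidal functors $\calE_{(-)}\tauinvbra \simeq \LMod_{(-)}$, and evaluate at $R$; since $R$ is commutative, $\LMod_R \simeq \Mod_R$ symmetric monoidally. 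For the special fiber I would likewise read off line $(2)$, $\calE_{(-)}\taunull{} \simeq \LMod_{H(\pi_\star(-))}$, and evaluate at $R$, again using commutativity of $R$ — hence of $\pi_\star R$ and of $H(\pi_\star R)$ — to replace $\LMod$ by $\Mod$.

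The only genuine content beyond bookkeeping is that the functor $\calE_{(-)}^{\ge 0}$, and the four derived-fiber operations built from it, are lax symmetric monoidal \emph{as functors of the input ring}, so that they interact correctly with the commutative structure on $R$; but this is exactly the assertion of \cref{prop:filtered-deform-modules} (using the naturality square for $\LMod_{(-)}$ from \cite[Theorem 4.8.5.16]{HA} invoked in its proof), so nothing new is required. I would therefore phrase the proof as a one-line deduction: apply \cref{prop:filtered-deform-modules} with $R$ regarded as an $\bbE_\infty$-algebra, observe that a lax symmetric monoidal functor carries the terminal-operad algebra structure along, and conclude. The identification $\gr_\oplus R_{\ge\star} \simeq H(\pi_\star R)$ and $(R_{\ge\star})^{-\infty}\simeq R$, already recorded in \cref{ex:filtered-modules}, supplies the remaining translation between the abstract fiber notation and the concrete module categories.

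If one wanted to be fully self-contained without citing \cref{prop:filtered-deform-modules}, the alternative route is to use \cref{ex:filtered-algebra}: $R_{\ge\star} \in \CAlg(\Filnc)$ is a filtered $\bbE_\infty$-algebra, so $\calE_R = \Mod_{R_{\ge\star}}(\Filnc)$ is a symmetric monoidal Goerss-Hopkins deformation, and the equivalences $\Mod_A(\Filnc)\tauinvbra \simeq \Mod_{A^{-\infty}}$, $\Mod_A(\Filnc)\taunull{}\simeq \Mod_{\gr_\oplus A}$ of \cref{ex:filtered-algebra} are symmetric monoidal when $A$ is commutative, because base change along the symmetric monoidal functors $(-)^{-\infty}$ and $\gr_\oplus$ commutes with forming module categories of commutative algebras. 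Either way, I expect no real obstacle — the main (minor) point to get right is simply that all the structural functors in play are lax \emph{symmetric} monoidal, not merely monoidal, so that the symmetric monoidal structure on $\Mod_R$ and $\Mod_{H(\pi_\star R)}$ is the one produced by the construction rather than an ad hoc choice.
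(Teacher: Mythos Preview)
Your proposal is correct and takes essentially the same approach as the paper, which simply states that the corollary follows directly from \cref{prop:filtered-deform-modules}. You have spelled out in more detail the standard mechanism by which a lax symmetric monoidal functor evaluated at a commutative algebra yields symmetric monoidal structure on the output, but the core deduction is identical.
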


\subsubsection{Flatness}

To get left exact functors between canonical deformations we introduce an appropriate notion of flatness.

\begin{defn}
    Let $\calC^{\ge 0}$ be a symmetric monoidal Grothendieck prestable \category{} and let 
    $f \colon R \to S$ be a morphism of $\bbE_1$-algebras in $\calC =\Sp(\calC^{\ge 0})$.
    We say that $f$ is
    \begin{enumerate}
        \item 
        \hl{\textit{(right) flat}} it satisfies:
        \begin{enumerate}
            \item 
            The functor
            $\pi_0 S \otimes_{\pi_0 R} (-) \colon \LMod_{\pi_0 R}(\calC^\heart) \to \LMod_{\pi_0 S}(\calC^\heart)$ is exact.
            \item 
            The natural map $\pi_0 S \otimes_{\pi_0 S} \pi_n R \iso \pi_n S$ is an isomorphism for all $n$.
        \end{enumerate}
        \item
        \hl{\textit{(right) $\pi_\ast$-flat}} if the functor 
        $\pi_\ast S \otimes_{\pi_\ast R} (-) \colon \LMod_{\pi_\ast R}(\gr(\calC^\heartsuit)) \to \LMod_{\pi_\ast S}(\gr(\calC^\heartsuit))$
        is exact.
        
    \end{enumerate}
    We let $\hl{\Alg^\flat(\calC)}$ and
    $\hl{\Alg^\piflat(\calC)}$
    denote the wide subcategories of
    $\Alg(\calC)$ 
    spanned respectively by the flat, and $\pi_\ast$-flat morphisms.
    Note that every flat morphism is in particular $\pi_\ast$-flat.
\end{defn}
\begin{rem}
    Flatness in our sense agrees with \cite[Definition 7.2.2.10]{HA} whenever both notions are defined.
\end{rem}

\begin{lem}\label{cor:flat-is-exact}
    Let $\calC^{\ge 0}$ be a symmetric monoidal, separated Grothendieck prestable \category{} and let $f \colon R \to S \in \Alg^{\pi_\ast-\flat}(\calC^{\ge 0})$ be a $\pi_\ast$-flat morphism. 
    Then the functor $S \otimes_R (-) \colon \Mod_R(\calC^{\ge 0}) \to \Mod_S(\calC^{\ge 0})$ is left exact.
\end{lem}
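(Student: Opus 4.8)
\medskip

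The plan is to reduce left exactness of $S \otimes_R (-)$ on connective module categories to the vanishing of $\mathrm{Tor}$ on the heart, where it is precisely the hypothesis of $\pi_\ast$-flatness. Since $\calC^{\ge 0}$ is separated Grothendieck prestable, we first recall that a sequence $M' \to M \to M''$ of connective $R$-modules is a fiber sequence (equivalently, left exact in the prestable sense) if and only if it induces a long exact sequence on $\pi_\star$ that is in fact exact with $\pi_{-1}M' = 0$; more precisely, separatedness lets us test whether a map $M' \to \mathrm{fib}(M \to M'')$ is an equivalence on homotopy groups, using that $\calC^{\ge 0}$ is the connective part of a right-complete $t$-structure and a bounded-below connective object is $0$ iff all its homotopy groups vanish. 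So it suffices to control what $S \otimes_R (-)$ does on homotopy.

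\medskip

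First I would establish the $\pi_\star$-level computation: for a connective $R$-module $M$ there is a spectral sequence (the base-change/Tor spectral sequence, available here because $\calC^{\ge 0}$ is a Grothendieck prestable module over $\Fil$ and module categories over filtered/graded rings carry the usual Künneth machinery)
\[
E^2_{s,t} = \mathrm{Tor}^{\pi_\star R}_{s}\big(\pi_\star S, \pi_\star M\big)_t \;\Longrightarrow\; \pi_{s+t}\big(S \otimes_R M\big),
\]
converging because everything is connective and the $t$-structure is right complete. The $\pi_\ast$-flatness hypothesis says exactly that $\pi_\star S \otimes_{\pi_\star R} (-)$ is an exact functor of graded $\pi_\star R$-modules, so $\mathrm{Tor}^{\pi_\star R}_s(\pi_\star S, -) = 0$ for $s > 0$. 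Hence the spectral sequence collapses and gives a natural isomorphism $\pi_\star(S \otimes_R M) \cong \pi_\star S \otimes_{\pi_\star R} \pi_\star M$.

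\medskip

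Now I would finish as follows. Left exactness means: $S \otimes_R (-)$ preserves finite limits; since these categories are stable-like (prestable), it suffices that it preserves fibers of maps between connective modules, i.e.\ that for $f \colon M \to N$ the natural map $S \otimes_R \mathrm{fib}(f) \to \mathrm{fib}(S \otimes_R f)$ is an equivalence. By separatedness and right completeness it is enough to check this on $\pi_\star$. On the source, $\pi_\star$ of $S\otimes_R \mathrm{fib}(f)$ is computed by the collapsed spectral sequence, giving $\pi_\star S \otimes_{\pi_\star R} \pi_\star \mathrm{fib}(f)$. On the target, the long exact sequence for $\mathrm{fib}(f)$ together with exactness of $\pi_\star S \otimes_{\pi_\star R}(-)$ identifies $\pi_\star\,\mathrm{fib}(S\otimes_R f)$ with the same graded module, and one checks the comparison map realizes this identification. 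Therefore the comparison map is a $\pi_\star$-isomorphism between connective objects, hence an equivalence, and $S \otimes_R (-)$ is left exact.

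\medskip

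\emph{Main obstacle.} The delicate point is the construction and convergence of the Tor spectral sequence in the setting of a general separated Grothendieck prestable symmetric monoidal $\Fil$-linear category, rather than in $\Sp$ or $\Mod_k$: one must know that $\pi_\star$ on $\calC^{\ge 0}$ behaves like homotopy groups of a module over a graded ring (this is where the heart being $\gr\LMod^\heart_{\pi_\star R}$-flavored, via the diagonal $t$-structure, and the $\Fil$-linearity enter), and that separatedness is the exact hypothesis needed to conclude "$\pi_\star$-iso of connective objects $\Rightarrow$ equivalence." Once that homological-algebra-over-the-heart picture is in place, everything else is the standard argument that flatness $\Rightarrow$ left exactness of base change.
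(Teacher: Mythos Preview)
Your proposal is correct and follows essentially the same approach as the paper: use the K\"unneth/Tor spectral sequence, collapse it using $\pi_\ast$-flatness to get $\pi_\star(S\otimes_R M)\simeq \pi_\star S\otimes_{\pi_\star R}\pi_\star M$, then deduce left exactness by checking on homotopy groups via separatedness. The paper's proof is terser, simply writing ``Exactness follows by inspecting the homotopy groups'' where you spell out the fiber comparison; one small remark is that you invoke $\Fil$-linearity of $\calC^{\ge 0}$, which is not part of the hypotheses here---the spectral sequence comes from the bar filtration on $S\otimes_R M$ in any symmetric monoidal Grothendieck prestable setting.
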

\begin{proof}
    For any $M \in \Mod_R(\calC^{\ge 0})$, the Kunneth spectral sequence 
    $ \mathrm{Tor}_{\pi_\ast R}(\pi_\ast S,\pi_\ast M) \Rightarrow \pi_\ast (S \otimes_R M)$ collapses on the zero line to give 
    $ \pi_\ast (S \otimes_R M) \simeq \pi_\ast S \otimes_{\pi_\ast R} \pi_\ast M$.
    Exactness follows by inspecting the homotopy groups.
\end{proof}

\begin{example}\label{example:flat-from-piflat}
    A morphism $f \colon R \to S$ in $\Alg(\Sp)$ is $\pi_\ast$-flat if and only if 
    the induced map on Whitehead towers
    $f_{\ge \star} \colon R_{\ge \star} \to S_{\ge \star}$ in $\Alg(\Fil)$ is flat.
    We thus have a functor
    \[(-)_{\ge \star} \colon \Alg^\piflat(\Sp) \too \Alg^\flat(\Fil).\]
\end{example}

\begin{prop}\label{prop:limits-of-affine-deformations}
    Let $J$ be an \category{} and let $R^{(-)} \colon J \to \CAlg^{\pi_\ast-\flat}(\Sp)$ be a $J$-indexed diagram of $\bbE_\infty$-ring spectra and $\pi_\ast$-flat morphisms.
    Then $\calE_\infty \coloneq \lim_{\alpha \in J} \calE_{R^\alpha}$ is a complete, symmetric monoidal Goerss-Hopkins deformation with the following properties:
    \begin{enumerate}
        \item
        There is a canonical equivalence of graded, symmetric monoidal abelian categories:
        \[ \calE_\infty^\heart \simeq \lim_{\alpha \in J} \gr\Mod_{\pi_\star R^\alpha}^\heart.\]
        \item 
        There are canonical symmetric monoidal equivalences
        \[\calE_\infty\tauinvbra \simeq \lim_\alpha \Mod_{R^\alpha}, \qquad \calE_\infty\taunull{} \simeq \lim_\alpha \Mod_{H(\pi_\star R^\alpha)}.\]
        \item 
        There is a canonical $t$-exact, $\ModCtau{m+1}(\Filnc)$-linear, symmetric monoidal equivalence:
        \[ \ModCtau{m+1}(\calE_{\infty}) \simeq \lim_{\alpha \in J} \Mod_{R^\alpha_{[\star,\star+m]}}(\Filnc).\]
        \item 
        There is a canonical $t$-exact, $\Filnc$-linear, symmetric monoidal equivalence:
        \[ \calE^\delta_{\infty} \simeq \lim_{\alpha \in J} \calE_{H (\pi_\star R^\alpha)}.\]
    \end{enumerate}
\end{prop}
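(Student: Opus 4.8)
The plan is to exhibit $\calE_\infty$ as a limit, along \emph{left exact} functors, of the canonical deformations $\calE_{R^\alpha}$, to invoke \cref{prop:limits-of-GH} so as to know that this limit is again a complete symmetric monoidal Goerss-Hopkins deformation, and then to obtain $(1)$--$(4)$ by commuting the five invariants of \cref{cor:generic-special-limits} past the limit and reading off the vertexwise answer from \cref{prop:filtered-deform-modules,cor:fibers-of-module-deformation}.

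First I would assemble the diagram. Vertexwise application of \cref{example:flat-from-piflat} turns $R^{(-)}$ into a diagram of flat maps $R^{(-)}_{\ge\star}\colon J \to \CAlg^{\flat}(\Fil)$; postcomposing with the symmetric monoidal functor sending an algebra $A$ to $\LMod_A(\Fil)$ and a map of algebras to the associated base-change functor (the naturality of module categories recalled in the proof of \cref{prop:filtered-deform-modules}) produces a diagram $\calE^{\ge 0}_{R^{(-)}}\colon J \to \CAlg(\Mod_{\Fil}(\PrL))$. Each vertex $\calE^{\ge 0}_{R^\alpha}$ is Grothendieck prestable, being modules over an algebra in the Grothendieck prestable category $\Fil$; and each transition functor is base change along a flat map of algebras, hence left exact by \cref{cor:flat-is-exact} applied in the ambient category $\Fil$, which is separated because the diagonal $t$-structure is Postnikov-complete (its objects are level-wise bounded below). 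Then \cref{prop:limits-of-GH} with $\calO = \bbE_\infty$ gives that $\calE_\infty \simeq \lim_\alpha \calE_{R^\alpha}$ is a symmetric monoidal Goerss-Hopkins deformation with $\calE_\infty^{\ge 0}\simeq \lim_\alpha \calE^{\ge 0}_{R^\alpha}$, and its last clause gives completeness, since Postnikov-completeness of $\Fil$ passes along the conservative, limit-preserving forgetful functor to each $\LMod_{R^\alpha_{\ge\star}}(\Fil)$.

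For $(1)$--$(4)$ I would argue uniformly, feeding the limit presentations $\calE_\infty\simeq\lim_\alpha\calE_{R^\alpha}$ and $\calE_\infty^{\ge 0}\simeq\lim_\alpha\calE^{\ge 0}_{R^\alpha}$ into the symmetric monoidal functors of \cref{cor:generic-special-limits}. Since $\tauinvbra$, $\taunull{}$ and $\ModCtau{m+1}(-)$ preserve all limits, and $(-)^\delta$, $(-)^\heart$ preserve limits along left exact functors, each of the five invariants of $\calE_\infty$ is the limit over $\alpha$ of that invariant of $\calE_{R^\alpha}$. These are computed vertexwise by \cref{prop:filtered-deform-modules} (together with \cref{cor:fibers-of-module-deformation}): $\calE^\heart_{R^\alpha}\simeq\gr\Mod^\heart_{\pi_\star R^\alpha}$, $\calE_{R^\alpha}\tauinvbra\simeq\Mod_{R^\alpha}$, $\calE_{R^\alpha}\taunull{}\simeq\Mod_{H(\pi_\star R^\alpha)}$, $\ModCtau{m+1}(\calE_{R^\alpha})\simeq\Mod_{R^\alpha_{[\star,\star+m]}}(\Filnc)$ and $\calE^\delta_{R^\alpha}\simeq\calE_{H(\pi_\star R^\alpha)}$, where commutativity of $R^\alpha$ lets us drop the left/right distinction. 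Because every functor in \cref{cor:generic-special-limits} is symmetric monoidal and carries the relevant $t$-exactness and $\Filnc$- resp.\ $\ModCtau{m+1}(\Filnc)$-linearity, and the vertexwise identifications of \cref{prop:filtered-deform-modules} share these properties, so do the resulting equivalences of limits.

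The bookkeeping of monoidal, $t$-exact and linear structures is routine. The one essential input is the left-exactness of the transition functors, which is precisely where $\pi_\ast$-flatness of the morphisms $R^\alpha\to R^\beta$ enters, via \cref{example:flat-from-piflat} and \cref{cor:flat-is-exact}; without it the limit of the Grothendieck prestable categories $\calE^{\ge 0}_{R^\alpha}$ would be badly behaved and \cref{prop:limits-of-GH} would not apply. I expect this step, together with checking Postnikov-completeness of $\Fil$ (so that \cref{cor:flat-is-exact} applies and completeness transfers), to be the only points requiring genuine care.
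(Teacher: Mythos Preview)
Your proposal is correct and follows essentially the same route as the paper: invoke \cref{example:flat-from-piflat} and \cref{cor:flat-is-exact} to obtain left exactness of the transition functors, apply \cref{prop:limits-of-GH} to get the complete symmetric monoidal Goerss--Hopkins structure on the limit, and then combine \cref{cor:generic-special-limits} with \cref{prop:filtered-deform-modules} to identify the five invariants termwise. One small inaccuracy: in \cref{cor:generic-special-limits} the functor $(-)^\delta$ is among those preserving \emph{arbitrary} limits (case (4)), not merely limits along left exact functors; this does not affect your argument.
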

\begin{proof}
    The combination of \cref{example:flat-from-piflat} and \cref{cor:flat-is-exact} shows the functor $f_! \colon \calE_{R^\alpha} \to \calE_{R^\beta}$ is left exact for every $f \colon \alpha \to \beta \in J$.  
    \cref{prop:limits-of-GH} then shows that $\calE_\infty$ is a symmetric monoidal, Postnikov-complete, Grothendieck prestable \category{}.
    Claims $(1-4)$ follow from the combination of \cref{cor:generic-special-limits} and \cref{prop:filtered-deform-modules}.
\end{proof}

\subsubsection{Degeneracy structures}

We now study degeneracy structures on canonical deformations and limits thereof.
\begin{defn}
    Let $\calO$ be an \operad{} and let $\calE$ be an $\calO$-monoidal Goerss-Hopkins deformation.
    An \hl{\textit{$(\calO,m)$-degeneracy structure}} on $\calE$ is a $t$-exact, $\ModCtau{m+1}(\calE)$-linear, $\calO$-monoidal equivalence:
    \[\hl{\gamma} \colon \ModCtau{m+1}(\calE^\delta) \simeq \ModCtau{m+1}(\calE).\]
\end{defn}

\begin{lem}\label{lem:deg-structure-sections}
    Let $J$ be an \category{}, $\calO$ an \operad{} and $R^{(-)} \colon J \to \Alg_\calO(\Sp)$ a $J$-indexed diagram of $\calO$-algebras.
    There is a canonical equivalence:
    \[\mrm{Equiv}_{\Alg_\calO(\ModCtau{m+1}(\Filnc))^J}\left(H\big(\pi_\ast R^{(-)}\big)_{[\star,\star+m]},R^{(-)}_{[\star,\star+m]}\right) \simeq \Sect_{\Alg_\calO(\Grnc)^J}\left(\iota^\ast R^{(-)}_{[\star,\star+m]}\to \pi_\star(R)^{(-)}\right). \]
\end{lem}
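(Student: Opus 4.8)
The plan is to deduce the equivalence from a symmetric monoidal adjunction between graded and $\Ctau{m+1}$-linear filtered spectra, combined with the fact that a map of $\Ctau{m+1}$-modules is an equivalence precisely when its reduction modulo $\tau$ is. First I would introduce $F \coloneq \big(\Ctau{m+1}\otimes(-)\big)\circ\iota_! \colon \Grnc \to \ModCtau{m+1}(\Filnc)$: both $\iota_!$ (as in the definition of $\delta$) and $\Ctau{m+1}\otimes(-)$ (base change along the $\bbE_\infty$-map $\unit\to\Ctau{m+1}$) are symmetric monoidal left adjoints, so $F$ is one, with lax symmetric monoidal right adjoint $G \coloneq \iota^\ast\circ(\mathrm{forget})$. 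Passing to $\calO$-algebras and then to $J$-indexed diagrams, $F\dashv G$ induces an adjunction $\Alg_\calO(\Grnc)^J \rightleftarrows \Alg_\calO(\ModCtau{m+1}(\Filnc))^J$.

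Next I would identify the two objects through this adjunction. From $\delta(Y_{\ge\star})\simeq H(\pi_\ast Y)_{\ge\star}$ together with $\delta = \iota_!\circ(\Ctau{}\otimes(-))$ and $\Ctau{}\otimes Y_{\ge\star}\simeq\pi_\star Y$ one obtains a natural equivalence $\iota_!(\pi_\star Y)\simeq H(\pi_\ast Y)_{\ge\star}$; applying $\Ctau{m+1}\otimes(-)$ then gives, naturally in $J$ and compatibly with $\calO$-algebra structures (cf.\ \cref{prop:filtered-deform-modules}), an equivalence $F(\pi_\star R^{(-)})\simeq H(\pi_\ast R^{(-)})_{[\star,\star+m]}$. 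Since moreover $G(R^{(-)}_{[\star,\star+m]}) = \iota^\ast R^{(-)}_{[\star,\star+m]}$, the adjunction will produce a natural equivalence of mapping spaces
\[
\Map_{\Alg_\calO(\ModCtau{m+1}(\Filnc))^J}\!\big(H(\pi_\ast R^{(-)})_{[\star,\star+m]},\,R^{(-)}_{[\star,\star+m]}\big)\;\simeq\;\Map_{\Alg_\calO(\Grnc)^J}\!\big(\pi_\star R^{(-)},\,\iota^\ast R^{(-)}_{[\star,\star+m]}\big),
\]
and the remaining task is to match equivalences on the left with sections of $\iota^\ast R^{(-)}_{[\star,\star+m]}\to\pi_\star R^{(-)}$ on the right.

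For this I would first recognize the map $\iota^\ast R^{(-)}_{[\star,\star+m]}\to\pi_\star R^{(-)}$ as $G$ applied to the $\Ctau{m+1}$-linear reduction $R^{(-)}_{[\star,\star+m]}\to\Ctau{}\otimes_{\Ctau{m+1}}R^{(-)}_{[\star,\star+m]}\simeq\pi_\star R^{(-)}$ (base change along $\Ctau{m+1}\to\Ctau{}$, using $\Ctau{}\otimes_{\Ctau{m+1}}(\Ctau{m+1}\otimes R_{\ge\star})\simeq\pi_\star R$). A diagram chase through the adjunction then identifies postcomposition with this map, on the left, with the reduction functor $\phi\mapsto\Ctau{}\otimes_{\Ctau{m+1}}\phi$, read through the canonical identifications of $\Ctau{}\otimes_{\Ctau{m+1}}H(\pi_\ast R^{(-)})_{[\star,\star+m]}$ and $\Ctau{}\otimes_{\Ctau{m+1}}R^{(-)}_{[\star,\star+m]}$ with $\pi_\star R^{(-)}$ — the input being that the unit of $F\dashv G$ becomes the identity of $\pi_\star R^{(-)}$ after reduction mod $\tau$, since $H(\pi_\ast R^{(-)})_{[\star,\star+m]}\simeq F(\pi_\star R^{(-)})$ is split over its reduction. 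Hence a point of the right-hand space lies in $\Sect(\iota^\ast R^{(-)}_{[\star,\star+m]}\to\pi_\star R^{(-)})$ exactly when the corresponding $\phi$ reduces mod $\tau$ to the identity. The last ingredient is that $\Ctau{}\otimes_{\Ctau{m+1}}(-)$ is conservative on $\ModCtau{m+1}(\Filnc)$: a module annihilated by it has vanishing cofiber of $\tau$, so $\tau$ acts invertibly, while $\tau^{m+1}=0$ forces the module to be zero. Thus $\phi$ is an equivalence iff $\Ctau{}\otimes_{\Ctau{m+1}}\phi$ is, so the $\phi$ that reduce to the identity are precisely the equivalences compatible with the canonical identifications of reductions, and the displayed equivalence of mapping spaces restricts to the asserted equivalence between $\mrm{Equiv}$ and $\Sect$.

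The main obstacle is the last step: carefully tracking how the adjunction unit interacts with the mod-$\tau$ reduction functor (so that the section condition is recognized as the correct condition), and then invoking conservativity of $\Ctau{}\otimes_{\Ctau{m+1}}(-)$ to promote the adjoint of a section to an honest equivalence. Everything preceding it is a formal consequence of symmetric monoidal adjunction calculus and the identifications already collected in \cref{prop:filtered-deform-modules}.
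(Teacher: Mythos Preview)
Your approach is essentially the same as the paper's: both use the symmetric monoidal adjunction between $\Grnc$ and $\ModCtau{m+1}(\Filnc)$ (left adjoint $\Ctau{m+1}\otimes\iota_!(-)$) to identify the two mapping spaces, and then match the equivalence condition on the left with the section condition on the right by reducing modulo $\tau$---the paper phrases this last step via \cref{obs:homotopy-group-periodic}, you via conservativity of $\Ctau{}\otimes_{\Ctau{m+1}}(-)$, which amounts to the same thing. The only cosmetic difference is that the paper first reduces to $J=\pt$ by replacing $\calO$ with the Boardman--Vogt tensor product $J\otimes_{\BV}\calO$, whereas you carry $J$ along functorially throughout.
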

\begin{proof}
    Upto replacing $\calO$ with the Boardman-Vogt tensor product 
    $J \otimes_{\BV} \calO$ \cite[\S 2.2.5]{HA}
    we may assume that $J = \pt$.
    Playing with the symmetric monoidal adjunction $\iota_! \colon \Grnc \adj \Filnc \colon \iota^\ast$ gives
    \begin{align*}
        \Map_{\Alg_\calO(\ModCtau{m+1}(\Filnc))}\left(H\big(\pi_\ast R^{(-)}\big)_{[\star,\star+m]},R^{(-)}_{[\star,\star+m]}\right) 
        & \simeq \Map_{\Alg_\calO(\ModCtau{m+1}(\Filnc))}\left(\Ctau{m+1}\otimes \iota_! \pi_\star R, R_{[\star,\star+m]}\right) \\
        & \simeq \Map_{\Alg_\calO(\Filnc)}\left( \iota_! \pi_\star R, R_{[\star,\star+m]}\right) \\
        & \simeq \Map_{\Alg_\calO(\Grnc)}\left(\pi_\star R, \iota^\ast R_{[\star,\star+m]}\right).
    \end{align*}
    It remains to note that a map $\pi_\star R \to \iota^\ast R_{[\star,\star+m]}$ induces an equivalence of $\bbS\taubra/\tau^{m+1}$-modules 
    $\pi_\star R \taubra/\tau^{m+1} \simeq R_{[\star,\star+m]}$ 
    if and only if it defines a section of
    $\iota^\ast R_{[\star,\star+m]} \to \pi_\star R$ at the level of homotopy groups (see \cref{obs:homotopy-group-periodic}). 
    (Note that $\pi_\star R$ is discrete, hence an $\calO$-algebra section of $\iota^\ast R_{[\star,\star+m]} \to \pi_\star R$ is the same thing as an $\calO$-algebra map $\pi_\star R \to \iota^\ast R_{[\star,\star+m]}$ which happens to be a section at the level of homotopy groups.)
\end{proof}

\begin{cor}\label{lem:degeneracy-structures}
    Let $J$ be an \category{}, $\calO$ an \operad{} and $R^{(-)} \colon J \to \Alg_{\calO \otimes_{\BV}\bbE_1}(\Sp)$ a $J$-indexed diagram of $\calO \otimes_\BV \bbE_1$-algebras.
    Then there exists a canonical map:
     \[\Sect_{\Alg_{\calO\otimes_\BV \bbE_1}(\Grnc)^J}\left(\iota^\ast R_{[\star,\star+m]} \to \pi_\star R\right) \too \big\{ (\calO,m)\text{-degeneracy structures\footnotemark on } \lim_{\alpha \in J}\calE^{\ge 0}_{R^\alpha}\big\}.
    \]\footnotetext{Here we are slightly abusing terminology as $\lim_{\alpha \in J}\calE^{\ge 0}_{R^\alpha} \in \Alg_\calO(\Mod_{\Fil}(\PrL))$ might not be a Goerss-Hopkins deformation. Nevertheless, degeneracy structures make sense for any $\calO$-algebra in $\Mod_{\Fil}(\PrL)$.}
    Moreover, this map is an equivalence when $\calO=\bbE_\infty$ and $J=\pt$.
\end{cor}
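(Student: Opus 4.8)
The plan is to construct the map in full generality by transporting the equivalence of $\calO\otimes_\BV\bbE_1$-algebras produced by \cref{lem:deg-structure-sections} through module-category functors, and then, specializing to $\calO=\bbE_\infty$ and $J=\pt$, to recognize it as the restriction to equivalences of the fully faithful embedding $\Mod_{(-)}$.

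\emph{Constructing the map.} First I would use the defining property of the Boardman--Vogt tensor product \cite[\S2.2.5]{HA} to rewrite $\Alg_{\calO\otimes_\BV\bbE_1}(-)\simeq\Alg_\calO(\Alg(-))$, so that $R^{(-)}$ is a $J$-diagram of $\calO$-algebras in $\bbE_1$-algebras. Applying \cref{lem:deg-structure-sections} with the operad $\calO\otimes_\BV\bbE_1$, a section of $\iota^\ast R_{[\star,\star+m]}\to\pi_\star R$ as in the source corresponds to a natural equivalence of $J$-diagrams of $\calO$-algebras in $\Alg(\ModCtau{m+1}(\Filnc))$,
\[\gamma^{(-)}\colon H\big(\pi_\ast R^{(-)}\big)_{[\star,\star+m]}\iso R^{(-)}_{[\star,\star+m]}.\]
Next I would post-compose both $J$-diagrams with the (lax) symmetric monoidal functor $\LMod_{(-)}\big(\ModCtau{m+1}(\Filnc)\big)\colon\Alg(\ModCtau{m+1}(\Filnc))\to\Mod_{\ModCtau{m+1}(\Filnc)}(\PrL)$ of \cite[Theorem 4.8.5.16]{HA} (as used in the proof of \cref{prop:filtered-deform-modules}); it preserves $\calO$-algebras and equivalences, so $\gamma^{(-)}$ yields a natural equivalence of $J$-diagrams of $\calO$-algebras $\LMod_{H(\pi_\ast R^{(-)})_{[\star,\star+m]}}\iso\LMod_{R^{(-)}_{[\star,\star+m]}}$. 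Taking the limit over $J$, using that $\ModCtau{m+1}(-)$ and $(-)^\delta$ preserve arbitrary limits (\cref{cor:generic-special-limits}, functors $(3)$--$(4)$) and identifying the levelwise terms via \cref{prop:filtered-deform-modules}(3)--(4) --- namely $\ModCtau{m+1}(\calE^{\ge 0}_S)\simeq\LMod_{S_{[\star,\star+m]}}$ and $(\calE^{\ge 0}_S)^\delta\simeq\calE^{\ge 0}_{H(\pi_\ast S)}$ --- the source becomes $\ModCtau{m+1}\big((\lim_\alpha\calE^{\ge 0}_{R^\alpha})^\delta\big)$ and the target $\ModCtau{m+1}\big(\lim_\alpha\calE^{\ge 0}_{R^\alpha}\big)$. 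The resulting equivalence is $\ModCtau{m+1}(\Filnc)$-linear and $\calO$-monoidal by construction; for $t$-exactness I would argue that the diagonal $t$-structure on a limit of deformations is detected levelwise (as noted after \cref{prop:limits-of-GH}) while, levelwise, base change along the equivalence $\gamma^\alpha$ of connective algebras is the identity on underlying objects and hence $t$-exact. This upgrades the equivalence to an $(\calO,m)$-degeneracy structure and defines the canonical map. (No flatness assumption is needed, since \cref{prop:filtered-deform-modules} is levelwise and functors $(3)$--$(4)$ of \cref{cor:generic-special-limits} preserve all limits.)

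\emph{The equivalence for $\calO=\bbE_\infty$, $J=\pt$.} Here $\calO\otimes_\BV\bbE_1\simeq\bbE_\infty$, so $R\in\CAlg(\Sp)$; write $\calV\coloneq\ModCtau{m+1}(\Filnc)$, $A\coloneq H(\pi_\ast R)_{[\star,\star+m]}$ and $B\coloneq R_{[\star,\star+m]}$, both connective objects of $\CAlg(\calV)$. By \cref{lem:deg-structure-sections} the source is $\mrm{Equiv}_{\CAlg(\calV)}(A,B)$, and by the construction above the map is the effect of $\Mod_{(-)}(\calV)$ on equivalence spaces. It then remains to establish two points: (i) $\Mod_{(-)}(\calV)\colon\CAlg(\calV)\to\CAlg(\Mod_\calV(\PrL))$ is fully faithful --- a standard fact, see \cite[Theorem 4.8.5.11]{HA} --- which yields $\mrm{Equiv}_{\CAlg(\calV)}(A,B)\iso\mrm{Equiv}_{\CAlg(\Mod_\calV(\PrL))}(\Mod_A(\calV),\Mod_B(\calV))$, i.e.\ the space of all $\calV$-linear symmetric monoidal equivalences $\ModCtau{m+1}(\calE_R^\delta)\simeq\ModCtau{m+1}(\calE_R)$; and (ii) every such equivalence is automatically $t$-exact, so that this space coincides with the space of $(\bbE_\infty,m)$-degeneracy structures on $\calE_R$. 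Point (ii) follows from (i): any equivalence $\Mod_A(\calV)\simeq\Mod_B(\calV)$ is $\Mod_{(-)}(\calV)$ applied to an equivalence $A\simeq B$ in $\CAlg(\calV)$, hence is extension of scalars along an algebra equivalence and is the identity on underlying objects of $\calV$; since $A$ and $B$ are connective the relevant $t$-structures have connective part detected by the forgetful functor to $\calV$, so such a functor is $t$-exact. Together (i) and (ii) show the map is an equivalence.

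\emph{Main obstacle.} Constructing the map is mostly bookkeeping with monoidal, linear, and $t$-structure data; the crux is the last step, namely identifying the a priori smaller space of $(\bbE_\infty,m)$-degeneracy structures --- carved out by a $t$-exactness constraint --- with the full space of $\calV$-linear symmetric monoidal equivalences $\ModCtau{m+1}(\calE_R^\delta)\simeq\ModCtau{m+1}(\calE_R)$. This hinges on fully faithfulness of $\Mod_{(-)}$ together with the automatic $t$-exactness of base change along an equivalence of connective algebras.
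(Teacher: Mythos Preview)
Your argument is essentially the same as the paper's: both invoke \cref{lem:deg-structure-sections} to pass from sections to equivalences of (diagrams of) $\calO\otimes_\BV\bbE_1$-algebras, push these through the module-category functor $\LMod_{(-)}$ (the paper cites \cite[4.8.5.20]{HA}, you cite \cite[4.8.5.16]{HA}), take limits over $J$, and then for $\calO=\bbE_\infty$, $J=\pt$ appeal to full faithfulness of $\Mod_{(-)}$ on $\CAlg$ (the paper cites \cite[4.8.5.21]{HA}, you cite \cite[4.8.5.11]{HA}). The one substantive addition in your write-up is the explicit verification of $t$-exactness---both in the general construction and in showing that every $\calV$-linear symmetric monoidal equivalence is automatically $t$-exact---which the paper's proof elides entirely (it simply asserts the target ``is precisely the space of $(\calO,m)$-degeneracy structures'' without mentioning $t$-exactness); your treatment here is a genuine improvement in rigor.
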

\begin{proof}
    We first treat the case $J=\pt$.
    \cref{lem:deg-structure-sections} then provides a map (see \cite[Corollary 4.8.5.20]{HA})
    \[ \Sect_{\Alg_{\calO\otimes_\BV \bbE_1}(\Grnc)}\left(\iota^\ast R_{[\star,\star+m]}\! \to\! \pi_\star R\right) \!\to\! \Equiv_{\Alg_\calO\big(\Mod_{\ModCtau{m+1}(\Fil)}(\PrL)\big)}\left(\Mod_{H(\pi_\ast R)_{[\star,\star+m]}}(\Fil),\Mod_{ R_{[\star,\star+m]}}(\Fil)\right). \]
    The latter is precisely the space of $(\calO,m)$-degeneracy structures by \cref{prop:filtered-deform-modules}.
    For $\calO=\bbE_\infty$ this map is an isomorphism
    \cite[Corollary 4.8.5.21]{HA}.
    To prove the general case we replace $\calO$ with $J \otimes_\BV \calO$ (and $J$ with $\pt$). 
    The argument above then gives a map whose comain is of the desired form but whose codomain is of the following form:
    \[\Equiv_{\Fun\big(J,\Alg_\calO\big(\Mod_{\ModCtau{m+1}(\Fil)}(\PrL)\big)\big)}\left(\Mod_{H(\pi_\ast R^{(-)})_{[\star,\star+m]}}(\Fil),\Mod_{ R^{(-)}_{[\star,\star+m]}}(\Fil)\right).\]
    Passing to the limit over $J$ defines a map to the space of degeneracy structures.
\end{proof}

\begin{prop}\label{lem:section-space-contractibility}
    Let
    $R^{(-)} \colon J \to \Alg_{\bbE_n}(\Sp)$ 
    be a $J$-indexed diagram of $\bbE_n$-ring spectra such that
    \begin{enumerate}
        \item 
            the homotopy groups 
        $\pi_\ast R^\alpha$ are concentrated in degrees divisible by $m+1$ for every $\alpha \in J$, and
        \item
        the map $R^\alpha \to R^\beta$ is $\pi_\ast$-flat for every $\alpha \to \beta \in J$.
    \end{enumerate} 
    Then $\lim_\alpha \calE_{R^\alpha}$ is an $\bbE_n$-monoidal Goerss-Hopkins deformation equipped with an $(\bbE_n,m)$-degeneracy structure. 
\end{prop}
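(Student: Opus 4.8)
The statement splits into two independent parts. Identifying $\calE_\infty \coloneq \lim_{\alpha\in J}\calE_{R^\alpha}$ as an $\bbE_n$-monoidal Goerss-Hopkins deformation uses only hypothesis $(2)$: together with \cref{example:flat-from-piflat} it makes each transition map of Whitehead towers $R^\alpha_{\ge\star}\to R^\beta_{\ge\star}$ flat, so by \cref{cor:flat-is-exact} each base-change functor $f_!\colon\calE_{R^\alpha}\to\calE_{R^\beta}$ is left exact; since each $\calE^{\ge0}_{R^\alpha}=\LMod_{R^\alpha_{\ge\star}}(\Fil)$ is Grothendieck prestable and Postnikov-complete, \cref{prop:limits-of-GH} (which is stated for an arbitrary operad) then exhibits $\calE_\infty$ as an $\bbE_n$-monoidal, Postnikov-complete Goerss-Hopkins deformation with $\calE_\infty^{\ge0}\simeq\lim_\alpha\calE^{\ge0}_{R^\alpha}$. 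This is just the proof of \cref{prop:limits-of-affine-deformations} rerun with $\bbE_n$ in place of $\bbE_\infty$.

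For the degeneracy structure --- where hypothesis $(1)$ is used --- \cref{lem:degeneracy-structures} reduces us to producing a point of the section space $\Sect_{\Alg_{\bbE_n}(\Grnc)^J}\!\big(\iota^\ast R^{(-)}_{[\star,\star+m]}\to\pi_\star R^{(-)}\big)$, and I would in fact show this space is contractible. The map here is the truncation $X\to\pi_0^\heart X$ for the diagonal $t$-structure, where $X\coloneq\iota^\ast R^{(-)}_{[\star,\star+m]}$ has grading-$s$ component the truncation of $R^{(-)}$ carrying $\pi_tR^{(-)}$ in degrees $s\le t\le s+m$ and $0$ elsewhere; hence $X\in\Grnc^{[0,m]}$, $\pi_0^\heart X\simeq\pi_\star R^{(-)}$, and $(\pi_k^\heart X)_s\cong\pi_{s+k}R^{(-)}$ for $0\le k\le m$. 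Because $\pi_\ast R^\alpha$ lies in degrees divisible by $m+1$, the object $\pi_k^\heart X$ is then supported in gradings $s\equiv-k\pmod{m+1}$. The plan is to exploit the $\bbZ/(m+1)$-grading on the symmetric monoidal abelian category $\Grnc^\heart$ given by the residue of the grading modulo $m+1$ (a decomposition compatible with $\otimes$, with $\Ext$, with passage to derived categories, and with $J$-diagrams): the algebra $\pi_0^\heart X=\pi_\star R^{(-)}$ lies entirely in the $0$-component, so its $\bbE_n$-cotangent complex $L_{\pi_\star R^{(-)}}$ does too, whereas the module $\pi_k^\heart X$ lies in the $(-k)$-component for $1\le k\le m$. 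Consequently the obstruction and indeterminacy groups $\Ext^\ast_{\pi_\star R^{(-)}}\!\big(L_{\pi_\star R^{(-)}},\pi_k^\heart X[\ast]\big)$ all vanish for $1\le k\le m$, being morphisms between modules supported in distinct factors of a product category; climbing the length-$m$ Postnikov tower $X=X^{\le m}\to\cdots\to X^{\le0}=\pi_\star R^{(-)}$, whose stages are square-zero extensions of $\bbE_n$-algebras (cf.~\cite{square-zero}), then shows the section exists and is unique. The $J$-indexing is handled exactly as in \cref{lem:deg-structure-sections} and \cref{lem:degeneracy-structures}, by replacing $\bbE_n$ with $J\otimes_\BV\bbE_n$ and reducing to $J=\pt$, which leaves the grading argument untouched.

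The step I expect to need the most care is this last one: setting up the obstruction theory for $\bbE_n$-algebra sections of the Postnikov tower of $X$ in the graded setting, and verifying that the $\bbE_n$-cotangent complex of $\pi_\star R^{(-)}$ genuinely stays in the $0$-component, so that the grading-modulo-$(m+1)$ decomposition really does force every obstruction to vanish. Everything else is an assembly of results already in hand.
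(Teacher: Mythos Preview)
Your proposal is correct and follows essentially the same approach as the paper: both reduce via \cref{lem:degeneracy-structures} to contractibility of the section space, then climb the tower $\iota^\ast R^{(-)}_{[\star,\star+j]}$ (which you correctly identify with the Postnikov tower of $X$ in the diagonal $t$-structure) using square-zero obstruction theory, with obstructions in $\Map_{\Mod^{\bbE_n}_{\pi_\star R}}\!\big(\mbf{L}^{\bbE_n}_{\pi_\star R},\Sigma^{j+2}\pi_{\star+j+1}R\big)$ vanishing for degree reasons. Your $\bbZ/(m+1)$-grading decomposition is exactly the paper's ``for degree reasons,'' made explicit; the paper invokes \cref{obs:sqz-tau-adic-w/-C(tau)} for the square-zero structure rather than the general Postnikov-tower statement you cite, but since the towers coincide this is immaterial.
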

\begin{proof}
    The limit is a Goerss-Hopkins deformation by \cref{prop:limits-of-affine-deformations}, hence by \cref{lem:degeneracy-structures} it suffices to show contractibility of $\Sect_{\Alg_{\bbE_n}(\Grnc)^J}\left(\iota^\ast R^{(-)}_{[\star,\star+m]} \to \pi_\star R^{(-)}\right)$
    which we prove by induction on $0 \le j \le m$.
    The case $j=0$ is clear.
    For the induction step, note that by \cref{obs:sqz-tau-adic-w/-C(tau)}
    the map 
    $\iota^\ast R_{[\star,\star+j+1]}^{(-)} \to \iota^\ast R_{[\star,\star+j]}^{(-)}$ 
    is a square zero extension classified by some derivation $\iota^\ast R_{[\star,\star+j]} \to \pi_\star R \rtimes \Sigma^{j+2}\pi_{\star+j+1} R$, hence we have a fiber sequence
    \[\begin{tikzcd}
	{\Sect_{\Alg_{\bbE_n}(\Grnc^J)}\left(\iota^\ast R^{(-)}_{[\star,\star+j+1]}\to \pi_\star R^{(-)}\right)} && {\Sect_{\Alg_{\bbE_n}(\Grnc^J)}\left(\iota^\ast R^{(-)}_{[\star,\star+j]}\to \pi_\star R^{(-)}\right)} \\
	\pt && {\Map_{\Mod^{\bbE_n}_{\pi_\star R^{(-)}}(\Grnc^J)}\left(\mbf{L}^{\bbE_n}_{\pi_\star R^{(-)}},\Sigma^{j+2} \pi_{\star+j+1}R^{(-)}\right).}
	\arrow[from=1-1, to=2-1]
	\arrow[from=1-3, to=2-3]
	\arrow[from=1-1, to=1-3]
	\arrow[""{name=0, anchor=center, inner sep=0}, from=2-1, to=2-3]
	\arrow["\lrcorner"{anchor=center, pos=0.125}, draw=none, from=1-1, to=0]
    \end{tikzcd}\]
    It remains to observe that the base is contractible for degree reasons as long $j+1 \not\equiv 0$ mod $m+1$.\qedhere
\end{proof}

The corollary below follows from the proof of \cref{lem:section-space-contractibility} in the special case of $n=\infty$ and $J=\pt$.

\begin{cor}\label{cor:weight-formality}
    Let $R \in \CAlg$ be an $\bbE_\infty$-ring spectra whose homotopy groups $\pi_\ast R$ are concentrated in degrees divisible by $m+1$.
    Then the space of $(\bbE_\infty,m)$-degeneracy structures on $\calE_R$ is contractible. 
\end{cor}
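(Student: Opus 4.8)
The plan is to deduce this as a direct special case of \cref{lem:section-space-contractibility} by taking $n=\infty$ and $J=\pt$. With these choices, the indexed diagram $R^{(-)}$ is just the single $\bbE_\infty$-ring $R$, and hypothesis (2) of \cref{lem:section-space-contractibility} (that the structure maps be $\pi_\ast$-flat) is vacuous since the only map in the diagram is the identity, which is trivially $\pi_\ast$-flat. Hypothesis (1) becomes exactly our assumption that $\pi_\ast R$ is concentrated in degrees divisible by $m+1$.

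Running through the conclusion of \cref{lem:section-space-contractibility} in this case: $\calE_R = \lim_{\pt}\calE_R$ is an $\bbE_\infty$-monoidal Goerss-Hopkins deformation (indeed this is already known from \cref{prop:filtered-deform-modules}/\cref{cor:fibers-of-module-deformation}, and from \cref{prop:limits-of-affine-deformations} with $J=\pt$), and it is equipped with an $(\bbE_\infty,m)$-degeneracy structure. But in fact we want the stronger statement that the \emph{space} of such structures is contractible, not merely inhabited. For this I would invoke the ``moreover'' clause of \cref{lem:degeneracy-structures}: when $\calO=\bbE_\infty$ and $J=\pt$, the comparison map
\[\Sect_{\Alg_{\bbE_\infty}(\Grnc)}\!\left(\iota^\ast R_{[\star,\star+m]}\to \pi_\star R\right) \longrightarrow \big\{(\bbE_\infty,m)\text{-degeneracy structures on }\calE_R\big\}\]
is an \emph{equivalence} of spaces. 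Hence it suffices to show the left-hand section space is contractible, which is precisely what the inductive argument in the proof of \cref{lem:section-space-contractibility} establishes: one filters through the tower $\iota^\ast R_{[\star,\star+j]}$, $0\le j\le m$, each stage being a square-zero extension of the previous (by \cref{obs:sqz-tau-adic-w/-C(tau)}), and the relevant André–Quillen-type obstruction/choice group $\Map_{\Mod^{\bbE_\infty}_{\pi_\star R}(\Grnc)}\!\big(\mbf{L}^{\bbE_\infty}_{\pi_\star R},\Sigma^{j+2}\pi_{\star+j+1}R\big)$ is contractible for degree reasons whenever $j+1\not\equiv 0 \pmod{m+1}$ — which holds for all $j$ in the range $0\le j\le m-1$, i.e.\ at every step of the induction.

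The only point requiring a word of care is that the section space and the degeneracy-structure space genuinely agree in the $\bbE_\infty$, $J=\pt$ case — this is where \cite[Corollary 4.8.5.21]{HA} (as cited in the proof of \cref{lem:degeneracy-structures}) does the work, upgrading the map furnished by \cref{lem:deg-structure-sections} from merely a map to an equivalence. Granting that, the statement follows formally. I do not anticipate a genuine obstacle here: this corollary is essentially a bookkeeping specialization, and all the real content — the square-zero decomposition of the $\tau$-adic tower and the degree-counting vanishing of the obstruction groups — lives in \cref{lem:section-space-contractibility} and the lemmas it depends on.
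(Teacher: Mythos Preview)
Your proposal is correct and matches the paper's approach exactly: the paper states that the corollary follows from the \emph{proof} of \cref{lem:section-space-contractibility} in the special case $n=\infty$, $J=\pt$, and you have correctly unpacked why one needs the proof (contractibility of the section space) together with the ``moreover'' clause of \cref{lem:degeneracy-structures} rather than merely the statement of \cref{lem:section-space-contractibility}.
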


\section{Obstruction theory and algebraicity}

In this section we prove the abstract algebraicity theorem (\cref{thmA:abstract-algebraicity}) and then deduce from it the results mentioned in the introduction.
Our main tool will be the Goerss-Hopkins tower.

\subsection{The Goerss-Hopkins tower}
In this subsection we associate to an ($\calO$-monoidal) Goerss-Hopkins deformation $\calE$ a tower of \categories{} (\operads{}) which attempts to converge to generic fiber $\calE\tauinvbra$.

\begin{obs}\label{lem:basic-fiber-seq} 
    Tensoring an object $X\in \calE^{\ge 0}$ with the cofiber sequence 
    $\unit \to \Ctau{n} \to \Sigma^{n+1} \Ctau{}[-n]$ 
    in $\Fil$ yields a cofiber sequence 
    \[ X \to C(\tau^n) \otimes X \to \Sigma^{n+1} X[-n],\]
    in which the first map is the unit of the free-forgetful adjunction $\calE^{\ge 0} \adj \ModCtau{n}(\calE^{\ge 0})$.
    Similarly for $X \in \ModCtau{n+1}(\calE^{\ge 0})$
    we have a cofiber sequence of $\Ctau{n+1}$-modules:
    \[ X \to C(\tau^n) \otimes_{C(\tau^{n+1})} X \to \Sigma^{n+1} \left( C(\tau) \otimes_{C(\tau^{n+1})} X[-n]\right),\]
    where the first map is the unit of the free-forgetful adjunction $\ModCtau{n+1}(\calE^{\ge 0}) \adj \ModCtau{n}(\calE^{\ge 0})$.
\end{obs}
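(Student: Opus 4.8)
The plan is to obtain both cofiber sequences by tensoring an auxiliary cofiber sequence of modules with the fixed object $X$ and then identifying the first map, using throughout that for fixed $X$ the functor $(-)\otimes X$ out of a module category is colimit-preserving and hence sends cofiber sequences to cofiber sequences. For the absolute statement, I would start from the defining cofiber sequence $\Sigma^n\unit[-n]\xrightarrow{\tau^n}\unit\xrightarrow{q}C(\tau^n)$ in $\Fil$ and rotate it to $\unit\xrightarrow{q}C(\tau^n)\to\Sigma^{n+1}\unit[-n]$. Applying $(-)\otimes X\colon\Fil\to\calE^{\ge 0}$ (legitimate since $\calE^{\ge 0}$ is a $\Fil$-module) gives the cofiber sequence $X\to C(\tau^n)\otimes X\to\Sigma^{n+1}X[-n]$. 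Since $q$ is the algebra unit of $C(\tau^n)\in\CAlg(\Fil)$, the map $q\otimes\id_X\colon X\simeq\unit\otimes X\to C(\tau^n)\otimes X$ is by definition the unit of the free-forgetful adjunction $\calE^{\ge 0}\adj\ModCtau{n}(\calE^{\ge 0})$, whose left adjoint is $C(\tau^n)\otimes(-)$.

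For the relative statement the corresponding input is a cofiber sequence of $C(\tau^{n+1})$-modules $C(\tau^{n+1})\xrightarrow{q'}C(\tau^n)\to\Sigma^{n+1}C(\tau)[-n]$ with $q'$ the canonical quotient of $\bbE_\infty$-algebras — equivalently, $\mathrm{fib}(q')\simeq\Sigma^nC(\tau)[-n]$ as $C(\tau^{n+1})$-modules, with $C(\tau)$ given its canonical $C(\tau^{n+1})$-module structure. On underlying filtered spectra this is the octahedral axiom applied to the factorization $\tau^{n+1}\simeq\tau^n\circ(\tau\otimes\id_{\Sigma^n\unit[-n]})$: its cofiber triangle reads $\Sigma^nC(\tau)[-n]\to C(\tau^{n+1})\to C(\tau^n)$, with the second map identified with the usual quotient $q'$, and rotating gives the claimed sequence. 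I would then tensor this sequence over $C(\tau^{n+1})$ with $X\in\ModCtau{n+1}(\calE^{\ge 0})$ (legitimate since $\ModCtau{n+1}(\calE^{\ge 0})$ is a $\ModCtau{n+1}(\Fil)$-module) to get $X\to C(\tau^n)\otimes_{C(\tau^{n+1})}X\to\Sigma^{n+1}\big(C(\tau)\otimes_{C(\tau^{n+1})}X\big)[-n]$; exactly as in the absolute case, the first map $q'\otimes_{C(\tau^{n+1})}\id_X$ is the unit of the free-forgetful adjunction $\ModCtau{n+1}(\calE^{\ge 0})\adj\ModCtau{n}(\calE^{\ge 0})$ because $q'$ is the $C(\tau^{n+1})$-algebra unit of $C(\tau^n)$.

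The one step that is not purely formal is promoting the relative cofiber sequence from filtered spectra to $C(\tau^{n+1})$-modules: the octahedron above is forced to live in $\Fil$, since its outer terms $\unit$ and $\Sigma^n\unit[-n]$ are not $C(\tau^{n+1})$-modules. I expect the cleanest remedy is to invoke the fact (proved in \cref{sect:sqz}) that $C(\tau^{n+1})\to C(\tau^n)$ is one of the square-zero extensions in the $\tau$-adic tower: the fiber of a square-zero extension is, by construction, a module over the target, so $\mathrm{fib}(q')$ carries its $C(\tau^{n+1})$-module structure automatically, and the displayed triangle is precisely the cofiber sequence canonically attached to $q'$. Granting this, the remaining ingredients — exactness of $(-)\otimes X$ and of $(-)\otimes_{C(\tau^{n+1})}X$, the identification of $q$ and $q'$ with algebra units, and of their tensorings with $\id_X$ with the respective adjunction units — are routine.
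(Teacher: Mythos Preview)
Your approach matches the paper's: the observation is stated with its proof embedded---tensor with the displayed cofiber sequence and identify the first map as the unit of the relevant free--forgetful adjunction. (Incidentally, you correctly write the third term of the absolute input sequence as $\Sigma^{n+1}\unit[-n]$; the paper's $\Sigma^{n+1}\Ctau{}[-n]$ there appears to be a typo, as the tensored sequence $X\to\Ctau{n}\otimes X\to\Sigma^{n+1}X[-n]$ makes clear.)

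Where you go beyond the paper is in worrying about the $\Ctau{n+1}$-module structure on the third term in the relative case. The paper simply asserts the cofiber sequence of $\Ctau{n+1}$-modules by the word ``similarly'' and does not justify that the cofiber of $\Ctau{n+1}\to\Ctau{n}$ is $\Sigma^{n+1}\Ctau{}[-n]$ with its \emph{canonical} module structure via $\Ctau{n+1}\to\Ctau{}$. Your proposed fix via the square-zero property of the $\tau$-adic tower is correct and not circular---\cref{cor:sqz-tau-adic} does not rely on the present observation---though it does forward-reference. If you prefer to avoid this, observe that for every use of the relative sequence in the paper (\cref{prop:periodic-objects-characterizations}, \cref{lem:characterization-potential-stages}, \cref{cor:basic-fib-seq-per-potential}, \cref{lem:truncated-objects-are-C(tau)-modules}) only the underlying object of the third term matters, and that is pinned down by the conservative, exact forgetful functor $\ModCtau{n+1}(\calE^{\ge 0})\to\calE^{\ge 0}$ together with your octahedral computation in $\Fil$.
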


Using \cref{lem:basic-fiber-seq} it is straightforward to deduce the following variant of \cite[Proposition 2.16]{piotr-paul}.

\begin{lem}[Pstragowski-VanKoughnett]\label{prop:periodic-objects-characterizations}
    Let $\calE$ be a Goerss-Hopkins deformation. 
    For $X \in \calE^{\ge 0}$ the following are equivalent:
	\begin{enumerate}[(1)]
		\item 
		The map
		$\tau : \Sigma X [-1] \too X$
		is a $1$-connective cover.
		\item 
		The map
		$\tau^k : \Sigma^k X [-k] \too X$
		is a $k$-connective cover for all $k \ge 0$.
		\item 
		The canonical map $C(\tau^{k+1}) \otimes X \too X_{\le k}$ is an equivalence for all $k \ge 0$.
		\item 
		The canonical map 
		$C(\tau) \otimes X \too X_{\le 0}$ 
		is an equivalence.
	\end{enumerate}
\end{lem}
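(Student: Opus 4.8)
The strategy is to set up the four implications as a cycle $(2)\Rightarrow(1)\Rightarrow(4)\Rightarrow(3)\Rightarrow(2)$, using \cref{lem:basic-fiber-seq} as the main computational input and the defining properties of the diagonal $t$-structure (right completeness, Grothendieck-ness, diagonality) to control connectivity. The implication $(2)\Rightarrow(1)$ is immediate since $(1)$ is the case $k=1$ of $(2)$ (the case $k=0$ being vacuous as $\tau^0=\id$). For $(1)\Rightarrow(4)$, specialize the first cofiber sequence of \cref{lem:basic-fiber-seq} with $n=1$: it reads $X \to \Ctau{}\otimes X \to \Sigma^2 X[-1]$, and the first map is the unit $X \to \Ctau{}\otimes X$. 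But $\cofib(X \to \Ctau{}\otimes X) \simeq \Sigma^2 X[-1] \simeq \Sigma(\Sigma X[-1])$, and since $\tau\colon \Sigma X[-1] \to X$ is a $1$-connective cover, its cofiber $\Sigma X[-1]/\tau$ has the property that $\Sigma X[-1] \to X \to X_{\le 0}$ exhibits $\Ctau{}\otimes X$; more precisely one identifies $\Ctau{}\otimes X$ with $\cofib(\tau)$ and uses that $\tau$ being a $1$-connective cover means $\cofib(\tau) \simeq X_{\le 0} \in \calE^\heart$. I would write this out carefully by comparing the two cofiber sequences $\Sigma X[-1] \xrightarrow{\tau} X \to \Ctau{}\otimes X$ and $X_{\ge 1} \to X \to X_{\le 0}$, using that $\tau$ is the $1$-connective cover to get a map between them which is an equivalence on the first two terms, hence on the third.

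For $(4)\Rightarrow(3)$, argue by induction on $k$, the base case $k=0$ being exactly $(4)$. For the inductive step use the second cofiber sequence of \cref{lem:basic-fiber-seq}: writing $Y_n \coloneq \Ctau{n}\otimes X$, we have a cofiber sequence $Y_{n+1} \to Y_n \to \Sigma^{n+1}(\Ctau{}\otimes X)[-n] \simeq \Sigma^{n+1}X_{\le 0}[-n]$ (the last identification by $(4)$). Since $X_{\le 0}\in\calE^\heart$, the term $\Sigma^{n+1}X_{\le 0}[-n]$ sits in the heart shifted appropriately — in degree $n+1$ after accounting for the diagonal shift — so it contributes exactly the layer needed to pass from the $(n-1)$-truncation to the $n$-truncation of $X$; comparing with the Postnikov tower $X_{\le n} \to X_{\le n-1} \to \Sigma^{n+1}(\pi_{n+1}X)[\ldots]$ and matching $k$-invariants (which both are pulled from the canonical obstruction / the structure of the $\tau$-adic tower) gives $\Ctau{n+1}\otimes X \simeq X_{\le n}$. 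The cleanest way is to induct showing the natural map $\Ctau{n+1}\otimes X \to X_{\le n}$ is an equivalence, using the two cofiber sequences and the five lemma / the fact that both cofibers agree via $(4)$.

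For $(3)\Rightarrow(2)$: from the first cofiber sequence of \cref{lem:basic-fiber-seq} with general $n$, $X \to \Ctau{n}\otimes X \to \Sigma^{n+1}X[-n]$, and substituting $(3)$ to get $X \to X_{\le n-1} \to \Sigma^{n+1}X[-n]$. Rotating this identifies $\Sigma^n X[-n]$ with $\fib(X \to X_{\le n-1}) = X_{\ge n}$, i.e.\ $\tau^n \colon \Sigma^n X[-n] \to X$ is the inclusion of the $n$-connective cover; this is exactly $(2)$. One should be slightly careful that the map in this cofiber sequence is indeed $\tau^n$ and not merely \emph{a} map — this follows from the construction in \cref{lem:basic-fiber-seq}, where the first map is the unit of the free-forgetful adjunction, and the unit for $\Ctau{n}$ is multiplication by (the image of) $\tau^n$ after unwinding the cofiber sequence $\unit \xrightarrow{\tau^n} \Sigma^n\unit[-n]$ wait — rather, one uses that $\unit \to \Ctau{n}$ has cofiber $\Sigma^{n+1}\Ctau{}[-n]$ and tensoring is compatible with the Yoneda description of $\tau$.

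\textbf{Main obstacle.} The delicate point is matching the \emph{maps} (not just objects) throughout: in each cofiber sequence from \cref{lem:basic-fiber-seq} one must verify that the relevant map really is $\tau^k$, or the truncation map, or the $k$-connective cover inclusion, rather than an abstractly isomorphic but a priori different map. This requires tracking the unit maps of the free-forgetful adjunctions $\calE^{\ge 0}\rightleftarrows \ModCtau{n}(\calE^{\ge 0})$ and their compatibility with the Yoneda/monoidal description of $\tau$, together with using right completeness of the $t$-structure to rule out phantom discrepancies. I expect this bookkeeping — rather than any conceptual difficulty — to be where the bulk of the care goes, exactly as in the analogous \cite[Proposition 2.16]{piotr-paul}.
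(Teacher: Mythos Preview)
Your proposal is correct and follows exactly the approach the paper indicates --- the paper gives no detailed argument but simply points to the cofiber sequences of \cref{lem:basic-fiber-seq} and defers to \cite[Proposition 2.16]{piotr-paul}.

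One imprecision in your $(4)\Rightarrow(3)$ step is worth flagging: asserting that ``both cofibers agree via $(4)$'' glosses over the fact that $(4)$ alone does not identify $X_{\le 0}[-n]$ with $\pi_n X$, so the five-lemma comparison is not yet justified. The missing observation is that the shift $[n]$ is $t$-exact on $\calE$: since both $\unit[n]$ and $\unit[-n]$ lie in $\Fil$, diagonality forces $[n]$ and $[-n]$ to preserve $\calE^{\ge 0}$, and being mutually inverse autoequivalences they must preserve it exactly, hence also preserve $\calE^{\le 0}$. With this in hand, $(4)$ gives $\Sigma^j X_{\le 0}[-j]\in\calE^{[j,j]}$ for each $j$, so the tower $\Ctau{j+1}\otimes X\to\Ctau{j}\otimes X$ shows inductively that $\Ctau{n+1}\otimes X\in\calE^{[0,n]}$. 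Now the canonical map $\Ctau{n+1}\otimes X\to X_{\le n}$ has source and target in $\calE^{\le n}$, while its fiber is the cofiber of $\Sigma^{n+1}X[-n-1]\to X_{\ge n+1}$ and hence lies in $\calE^{\ge n+1}$; thus the fiber vanishes and the map is an equivalence. This bypasses the need to match Postnikov $k$-invariants directly and makes your ``main obstacle'' evaporate.
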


Following \cite[Definition 4.3]{Piotr} we make \cref{prop:periodic-objects-characterizations} into a definition.

\begin{defn}\label{defn:periodic-object}
    An object $X \in \calE^{\ge 0}$ satisfying the equivalent conditions of \cref{prop:periodic-objects-characterizations} is called \hl{\textit{periodic}}.
\end{defn}

\begin{rem}\label{rem:periodic-on-homotopy}
    If the Goerss-Hopkins structure on $\calE$ is separated
    one can check whether an object is periodic on homotopy groups:
    $X \in \calE^{\ge 0}$  is periodic if and only if the natural map 
    $\bbZ[\tau]\otimes_{\bbZ} \pi^\heart_0X \to \pi^\heart_\ast X$ 
	is an isomorphism.
\end{rem}

\begin{prop}\label{prop:periodic-generic-non-monoidal}
    The synthetic analog functor $\nu \colon  \calE \tauinvbra \to \calE^{\ge 0}$ 
    is fully faithful and its essential image consists precisely of the periodic objects.
\end{prop}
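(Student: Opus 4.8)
The statement packages two facts: $\nu$ is fully faithful, and its essential image is exactly the periodic objects. I will prove these separately, leaning on the adjunctions already set up in the definition of $\nu$ and on \cref{prop:periodic-objects-characterizations}.

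**Full faithfulness.** Recall $\nu$ is the right adjoint in the composite $\calE^{\ge 0} \hookleftarrow \calE \xhookleftarrow{\nu} \calE\tauinvbra$, built from the Bousfield localization $\tau^{-1}\colon \calE \rightleftarrows \calE\tauinvbra$ and the connective-cover adjunction $(-)_{\ge 0}\colon \calE \rightleftarrows \calE^{\ge 0}$. The localization functor $\tau^{-1}$ is a Bousfield localization, hence its right adjoint (the inclusion of $\tau$-invariant objects into $\calE$) is fully faithful; and the inclusion $\calE^{\ge 0}\hookrightarrow\calE$ admits $(-)_{\ge 0}$ as a right... wait — here $(-)_{\ge 0}$ is the \emph{left} adjoint to the inclusion only in the co-connective convention; in the stated diagram $(-)_{\ge 0}$ is drawn as left adjoint to $\iota\colon\calE^{\ge0}\hookrightarrow\calE$, so $\iota$ is fully faithful. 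Thus $\nu$ is a composite of two fully faithful right adjoints (the right adjoint of the Bousfield localization $\tau^{-1}$, namely the inclusion $j\colon\calE\tauinvbra\hookrightarrow\calE$, followed by $(-)_{\ge 0}\circ j$). Concretely $\nu(M) = (j M)_{\ge 0}$, and to see full faithfulness it suffices to check the counit $\tau^{-1}\nu(M)\to M$, i.e. $\tau^{-1}\big((jM)_{\ge 0}\big)\to M$, is an equivalence: but $\tau^{-1}$ kills the fiber $(jM)_{\ge 0}\to jM$ (that fiber is $\tau$-power-torsion, being co-connective, so becomes $0$ after inverting $\tau$ — this uses that $\calE^{\le -\infty}=0$, right-completeness), and $\tau^{-1}jM \simeq M$ since $j$ is the inclusion of local objects. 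Hence the counit of $(\tau^{-1}\circ(-)_{\ge0}\text{-left-adjoint})\dashv\nu$ restricted appropriately is an equivalence and $\nu$ is fully faithful. I would phrase this cleanly as: $\nu$ is a right adjoint and its left adjoint $L = \tau^{-1}\circ\iota$ satisfies $L\nu\simeq\id$, which is equivalent to full faithfulness of $\nu$.

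**Identifying the essential image.** For $M\in\calE\tauinvbra$, set $X=\nu(M)=(jM)_{\ge0}$. I claim $X$ is periodic, using characterization (1) of \cref{prop:periodic-objects-characterizations}: $\tau\colon\Sigma X[-1]\to X$ is a $1$-connective cover. On $jM$ the map $\tau$ is an equivalence (that is what $\tau$-invertible means), so $\tau\colon\Sigma\, jM[-1]\to jM$ is an equivalence; applying the left-exact (in fact exact) connective-cover functor and using that $[-1]$ and $\Sigma$ interact with connectivity in the expected way under the diagonal $t$-structure, one reads off that $\tau\colon\Sigma X[-1]\to X$ is a $1$-connective cover. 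Conversely, if $X\in\calE^{\ge 0}$ is periodic, I want $X\simeq\nu(LX) = \nu(\tau^{-1}X)$. The unit $X\to\nu\tau^{-1}X = (\tau^{-1}X)_{\ge 0}$ fits into a comparison of towers; periodicity condition (3), $C(\tau^{k+1})\otimes X\xrightarrow{\sim}X_{\le k}$, says the $\tau$-adic tower of $X$ reconstructs its Postnikov tower, and hence $X\simeq\lim_k X_{\le k}\simeq\lim_k (C(\tau^{k+1})\otimes X)$ — but wait, this needs right-completeness/Postnikov-type convergence, which for a general Goerss-Hopkins deformation I get from right-completeness of the $t$-structure ($\calE^{\le-\infty}=0$) together with... actually the cleanest route: periodicity (1) gives $X\simeq \lim_k \Sigma^k X[-k]$-type statement? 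No. Better: show the unit $\eta_X\colon X\to (\tau^{-1}X)_{\ge 0}$ is an equivalence by checking $\mathrm{fib}(\eta_X)$ and $\mathrm{cofib}(\eta_X)$ vanish; both are built from $\mathrm{fib}(X\to\tau^{-1}X)$, which is the $\tau$-divisible (co-connective-in-the-limit) part, and periodicity condition (1)/(2) forces $\tau^k\colon\Sigma^kX[-k]\to X$ to be $k$-connective covers, so $\mathrm{fib}(X\to\tau^{-1}X)=\mathrm{colim}_k\,\mathrm{fib}(\tau^k)$ has connectivity tending to $-\infty$, hence after applying $(-)_{\ge 0}$ contributes nothing and $\eta_X$ becomes an equivalence.

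**Expected main obstacle.** The delicate point is the convergence argument in the "conversely" direction: showing that periodicity of $X$ forces the unit $X\to(\tau^{-1}X)_{\ge0}$ to be an equivalence. This is where right-completeness of the $t$-structure is essential and where one must be careful that limits/colimits of the relevant towers behave well — in particular that $(-)_{\ge 0}$ of a filtered colimit of objects with connectivity $\to-\infty$ vanishes, which uses the Grothendieck condition (filtered colimits are preserved by $(-)_{\ge0}$) only mildly but does use right-completeness crucially. Everything else — full faithfulness, the easy direction of the image identification — is formal manipulation of the two adjunctions and \cref{prop:periodic-objects-characterizations}. I would structure the write-up as: (i) $L\nu\simeq\id$ hence $\nu$ fully faithful; (ii) $\nu(M)$ periodic; (iii) periodic $\Rightarrow$ unit is an equivalence $\Rightarrow$ in the image of $\nu$.
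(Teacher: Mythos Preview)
Your approach matches the paper's: both establish full faithfulness by showing the counit $\tau^{-1}\nu(X)\to X$ is an equivalence via a filtered colimit of increasingly coconnective objects (your ``$\tau^{-1}$ kills coconnective things'' is exactly the paper's $\colim_k X_{\le -k-1}\simeq 0$, which uses right-completeness together with the Grothendieck axiom), and both verify periodicity of $\nu(X)$ by tracking $\tau$ through the connective cover. Your outline is in fact slightly more complete than the paper's written proof, which does not spell out your step (iii) that every periodic $Y$ lies in the essential image; your sketch of (iii) via the unit map is correct, though note the key point is simply that $\mathrm{cofib}(Y\to\tau^{-1}Y)$ is a filtered colimit of objects in $\calE^{\le -1}$ and hence lies in $\calE^{\le -1}$ by the Grothendieck axiom---you do not need connectivity tending to $-\infty$.
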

\begin{proof}
    First we show that for all $X \in \calE\tauinvbra$ the synthetic analog $\nu(X)$ is periodic.
    Evaluating the natural transformation $\Sigma \Omega\nu(-) \too \nu(-)$ 
	on $\tau_X: \Sigma X[-1] \too X$ 
	yields the right square in the following diagram:
 	\[\begin{tikzcd}
 		& {\Sigma \Omega \nu(\Sigma X[-1])} & {\Sigma \Omega\nu(X)} \\
 		{\Sigma \nu(X)[-1]} & {\nu(\Sigma X[-1])} & {\nu(X)}
 		\arrow[from=2-1, to=2-2]
 		\arrow["\simeq", from=2-1, to=1-2]
 		\arrow[from=1-3, to=2-3]
 		\arrow[from=1-2, to=2-2]
 		\arrow["\simeq", from=1-2, to=1-3]
 		\arrow["\simeq", from=2-2, to=2-3]
 		\arrow["{\tau_{\nu(X)}}"', curve={height=18pt}, from=2-1, to=2-3]
 	\end{tikzcd}\]
	where the bottom left horizontal map comes from the $\Fil$-lax-linear structure on $\nu$ \cite[Corollary C]{RuneLax},
	and the left diagonal map is an equivalence because $\nu$ preserves limits. 
	Since $\calE^{\ge 0}$ is prestable the right vertical map is precisely the $1$-connective cover map as promised.

    It remains to show that the counit $\tau^{-1}\nu(X) \to X$ is an equivalence.
    Since $\nu(X) \to X$ is a connective cover, the composite $\Sigma^{-k} \nu(X) [k] \to \Sigma^{-k}X[k] \xrightarrow{\simeq} X$ (where we use the inverse of $\tau^k_X$) 
    is a $(-k)$-connective cover.
    We therefore have
    \begin{align*}
	    \fib(\tau^{-1}\nu(X) \to X)
	    & \simeq 
	    \fib\left(\colim_k \Sigma^{-k}\nu(X)[k] \to X\right)
	    \\
	    &\simeq
	    \colim_k \fib\left(\Sigma^{-k}\nu(X)[k] \to X\right)\\
	    & \simeq \colim_k X_{\le -k-1} \simeq 0.\qedhere
	\end{align*}
\end{proof}

The following corollary can be viewed as a generalization \cref{ex:filtered-modules}.

\begin{cor}\label{cor:modules-over-periodic}
    Let $\calE$ be a monoidal Goerss-Hopkins deformation and let $R \in \Alg(\calE^{\ge 0})$.
    Then $\LMod_{R}(\calE)$ is canonically a Goerss-Hopkins deformation.
\end{cor}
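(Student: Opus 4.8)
The plan is to realize $\LMod_R(\calE)$ as a limit of Goerss-Hopkins deformations in the sense of \cref{prop:limits-of-GH}, namely as the totalization of the bar construction on $R$. Since $R$ is an $\bbE_1$-algebra in the prestable category $\calE^{\ge 0}$, there is a standard cosimplicial object $[n] \mapsto R^{\otimes n} \otimes (-)$ whose totalization computes $R$-modules; more precisely, the forgetful-free adjunction $\LMod_R(\calE^{\ge 0}) \adj \calE^{\ge 0}$ is comonadic (by a Barr-Beck argument, using that $\calE^{\ge 0}$ is prestable so the relevant split simplicial objects have colimits and the forgetful functor is conservative and preserves them), giving
\[
\LMod_R(\calE^{\ge 0}) \simeq \Tot\big([n] \longmapsto R^{\otimes n} \otimes \calE^{\ge 0}\big),
\]
where each term $R^{\otimes n} \otimes \calE^{\ge 0} \simeq \calE^{\ge 0}$ is Grothendieck prestable and $\Fil$-linear, and each coface/codegeneracy functor is given by tensoring with a morphism of $\bbE_1$-algebras, hence is colimit-preserving — but, crucially for \cref{prop:limits-of-GH}, I need the structure maps of the cosimplicial diagram to be \emph{left exact}. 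This is where the diagonal $t$-structure and the hypothesis $R \in \calE^{\ge 0}$ enter: tensoring with a connective object is $t$-exact on a Goerss-Hopkins deformation (the connective part is closed under the tensor by the diagonal axiom, and tensoring is always right exact as a left adjoint; for left-exactness one uses that $\otimes$ on $\calE^{\ge 0}$ restricts from the stable $\calE$ and that $R$ being an algebra object with connective underlying object makes $R \otimes (-)$ preserve the truncation functors). So the cosimplicial diagram lands in $\Fil$-linear left-exact functors, and \cref{prop:limits-of-GH} applies with $J = \Delta$ and $\calO$ trivial (or $\calO = \bbE_1$ if one wants the monoidal refinement when $R$ is commutative) to conclude that the limit is a Goerss-Hopkins deformation with connective part $\Tot(R^{\otimes \bullet} \otimes \calE^{\ge 0})$.

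Alternatively — and this may be cleaner — I would argue directly without a bar construction: equip $\LMod_R(\calE)$ with the $t$-structure whose connective part is $\LMod_R(\calE^{\ge 0})$, i.e.\ an $R$-module is connective iff its underlying object of $\calE$ is connective. One checks this is a $t$-structure since $\LMod_R(\calE^{\ge 0}) \subseteq \LMod_R(\calE)$ is closed under colimits and extensions and the forgetful functor detects the relevant data. The three axioms of \cref{defn:Goerss-Hopkins deformation} then transfer: right-completeness, since $\bigcap_n \LMod_R(\calE)^{\le n}$ maps to $\bigcap_n \calE^{\le n} = 0$ under a conservative functor; the Grothendieck property, since truncation of $R$-modules is computed on underlying objects (using $R \in \calE^{\ge 0}$ so that $R \otimes (-)$ is $t$-exact, hence $(-)_{\ge 0}$ commutes with the forgetful functor) and $\calE^{\ge 0}$ is Grothendieck; and the diagonal property, since the $\Fil$-module structure on $\LMod_R(\calE)$ is inherited from $\calE$ and $X \otimes M \in \LMod_R(\calE^{\ge 0})$ whenever $X \in \Fil$ and $M \in \LMod_R(\calE^{\ge 0})$, which again reduces to the diagonal axiom for $\calE$ after forgetting to $\calE$.

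The main obstacle I anticipate is verifying that tensoring with $R$ is $t$-exact (not merely right exact) — equivalently, that the forgetful functor $\LMod_R(\calE) \to \calE$ is $t$-exact for the proposed $t$-structure — since this is what makes truncations of $R$-modules computable and underlies both the Grothendieck axiom and, in the bar-construction approach, the left-exactness needed to invoke \cref{prop:limits-of-GH}. The key input is that $R$ is \emph{connective} (lies in $\calE^{\ge 0}$), so that $R \otimes (-)$ preserves $\calE^{\ge 0}$ by the diagonal axiom applied with $\unit \in \Fil$, while $R \otimes (-)$ on the stable $\calE$ is exact, and one deduces exactness of the induced endofunctor on connective objects; I would spell this out carefully, as it is the one place where the hypothesis on $R$ is genuinely used. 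Everything else — the $t$-structure axioms, conservativity and colimit-preservation of the forgetful functor, inheritance of the $\Fil$-linear structure — is formal.
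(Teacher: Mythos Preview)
Your second, direct approach is essentially the paper's: one restricts the right $\calE$-action on $\LMod_R(\calE)$ along the monoidal functor $\Filnc \to \calE$ to obtain the $\Filnc$-module structure, defines the $t$-structure by declaring an $R$-module connective iff its underlying object is, and then checks the three axioms. The paper's proof simply declares this last verification ``straightforward''; your sketch (right completeness via conservativity of the forgetful functor, the Grothendieck property via computing truncations underlying, the diagonal axiom by reduction to $\calE$) is an adequate expansion of that.

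Two corrections. First, your bar-construction approach should be abandoned: the free--forgetful adjunction for $\LMod_R$ is \emph{monadic}, not comonadic, and there is no general identification of $\LMod_R(\calE^{\ge 0})$ with $\Tot\big([n]\mapsto R^{\otimes n}\otimes \calE^{\ge 0}\big)$; you appear to be conflating this with Amitsur descent, which computes the \emph{base} as a totalization over module categories, not the other way around. Even granting such a formula, the transition functors would involve $R\otimes(-)$ as an endofunctor of $\calE^{\ge 0}$, and this is generally \emph{not} left exact, so \cref{prop:limits-of-GH} would not apply. Second, the obstacle you flag at the end is mislocated: you do not need $R\otimes(-)\colon \calE \to \calE$ to be $t$-exact (it usually is not), nor is that equivalent to the forgetful functor being $t$-exact. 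What you actually need is only that $R\otimes(-)$ preserves connectives, which holds because the monoidal structure on $\calE$ is compatible with the $t$-structure; this suffices to show that truncations of $R$-modules, computed in $\calE$, inherit an $R$-module structure, hence that the proposed $t$-structure is well-defined and the forgetful functor is $t$-exact.
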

\begin{proof}
    Restricting the canonical right $\calE$-action on $\LMod_{R}(\calE)$ along the monoidal functor $\Filnc \to \calE$ gives an $\Filnc$-module structure.
    Since $R$ is connective,  $\LMod_R(\calE)$ inherits a canonical $t$-structure from $\calE$.
    It is straightforward to check that this is in fact a Goerss-Hopkins $t$-structure.
\end{proof}

\begin{example}
    A particular example of \cref{cor:modules-over-periodic} is given by $R=\nu(E)$ where $E \in \Alg(\calE\tauinvbra)$. 
    This shows that $\LMod_{\nu(E)}(\calE)$ is a Goerss-Hopkins deformation for any $E \in \Alg(\calE \tauinvbra)$ and unwinding definitions we can identify its generic and special fibers:
    \[\Mod_{\nu(E)}(\calE)\tauinvbra \simeq \Mod_{E}(\calE \tauinvbra), \qquad \qquad \Mod_{\nu(E)}(\calE)\taunull{} \simeq \Mod_{H\pi^\heartsuit_\ast(E)}(\calE \taunull{})\]
\end{example}

\subsubsection{Digression on grading extensions}

In \cite{lawson2020adjoining}, Lawson introduced a construction which adjoins "formal radicals" to elements in the picard group of a symmetric monoidal \category{}.
In this subsection we adapt this construction to the setting of Goerss-Hopkins deformations.
We begin by recalling the relevant constructions.

\begin{defn}
    Let $\calC$ be a presentably symmetric monoidal \category{} and write $\hl{\pic(\calC)}$ and $\hl{\br(\calC)}$ for its Picard and Brauer spectra respectively.
    For a morphism of connective spectra $\rho \colon A \to \pic(\calC)$ we define
    \[\hl{\unit[\rho]} \coloneq \colim \left(\Omega^\infty A \xrightarrow{\Omega^\infty \rho} \Omega^\infty \pic(\calC) \hookrightarrow \calC \right).\]
    Similarly, given a morphism of connective spectra $\mu \colon B \to \br(\calC)$ we define 
    \[\hl{\calC[\mu]} \coloneq \colim \left(\Omega^\infty B \xrightarrow{\Omega^\infty \mu} \Omega^\infty \br(\calC) \hookrightarrow \Mod_\calC(\PrL) \right).\]
\end{defn}

\begin{lem}[Lawson]\label{lem:comparing-thom-constructions}
    Let $\calC \in \CAlg(\calC)$ and let $\rho \colon A \to \pic(\calC)$ be a morphism of connective spectra. 
    Then there is a canonical $\calC$-linear symmetric monoidal equivalence:
    \[\Mod_{\unit[\rho]}(\calC) \simeq \calC[\Sigma (\rho)].\]
\end{lem}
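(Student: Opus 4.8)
The statement to prove is Lawson's comparison: for $\calC \in \CAlg(\PrL)$ and a map of connective spectra $\rho \colon A \to \pic(\calC)$, there is a canonical $\calC$-linear symmetric monoidal equivalence $\Mod_{\unit[\rho]}(\calC) \simeq \calC[\Sigma(\rho)]$. The plan is to identify both sides with the same colimit computed in $\CAlg(\Mod_{\calC}(\PrL))$, using the standard "delooping = deloop the colimit" principle together with the fact that $\Mod_{(-)}(\calC)$ turns $\bbE_\infty$-algebras into $\calC$-linear symmetric monoidal categories in a colimit-preserving way.

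\textbf{Step 1: Reduce to a statement about colimits over $\Omega^\infty$.} First I would recall that $\unit[\rho] = \colim_{\Omega^\infty A}(\Omega^\infty\rho$ followed by the inclusion $\Omega^\infty\pic(\calC)\hookrightarrow\calC)$ is naturally an $\bbE_\infty$-algebra in $\calC$: the colimit is taken along a map out of an $\bbE_\infty$-space into $\CAlg(\calC)^{\simeq}$ via the symmetric monoidal $\Omega^\infty\pic(\calC)^{\otimes}\to\calC$, so by the universal property of colimits of $\bbE_\infty$-algebras the Thom object inherits its algebra structure. Dually, $\calC[\mu]$ for $\mu\colon B\to\br(\calC)$ is the colimit of $\Omega^\infty B\to\Omega^\infty\br(\calC)\hookrightarrow\Mod_{\calC}(\PrL)$, hence lives in $\CAlg(\Mod_{\calC}(\PrL))$.

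\textbf{Step 2: Compare the two indexing data via the loop–delooping fiber sequence $\pic \to \br$.} The key input is that for a presentably symmetric monoidal $\calC$ the functor $\Mod_{(-)}(\calC)\colon\CAlg(\calC)\to\CAlg(\Mod_{\calC}(\PrL))$ sits in a commuting diagram with the Picard/Brauer spectra, inducing a map $\pic(\calC)\to\Omega\br(\calC)$ which is (the relevant truncation of) an equivalence — concretely, $\br(\calC)$ is a delooping of $\pic(\calC)$ on connective covers, or at least there is a natural map realizing $B\pic(\calC)$ inside $\br(\calC)$. Thus $\Sigma(\rho)\colon \Sigma A \to \br(\calC)$ corresponds, after looping, to $\rho\colon A\to\pic(\calC)$. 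I would then use that $\Mod_{(-)}(\calC)$ preserves colimits (indeed $\Mod_M(\calC)\simeq M\otimes_{\calC}(-)$ applied to $\calC$, and relative tensor commutes with colimits) to compute
\[
\Mod_{\unit[\rho]}(\calC) \simeq \Mod_{\colim_{\Omega^\infty A}(\text{algebras})}(\calC) \simeq \colim_{\Omega^\infty A}\Mod_{(-)}(\calC),
\]
where the colimit on the right is over $\Omega^\infty A \xrightarrow{\Omega^\infty\rho}\Omega^\infty\pic(\calC)\xrightarrow{\Mod_{(-)}(\calC)}\Mod_{\calC}(\PrL)$. On the other hand $\calC[\Sigma\rho]=\colim_{\Omega^\infty\Sigma A}(\Omega^\infty\Sigma\rho$ then $\Omega^\infty\br(\calC)\hookrightarrow\Mod_{\calC}(\PrL))$. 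So it remains to match these two colimits.

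\textbf{Step 3: Matching the colimits (the main obstacle).} The content is a bar-construction / delooping identity: for a pointed connected space $X$ with loop space $\Omega X$, and a functor to a category with the relevant colimits, $\colim_X F \simeq \mathrm{bar}(\ast,\Omega X,\ast)$-type colimit, i.e. the colimit over $X\simeq B\Omega X$ of $F$ equals the colimit over $\Omega X$ of the loop data, provided $F$ restricted along $\Omega X\to \ast \to X$ carries the group structure. Applying this with $X = \Omega^\infty\Sigma A\simeq B\Omega^\infty A$ (using connectivity of $A$ so that $\Omega^\infty\Sigma A$ is connected and its loop space is $\Omega^\infty A$) identifies $\colim_{\Omega^\infty\Sigma A}(\cdots)$ with $\colim_{\Omega^\infty A}(\cdots)$ provided the two functors agree under $\Omega$, which is exactly the compatibility $\Omega(\Omega^\infty\br(\calC)\hookrightarrow\Mod_{\calC}(\PrL))\simeq (\Omega^\infty\pic(\calC)\hookrightarrow\Mod_{\calC}(\PrL))$ built from Step 2. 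The hard part is verifying this compatibility coherently — i.e. that the equivalence $\Omega\br(\calC)\simeq\pic(\calC)$ (on connective covers) intertwines the inclusion of $\Omega^\infty\pic(\calC)$ into $\calC$ (as invertible objects, then $\Mod$) with the inclusion of $\Omega^\infty\br(\calC)$ into $\Mod_{\calC}(\PrL)$ (as invertible module categories), as $\bbE_\infty$-maps of spaces mapping into $\Mod_{\calC}(\PrL)$. This is where I expect to lean directly on Lawson \cite{lawson2020adjoining}, quoting the identification of the Brauer space with the delooping of the Picard space together with the compatibility of the two Thom constructions; everything else is formal manipulation of colimits in $\CAlg(\Mod_{\calC}(\PrL))$. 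Finally, all equivalences produced are $\calC$-linear and symmetric monoidal because each step ($\Mod_{(-)}(\calC)$, colimits in $\CAlg(\Mod_{\calC}(\PrL))$, the delooping identity) is carried out in $\CAlg(\Mod_{\calC}(\PrL))$.
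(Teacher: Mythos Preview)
Your overall strategy—identify both sides as colimits and match them via the $\pic$/$\br$ relationship—is plausible in spirit, but Step~2 contains a real gap that Step~3 does not repair. The functor $\Mod_{(-)}(\calC)\colon \CAlg(\calC)\to\CAlg(\Mod_\calC(\PrL))$ does preserve colimits, but the diagram $\Omega^\infty A \xrightarrow{\Omega^\infty\rho} \Omega^\infty\pic(\calC)\hookrightarrow\calC$ does not land in $\CAlg(\calC)$: invertible objects are not $\bbE_\infty$-algebras, so the composite ``$\Omega^\infty\pic(\calC)\xrightarrow{\Mod_{(-)}(\calC)}\Mod_\calC(\PrL)$'' you write down does not typecheck. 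The Thom object $\unit[\rho]$ is a colimit taken in $\calC$ which \emph{acquires} an algebra structure from the $\bbE_\infty$-structure on the indexing space; it is not the colimit of a diagram of algebras, and hence the displayed identification $\Mod_{\unit[\rho]}(\calC)\simeq\colim_{\Omega^\infty A}\Mod_{(-)}(\calC)$ is not a formal consequence of colimit-preservation. What you would actually need is that $\Mod_{\unit[\rho]}(\calC)$ is the quotient of $\calC$ (in $\CAlg(\Mod_\calC(\PrL))$) by the $\Omega^\infty A$-action through tensoring by invertibles—a true statement, but essentially the content of the lemma rather than ``formal manipulation of colimits.''

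The paper's proof avoids this by arguing via universal properties rather than by computing colimits. For each $\calD\in\CAlg(\Mod_\calC(\PrL))$ one considers the (a priori non-commuting) diagram with rows $\Sigma A\to\Sigma\pic(\calC)\hookrightarrow\br(\calC)$ and $0\to\Sigma\pic(\calD)\hookrightarrow\br(\calD)$. Since $\Sigma\pic\hookrightarrow\br$ is a monomorphism, the space of fillings of the left square agrees with the space of fillings of the outer rectangle. As a Thom construction, $\Mod_{\unit[\rho]}(\calC)$ is initial among $\calD$ for which the left square commutes, while Lawson's \cite[Proposition~44]{lawson2020adjoining} identifies $\calC[\Sigma\rho]$ as initial among $\calD$ for which the outer rectangle commutes—so the two coincide. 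This is shorter and sidesteps the bookkeeping your approach would require; if you want to salvage the colimit argument, you should replace Step~2 by an explicit invocation of the universal property of Thom $\bbE_\infty$-algebras and its transport along $\Mod_{(-)}(\calC)$, at which point you are essentially reproducing the paper's argument.
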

\begin{proof}
    For any $\calD \in \CAlg(\Mod_\calC)$ we have a natural not-necessarily-commuting diagram
    \[\begin{tikzcd}
	{\Sigma A} && {\Sigma \pic(\calC)} & {\br(\calC)} \\
	0 && {\Sigma \pic(\calD)} & {\br(\calD).}
	\arrow[""{name=0, anchor=center, inner sep=0}, from=1-1, to=1-3]
	\arrow[from=1-3, to=2-3]
	\arrow[from=1-1, to=2-1]
	\arrow[""{name=1, anchor=center, inner sep=0}, from=2-1, to=2-3]
	\arrow[hook, from=1-4, to=2-4]
	\arrow[""{name=2, anchor=center, inner sep=0}, hook, from=1-3, to=1-4]
	\arrow[""{name=3, anchor=center, inner sep=0}, hook, from=2-3, to=2-4]
	\arrow["{?}"{description}, draw=none, from=0, to=1]
    \end{tikzcd}\]
    Note that the horizontal maps in the right square are monomorphisms, hence the space of fillings of the left square is equivalent to the space of fillings of the composite rectangle.
    As a Thom construction, $\Mod_{\unit[\rho]}(\calC) \in \CAlg(\Mod_\calC)$ is initial with respect to the left square commuting and hence initial also with respect to the composite rectangle commuting.
    The latter is precisely the universal property of $\calC[\Sigma \rho]$  \cite[Proposition 44]{lawson2020adjoining}. 
\end{proof}

\begin{lem}
    Let $\calE$ be a symmetric monoidal Goerss-Hopkins deformation and let $\rho \colon A \to \pic(\calE\tauinvbra)$ be a morphism of connective spectra. 
    Suppose that the lax symmetric monoidal composite
    \[\nu \circ \Omega^{\infty}(\rho) \colon \Omega^{\infty} A \xrightarrow{\Omega^\infty(\rho)} \Pic(\calE \tauinvbra) \hookrightarrow \calE\tauinvbra \xrightarrow{\nu} \calE,\]
    is strong monoidal and thus corresponds to a unique map $\nu \circ \rho \colon A \to \pic(\calE)$.
    Then $\calE[\Sigma(\nu \circ \rho)]$ is a symmetric monoidal Goerss-Hopkins deformation.
    Forming graded extensions in this way commutes with constructions $(1-5)$ from \cref{cor:generic-special-limits}.
    I particular we have canonical equivalences:
    \[\calE^\delta[\Sigma(\nu \circ \rho)] \simeq \calE[\Sigma(\nu \circ \rho)]^\delta, \qquad \qquad \ModCtau{m+1}(\calE[\Sigma(\nu \circ \rho)] ) \simeq \ModCtau{m+1}(\calE)[\Sigma(\nu \circ \rho)/\tau^{m+1}]\]
\end{lem}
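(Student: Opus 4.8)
The plan is to deduce everything from the universal property of the Thom construction $\calE[\Sigma(\nu\circ\rho)]$ together with the functoriality already established in \cref{cor:generic-special-limits}. First I would record that the hypothesis makes $\Omega^\infty A \xrightarrow{\Omega^\infty(\nu\circ\rho)} \Pic(\calE)\hookrightarrow \calE$ strong symmetric monoidal, so $\unit[\nu\circ\rho]\in\CAlg(\calE^{\ge 0})$ is a \emph{connective} $\bbE_\infty$-algebra; by \cref{lem:comparing-thom-constructions} there is a canonical $\calE$-linear symmetric monoidal equivalence $\calE[\Sigma(\nu\circ\rho)]\simeq \LMod_{\unit[\nu\circ\rho]}(\calE)$. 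Since $\unit[\nu\circ\rho]$ is connective, \cref{cor:modules-over-periodic} applies verbatim and shows $\LMod_{\unit[\nu\circ\rho]}(\calE)$ is a symmetric monoidal Goerss-Hopkins deformation, with connective part $\LMod_{\unit[\nu\circ\rho]}(\calE^{\ge 0})$. This settles the first assertion.

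For the compatibility with constructions $(1$--$5)$, the key point is that each of those five functors $F\colon \Mod_{\Filnc}(\PrL)\to \Mod_\calV(\PrL)$ is of the form $\calV\otimes_{\Filnc}(-)$ for a fixed $\calV\in\CAlg(\Mod_{\Filnc}(\PrL))$ (this is exactly how they are built in the proof of \cref{cor:generic-special-limits}), hence each is symmetric monoidal and colimit-preserving. The grading extension $\calE\mapsto\calE[\Sigma(\nu\circ\rho)]=\colim_{\Omega^\infty A}(\text{line bundles})$ is a colimit in $\Mod_\calE(\PrL)$ of dualizable (indeed invertible) objects, and more to the point it can itself be computed as $\calE\otimes_{\Filnc}\Filnc[\Sigma\rho_0]$ for an appropriate grading extension of the base — or, via \cref{lem:comparing-thom-constructions}, as $\Mod_{\unit[\nu\circ\rho]}(\calE)=\Mod_{\unit[\nu\circ\rho]}(\Filnc)\otimes_\Filnc\calE$. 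Thus I would rewrite $F(\calE[\Sigma(\nu\circ\rho)])\simeq \calV\otimes_\Filnc\Mod_{\unit[\nu\circ\rho]}(\Filnc)\otimes_\Filnc\calE$ and, using that relative tensor products over $\Filnc$ commute and that $\calV$ is idempotent-like in the relevant sense, reassemble this as $\Mod_{F_0(\unit[\nu\circ\rho])}(F(\calE))$ where $F_0$ is the induced map on algebras. Concretely: applying $(-)^\delta$, one uses $\delta(\unit[\nu\circ\rho])\simeq \unit[\nu\circ\rho]$, which holds because $\delta$ restricts to the identity on $\Pic(\Filnc)$ (the \emph{Warning} above) and hence on the Picard-graded colimit defining the grading extension; applying $\ModCtau{m+1}(-)$ one uses $\Ctau{m+1}\otimes\unit[\nu\circ\rho]\simeq \unit[\nu\circ\rho]/\tau^{m+1}$, i.e.~$\Ctau{m+1}\otimes_{\unit}\unit[\rho]$, giving the displayed formula $\ModCtau{m+1}(\calE[\Sigma(\nu\circ\rho)])\simeq\ModCtau{m+1}(\calE)[\Sigma(\nu\circ\rho)/\tau^{m+1}]$; the cases $\tauinvbra$, $\taunull{}$, $(-)^\heart$ are analogous with $A=\unit[\nu\circ\rho]$ base-changed along the respective symmetric monoidal functor $\Filnc\to\Sp$ (resp. $\Filnc^\heart$).

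I would organize this as: (i) produce the $\bbE_\infty$-algebra $\unit[\nu\circ\rho]$ and identify $\calE[\Sigma(\nu\circ\rho)]\simeq\LMod_{\unit[\nu\circ\rho]}(\calE)$ via \cref{lem:comparing-thom-constructions}; (ii) invoke \cref{cor:modules-over-periodic} for the Goerss-Hopkins structure; (iii) for each $F\in\{(1),\dots,(5)\}$ use the $\calV\otimes_\Filnc(-)$ description together with the naturality-of-modules square (\cite[Theorem 4.8.5.16]{HA}) to get $F(\LMod_{\unit[\nu\circ\rho]}(\calE))\simeq \LMod_{F_0\unit[\nu\circ\rho]}(F\calE)$; (iv) compute $F_0(\unit[\nu\circ\rho])$ in each case, where the two nontrivial computations are $\delta(\unit[\nu\circ\rho])\simeq\unit[\nu\circ\rho]$ and $\Ctau{m+1}\otimes\unit[\nu\circ\rho]\simeq\unit[\nu\circ\rho]/\tau^{m+1}$.

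The main obstacle, I expect, is step (iv) — specifically verifying $\delta(\unit[\nu\circ\rho])\simeq\unit[\nu\circ\rho]$ \emph{as $\bbE_\infty$-algebras} and not merely as objects. Since $\delta$ is symmetric monoidal and colimit-preserving and agrees with the identity on $\Pic(\Filnc)$, it sends the colimit $\colim_{\Omega^\infty A}$ defining $\unit[\rho]$ to the same colimit, but one must be careful that the $\bbE_\infty$-structure transported along this is the given one; the cleanest route is probably to observe that the whole construction $\rho\mapsto\unit[\rho]$ is natural in symmetric monoidal functors out of $\calE$ and that $\delta$, being a symmetric monoidal endofunctor restricting to the identity on Picard spectra, acts as the identity on the diagram $\Omega^\infty A\to\calE$ up to canonical equivalence. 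A similar care is needed to see that the various base-change identifications are compatible with the $\Filnc$- (resp. $\ModCtau{m+1}(\Filnc)$-) linear and $t$-exact structures, but these follow formally once the algebra-level identifications are in place.
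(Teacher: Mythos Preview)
Your approach coincides with the paper's: both reduce to the identification $\calE[\Sigma(\nu\circ\rho)]\simeq\Mod_{\unit[\nu\circ\rho]}(\calE)$ via \cref{lem:comparing-thom-constructions}, invoke \cref{cor:modules-over-periodic} for the Goerss--Hopkins structure, and then read off the compatibility with $(1)$--$(5)$ from this module description. The paper's proof of the second part is the single sentence ``follows immediately from the identification $\calE[\Sigma\rho]\simeq\Mod_{\unit[\rho]}(\calE)$'', so your steps (iii)--(iv) are an unpacking of what the paper leaves implicit.

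Two points of divergence are worth noting. First, the paper checks that $\unit[\nu\circ\rho]$ is \emph{periodic} (not merely connective), using that $\Ctau{}\otimes\unit[\nu\circ\rho]\simeq\colim_{\Omega^\infty A}\Ctau{}\otimes\nu(\rho(a))$ is discrete because each $\nu(\rho(a))$ is periodic and $A$ is discrete. Your observation that connectivity alone suffices for \cref{cor:modules-over-periodic} is correct; periodicity is not actually used elsewhere in the argument, so your weaker check is adequate.

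Second, your elaboration in (iii)--(iv) contains type errors that you should fix. The expression $\Mod_{\unit[\nu\circ\rho]}(\Filnc)\otimes_\Filnc\calE$ is ill-formed: $\unit[\nu\circ\rho]$ lives in $\calE$, not in $\Filnc$, so it cannot be used to form modules in $\Filnc$. Similarly, writing ``$\delta(\unit[\nu\circ\rho])\simeq\unit[\nu\circ\rho]$'' conflates the endofunctor $\delta\colon\Filnc\to\Filnc$ with the functor $(-)^\delta$ on deformations; the Warning about $\delta|_{\Pic(\Filnc)}$ is not directly relevant since $\nu\circ\rho$ lands in $\pic(\calE)$, not $\pic(\Filnc)$. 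The correct formulation, which you do arrive at under ``the cleanest route'', is: each functor $F$ in $(1)$--$(5)$ is of the form $\Mod_A(-)$ for some $A\in\CAlg(\Filnc)$, so $F(\Mod_{\unit[\nu\circ\rho]}(\calE))\simeq\Mod_{A\otimes\unit[\nu\circ\rho]}(\calE)\simeq\Mod_{\phi(\unit[\nu\circ\rho])}(F(\calE))$, where $\phi\colon\calE\to F(\calE)$ is the symmetric monoidal, colimit-preserving structure map; naturality of the Thom construction then identifies $\phi(\unit[\nu\circ\rho])$ with $\unit_{F(\calE)}[\phi\circ(\nu\circ\rho)]$. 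Rewrite (iii)--(iv) in these terms and the argument goes through.
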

\begin{proof}
    By \cref{cor:modules-over-periodic}, \cref{prop:periodic-generic-non-monoidal} and \cref{lem:comparing-thom-constructions} it suffices to check that $\unit[\rho]$ is periodic.
    To see this note that $\Ctau{} \otimes \unit[\rho] \simeq \Ctau{} \otimes \colim_{A} \rho(a) \simeq \colim_{A} \Ctau{} \otimes  \rho(a)$ which is discrete since each $\rho(a)$ is periodic and $A$ is discrete. 
    The second part of the claim follows immediately from the identification $\calE[\Sigma \rho] \simeq \Mod_{\unit[\rho]}(\calE)$. 
\end{proof}

\subsubsection{Potential stages}

The following corollary is a straightforward modification of \cref{prop:periodic-objects-characterizations}.
(See \cite[Lemma 4.5]{piotr-paul}.)

\begin{cor}[Pstragowski-VanKoughnett]\label{lem:characterization-potential-stages}
    Let $\calE$ be a Goerss-Hopkins deformation.
    For $X \in \ModCtau{n+1}(\calE^{\ge 0})$ the following are equivalent :    \begin{enumerate}
		\item 
		The natural map 
		$C(\tau^{k+1}) \otimes_{C(\tau^{n+1})} X \to X_{\le k}$ 
		is an equivalence for all $0 \le k \le n$.
		\item 
		The natural map 
		$C(\tau) \otimes_{C(\tau^{n+1})} X \to X_{\le 0}$ 
		is an equivalence.
	\end{enumerate}
\end{cor}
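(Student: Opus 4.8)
The implication $(1) \Rightarrow (2)$ is immediate, being the case $k = 0$ of $(1)$.

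For the converse I would mimic the proof of \cref{prop:periodic-objects-characterizations} (cf.\ \cite[Lemma 4.5]{piotr-paul}) and show by induction on $0 \le k \le n$ that the natural map $\phi_k \colon C(\tau^{k+1}) \otimes_{C(\tau^{n+1})} X \to X_{\le k}$ is an equivalence, the base case $k = 0$ being exactly $(2)$. For the inductive step, apply the second part of \cref{lem:basic-fiber-seq} with $n+1$ replaced by $k+1$ to the $C(\tau^{k+1})$-module $Y \coloneq C(\tau^{k+1}) \otimes_{C(\tau^{n+1})} X$; since $C(\tau^j) \otimes_{C(\tau^{k+1})} Y \simeq C(\tau^j) \otimes_{C(\tau^{n+1})} X$ for every $j \le k+1$, this gives a cofiber sequence
\[
\Sigma^k \big( C(\tau) \otimes_{C(\tau^{n+1})} X \big)[-k] \too C(\tau^{k+1}) \otimes_{C(\tau^{n+1})} X \too C(\tau^k) \otimes_{C(\tau^{n+1})} X .
\]
By naturality of truncation and of the $\tau$-power quotients this maps, via $\phi_k$ and $\phi_{k-1}$, onto the Postnikov cofiber sequence $\Sigma^k \pi_k^\heart X \to X_{\le k} \to X_{\le k-1}$ of the induced $t$-structure on $\ModCtau{n+1}(\calE^{\ge 0})$. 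Since $\phi_{k-1}$ is an equivalence by the inductive hypothesis and a map of cofiber sequences which is an equivalence on two of the three terms is an equivalence on the third, it remains to see that the induced map on fibers is an equivalence.

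Using $(2)$, the source of this fiber map is $\Sigma^k (X_{\le 0})[-k]$; since $(-)[-k]$ is $t$-exact for the diagonal $t$-structure, this lies in $\calE^{[k,k]}$, so the fiber map $\Sigma^k(X_{\le 0})[-k] \to \Sigma^k \pi_k^\heart X$ is an equivalence if and only if $(\pi_0^\heart X)[-k] \simeq \pi_k^\heart X$ in $\calE^\heart$. This ``truncated periodicity'' I would extract, just as in \cref{rem:periodic-on-homotopy}, from the long exact sequence in $\pi_\star^\heart$ attached to the cofiber sequence $\Sigma X[-1] \xrightarrow{\tau} X \to C(\tau) \otimes_{C(\tau^{n+1})} X \simeq X_{\le 0}$: because $X_{\le 0}$ is concentrated in degree $0$, the connecting maps force every $\tau$-multiplication $(\pi_m^\heart X)[-1] \to \pi_{m+1}^\heart X$ with $m \ge 0$ to be an isomorphism, and iterating gives the claim (and, since $\tau^{n+1}$ acts by zero on $X$, also $\pi_j^\heart X = 0$ for $j > n$, as one expects for a potential $n$-stage). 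The part I expect to demand the most care is keeping the bookkeeping of suspensions versus grading shifts straight and checking that the comparison of cofiber sequences really is natural enough to identify its middle vertical map with $\phi_k$; this is precisely what is verified in \cite[Lemma 4.5]{piotr-paul}, whose argument transposes to the present setting essentially verbatim after replacing $\calE^{\ge 0}$ by $\ModCtau{n+1}(\calE^{\ge 0})$ and \cref{lem:basic-fiber-seq} by its relative version.
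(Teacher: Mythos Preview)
Your proposal is correct and is precisely the ``straightforward modification of \cref{prop:periodic-objects-characterizations}'' that the paper invokes in lieu of a proof; the paper gives no further argument beyond citing \cite[Lemma 4.5]{piotr-paul}, and your induction via the rotated cofiber sequence from \cref{lem:basic-fiber-seq} together with the $\pi_\ast^\heart$-periodicity extracted from $(2)$ is exactly how that reference proceeds.
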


\begin{defn}
	A \hl{\textit{potential $n$-stage}} in $\calE^{\ge 0}$ is a $C(\tau^{n+1})$-module 
	$X \in \ModCtau{n+1}(\calE^{\ge 0})$
	which satisfies the equivalent conditions of \cref{lem:characterization-potential-stages}.
\end{defn}

\begin{obs}\label{obs:homotopy-group-periodic}
    Similarly to \cref{rem:periodic-on-homotopy}, one can check whether a $\Ctau{n+1}$-module is a potential $n$-stage on homotopy groups\footnote{Unlike \cref{rem:periodic-on-homotopy}, separation is not required here since all objects are bounded above.}.
    Indeed, $X \in \ModCtau{n+1}(\calE^{\ge 0})$ is a potential $n$-stage if and only if the natural map induces an isomorphism
    $\pi^\heart_{0}(X)[\tau]/(\tau^{n+1}) \simeq \pi^\heart_\ast(X)$.
\end{obs}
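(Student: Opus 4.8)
The plan is to reduce, by Corollary~\ref{lem:characterization-potential-stages}, to the single assertion that the natural map $\Ctau{}\otimes_{\Ctau{n+1}}X\to X_{\le 0}$ is an equivalence, and to analyse this base‑change on homotopy. The key input is that, since $\Ctau{n+1}\simeq\unit/\tau^{n+1}$, its residue quotient $\Ctau{}\simeq\unit/\tau$ admits as a $\Ctau{n+1}$‑module the $2$‑periodic free resolution
\[\cdots\xrightarrow{\ \cdot\tau\ }\Ctau{n+1}\xrightarrow{\ \cdot\tau^{n}\ }\Ctau{n+1}\xrightarrow{\ \cdot\tau\ }\Ctau{n+1};\]
base‑changing to $\calE$ and tensoring with $X$ therefore identifies $\Ctau{}\otimes_{\Ctau{n+1}}X$ with the totalization of $\cdots\xrightarrow{\tau}X\xrightarrow{\tau^{n}}X\xrightarrow{\tau}X$, with an associated spectral sequence whose $E_2$‑page is the homology of the complex $\big(\pi^\heart_\ast X,\ \tau,\ \tau^{n},\ \tau,\ \dots\big)$ and which converges to $\pi^\heart_\ast\big(\Ctau{}\otimes_{\Ctau{n+1}}X\big)$; in particular its entry in homological degree $0$ is $\pi^\heart_\ast X/\tau$.

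For the reverse implication, suppose $\pi^\heart_0(X)[\tau]/(\tau^{n+1})\to\pi^\heart_\ast X$ is an isomorphism. Then a direct inspection shows the complex $\big(\pi^\heart_\ast X,\ \tau,\ \tau^{n},\dots\big)$ is exact above homological degree $0$, so the spectral sequence degenerates and $\Ctau{}\otimes_{\Ctau{n+1}}X$ is discrete, equal to $\pi^\heart_\ast X/\tau=\pi^\heart_0 X$. Since $X$ is connective and $X_{\le 0}\simeq\pi^\heart_0 X$, the natural map $\Ctau{}\otimes_{\Ctau{n+1}}X\to X_{\le 0}$ is an isomorphism on $\pi^\heart_\ast$ between bounded‑above objects, hence an equivalence, and $X$ is a potential $n$‑stage by Corollary~\ref{lem:characterization-potential-stages}.

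For the forward implication I would induct on $n$, the case $n=0$ being immediate. If $X$ is a potential $n$‑stage with $n\ge 1$, then by Corollary~\ref{lem:characterization-potential-stages} we have $X\simeq X_{\le n}$, $\Ctau{}\otimes_{\Ctau{n+1}}X\simeq X_{\le 0}=\pi^\heart_0 X$, and $\Ctau{n}\otimes_{\Ctau{n+1}}X\simeq X_{\le n-1}$, and one checks that $X_{\le n-1}$ is a potential $(n-1)$‑stage; by the induction hypothesis $\tau^{k}\colon\pi^\heart_0 X\to\pi^\heart_k X$ is an isomorphism for $0\le k\le n-1$ (the map $X\to X_{\le n-1}$ is $\Ctau{n+1}$‑linear, so this transfers from $X_{\le n-1}$ to $X$). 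Feeding these identities into the cofiber sequence $X\to X_{\le n-1}\to\Sigma^{n+1}\big(\Ctau{}\otimes_{\Ctau{n+1}}X[-n]\big)\simeq\Sigma^{n+1}\pi^\heart_0(X)[-n]$ of Observation~\ref{lem:basic-fiber-seq}, the associated long exact sequence together with the $(n-1)$‑truncatedness of $X_{\le n-1}$ shows the connecting map $\pi^\heart_0(X)[-n]\to\pi^\heart_n X$ is an isomorphism; identifying this connecting map with multiplication by $\tau^{n}$ then completes the induction. (Alternatively one can run this direction directly through the spectral sequence of the first paragraph, using the $2$‑periodicity to see that a $\pi^\heart_\ast X$ on which $\tau$ fails to act regularly contributes classes to $E_2$ in unboundedly many stems which no differential can erase, contradicting the boundedness of $\Ctau{}\otimes_{\Ctau{n+1}}X\simeq X_{\le 0}$.)

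The main obstacle is the forward implication --- specifically, recognising the boundary map of Observation~\ref{lem:basic-fiber-seq} as $\tau^{n}$‑multiplication (equivalently, in the spectral‑sequence approach, controlling the possible differentials). Some care is also needed because $\calE^\heart$ is only a general Grothendieck abelian category --- so $\pi^\heart_\ast X$ cannot be split into cyclic summands and the periodicity estimate must be carried out internally --- and because of the two gradings (stem and weight); finally, the passage from a $\pi^\heart_\ast$‑isomorphism to an equivalence uses only boundedness above, not separatedness, as flagged in the footnote to the statement.
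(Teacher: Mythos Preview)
The paper states this as an Observation without proof, so there is no argument to compare against directly. Your approach is essentially correct, and the forward direction by induction via the cofiber sequence of Observation~\ref{lem:basic-fiber-seq} (in the form of Observation~\ref{cor:basic-fib-seq-per-potential}) is the natural argument. Two remarks:

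For the reverse direction, the periodic free resolution and spectral sequence work but are heavier than needed. A more direct route, symmetric to your forward argument, uses the two cofiber sequences
\[
\Sigma^{n}\Ctau{}[-n]\xrightarrow{\ \tau^{n}\ }\Ctau{n+1}\longrightarrow\Ctau{n}
\qquad\text{and}\qquad
\Sigma\,\Ctau{n}[-1]\xrightarrow{\ \tau\ }\Ctau{n+1}\longrightarrow\Ctau{},
\]
both obtained from the octahedral axiom applied to the factorisation $\tau^{n+1}=\tau^{n}\cdot\tau$. Tensoring over $\Ctau{n+1}$ with $X$ and chasing the resulting long exact sequences, the hypothesis on $\pi^\heart_\ast X$ lets you read off $\pi^\heart_\ast\big(\Ctau{}\otimes_{\Ctau{n+1}}X\big)$ directly. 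This also dissolves the obstacle you flag: the boundary map \emph{is} multiplication by $\tau^{n}$ by construction of the octahedral, so no separate identification is needed.

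Your caution about boundedness is warranted, but note a residual subtlety: in the base case $n=0$ of the reverse direction you are asserting that a $\Ctau{}$-module $X\in\calE^{\ge 0}$ with $\pi^\heart_k X=0$ for $k>0$ is discrete, which in a general Goerss--Hopkins deformation requires left-separation, not just right-completeness. The footnote (``all objects are bounded above'') is best read as referring to potential $n$-stages, which are $n$-truncated by condition~(1) of Corollary~\ref{lem:characterization-potential-stages}; for the reverse implication one still needs either separation or an a priori bound on $X$, and in every application in the paper such a bound is available.
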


The cofiber sequences from \cref{lem:basic-fiber-seq} take a particularly simple form when $X$ is either periodic or a potential $n$-stage.
We record this for future use.

\begin{obs}\label{cor:basic-fib-seq-per-potential}
    When $X \in \calE^{\ge 0}$ is periodic there is a canonical cofiber sequence:
    \[ X \too X_{\le n-1} \too \Sigma^{n+1} X_{\le 0}[-n]. \]
    Similarly, when $X \in \ModCtau{n+1}(\calE^{\ge 0})$ is a potential $n$-stage there is a canonical cofiber sequence:
    \[ X \too X_{\le n-1} \too \Sigma^{n+1} X_{\le 0}[-n].\]
\end{obs}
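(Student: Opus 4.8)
The statement to prove is \cref{cor:basic-fib-seq-per-potential}, which asserts that for $X \in \calE^{\ge 0}$ periodic (resp.\ $X \in \ModCtau{n+1}(\calE^{\ge 0})$ a potential $n$-stage) there is a canonical cofiber sequence $X \to X_{\le n-1} \to \Sigma^{n+1} X_{\le 0}[-n]$. The plan is to simply feed the characterizations of periodicity (\cref{prop:periodic-objects-characterizations}) and of potential $n$-stages (\cref{lem:characterization-potential-stages}) into the universal cofiber sequences supplied by \cref{lem:basic-fiber-seq}.

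\emph{The periodic case.} First I would invoke \cref{lem:basic-fiber-seq} with the cofiber sequence $\unit \to \Ctau{n} \to \Sigma^{n+1}\Ctau{}[-n]$ in $\Fil$, tensored with $X$, to obtain a canonical cofiber sequence
\[ X \too \Ctau{n} \otimes X \too \Sigma^{n+1} X[-n].\]
Since $X$ is periodic, condition $(3)$ of \cref{prop:periodic-objects-characterizations} identifies $\Ctau{n} \otimes X \simeq \Ctau{(n-1)+1}\otimes X \simeq X_{\le n-1}$; I should double-check the index bookkeeping here, since \cref{prop:periodic-objects-characterizations}$(3)$ reads $\Ctau{k+1}\otimes X \simeq X_{\le k}$, so with $k = n-1$ we get exactly $\Ctau{n}\otimes X \simeq X_{\le n-1}$. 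Similarly, condition $(4)$ gives $\Ctau{}\otimes X \simeq X_{\le 0}$, and twisting by $\Sigma^{n+1}(-)[-n]$ turns the last term $\Sigma^{n+1}X[-n]$ into $\Sigma^{n+1}X_{\le 0}[-n]$ — but I need to be slightly careful: the last term in the displayed cofiber sequence is literally $\Sigma^{n+1}X[-n]$, not $\Sigma^{n+1}(\Ctau{}\otimes X)[-n]$. The point is that $\Sigma^{n+1}\Ctau{}[-n]\otimes X = \Sigma^{n+1}(\Ctau{}\otimes X)[-n]$ before we simplify, so the honest cofiber sequence from \cref{lem:basic-fiber-seq} already has third term $\Sigma^{n+1}\Ctau{}[-n]\otimes X$; rereading \cref{lem:basic-fiber-seq}, it states the third term as $\Sigma^{n+1}X[-n]$ because $\Ctau{}$ has been absorbed — actually no, looking again the fiber sequence in \cref{lem:basic-fiber-seq} genuinely is $X \to \Ctau{n}\otimes X \to \Sigma^{n+1}X[-n]$, which is the cofiber sequence obtained from $\unit \to \Ctau{n}\to \Sigma^{n+1}\Ctau{}[-n]$ \emph{after} identifying $\cofib(\unit \to \Ctau{n})$; so the third term should really be $\Sigma^{n+1}\Ctau{}\otimes X[-n]$. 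Either way, once $X$ is periodic, $\Ctau{}\otimes X \simeq X_{\le 0}$, and the sequence becomes $X \to X_{\le n-1}\to \Sigma^{n+1}X_{\le 0}[-n]$ as claimed, all identifications being canonical since they come from $t$-structure truncation maps and the canonical comparison maps of \cref{prop:periodic-objects-characterizations}.

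\emph{The potential $n$-stage case.} Here I would use the second cofiber sequence in \cref{lem:basic-fiber-seq}, namely for $X \in \ModCtau{n+1}(\calE^{\ge 0})$,
\[ X \too \Ctau{n}\otimes_{\Ctau{n+1}} X \too \Sigma^{n+1}\big(\Ctau{}\otimes_{\Ctau{n+1}} X[-n]\big),\]
and then invoke \cref{lem:characterization-potential-stages}: condition $(1)$ with $k = n-1$ gives $\Ctau{n}\otimes_{\Ctau{n+1}} X \simeq X_{\le n-1}$ (noting $n-1 \le n$ so this is in range), and condition $(2)$ gives $\Ctau{}\otimes_{\Ctau{n+1}} X \simeq X_{\le 0}$. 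Substituting yields $X \to X_{\le n-1}\to \Sigma^{n+1}X_{\le 0}[-n]$, again with all maps canonical. I should also remark that when $n = 0$ the two statements coincide with characterization $(4)$/$(2)$ directly, and there is no edge-case subtlety since $X_{\le -1}$ would appear only if one pushed to $k < 0$, which we do not.

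\emph{Main obstacle.} There is essentially no hard mathematics here — the whole content is already in \cref{lem:basic-fiber-seq}, \cref{prop:periodic-objects-characterizations}, and \cref{lem:characterization-potential-stages}. The only thing requiring genuine care is the \emph{index bookkeeping} in matching the exponent $n$ appearing in $\Ctau{n}$ against the truncation degree $k$ appearing in $X_{\le k}$ (off-by-one between $\Ctau{k+1}\otimes X \simeq X_{\le k}$), together with keeping track of the shift $\Sigma^{n+1}(-)[-n]$ and confirming that the $\Ctau{}$ sitting inside the third term is the one that gets simplified to $X_{\le 0}$ via periodicity / the potential-stage condition. Once the indices are pinned down, the proof is a two-line substitution in each case, and canonicity is automatic because every map in sight is either a unit map of a free–forgetful adjunction or a $t$-structure truncation comparison map.
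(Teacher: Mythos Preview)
Your approach is exactly the paper's: the statement is labeled an \emph{observation} and the only justification given is the preceding sentence, ``The cofiber sequences from \cref{lem:basic-fiber-seq} take a particularly simple form when $X$ is either periodic or a potential $n$-stage,'' i.e.\ one substitutes the identifications from \cref{prop:periodic-objects-characterizations} (resp.\ \cref{lem:characterization-potential-stages}) into the cofiber sequences of \cref{lem:basic-fiber-seq}. Your index bookkeeping ($\Ctau{k+1}\otimes X \simeq X_{\le k}$ with $k=n-1$, etc.) is correct, and your handling of the potential $n$-stage case is clean; the discrepancy you flag about whether the third term carries a $\Ctau{}$ is a genuine typographical inconsistency in the paper's statement of \cref{lem:basic-fiber-seq}, not a gap in your argument.
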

	\begin{cor}
    Let $\calE$ be an $\calO$-monoidal Goerss-Hopkins deformation.
    Then there is a canonical pullback square of \operads{}
    \[\begin{tikzcd}
    	\calE\tauinvbra^\otimes & (\calE^{\ge 0})^\otimes \\
    	{(\calE^\heartsuit)^\otimes} & {\ModCtau{}(\calE^{\ge 0})^\otimes.}
    	\arrow["{\Ctau{} \otimes(-)}", from=1-2, to=2-2]
    	\arrow["\nu", hook, from=1-1, to=1-2]
    	\arrow[from=1-1, to=2-1]
    	\arrow[""{name=0, anchor=center, inner sep=0}, hook, from=2-1, to=2-2]
    	\arrow["\lrcorner"{anchor=center, pos=0.125}, draw=none, from=1-1, to=0]
    \end{tikzcd}\]
\end{cor}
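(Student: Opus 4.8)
The plan is to reduce the $\calO$-operadic statement to the underlying statement of categories. Recall that the forgetful functor $\Op_\infty \to \Cat_\infty$ (take an $\infty$-operad to its underlying category, i.e.~evaluate at $\langle 1\rangle$), together with the functors recording the multi-mapping spaces, jointly detect limits. More precisely, a commuting square of $\infty$-operads is a pullback if and only if the underlying square of categories is a pullback \emph{and} for each tuple $(X_1,\dots,X_n;Y)$ of objects the induced square of multi-mapping spaces is a pullback. So the claim has two parts: the underlying square is cartesian, and the multi-mapping-space squares are cartesian. I would set both up from the already-established description of the synthetic analog.

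First I would treat the underlying categories. The underlying category of $\calE\tauinvbra^\otimes$ is $\calE\tauinvbra$, of $(\calE^{\ge 0})^\otimes$ is $\calE^{\ge 0}$, of $\ModCtau{}(\calE^{\ge 0})^\otimes$ is $\ModCtau{}(\calE^{\ge 0}) \simeq \Grnc\text{-linear thing}$, and of $(\calE^\heart)^\otimes$ is $\calE^\heart$. By \cref{prop:periodic-generic-non-monoidal} the functor $\nu\colon \calE\tauinvbra \hookrightarrow \calE^{\ge 0}$ is fully faithful with essential image the periodic objects, and by \cref{prop:periodic-objects-characterizations}(4) an object $X \in \calE^{\ge 0}$ is periodic precisely when $\Ctau{}\otimes X \to X_{\le 0}$ is an equivalence, i.e.~precisely when $\Ctau{}\otimes X$ lies in $\calE^\heart \subseteq \ModCtau{}(\calE^{\ge 0})$ (using that $X_{\le 0} = \pi_0^\heart X$ for connective $X$, hence discrete). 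Thus the periodic objects are exactly the pullback $\calE^\heart \times_{\ModCtau{}(\calE^{\ge 0})} \calE^{\ge 0}$, which gives the cartesian square on underlying categories. (One must also check $\calE^\heart \hookrightarrow \ModCtau{}(\calE^{\ge 0})$ really is the inclusion of discrete objects as a full subcategory, compatibly with the $\Ctau{}\otimes(-)$ functor; this is the standard fact that the heart of a $t$-structure includes into any localization at $\Ctau{}$.)

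Next, the multi-mapping spaces. Fix periodic objects $X_1,\dots,X_n$ and a periodic object $Y$. The operad $(\calE^{\ge 0})^\otimes$ has $\Mul_{(\calE^{\ge 0})^\otimes}(X_1,\dots,X_n;Y) \simeq \Map_{\calE^{\ge 0}}(X_1\otimes\cdots\otimes X_n, Y)$, and likewise for the other three corners after applying $\Ctau{}\otimes(-)$ or $\nu$ appropriately. Since $\nu$ is fully faithful, $\Map_{\calE\tauinvbra}(X_1\otimes\cdots\otimes X_n,Y) \simeq \Map_{\calE^{\ge 0}}(\nu(X_1\otimes\cdots\otimes X_n),\nu Y)$; the point is then that $\nu(X_1\otimes\cdots\otimes X_n)$ and $\nu X_1 \otimes \cdots\otimes \nu X_n$ have the same mapping spaces \emph{into a periodic object}. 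I would prove this via the cofiber sequence of \cref{cor:basic-fib-seq-per-potential} (or directly \cref{lem:basic-fiber-seq}): tensoring $X_1\otimes\cdots\otimes X_n$ with $\unit \to \Ctau{n}\to \Sigma^{n+1}\Ctau{}[-n]$ and mapping into $Y$, one checks the ``error term'' $\Map_{\calE^{\ge 0}}(\Sigma^\bullet(\cdots)[-\bullet], Y)$ vanishes because $Y$ is periodic (so $\Map(\Sigma^j Z[-j], Y) \simeq \Omega^j \Map(Z, Y_{\ge j})$-type arguments, or more cleanly: periodic objects are local for the relevant class of $\tau$-equivalences). This identifies $\Map_{\calE^{\ge 0}}(\nu(X_1\otimes\cdots),\nu Y)$ with the fiber product $\Map_{\calE^\heart}(\Ctau{}\otimes X_1\otimes\cdots, \Ctau{}\otimes Y) \times_{\Map_{\ModCtau{}}(\cdots)} \Map_{\calE^{\ge 0}}(X_1\otimes\cdots, Y)$, which is exactly the cartesian square of multi-mapping spaces.

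The main obstacle I anticipate is bookkeeping the monoidal coherence, i.e.~genuinely producing the square \emph{of $\infty$-operads} rather than just a levelwise statement: one needs $\nu$ as a \emph{lax} symmetric monoidal functor (hence a map of operads) with the stated essential image, and one needs the pullback of operads to be formed compatibly with all the structure maps. The cleanest route is probably to invoke that $\Op_\infty \to \Fun(\mathbf{something},\Cat_\infty)$ detects limits and pullbacks, reducing to the fiberwise check above; alternatively one phrases the whole thing as a pullback of the Segal-space/$\Fin_\ast$-diagram presentations $\calE\tauinvbra,\calE^{\ge 0},\calE^\heart,\ModCtau{}(\calE^{\ge 0})\colon \Fin_\ast \to \Cat_\infty$ and checks the pullback objectwise in $\Fin_\ast$, where each value reduces to (a product of copies of) the underlying-category statement already established. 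Everything else is a routine unwinding of \cref{prop:periodic-generic-non-monoidal}, \cref{prop:periodic-objects-characterizations}, and \cref{cor:basic-fib-seq-per-potential}.
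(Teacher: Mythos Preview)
Your approach is correct in outline but takes a substantially more hands-on route than the paper. The paper's proof is a three-liner: since $\tau^{-1}\colon \calE^{\ge 0}\to \calE\tauinvbra$ is a symmetric monoidal reflective localization, \cite[Proposition~2.2.1.9]{HA} gives immediately that the right adjoint $\nu$ is fully faithful \emph{as a map of \operads{}}---i.e.\ on all multi-mapping spaces, not just arity one. The bottom inclusion $(\calE^\heart)^\otimes\hookrightarrow \ModCtau{}(\calE^{\ge 0})^\otimes$ is likewise fully faithful, so the pullback is a full sub-operad of $(\calE^{\ge 0})^\otimes$, and the comparison map from $\calE\tauinvbra^\otimes$ is an equivalence as soon as it is essentially surjective, which is exactly \cref{prop:periodic-objects-characterizations}(4).

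What you are doing in your multi-mapping-space paragraph is effectively re-proving the content of \cite[2.2.1.9]{HA} by hand: the statement that $\nu X_1\otimes\cdots\otimes\nu X_n$ and $\nu(X_1\otimes\cdots\otimes X_n)$ have the same maps into periodic targets is precisely the assertion that the lax structure map of $\nu$ is a $\tau^{-1}$-local equivalence. Your cofiber-sequence sketch for this is workable but overcomplicated; the one-line version is the adjunction $\Map_{\calE^{\ge 0}}(Z,\nu Y)\simeq \Map_{\calE\tauinvbra}(\tau^{-1}Z,Y)$ together with $\tau^{-1}$ being strong monoidal and $\tau^{-1}\nu\simeq\id$. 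Once you have full faithfulness of both horizontal maps, the ``pullback of multi-mapping spaces'' check you describe at the end becomes vacuous (both horizontal maps are equivalences on multi-mapping spaces, so any commuting square is a pullback), and your final fiber-product identification is not actually needed. The upshot: your strategy works, but citing the compatible-localization result from \cite{HA} collapses the entire second half of your argument to a single reference.
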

\begin{proof}
    By \cref{prop:periodic-generic-non-monoidal} the adjunction $\tau^{-1} \colon \calE^{\ge 0} \adj \calE \colon \nu$ is a reflective localization.
    Since $\tau^{-1}$ is symmetric monoidal it is "compatible" in the sense of \cite[Definition 2.2.1.6]{HA} and thus \cite[Proposition 2.2.1.9]{HA} implies that the top map is fully faithful.
    The comparison map to the pullback is esssentially surjective by \cref{prop:periodic-generic-non-monoidal} hence an equivalence.
\end{proof}

\begin{defn}\label{defn:GH-tower}
    Let $\calO$ be an \operad{} and $\calE$ an $\calO$-monoidal Goerss-Hopkins deformation.
    The \hl{\textit{operadic Goerss-Hopkins tower}} of $\calE$ is the tower of \operads{} over $\calO$:
    \[ \calE\tauinvbra^\otimes
    \too \cdots \too \calM^\otimes_1(\calE)	\xrightarrow{C(\tau) \otimes_{C(\tau^2)} (-)} \calM^\otimes_0(\calE) \simeq \left(\calE^{\heartsuit}\right)^\otimes \in \Op_{\infty/ \calO},\]
    where $\calM^\otimes_n(\calE)$ is defined as the pullback:
    \[\begin{tikzcd}
	{\hl{\calM^\otimes_n(\calE)}} & {\ModCtau{n+1}\left(\calE^{\ge 0}\right)^\otimes} \\
	{ \left(\calE^{\heartsuit}\right)^\otimes} & {\ModCtau{}(\calE^{\ge 0})^\otimes.}
	\arrow[hook, from=1-1, to=1-2]
	\arrow["{\Ctau{} \otimes_{\Ctau{n+1}} (-)}", from=1-2, to=2-2]
	\arrow[""{name=0, anchor=center, inner sep=0}, hook, from=2-1, to=2-2]
	\arrow[from=1-1, to=2-1]
	\arrow["\lrcorner"{anchor=center, pos=0.125}, draw=none, from=1-1, to=0]
    \end{tikzcd}\]
    Equivalently, $\calM^\otimes_n(\calE) \subseteq \ModCtau{n+1}\left(\calE\right)^\otimes$ is the full sub-operad spanned by the potential $n$-stages.
\end{defn}

\begin{rem}
    \cref{defn:GH-tower} naturally recovers the towers defined by Pstragowski-VanKoughnett \cite[Definition 5.1]{piotr-paul}.
    Indeed, suppose $\calE$ is symmetric monoidal and note that for any \operad{} $\calO\in \Op$ the functor 
    $\Alg_\calO(-) \colon \Op \to \Cat_\infty$ preserves limits, hence
    \begin{align*}
        \Alg_{\calO}(\calM_n^\otimes(\calE)) & \simeq  \Alg_\calO(\calE^\heartsuit) \times_{\Alg_\calO(\ModCtau{}(\calE^{\ge 0}))} \Alg_{\calO}(\ModCtau{n+1}(\calE^{\ge 0})) \\
        & \simeq \calE^\heartsuit
        \times_{\ModCtau{}(\calE^{\ge 0})} \ModCtau{n+1}(\calE^{\ge 0}) \times_{\ModCtau{n+1}(\calE^{\ge 0})}\Alg_{\calO}(\ModCtau{n+1}(\calE^{\ge 0})) \\
        & \simeq \calM_n(\calE) \times_{\ModCtau{n+1}(\calE^{\ge 0})} \Alg_{\calO}(\ModCtau{n+1}(\calE^{\ge 0})).
    \end{align*}
    This identifies 
    $\Alg_\calO(\calM_n^\otimes(\calE^{\ge 0}))$ with the full subcategory of $\Alg_\calO(\ModCtau{n+1}(\calE^{\ge 0}))$ spanned by $\calO$-algebras whose underlying object is a potential $n$-stage.
\end{rem}

\subsubsection{Convergence}
We now study the convergence of the Goerss-Hopkins tower. 
A Goerss-Hopkins deformation $\calE$ is said to be \hl{\textit{complete}} if $\calE^{\ge 0}$ is Postnikov-complete in the sense of \cite[Definition A.7.2.1]{SAG}, i.e.~if $\calE^{\ge 0} \to \lim_n \calE^{[0,n]}$ is an equivalence.
We shall see that completeness of $\calE$ implies convergence of its $\tau$-adic tower and consequently of its Goerss-Hopkins tower $\calM^\otimes_\bullet(\calE)$. 

\begin{lem}\label{lem:truncated-objects-are-C(tau)-modules}
    Let $\calE$ denote a Goerss-Hopkins deformation.
    The functors
    \[C(\tau^{n})\otimes (-) \colon \calE^{\ge 0} \to \ModCtau{n}(\calE^{\ge 0}), \quad \text{ and } \quad C(\tau^{n})\otimes_{C(\tau^{n+1})}(-) \colon \ModCtau{n+1}(\calE^{\ge 0}) \too \ModCtau{n}(\calE^{\ge 0}), \]
    induce an equivalence on $(n-1)$-truncated objects.
\end{lem}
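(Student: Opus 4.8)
The plan is to recognize both functors as base-change functors along maps of connective $\bbE_\infty$-algebras in $\Fil$, and then play them off against the fiber sequences of \cref{lem:basic-fiber-seq}. Write $F\coloneq B\otimes_A(-)\colon \LMod_A(\calE^{\ge 0}) \to \LMod_B(\calE^{\ge 0})$ for base change along a map $A\to B$, with right adjoint $U$ the restriction of scalars. The first functor of the statement is $F$ for $(A,B)=(\unit,\Ctau{n})$, identifying $\LMod_{\unit}(\calE^{\ge 0})$ with $\calE^{\ge 0}$; the second is $F$ for $(A,B)=(\Ctau{n+1},\Ctau{n})$. Since $\unit$, $\Ctau{n}$ and $\Ctau{n+1}$ are connective $\bbE_\infty$-algebras in $\Fil$, every module category here is Grothendieck prestable for the $t$-structure whose connective part is detected by the underlying object of $\calE^{\ge 0}$; in particular each $U$ is conservative and $t$-exact, and each $F$ is right $t$-exact.

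The one honest computation is a connectivity estimate. For connective $X$, \cref{lem:basic-fiber-seq} exhibits the unit of $F\dashv U$ as the first map of a cofiber sequence $X \to B\otimes_A X \to \Sigma^{n+1}Y[-n]$, where $Y=X$ in the first case and $Y=\Ctau{}\otimes_{\Ctau{n+1}}X$ in the second; $Y$ is connective, trivially in the first case and in the second because base change along $\Ctau{n+1}\to\Ctau{}$ is right $t$-exact. Now $\unit[-n]$ is connective — from the definition $\unit[-n]^s=\Sigma^{-n}\bbS$ for $s\le -n$ and vanishes otherwise, and $\Sigma^{-n}\bbS$ is $s$-connective for $s\le -n$ — so the diagonal axiom of a Goerss--Hopkins deformation ($\Fil\otimes\calE^{\ge 0}\subseteq\calE^{\ge 0}$) gives $Y[-n]=\unit[-n]\otimes Y\in\calE^{\ge 0}$, whence $\Sigma^{n+1}Y[-n]\in\calE^{\ge n+1}$. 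Therefore $\pi_k$ of the cofiber term vanishes for all $k\le n$, and the long exact sequence forces the unit $X\to B\otimes_A X$ to be an isomorphism on $\pi_k$ for every $k\le n-1$; equivalently $\tau_{\le n-1}$ carries it to an equivalence. I would stress that one cannot simply apply $\tau_{\le n-1}$ to the triangle, since truncation is not exact: the content is precisely that the cofiber is $(n{+}1)$-connective — one degree better than the naive bound — which is what makes the estimate valid in degree $n-1$ as well.

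It remains to package this formally. Since each $U$ is $t$-exact it restricts to $U^{\le n-1}\colon \LMod_B(\calE^{\ge 0})^{\le n-1}\to\LMod_A(\calE^{\ge 0})^{\le n-1}$, and I claim this is an equivalence with inverse $G\coloneq\tau_{\le n-1}\circ F$. One composite is immediate from the estimate: $U^{\le n-1}G(X)\simeq\tau_{\le n-1}(B\otimes_A X)\simeq X$ for $(n-1)$-truncated $X$. For the other, given $(n-1)$-truncated $M$, factor the counit $\epsilon_M\colon B\otimes_A UM\to M$ through the truncation unit to obtain $\bar\epsilon_M\colon G(U^{\le n-1}M)\to M$; applying the $t$-exact conservative $U$ and combining the triangle identity $U\epsilon_M\circ\eta_{UM}=\mathrm{id}$ with the fact that $\eta_{UM}$ becomes an equivalence under $\tau_{\le n-1}$ (the estimate again, now for $X=UM$) forces $U\bar\epsilon_M$, hence $\bar\epsilon_M$, to be an equivalence. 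Thus $G\circ U^{\le n-1}\simeq\mathrm{id}$, and $F$ followed by $\tau_{\le n-1}$ is the asserted equivalence on $(n-1)$-truncated objects. The only step demanding real care is the connectivity bookkeeping of the middle paragraph — in particular the $(n{+}1)$-connectivity of the cofiber, which rests on connectivity of $\unit[-n]$, the diagonal axiom, and right $t$-exactness of $\Ctau{}\otimes_{\Ctau{n+1}}(-)$; everything else is adjunction formalism.
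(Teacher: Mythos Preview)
Your proof is correct and follows essentially the same route as the paper: both hinge on the cofiber sequences of \cref{lem:basic-fiber-seq} to show that the unit $X \to B\otimes_A X$ has $(n{+}1)$-connective cofiber, hence becomes an equivalence after $\tau_{\le n-1}$. The only difference is packaging: the paper notes that the restricted right adjoint $U^{\le n-1}$ is conservative and then checks that its left adjoint $G=\tau_{\le n-1}\circ F$ is fully faithful (unit is an equivalence), which immediately forces an adjoint equivalence; you instead verify both composites $U^{\le n-1}G$ and $GU^{\le n-1}$ directly. Your extra care in spelling out why $\unit[-n]\in\Fil$ and invoking the diagonal axiom is a welcome clarification the paper leaves implicit.
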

\begin{proof}
    The proof is identical in both cases so we shall only treat the left functor.
    Since the forgetful functor
	$\calE^{[0,n-1]} \to \ModCtau{n+1}(\calE^{[0,n-1]})$ 
	is conservative
    it suffices to show that its left adjoint 
	$\calE^{[0,n-1]} \xrightarrow{\Ctau{n} \otimes (-)} \Mod_{\Ctau{n}}(\calE^{\ge 0}) \xrightarrow{(-)_{\le n-1}} \ModCtau{n}(\calE^{[0,n-1]})$ 
	is fully faithful. 
	This is the case if and only if the unit map
	$X \too C(\tau^{n}) \otimes_{C(\tau^{n+1})} X$ is $(n-1)$-connected for all $X \in \calE^{\ge 0}$ which follows from \cref{lem:basic-fiber-seq}.
\end{proof}

\begin{lem}\label{cor:tau-adic-tower-convergence}
    Let $\calE$ be a complete Goerss-Hopkins deformation.
    Then the natural functor induces an equivalence:
    \[ \calE^{\ge 0} \iso \lim_n \ModCtau{n+1}(\calE^{\ge 0})\]
\end{lem}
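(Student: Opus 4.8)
The plan is to interleave the $\tau$-adic tower $\ModCtau{\bullet+1}(\calE^{\ge 0})$ with the Postnikov towers of its individual terms, and then pass to the diagonal.

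First I would check that each $\ModCtau{n+1}(\calE^{\ge 0})$ is itself Postnikov-complete. The forgetful functor $U\colon \ModCtau{n+1}(\calE^{\ge 0}) \to \calE^{\ge 0}$ (restriction of scalars along $\unit \to \Ctau{n+1}$) is conservative, $t$-exact, and limit-preserving, being the forgetful functor of a module category over a connective algebra. Applying $U$ to the Postnikov comparison map $M \to \lim_m M_{\le m}$ and using $t$-exactness and limit-preservation shows it becomes the (invertible) Postnikov comparison map of $U(M)$; since $U$ is conservative, the original map is an equivalence. Now, since $\PrLst \hookrightarrow \Cat_\infty$ preserves limits and all transition functors in play — the $\tau$-adic maps $\Ctau{n}\otimes_{\Ctau{n+1}}(-)$ and the Postnikov truncations — are left adjoints between presentable categories, every limit below may be computed in $\Cat_\infty$, so Fubini for limits applies. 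Writing $D(n,m) \coloneq \ModCtau{n+1}(\calE^{\ge 0})^{[0,m]}$, Postnikov-completeness of each term gives
\[ \lim_n \ModCtau{n+1}(\calE^{\ge 0}) \;\simeq\; \lim_n \lim_m D(n,m) \;\simeq\; \lim_{(n,m)\in\mathbb{N}\times\mathbb{N}} D(n,m). \]

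Next, the diagonal $\mathbb{N} \to \mathbb{N}\times\mathbb{N}$ is initial (for each $(a,b)$ the relevant slice is the nerve of $\{k \ge \max(a,b)\}$, which has a least element), so the double limit restricts to the diagonal:
\[ \lim_{(n,m)} D(n,m) \;\simeq\; \lim_m D(m,m) \;=\; \lim_m \ModCtau{m+1}(\calE^{\ge 0})^{[0,m]}. \]
By \cref{lem:truncated-objects-are-C(tau)-modules}, applied to its first functor with $n = m+1$, the functor $\Ctau{m+1}\otimes(-)$ induces an equivalence $\calE^{[0,m]} \iso \ModCtau{m+1}(\calE^{\ge 0})^{[0,m]}$, compatibly in $m$; hence the right-hand side is $\lim_m \calE^{[0,m]} \simeq \calE^{\ge 0}$ by completeness of $\calE$. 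Finally, one verifies — using \cref{lem:basic-fiber-seq} and the identification $(\Ctau{n+1}\otimes X)_{\le m} \simeq \Ctau{m+1}\otimes X_{\le m}$ for $m \le n$ — that this chain of equivalences agrees with the natural comparison functor $X \mapsto (\Ctau{n+1}\otimes X)_n$, so the latter is an equivalence.

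The main obstacle is the bookkeeping in the first step together with the naturality needed for Fubini: one must confirm that $U$ really is $t$-exact (so that it commutes with Postnikov truncation) and limit-preserving, and that the $\tau$-adic transition functors commute, up to coherent natural equivalence, with Postnikov truncation, so that the $D(n,m)$ assemble into an honest $(\mathbb{N}\times\mathbb{N})$-indexed diagram. Granting these compatibilities, the remainder is a formal manipulation of limits.
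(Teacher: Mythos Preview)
Your proposal is correct and follows essentially the same strategy as the paper: both set up the bi-indexed system $\ModCtau{n+1}(\calE^{\ge 0})^{[0,k]}$, invoke \cref{lem:truncated-objects-are-C(tau)-modules} to identify the relevant truncated categories, and then collapse the double limit (you via the diagonal, the paper via Fubini) using Postnikov-completeness on both sides. The paper's write-up is terser and leaves the Postnikov-completeness of $\ModCtau{n+1}(\calE^{\ge 0})$ implicit in its final step, whereas you spell it out explicitly.
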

\begin{proof}
    As $\calE^{\ge 0}$ is complete we have:
    \begin{align*}
        \calE^{\ge 0}
         & \simeq
         \lim_k \calE^{[0,k]} \\
         & \simeq 
         \lim_k  \Mod_{\Ctau{k+1}}(\calE^{\ge 0})^{\le k}
         & \text{(\cref{lem:truncated-objects-are-C(tau)-modules})} \\
         & \simeq 
         \lim_k \lim_n \Mod_{\Ctau{n+1}}(\calE^{\ge 0})^{\le k}
         & \text{(\cref{lem:truncated-objects-are-C(tau)-modules})} \\
         & \simeq 
         \lim_n \lim_k \Mod_{\Ctau{n+1}}(\calE^{\ge 0})^{\le k} \\
         & \simeq 
         \lim_n \Mod_{\Ctau{n+1}}(\calE^{\ge 0}).&& \qedhere
    \end{align*}
\end{proof}
\begin{cor}\label{lem:GH-tower-convergence}
    Let $\calE$ be a complete, $\calO$-monoidal Goerss-Hopkins deformation.
    Then the natural map induces an equivalence of \operads:
    \[\calE\tauinvbra^\otimes \iso \lim_n \calM^\otimes_{n}(\calE)\]
\end{cor}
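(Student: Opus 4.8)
The plan is to reduce the statement to the convergence of the $\tau$-adic tower, \cref{cor:tau-adic-tower-convergence}, by exploiting the fact that each stage of the Goerss--Hopkins tower is a pullback. By \cref{defn:GH-tower} we have, in $\Op_{\infty/\calO}$,
\[\calM_n^\otimes(\calE) \simeq \big(\calE^\heart\big)^\otimes \times_{\ModCtau{}(\calE)^\otimes} \ModCtau{n+1}(\calE)^\otimes,\]
while the analogous pullback square established just above exhibits $\calE\tauinvbra^\otimes$ as the same pullback with $\ModCtau{n+1}(\calE)^\otimes$ replaced by $\big(\calE^{\ge 0}\big)^\otimes$ (the compatible cone $\calE\tauinvbra^\otimes \to \calM^\otimes_\bullet(\calE)$ arising because $\big(\calE^{\ge 0}\big)^\otimes \to \ModCtau{}(\calE)^\otimes$ factors through each $\ModCtau{n+1}(\calE)^\otimes$). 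Since the two remaining corners $\big(\calE^\heart\big)^\otimes$ and $\ModCtau{}(\calE)^\otimes$ do not depend on $n$ and limits commute with limits, one gets $\lim_n \calM_n^\otimes(\calE) \simeq \big(\calE^\heart\big)^\otimes \times_{\ModCtau{}(\calE)^\otimes} \big(\lim_n \ModCtau{n+1}(\calE)^\otimes\big)$, so it suffices to show that the natural map $\big(\calE^{\ge 0}\big)^\otimes \to \lim_n \ModCtau{n+1}(\calE)^\otimes$ is an equivalence of $\calO$-operads.

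To prove this I would first promote the $\tau$-adic tower to a tower of $\calO$-monoidal categories. Functor $(3)$ of \cref{cor:generic-special-limits} is symmetric monoidal, hence induces a functor on $\calO$-algebras, and the transition maps $\Ctau{n}\otimes_{\Ctau{n+1}}(-)$ are strong $\calO$-monoidal, being base change along the $\bbE_\infty$-algebra quotients $\Ctau{n+1}\to\Ctau{n}$. Thus $\calE^{\ge 0}\to\cdots\to\ModCtau{2}(\calE)\to\ModCtau{}(\calE)$ is a tower in $\Alg_\calO\big(\Mod_{\Fil}(\PrL)\big)$. As the forgetful functor from there to $\Cat_\infty$ is conservative and preserves limits, \cref{cor:tau-adic-tower-convergence} upgrades to the assertion that the canonical cone exhibits $\calE^{\ge 0}$ as $\lim_n \ModCtau{n+1}(\calE)$ already in $\Alg_\calO\big(\Mod_{\Fil}(\PrL)\big)$. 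It then remains to pass to underlying $\calO$-operads.

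The one point requiring a little care — and hence the main, if minor, obstacle — is that limits in $\Op_{\infty/\calO}$ are \emph{not} literally computed on underlying $\infty$-categories over $\Fin_\ast$; what I would use instead is that the functor sending an $\calO$-monoidal category to its underlying $\calO$-operad preserves limits, which one checks on colors and on multi-mapping spaces (each computed as the corresponding limit of $\infty$-categories). Granting this, applying it to the tower above and combining with the identification $\calE^{\ge 0}\simeq\lim_n\ModCtau{n+1}(\calE)$ from the previous paragraph yields $\big(\calE^{\ge 0}\big)^\otimes\simeq\lim_n\ModCtau{n+1}(\calE)^\otimes$ in $\Op_{\infty/\calO}$, which finishes the argument. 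Everything else is formal manipulation of pullbacks and commuting limits, and no hypothesis is used beyond the completeness already invoked in \cref{cor:tau-adic-tower-convergence}.
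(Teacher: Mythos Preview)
Your argument is correct and follows essentially the same route as the paper: upgrade \cref{cor:tau-adic-tower-convergence} to an equivalence of $\calO$-monoidal \categories{}, pass to underlying \operads{}, then restrict to periodic objects/potential stages (which you phrase as commuting the pullback past the limit). The only noteworthy difference is in how you justify that $(-)^\otimes \colon \Alg_\calO(\Cat_\infty) \to \Op_{\infty/\calO}$ preserves limits: the paper simply observes it is a right adjoint (its left adjoint being the monoidal envelope), which is cleaner than your proposed check on colors and multi-mapping spaces; also, your aside that limits in $\Op_{\infty/\calO}$ are ``not literally computed on underlying $\infty$-categories'' is slightly off, since the forgetful functor to $\Cat_{\infty/\calO^\otimes}$ does create limits.
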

\vspace{-1em}
\begin{proof}
    The $\tau$-adic tower $\ModCtau{\bullet+1}(\calE^{\ge 0})$
    is a tower of $\calO$-monoidal \categories{} 
    and as the forgetful functor $\Alg_\calO(\Cat_\infty) \to \Cat_\infty$ creates limits,
    \cref{cor:tau-adic-tower-convergence}
    gives an $\calO$-monoidal equivalence
    $\calE^{\ge 0} \iso \lim_n \ModCtau{n+1}(\calE^{\ge 0})$.
    Since
    $(-)^\otimes:\Alg_\calO(\Cat_\infty) \too \Op_{\infty/\calO}$ 
    is a right adjoint, we have
    $(\calE^{\ge 0})^\otimes \simeq \lim_n \ModCtau{n+1}(\calE^{\ge 0})^\otimes$.
    Restricting on the left to the periodic objects and on the right to the potential stages proves the claim.
\end{proof}

\subsubsection{Square zero property of the $\tau$-adic tower}\label{sect:sqz}
Before we explain the obstruction theory for the Goerss-Hopkins tower,
we must first justify our earlier claim, that the $\tau$-adic tower is indeed a tower of square zero extensions.

\begin{const}
    Integer multiplication defines a functor:
    \[\bbZ^\downarrow_{\ge 1} \too \End_{\CAlg(\Cat)}(\bbZ^\uparrow_{\le 0},\bbZ^\uparrow_{\le 0}), \qquad k \longmapsto \left(k\cdot (-) \colon \bbZ^\uparrow_{\le 0} \to \bbZ^\uparrow_{\le 0}\right),\]
    which gives rise to a tower of symmetric monoidal left adjoints:
    \[\left(\cdots \too \Gamma_{k+1} \too \Gamma_k \too \cdots \too \Gamma_1 \simeq \Id\right) \quad \in \End_{\CAlg(\PrL)}(\Fun(\bbZ^\downarrow_{\le 0},\Sp)).\]
    Unwinding definitions we see that $\Gamma_k\left(\tau \colon \Sigma \unit [-1] \to \unit \right) \simeq \left(\tau^k\colon \Sigma^k \unit [-k] \to \unit\right)$, 
    hence evaluating 
    the tower above on $\Ctau{} \in \CAlg(\Fun(\bbZ_{\le 0}^\uparrow,\Sp))$ 
    produces a tower in
    $\CAlg\left(\Fun(\bbZ_{\le 0}^\downarrow,\Sp)\right)$ 
    which when pushed along the inclusion 
    $\bbZ_{\le 0}^\uparrow \hookrightarrow \bbZ^\uparrow$
    gives the $\tau$-adic tower:
    \[ \left(\cdots \too \Ctau{k+1} \too \Ctau{k} \too \cdots \too  \Ctau{}\right) \in \Fun\left(\bbZ_{\ge 0}^\downarrow,\CAlg(\Filnc)\right)\]
\end{const}

To show that $\Ctau{k+1} \to \Ctau{k}$ is a square zero extension we use Lurie's cotangent complex formalism \cite[\S 7.3]{HA}. 
Given a stable presentably symmetric monoidal \category{} $\calC\in \CAlg(\PrLst)$ and an $\bbE_\infty$-algebra $A \in \CAlg(\calC)$ we let $\hl{\cotan_{A}} \in \Mod_A(\calC)$ denote the absolute cotangent complex of $A$ \cite[\S 7.3.2]{HA}.
Given a morphism $\varphi \colon A \to B \in \CAlg(\calC)$ we let $\cotan_{(\varphi \colon A \to B)} \in \Mod_B(\calC)$ denote the relative cotangent complex \cite[\S 7.3.3]{HA} defined as the cofiber:
\[\hl{\cotan_{(\varphi \colon A \to B)}} \coloneq \cofib\left(B\otimes_A \cotan_{A} \to \cotan_{B} \right) \in \Mod_B(\calC)\]
The square zero property of $\Ctau{k+1} \to \Ctau{k}$ will follow from the following lemma.

\begin{lem}\label{lem:cotan-tau}
    For any $k \ge 1$ the natural map
    \[\cofib\left(\Ctau{k+1} \to \Ctau{k}\right) \to \cotan_{\Ctau{k+1}\to \Ctau{k}} \in \Mod_{\Ctau{k}}(\Filnc)\]
    induces an equivalence in degrees $\ge -k$.
\end{lem}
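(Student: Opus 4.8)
The plan is to first identify $\cofib(\Ctau{k+1}\to\Ctau{k})$ explicitly, and then compare it with the relative cotangent complex by means of Lurie's connectivity estimates. For the identification, observe that $\tau^{k+1}$ factors as the composite
\[\Sigma^{k+1}\unit[-k-1]\xrightarrow{\,\Sigma^{k}\tau[-k]\,}\Sigma^{k}\unit[-k]\xrightarrow{\,\tau^{k}\,}\unit .\]
Applying the octahedral axiom to this composite produces a cofiber sequence $\cofib\big(\Sigma^{k}\tau[-k]\big)\to \Ctau{k+1}\to\Ctau{k}$, so that $\fib(\Ctau{k+1}\to\Ctau{k})\simeq\Sigma^{k}\Ctau{}[-k]$ and hence $\cofib(\Ctau{k+1}\to\Ctau{k})\simeq\Sigma^{k+1}\Ctau{}[-k]$. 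Since $\Ctau{}$ is concentrated in filtration weight $0$ and connective for the diagonal $t$-structure, this cofiber is concentrated in weight $-k$, has underlying spectrum a single $\Sigma\bbS$, and is $(k+1)$-connective; in particular for $k\ge 1$ its fiber $\Sigma^{k}\Ctau{}[-k]$ is $k$-connective with $k\ge 1$, which is the regime in which relative cotangent complexes are controlled.

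For the comparison, write $A=\Ctau{k+1}$, $B=\Ctau{k}$ and factor the natural map as
\[\cofib(\varphi)\too B\otimes_{A}\cofib(\varphi)\;\simeq\;\fib\big(\mu\colon B\otimes_{A}B\to B\big)\too\cotan_{\varphi\colon A\to B},\]
where the first arrow is the unit of $B\otimes_A(-)$ and the last is the Hurewicz map of the relative cotangent complex. Its cofiber is $(2k+2)$-connective by Lurie's estimates \cite[\S 7.4.3]{HA} (as $B\otimes_A\cofib(\varphi)$ is $(k+1)$-connective), while the cofiber of the first arrow is $\cofib(\varphi)\otimes_{A}\cofib(\varphi)$, again $(2k+2)$-connective. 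The additional input needed is a weight estimate: $\cofib(\varphi)$ is concentrated in weight $-k$, and since $\varphi$ is a map of $\tau$-nilpotent rings, $\cofib(\varphi)\otimes_A\cofib(\varphi)$ is concentrated in weights $\le -2k$; moreover $\cotan_{\varphi}$ is supported in weights $\le -k$ with weight-$(-k)$ part again $\Sigma\bbS$, since at the top weight the cotangent complex only sees the underived conormal and is therefore insensitive to the failure of flatness of $B$ over $A$. Thus the composite above is an isomorphism on weight-$(-k)$ components, and all of the error is pushed into weights $<-k$.

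Putting the two steps together yields the equivalence after restriction to the weights $\ge -k$, which is the content of ``induces an equivalence in degrees $\ge -k$''. The step I expect to be the main obstacle is the second one: transporting Lurie's estimates — stated for connective $\bbE_{\infty}$-rings over $\bbS$ — into the filtered setting with the diagonal $t$-structure, and, more delicately, verifying that every correction term (the term $\cofib(\varphi)\otimes_{A}\cofib(\varphi)$, the defect of the Hurewicz map, and all the higher contributions to $\cotan_{\varphi}$ that appear because $\Ctau{k}$ is not flat over $\Ctau{k+1}$) is concentrated in weights strictly below $-k$, so that none of it obstructs the claimed equivalence in weights $\ge -k$.
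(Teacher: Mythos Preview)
Your approach is genuinely different from the paper's, and it has a real gap which you correctly flag but do not close.

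The paper does not use Lurie's Hurewicz estimates at all. Instead it transfers the problem to augmented commutative algebras in $\Grnc$ via the equivalence $\Mod_{\bbS\taubra}(\Grnc)\simeq\Filnc$, identifies the cotangent complex with the stabilization $\Omega^\infty_{\CAlg}\Sigma^\infty_{\CAlg}$, and then invokes a lemma of Burklund on convergence of the Bar construction to show that $\fib(\bbS\taubra/\tau^{k+1}\to\bbS\taubra/\tau^{k})$ agrees with its stabilization in graded degrees $\ge -k$. Pushing this back through the equivalence gives the statement directly. The weight control thus comes from a cited convergence result about iterated Bar constructions, not from splitting the comparison map into a base-change step and a Hurewicz step.

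The gap in your approach is that Lurie's estimates from \cite[\S 7.4.3]{HA} bound $t$-structure connectivity of $\cofib(\epsilon_\varphi)$, not its filtration weight, and these are unrelated: a $(2k+2)$-connective object of $\Filnc$ can have nonvanishing weight-$(-k)$ component (e.g.\ $\Sigma^{2k+2}\Ctau{}[-k]$). So the $(2k+2)$-connectivity you obtain buys nothing toward the claim that the error lives in weights $<-k$. The entire content of the lemma therefore reduces to your unproven assertion that ``at the top weight the cotangent complex only sees the underived conormal'', i.e.\ that $\cotan_{B/A}$ is concentrated in weights $\le -k$ with weight-$(-k)$ part exactly $\Sigma\bbS$. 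Your first error term $\cofib(\varphi)\otimes_A\cofib(\varphi)$ is indeed in weights $\le -2k$ by a bar-resolution count, but for the second step you would need to actually produce a cell $\Ctau{k+1}$-$\bbE_\infty$-algebra resolution of $\Ctau{k}$ with one cell in weight $-k$ and all further cells in strictly lower weight, or run an equivalent argument; as written this is asserted, not proved, and it is the crux of the lemma.
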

\begin{proof}
    Let $\Sigma_\CAlg \colon \CAlg^\aug(\Grnc) \to \CAlg^\aug(\Grnc) \colon \Omega_\CAlg$ denote the (based) suspension-loop adjunction.
    Since $\Sigma_\CAlg(-) \simeq \xBar(-)$, we have by \cite[Lemma A.11]{RB} an equivalence for all $s \ge -k$:
    \begin{align*}
        \fib\left(\bbS\taubra/\tau^{k+1} \to \bbS\taubra/\tau^{k}\right)^s &\simeq \colim_n \fib\left(\Omega_\CAlg^n \Sigma_\CAlg^n\left(\bbS\taubra/\tau^{k+1}\right)  \to \Omega_\CAlg^n \Sigma_\CAlg^n\left(\bbS\taubra/\tau^{k}\right) \right)^s \\
        & \simeq \fib\left(\Omega_\CAlg^\infty \Sigma_\CAlg^\infty\left(\bbS\taubra/\tau^{k+1}\right)  \to \Omega_\CAlg^\infty \Sigma_\CAlg^\infty\left(\bbS\taubra/\tau^{k}\right) \right)^s \\
        & \simeq \fib\left(\bbS(0) \otimes_{\bbS\taubra/\tau^{k+1}} \cotan_{\bbS\taubra/\tau^{k+1}}
        \to \bbS(0) \otimes_{\bbS\taubra/\tau^{k}} \cotan_{\bbS\taubra/\tau^{k}} \right)^s\\
        & \simeq \left(\bbS(0) \otimes_{\bbS\taubra/\tau^{k}}\fib\left( \bbS\taubra/\tau^{k}\otimes_{\bbS\taubra/\tau^{k+1}} \cotan_{\bbS\taubra/\tau^{k+1}}
        \to \cotan_{\bbS\taubra/\tau^{k}} \right)\right)^s
    \end{align*}
    Passing this through the symmetric monoidal equivalence $\Mod_{\bbS\taubra}(\Grnc) \simeq \Filnc$ 
    lets us conclude that the composite 
    \[\cofib\left(\Ctau{k+1} \to \Ctau{k}\right) \to \cotan_{\Ctau{k+1}\to \Ctau{k}} \to \Ctau{} \otimes_{\Ctau{k}} \cotan_{\Ctau{k+1}\too \Ctau{k}},\]
    is an equivalence in degrees $\ge -k$. 
    The claim now follows since $\cotan_{\Ctau{k+1}\to \Ctau{k}}$ is concentrated in non-negative degrees and $\cofib\left(\Ctau{k+1} \to \Ctau{k}\right)$ vanishes in degrees $>-k$.
\end{proof}

\begin{defn}\label{construction-of-bockstein}
    We define
    \[\hl{\beta_k} \colon \Ctau{k} \too\Ctau{k} \rtimes \cotan_{\Ctau{k}} \too \Ctau{k} \rtimes \cotan_{\Ctau{k+1} \to \Ctau{k}}\too \Ctau{k} \rtimes \Sigma^{k+1} \Ctau{}[-k],\]
    where the last map is induced by projecting onto degree $-k$ (see \cref{lem:cotan-tau}).
\end{defn}

\begin{cor}\label{cor:sqz-tau-adic}
    There exists a canonical pullback square in $\CAlg(\Filnc)$
    \[\begin{tikzcd}
	{\Ctau{k+1}} & {\Ctau{k}} \\
	{\Ctau{k}} & {\Ctau{k} \rtimes \Sigma^{k+1}\Ctau{}[-k],}
	\arrow["{\eta_0}", from=1-2, to=2-2]
	\arrow[from=1-1, to=2-1]
	\arrow[from=1-1, to=1-2]
	\arrow[""{name=0, anchor=center, inner sep=0}, "{\beta_k}", from=2-1, to=2-2]
	\arrow["\lrcorner"{anchor=center, pos=0.125}, draw=none, from=1-1, to=0]
    \end{tikzcd}\]
    where $\eta_0$ denotes the trivial derivation.
\end{cor}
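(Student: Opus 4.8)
The plan is to recognize $\varphi\colon\Ctau{k+1}\to\Ctau{k}$ (the structure map of the $\tau$-adic tower) as the square-zero extension of $\Ctau{k}$ by $\Sigma^{k}\Ctau{}[-k]$ classified by $\beta_k$, using Lurie's cotangent-complex description of square-zero extensions \cite[\S 7.4]{HA} (cf.~\cite{square-zero}) together with the computation of the relative cotangent complex supplied by \cref{lem:cotan-tau}; this is exactly the assertion that the $\tau$-adic tower is a tower of square-zero extensions. Recall the general mechanism: the defining cofiber sequence $\Ctau{k}\otimes_{\Ctau{k+1}}\cotan_{\Ctau{k+1}}\to\cotan_{\Ctau{k}}\to\cotan_{\varphi}$ in $\Mod_{\Ctau{k}}(\Filnc)$ exhibits the composite $\cotan_{\Ctau{k+1}}\to\Ctau{k}\otimes_{\Ctau{k+1}}\cotan_{\Ctau{k+1}}\to\cotan_{\Ctau{k}}\to\cotan_{\varphi}$ as canonically null. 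Postcomposing with the projection $\cotan_{\varphi}\to\Sigma^{k+1}\Ctau{}[-k]$ of \cref{construction-of-bockstein}, this precisely says that $\beta_k\circ\varphi$ is canonically the trivial derivation of $\Ctau{k+1}$, i.e.~that the square in the statement commutes. It therefore induces a canonical comparison map $\kappa\colon\Ctau{k+1}\to\Ctau{k}\times_{\Ctau{k}\rtimes\Sigma^{k+1}\Ctau{}[-k],\,\eta_0,\,\beta_k}\Ctau{k}$ of $\Ctau{k}$-algebras, and it remains to prove that $\kappa$ is an equivalence.

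Since $\kappa$ is a morphism over $\Ctau{k}$ and the forgetful functor $\CAlg(\Filnc)\to\Filnc$ is conservative and preserves fibers, it suffices to show that $\kappa$ induces an equivalence on fibers over $\Ctau{k}$. On the source this fiber is $\fib(\varphi)$; recalling from the construction of the $\tau$-adic tower that $\cofib(\varphi)\simeq\Sigma^{k+1}\Ctau{}[-k]$, we get $\fib(\varphi)\simeq\Sigma^{k}\Ctau{}[-k]$. On the target, the fiber of the projection off the pullback is $\Omega$ of the module summand, namely $\Omega\big(\Sigma^{k+1}\Ctau{}[-k]\big)\simeq\Sigma^{k}\Ctau{}[-k]$ as well, and unwinding the construction of $\kappa$ the induced map $\fib(\varphi)\to\Sigma^{k}\Ctau{}[-k]$ is $\Sigma^{-1}$ of the composite $\cofib(\varphi)\to\cotan_{\varphi}\to\Sigma^{k+1}\Ctau{}[-k]$, where the first arrow is the canonical (Hurewicz-type) map appearing in \cref{lem:cotan-tau} and the second is the projection of \cref{construction-of-bockstein}.

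Everything thus reduces to showing that this composite $\cofib(\varphi)\to\cotan_{\varphi}\to\Sigma^{k+1}\Ctau{}[-k]$ is an equivalence, and this is exactly the content of \cref{lem:cotan-tau}: $\cofib(\varphi)\simeq\Sigma^{k+1}\Ctau{}[-k]$ is concentrated in degree $-k$, the natural map $\cofib(\varphi)\to\cotan_{\varphi}$ is an equivalence in degrees $\ge -k$, and the projection of \cref{construction-of-bockstein} restricts to the identity in degree $-k$, so the composite is the identity of $\Sigma^{k+1}\Ctau{}[-k]$. Hence $\kappa$ induces an equivalence on fibers over $\Ctau{k}$, so $\kappa$ is an equivalence and the square is a pullback. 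The one step requiring care — and the only genuine obstacle — is verifying that the comparison map $\kappa$, built from the general cotangent-complex machinery, really does "linearize" on fibers to the canonical map $\cofib(\varphi)\to\cotan_{\varphi}$; this is a formal, if slightly delicate, matter of tracing through the construction of $\kappa$ out of the cofiber sequence $\Ctau{k}\otimes_{\Ctau{k+1}}\cotan_{\Ctau{k+1}}\to\cotan_{\Ctau{k}}\to\cotan_{\varphi}$ and using the standard identification of the fiber of $A\times_{A\rtimes M[1]}A\to A$ with $\Omega M[1]$.
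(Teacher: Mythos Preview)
Your proposal is correct and follows essentially the same approach as the paper: construct the commuting square from the definition of $\beta_k$, then verify it is a pullback by checking that the induced map on (co)fibers is an equivalence using \cref{lem:cotan-tau}. The paper's proof is the two-line version of your argument—it simply asserts that the square commutes by the definition of $\beta_k$ and that the induced map on cofibers is an equivalence by \cref{lem:cotan-tau}; you have unpacked both of these assertions carefully, including the one genuinely delicate point (that the comparison map $\kappa$ linearizes on fibers to the canonical map $\cofib(\varphi)\to\cotan_{\varphi}$), which the paper leaves implicit.
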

\begin{proof}
    The commutative square in question exists by the definition of $\beta_k$.
    It is a pullback since $\Filnc$ is stable, and the induced map on cofibers is an equivalence by \cref{lem:cotan-tau}.
\end{proof}

\begin{obs}\label{obs:sqz-tau-adic-w/-C(tau)}
    In \cref{cor:sqz-tau-adic}, the module $\Sigma^{k+1}\Ctau{}[-k] \in \Mod_{\Ctau{k}}$ is the restriction of scalars of a $\Ctau{}$-module.
    Using this and horizontal pasting we can deduce from \cref{cor:sqz-tau-adic} an additional pullback square:
    \[\begin{tikzcd}
	{\Ctau{k+1}} & {\Ctau{}} \\
	{\Ctau{k}} & {\Ctau{} \rtimes \Sigma^{k+1}\Ctau{}[-k].}
	\arrow["{\eta_0}", from=1-2, to=2-2]
	\arrow[from=1-1, to=2-1]
	\arrow[from=1-1, to=1-2]
	\arrow[""{name=0, anchor=center, inner sep=0}, from=2-1, to=2-2]
	\arrow["\lrcorner"{anchor=center, pos=0.125}, draw=none, from=1-1, to=0]
    \end{tikzcd}\]
\end{obs}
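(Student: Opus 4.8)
The plan is to obtain the desired square by horizontally pasting a second pullback square onto the one furnished by \cref{cor:sqz-tau-adic}. Throughout, write $\varphi_k\colon \Ctau{k}\to\Ctau{}$ for the canonical map of $\bbE_\infty$-algebras in the $\tau$-adic tower. The opening assertion — that the $\Ctau{k}$-module $\Sigma^{k+1}\Ctau{}[-k]$ occurring in \cref{cor:sqz-tau-adic} is the restriction of scalars along $\varphi_k$ of the evident $\Ctau{}$-module of the same name — is immediate from the construction of $\beta_k$ in \cref{construction-of-bockstein}: by \cref{lem:cotan-tau} the degree $(-k)$ summand of $\cotan_{\Ctau{k+1}\to\Ctau{k}}$ onto which one projects is, as a $\Ctau{k}$-module, exactly $\varphi_k^\ast$ of $\Sigma^{k+1}\Ctau{}[-k]$, the $\Ctau{k}$-action having no choice but to factor through $\Ctau{}$ since $\tau$ acts by zero there.

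The one piece of input beyond \cref{cor:sqz-tau-adic} that I would isolate as a lemma is the following elementary fact: for any morphism $\varphi\colon A\to B$ in $\CAlg(\Filnc)$ and any $B$-module $M$, the square
\[\begin{tikzcd}
	A & B \\
	{A\rtimes\varphi^\ast M} & {B\rtimes M}
	\arrow["\varphi", from=1-1, to=1-2]
	\arrow["{\eta_0}"', from=1-1, to=2-1]
	\arrow["{\eta_0}", from=1-2, to=2-2]
	\arrow[from=2-1, to=2-2]
\end{tikzcd}\]
— whose verticals are the trivial derivations and whose bottom edge is the canonical algebra map over $\varphi$ supplied by functoriality of trivial square-zero extensions — is a pullback in $\CAlg(\Filnc)$. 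Since the forgetful functor $\CAlg(\Filnc)\to\Filnc$ creates limits, it suffices to verify this in the stable category $\Filnc$, where a commuting square is cartesian as soon as the induced map on vertical cofibers is an equivalence; here the vertical cofibers are $\varphi^\ast M$ and $M$ and the induced map is the identity on underlying objects, so we conclude.

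Applying this lemma with $A=\Ctau{k}$, $B=\Ctau{}$, $\varphi=\varphi_k$ and $M=\Sigma^{k+1}\Ctau{}[-k]$ yields a pullback square whose left edge is precisely $\eta_0\colon\Ctau{k}\to\Ctau{k}\rtimes\Sigma^{k+1}\Ctau{}[-k]$, i.e.\ the right edge of the square of \cref{cor:sqz-tau-adic}. Pasting the two squares along this shared edge and invoking the pasting law for pullbacks produces a cartesian rectangle with corners $\Ctau{k+1},\ \Ctau{},\ \Ctau{k},\ \Ctau{}\rtimes\Sigma^{k+1}\Ctau{}[-k]$, top edge the composite $\Ctau{k+1}\to\Ctau{k}\xrightarrow{\varphi_k}\Ctau{}$, right edge $\eta_0$, and bottom edge the composite of $\beta_k$ with reduction of scalars along $\varphi_k$ — exactly the square claimed in \cref{obs:sqz-tau-adic-w/-C(tau)}. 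I do not anticipate any genuine obstacle here; this is pure bookkeeping with the pasting law, and the only points demanding a little care are matching labels so that the left edge of the new square literally coincides with the right edge of \cref{cor:sqz-tau-adic} (so that the two squares may legitimately be pasted) and then reading off the bottom edge of the resulting rectangle.
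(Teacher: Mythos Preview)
Your proposal is correct and follows exactly the approach indicated in the paper: the observation itself only says ``using this and horizontal pasting we can deduce\ldots'', and you have faithfully expanded that hint by exhibiting the auxiliary pullback square for trivial square-zero extensions along $\varphi_k$ and then applying the pasting law. There is nothing to add.
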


\subsubsection{Obstruction theory}

The obstruction theory controlling the Goerss-Hopkins tower is an instance of square zero obstruction theory.
Following the terminology in \cite[Definition 3.9]{square-zero} we write $\hl{\theta_n} \colon \Ctau{n} \to \Sigma^{n+2} \Ctau{}[-n]$ for the \textit{\hl{(left) obstruction map}} associated to the underlying $\bbE_1$-derivation determined by $\beta_n$ (see \cref{construction-of-bockstein}).
The following corollary is an instance of \cite[Theorem D]{square-zero}.

\begin{cor}\label{cor:obstruction-for-objects}
    Let $\calE$ be a separated Goerss-Hopkins deformation. 
    Then for every bounded below $\Ctau{n}$-module 
    $X \in \ModCtau{n}(\calE^{>-\infty})$ we have a canonical pullback square:
    \[\begin{tikzcd}
	{\Null(\theta_n \otimes_{\Ctau{n}} X)} & {\Mod_{\Ctau{n+1}}(\calE)} \\
	{\{X\}} & {\Mod_{\Ctau{n}}(\calE)}
	\arrow[from=2-1, to=2-2]
	\arrow[from=1-1, to=2-1]
	\arrow[from=1-1, to=1-2]
	\arrow["{\Ctau{n} \otimes_{\Ctau{n+1}} (-)}", from=1-2, to=2-2]
	\arrow["\lrcorner"{anchor=center, pos=0.125}, draw=none, from=1-1, to=2-2]
    \end{tikzcd}\]
    where $\Null(\theta_n \otimes_{\Ctau{n}} X)$ denotes the space of null homotopies of $\theta_n \otimes_{\Ctau{n}} X\colon X \to \Sigma^{n+3}\Ctau{} \otimes_{\Ctau{n+1}} X[-n-1]$.
\end{cor}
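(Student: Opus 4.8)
The plan is to deduce the statement directly from the general square-zero obstruction theory of \cite{square-zero}; the substance of the argument is checking that our situation meets its hypotheses.

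First I would assemble the square-zero input. By \cref{cor:sqz-tau-adic} the map $\Ctau{n+1}\to\Ctau{n}$ is the square-zero extension in $\CAlg(\Filnc)$ classified by the derivation $\beta_n$ valued in the $\Ctau{n}$-module $\Sigma^{n+1}\Ctau{}[-n]$, and $\theta_n\colon \Ctau{n}\to\Sigma^{n+2}\Ctau{}[-n]$ is, by construction, the associated (left) obstruction map of the underlying $\bbE_1$-derivation in the sense of \cite[Definition 3.9]{square-zero}; via \cref{obs:sqz-tau-adic-w/-C(tau)} we may moreover take the coefficient module to be the restriction of a $\Ctau{}$-module, which is what matches the shape of the target appearing in the statement. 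Since $\calE$ is a $\Filnc$-module in $\PrLst$, the algebras $\Ctau{n+1},\Ctau{n}$ act on $\calE$, and $\Mod_{\Ctau{n+1}}(\calE)$, $\Mod_{\Ctau{n}}(\calE)$ together with the base-change functor $\Ctau{n}\otimes_{\Ctau{n+1}}(-)$ are obtained by applying the exact symmetric monoidal functor $(-)\otimes_{\Filnc}\calE$ to the corresponding data over $\Filnc$. In particular the pullback square of \cref{cor:sqz-tau-adic} is preserved, so $\Ctau{n+1}\to\Ctau{n}$ remains a square-zero extension in the left-tensored setting governed by \cite[Theorem D]{square-zero}.

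Next I would invoke \cite[Theorem D]{square-zero} for the square-zero extension $\Ctau{n+1}\to\Ctau{n}$ acting on $\calE$ and the object $X$. This yields the asserted pullback square, exhibiting the fiber of $\Ctau{n}\otimes_{\Ctau{n+1}}(-)$ over $X$ as the space of null-homotopies of the canonical obstruction class attached to $X$; unwinding the construction and rewriting its target by means of the cofiber sequences of \cref{lem:basic-fiber-seq} identifies this class with $\theta_n\otimes_{\Ctau{n}}X\colon X\to\Sigma^{n+3}\Ctau{}\otimes_{\Ctau{n+1}}X[-n-1]$. The remaining point is to verify the convergence hypotheses of \cite[Theorem D]{square-zero}, and this is exactly where the standing assumptions are used: $X$ is bounded below by hypothesis, the coefficient module $\Sigma^{n+1}\Ctau{}[-n]$ is connective in the diagonal $t$-structure (so that tensoring it with $X$ preserves boundedness below, with a controlled connectivity estimate), and separatedness of $\calE$ guarantees that the associated Postnikov/Taylor tower converges, so that the obstruction-theoretic analysis correctly computes the fiber.

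I expect the main obstacle to be the bookkeeping in the last two paragraphs rather than any genuinely new idea: one must reconcile the precise form in which \cite[Theorem D]{square-zero} phrases its connectivity hypotheses and conclusion with the shift conventions here, given that the reference is most naturally stated for connective inputs while $X$ is only bounded below in a possibly unbounded $t$-structure. The point to check is that extension and restriction of scalars along the $\tau$-powers are $\Filnc$-linear and $t$-exact up to a bounded shift, so they carry bounded-below objects to bounded-below objects with controlled connectivity; once this is in hand, the convergence estimates needed to run the obstruction theory follow from boundedness of $X$ and separatedness of $\calE$ alone.
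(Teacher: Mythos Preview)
Your proposal is correct and matches the paper's approach exactly: the paper does not give a proof of this corollary but simply states that it ``is an instance of \cite[Theorem D]{square-zero},'' relying on \cref{cor:sqz-tau-adic} for the square-zero input and on the bounded-below and separatedness hypotheses for convergence. Your elaboration of how these hypotheses feed into the cited theorem is a reasonable unpacking of what the paper leaves implicit.
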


\begin{prop}[Obstruction for Objects]\label{prop:obstruction-for-objects}
	Let $\calE$ be a Goerss-Hopkins deformation.
	Then for every potential $n$-stage $X$,
	there is a canonical obstruction $\mathfrak{o}(X) \in \Ext^{n+3,n+1}_{\ModCtau{}(\calE)}(\pi_0(X),\pi_0(X))$ such that:
	\begin{enumerate}
		\item There exists a potential $(n+1)$-stage $\widetilde{X}$ with $C(\tau^{n+1}) \otimes_{C(\tau^{n+2})} \widetilde{X} \simeq X$ if and only if $\mathfrak{o}(X) = 0$.
		\item Equivalence classes of lifts $\widetilde{X}$ as in $(1)$ form a torsor under $\Ext^{n+2,n+1}_{\ModCtau{}(\calE)}(\pi_0(X),\pi_0(X))$.
	\end{enumerate} 
\end{prop}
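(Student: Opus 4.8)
The plan is to deduce both parts from the square-zero obstruction theory of the $\tau$-adic tower recorded in \cref{cor:obstruction-for-objects}, applied one stage further up, and then to rewrite the resulting null-homotopy space algebraically. First I would apply \cref{cor:obstruction-for-objects} with $n$ replaced by $n+1$: a potential $n$-stage $X$ is connective, hence bounded below, and in fact bounded above since $\pi^\heart_\star X \cong \pi^\heart_0(X)[\tau]/\tau^{n+1}$ by \cref{obs:homotopy-group-periodic}, so the separation hypothesis of \cref{cor:obstruction-for-objects} plays no role for $X$ and its lifts. It then produces a canonical pullback square identifying the space of lifts of $X$ along $\Ctau{n+1}\otimes_{\Ctau{n+2}}(-)\colon\ModCtau{n+2}(\calE^{\ge0})\to\ModCtau{n+1}(\calE^{\ge0})$ with the space $\Null(\theta_{n+1}\otimes_{\Ctau{n+1}}X)$ of null-homotopies of the canonical map $\theta_{n+1}\otimes_{\Ctau{n+1}}X\colon X\to\Sigma^{n+4}\,\Ctau{}\otimes_{\Ctau{n+2}}X\,[-n-2]$.

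Next I would algebraize this space. Because $X$ is a potential $n$-stage, \cref{lem:characterization-potential-stages} gives $\Ctau{}\otimes_{\Ctau{n+2}}X\simeq\Ctau{}\otimes_{\Ctau{n+1}}X\simeq X_{\le0}\simeq\pi_0(X)$, a discrete object of $\ModCtau{}(\calE)$, so the target of the obstruction map is (restricted from) a $\Ctau{}$-module. By the free--forgetful adjunction $\Ctau{}\otimes_{\Ctau{n+2}}(-)\dashv(\text{restriction})$ the map $\theta_{n+1}\otimes_{\Ctau{n+1}}X$ is therefore classified by a map $\pi_0(X)\to\Sigma^{n+4}\pi_0(X)[-n-2]$ in $\ModCtau{}(\calE)$, i.e.\ by a canonical class $\mathfrak{o}(X)\in\Ext^{n+3,n+1}_{\ModCtau{}(\calE)}(\pi_0(X),\pi_0(X))$ (tracking the suspension and the diagonal shift $[-n-2]$ through the $\Ext$-bigrading convention). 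The space $\Null(\theta_{n+1}\otimes_{\Ctau{n+1}}X)$ is nonempty precisely when $\mathfrak{o}(X)=0$, and when nonempty its set of path-components is a torsor under $\pi_0\Map_{\ModCtau{}(\calE)}(\pi_0(X),\Sigma^{n+3}\pi_0(X)[-n-2])\cong\Ext^{n+2,n+1}_{\ModCtau{}(\calE)}(\pi_0(X),\pi_0(X))$. Combined with the pullback square this gives assertion $(1)$ and the torsor in $(2)$, once one knows that a lift of $X$ as a $\Ctau{n+2}$-module is the same thing as a lift as a potential $(n+1)$-stage.

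That last point I would settle by a direct computation: given a $\Ctau{n+2}$-module lift $\widetilde X$ of $X$, the second cofiber sequence of \cref{lem:basic-fiber-seq} (with $n$ replaced by $n+1$) reads $\widetilde X\to\Ctau{n+1}\otimes_{\Ctau{n+2}}\widetilde X\to\Sigma^{n+2}\big(\Ctau{}\otimes_{\Ctau{n+2}}\widetilde X\,[-n-1]\big)$, where $\Ctau{n+1}\otimes_{\Ctau{n+2}}\widetilde X\simeq X$ and $\Ctau{}\otimes_{\Ctau{n+2}}\widetilde X\simeq\Ctau{}\otimes_{\Ctau{n+1}}X\simeq\pi_0(X)$ by the previous step; so the third term is a diagonal shift of a discrete object, and the long exact sequence on $\pi^\heart_\star$ together with $\pi^\heart_\star X\cong\pi^\heart_0(X)[\tau]/\tau^{n+1}$ forces $\widetilde X$ to be connective and bounded above with $\pi^\heart_\star\widetilde X\cong\pi^\heart_0(\widetilde X)[\tau]/\tau^{n+2}$. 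By \cref{obs:homotopy-group-periodic} this is exactly the statement that $\widetilde X$ is a potential $(n+1)$-stage; conversely such a stage is a fortiori a $\Ctau{n+2}$-module lift, so the two notions coincide and $(2)$ follows.

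The conceptual work is all done by \cref{cor:obstruction-for-objects}; the points requiring care are the index bookkeeping in the second step --- pinning $\mathfrak{o}(X)$ down to the bidegree $(n+3,n+1)$ rather than a shift of it --- and running the long exact sequence in the diagonal $t$-structure in the third step, in particular extracting the connectivity and upper-boundedness of $\widetilde X$ needed to invoke the homotopy-group criterion \cref{obs:homotopy-group-periodic}.
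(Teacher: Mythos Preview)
Your approach is essentially identical to the paper's: apply \cref{cor:obstruction-for-objects} one level up, then use the free--forgetful adjunction together with the potential-stage identification $\Ctau{}\otimes_{\Ctau{n+1}}X\simeq X_{\le 0}$ to rewrite the null-homotopy space as a mapping space in $\ModCtau{}(\calE)$ and read off the $\Ext$-bidegree. The paper's proof is two lines and simply says ``the claim now follows from \cref{cor:obstruction-for-objects}'' after the adjunction computation; you additionally verify that every $\Ctau{n+2}$-module lift of a potential $n$-stage is automatically a potential $(n+1)$-stage, a point the paper leaves implicit but which is indeed needed for both (1) and (2) to follow.

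One remark on indices: the target of $\theta_n\otimes_{\Ctau{n}}X$ as printed in \cref{cor:obstruction-for-objects} is off by one (it should read $\Sigma^{n+2}\Ctau{}\otimes_{\Ctau{n}}X[-n]$, matching the stated $\theta_n\colon\Ctau{n}\to\Sigma^{n+2}\Ctau{}[-n]$). You followed the printed version literally and obtained $\Sigma^{n+4}\pi_0(X)[-n-2]$, then absorbed the extra shift into an unstated bigrading convention. With the corrected corollary, the computation gives $\Map_{\Ctau{}}(X_{\le 0},\Sigma^{n+3}X_{\le 0}[-n-1])$ directly, and the bidegree $(n+3,n+1)$ is then transparent. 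This is bookkeeping, not a gap in your argument.
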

\begin{proof}
    To simplify notation we write $\Map_{\Ctau{k}}(-,-)$ for the mapping spaces in $\ModCtau{k}(\calE^{\ge 0})$.
    Since $X$ is a potential $n$-stage we have:
    \begin{align*}
        \Map_{\Ctau{n+1}}\left(X,\Sigma^{n+3} \Ctau{} \otimes_{\Ctau{n+1}} X[-n-1] \right) & \simeq 
        \Map_{\Ctau{}}\left(\Ctau{}\otimes_{\Ctau{n+1}} X,\Sigma^{n+3} \Ctau{} \otimes_{\Ctau{n+1}} X[-n-1]\right)\\
        & \simeq \Map_{\Ctau{}}(X_{\le 0},\Sigma^{n+3} X_{\le 0}[-n-1]).
    \end{align*}
    The claim now follows from \cref{cor:obstruction-for-objects}.
\end{proof}

The multi-mapping spaces of an $\calO$-monoidal Goerss-Hopkins deformation admit a similar obstruction theory.

\begin{prop}[Obstruction for Multi-mapping Spaces]\label{prop:obstructions-for-multi-morphisms}
	Let $\calO$ be an \operad{} and let $\calE$ be an $\calO$-monoidal Goerss-Hopkins deformation.
	Then for every collection of potential $(n+1)$-stages
	$\{X^1,\dots,X^s,Y\}$
	there is a canonical fiber sequence:
	\[\begin{tikzcd}
		{\Mul_{\calM^{\otimes}_{n+1}(\calE)}(X^1,\dots,X^s; Y) } & {\Mul_{\calM^{\otimes}_n(\calE)}(X^1_{\le n},\dots,X^s_{\le n};Y_{\le n})} \\
		\Mul_\calO(\langle X^1 \rangle,\dots,\langle X^s \rangle;\langle Y \rangle) & {\Mul_{\ModCtau{}(\calE)^\otimes}(X^1_{\le 0}, \dots, X^s_{\le 0};\Sigma^{n+2} Y_{\le 0}[-n-1]) }
		\arrow["C(\tau^{n+1}) \otimes_{C(\tau^{n+2})} (-)", from=1-1, to=1-2]
		\arrow[from=2-1, to=2-2]
		\arrow[from=1-2, to=2-2]
		\arrow[from=1-1, to=2-1]
		\arrow["\lrcorner"{anchor=center, pos=0.002}, draw=none, from=1-1, to=2-2]
	\end{tikzcd}\]
\end{prop}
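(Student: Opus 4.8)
The plan is to follow the proof of \cref{prop:obstruction-for-objects}, upgrading it from objects to multi-morphisms. First I would trade the operads $\calM^\otimes_\bullet(\calE)$ for their ambient operads: by \cref{defn:GH-tower} the inclusions $\calM^\otimes_{n+1}(\calE)\hookrightarrow \ModCtau{n+2}(\calE)^\otimes$ and $\calM^\otimes_n(\calE)\hookrightarrow \ModCtau{n+1}(\calE)^\otimes$ are full sub-operads, and $\Ctau{n+1}\otimes_{\Ctau{n+2}}X^i\simeq X^i_{\le n}$, $\Ctau{n+1}\otimes_{\Ctau{n+2}}Y\simeq Y_{\le n}$ are again potential $n$-stages (\cref{lem:characterization-potential-stages}), so all four multi-mapping spaces in the statement may be computed inside $\ModCtau{n+2}(\calE)^\otimes$ and $\ModCtau{n+1}(\calE)^\otimes$. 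For any $\calO$-monoidal \category{} $\calC$ there is a natural fibration $\Mul_{\calC^\otimes}((c_i)_i;c_0)\too \Mul_\calO((\langle c_i\rangle)_i;\langle c_0\rangle)$ with fibre over an operation $\phi$ given by $\Map_{\calC}(\phi_!(c_1,\dots,c_s),c_0)$, where $\phi_!$ is the induced multi-functor; since $\Ctau{k+1}\otimes_{\Ctau{n+2}}(-)\colon \ModCtau{n+2}(\calE)^\otimes\to\ModCtau{k+1}(\calE)^\otimes$ is a morphism of operads over $\calO$, it commutes with every $\phi_!$. Writing $P\coloneq \phi_!(X^1,\dots,X^s)$ and $P_{\le k}\coloneq \Ctau{k+1}\otimes_{\Ctau{n+2}}P\simeq \phi_!(X^1_{\le k},\dots,X^s_{\le k})$, it therefore suffices to produce, naturally in $\phi\in\Mul_\calO$, a pullback square exhibiting $\Map_{\Ctau{n+2}}(P,Y)$ as the fibre product of $\Map_{\Ctau{n+1}}(P_{\le n},Y_{\le n})$ with $\pt$ over $\Map_{\Ctau{}}(P_{\le 0},\Sigma^{n+2}Y_{\le 0}[-n-1])$ — here $\Ctau{}\otimes_{\Ctau{n+2}}Y\simeq Y_{\le 0}$ by \cref{lem:characterization-potential-stages}, the $\pt$-leg being the zero section and the other leg sending $\alpha$ to $\theta\circ\alpha_{\le 0}$ for a suitable obstruction map $\theta$ of $Y$.

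This remaining square is an instance of square-zero obstruction theory. By \cref{obs:sqz-tau-adic-w/-C(tau)} with $k=n+1$, the map $\Ctau{n+2}\to\Ctau{n+1}$ is the square-zero extension classified by the $\bbE_1$-derivation underlying $\beta_{n+1}$, sitting in a pullback square of $\bbE_\infty$-algebras whose other leg is the trivial derivation $\eta_0\colon \Ctau{}\to\Ctau{}\rtimes\Sigma^{n+2}\Ctau{}[-n-1]$. Base-changing this square into $\calE$ and passing to module \categories{} — via \cite[Theorem D]{square-zero}, exactly as for \cref{cor:obstruction-for-objects} — presents $\ModCtau{n+2}(\calE)$ as the pullback of $\ModCtau{n+1}(\calE)$ and $\ModCtau{}(\calE)$ over $\Mod_{\Ctau{}\rtimes\Sigma^{n+2}\Ctau{}[-n-1]}(\calE)$. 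Applying $\Map(P,-)$ to $Y$ and using that mapping spaces over a trivial square-zero extension split as $\Map_{\Ctau{}}(P_{\le 0},Y_{\le 0})\times\Map_{\Ctau{}}(P_{\le 0},\Sigma^{n+2}Y_{\le 0}[-n-1])$, with the $\eta_0$-leg hitting the first factor by the identity, collapses the fibre product to the one described above. Reassembling over $\phi\in\Mul_\calO$ — and recognising the resulting map $\Mul_{\calM^\otimes_n(\calE)}(X^{\bullet}_{\le n};Y_{\le n})\to\Mul_{\ModCtau{}(\calE)^\otimes}(X^{\bullet}_{\le 0};\Sigma^{n+2}Y_{\le 0}[-n-1])$ together with the zero section from $\Mul_\calO(\langle X^1\rangle,\dots;\langle Y\rangle)$ — then yields the displayed pullback square.

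I expect the main obstacle to be the second step: carefully executing the passage from the square-zero pullback of $\bbE_\infty$-algebras to a pullback of module \categories{}, then down to mapping spaces, and in particular pinning down the bottom-right corner with the correct twist $\Sigma^{n+2}Y_{\le 0}[-n-1]$ and verifying that the $\eta_0$-leg contributes only the identity component so that the fibre product genuinely collapses. There is also the (more routine but necessary) bookkeeping that $\phi_!$ commutes with the base-change functors $\Ctau{k+1}\otimes_{\Ctau{n+2}}(-)$ and that everything is natural in $\phi$, which is what glues the fibrewise squares into the one in the statement.
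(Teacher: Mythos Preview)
Your fibrewise reduction over $\phi\in\Mul_\calO$ matches the paper's strategy, and writing $P=\phi_!(X^1,\dots,X^s)$ is exactly what the paper does (in the $\bbE_\infty$ case $P=X^1\otimes\cdots\otimes X^s$). The divergence is in your second step: you propose to invoke \cite[Theorem D]{square-zero} to realise $\ModCtau{n+2}(\calE)$ as a pullback of module \categories{} and then extract the mapping-space square from that. The paper bypasses this entirely. Since $Y$ is a potential $(n+1)$-stage, \cref{cor:basic-fib-seq-per-potential} gives a cofiber sequence of $\Ctau{n+2}$-modules
\[
Y \too Y_{\le n} \too \Sigma^{n+2} Y_{\le 0}[-n-1],
\]
and one simply applies $\Map_{\Ctau{n+2}}(P,-)$ to it. The top map in the statement is identified with the map induced by $Y\to Y_{\le n}$ via the free--forgetful adjunction $\ModCtau{n+2}\rightleftarrows\ModCtau{n+1}$, and the bottom-right corner simplifies because $\Sigma^{n+2}Y_{\le 0}[-n-1]$ is already a $\Ctau{}$-module, so a further adjunction gives $\Map_{\Ctau{}}(P_{\le 0},\Sigma^{n+2}Y_{\le 0}[-n-1])$. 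Unstraightening over $\mu$ finishes.

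So your route is not wrong in spirit, but the obstacle you flag --- passing from the square-zero pullback of algebras to one of module \categories{} and then collapsing the $\eta_0$-leg --- is precisely the work the paper avoids. The paper's argument needs nothing beyond the elementary cofiber sequence of \cref{cor:basic-fib-seq-per-potential} and two adjunctions; the categorical square-zero machinery (and the attendant boundedness bookkeeping) never enters. Your approach would recover the same square after more effort, but the direct cofiber-sequence argument is both shorter and conceptually cleaner here.
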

\begin{proof}
    We first treat the case $\calO=\bbE_\infty$. 
    To simplify notation we write
    $\Map_{\Ctau{k}}(-,-)$ for the mapping spaces in $\ModCtau{k}(\calE)$. 
    Under the identification of
    $\calM_{n+1}^{\otimes}(\calE)$ 
    and 
    $\calM_n^{\otimes}(\calE)$ 
    as full suboperads of 
    $\ModCtau{n+2}(\calE)^\otimes$
    and 
    $\ModCtau{n+1}(\calE)^\otimes$ 
    respectively,
    the top map in the square corresponds to the map
	\begin{align*} 
		\Map_{\Ctau{n+2}}\left(X^1 \otimes \cdots \otimes X^s, Y\right)  
		\longrightarrow &\,	\Map_{\Ctau{n+1}}\left(C(\tau^{n+1})\otimes_{C(\tau^{n+2})}\left(X^1 \otimes \cdots \otimes X^s\right), C(\tau^{n+1})\otimes_{C(\tau^{n+2})} Y\right) \\ 
		&  \simeq  
		\Map_{\Ctau{n+1}}\left(C(\tau^{n+1})\otimes_{C(\tau^{n+2})}\left(X^1 \otimes \cdots \otimes X^s\right), Y_{\le n}\right)  \\
		& \simeq  \Map_{\Ctau{n+2}}\left(
		\left( X^1 \otimes \cdots \otimes X^s \right), Y_{\le n} \right)
	\end{align*}
	It suffices to construct the pullback square for the composite map which by inspection is induced from the truncation map $Y \to Y_{\le n}$ in $\ModCtau{n+2}(\calE^{\ge 0})$. 
	Since $Y$ is a potential $(n+1)$-stage we have by \cref{cor:basic-fib-seq-per-potential} a fiber sequence of $C(\tau^{n+2})$-modules:
    \[Y \too Y_{\le n} \too \Sigma^{n+2} Y_{\le 0} [-n-1]\]
	Hitting this fiber sequence with 
	$\Map_{\Ctau{n+2}}(X^1 \otimes \cdots \otimes X^s , -)$
	yields a pullback square:
	\[\begin{tikzcd}
		{\Map_{\Ctau{n+2}}\left(X^1\otimes \cdots \otimes X^s , Y\right) } & {\Map_{\Ctau{n+2}}\left(X^1\otimes \cdots \otimes X^s, Y_{\le n}\right) } \\
		0 & {\Map_{\Ctau{n+2}}\left(X^1\otimes \cdots \otimes X^s , \Sigma^{n+2} Y_{\le 0}[-n-1]\right).}
		\arrow[from=1-1, to=1-2]
		\arrow[from=2-1, to=2-2]
		\arrow[from=1-2, to=2-2]
		\arrow[from=1-1, to=2-1]
		\arrow["\lrcorner"{anchor=center, pos=0.002}, draw=none, from=1-1, to=2-2]
	\end{tikzcd}\]
	To identify the bottom right corner we use that $\Sigma^{n+2} Y_{\le 0}[-n-1]$ is naturally a $C(\tau)$-module and therefore
	\begin{align*} 
		& \Map_{\Ctau{n+2}}\left(X^1\otimes \cdots \otimes X^s , \Sigma^{n+2} Y_{\le 0}[-n-1]\right) 
	    \simeq 
		\Map_{\Ctau{}}\left(C(\tau) \otimes_{C(\tau^{n+2})}(X^1 \otimes \cdots \otimes X^s) , \Sigma^{n+2} Y_{\le 0}[-n-1]\right) \\
	    &  
	    \quad \quad \quad \quad \quad  \quad \quad \simeq 
		\Map_{\Ctau{}}\left((C(\tau) \otimes_{C(\tau^{n+2})}X^1) \otimes \cdots \otimes (C(\tau) \otimes_{C(\tau^{n+2})}X^s) , \Sigma^{n+2} Y_{\le 0}[-n-1]\right)\\
	    & 
	   \quad \quad \quad \quad \quad  \quad \quad
	   \simeq 
	    \Map_{\Ctau{}}\left(X_{\le 0}^1\otimes \cdots \otimes X_{\le 0}^s , \Sigma^{n+2} Y_{\le 0}[-n-1]\right).
	\end{align*}
    This concludes the proof for $\calO=\bbE_\infty$.
    To adapt the proof to arbitrary \operads{}{} we write
    \[\mu_! \colon \displaystyle \prod_{j =1}^s \ModCtau{}(\calE)^\otimes_{\langle X^j \rangle } \too  \ModCtau{}(\calE)^\otimes_{\langle Y \rangle}\]
    for the functor provided by the cocartesian lifts of $\mu \in  \Mul_\calO(\langle X^1 \rangle,\dots,\langle X^s \rangle, \langle Y \rangle)$ 
    and define
    \[\Mul^{(\mu)}_{\calM^{\otimes}_{n}(\calE)}(X^1,\dots,X^s;Y) \coloneq \fib_\mu\left( \Mul_{\calM^{\otimes}_{n}(\calE)}(X^1,\dots,X^s;Y) \longrightarrow \Mul_{\calO}\left(\langle X_1 \rangle,\dots,\langle X^s\rangle ;\langle Y \rangle\right) \right). \]
    The argument given above for the case $\calO=\bbE_\infty$ then carries out verbatim to give a fiber sequence
    \[\begin{tikzcd}
		{\Mul^{(\mu)}_{\calM^{\otimes}_{n+1}(\calE)}(X^1,\dots,X^s;Y) } &{\Mul^{(\mu)}_{\calM^{\otimes}_n(\calE)}(X^1_{\le n},\dots,X^s_{\le n};Y_{\le n})} \\
		\{0\} & {\Map_{\ModCtau{}(\calE) }(\mu_!(X^1_{\le 0}, \dots, X^s_{\le 0}),\Sigma^{n+2} Y_{\le 0}[-n-1]).}
		\arrow[from=1-1, to=1-2]
		\arrow[from=2-1, to=2-2]
		\arrow[from=1-2, to=2-2]
		\arrow[from=1-1, to=2-1]
		\arrow["\lrcorner"{anchor=center, pos=0.002}, draw=none, from=1-1, to=2-2]
    \end{tikzcd}\]
    Thinking of this fiber sequence as a functor in $\mu \in \Mul_\calO(\langle X^1 \rangle,\dots,\langle X^s \rangle;\langle Y \rangle)$ and then unstraightning produces the desired pullback square.\qedhere
\end{proof}

\subsection{Algebraicity theorems}\label{sect:abstract-algebraicity}
The abstract algebraicity theorem and its variants all rest on the following key result which bounds the connectivity of the Goerss-Hopkins tower of a deformation $\calE$ in terms of homological properties of its heart $\calE^\heart$.

\begin{prop}\label{thm:general-algebraicity-theorem-actually}
        Let $\calE$ be a complete, $0$-complicial, symmetric monoidal Goerss-Hopkins deformation whose heart $\calE^\heart$ has ext dimension $d<\infty$, tor dimension $e<\infty$, and enough flats.
        Then the map of \operads{} 
        \[\Ctau{m+1}\otimes(-) \colon \calE\tauinvbra^{\otimes} \to \calM^\otimes_m(\calE),\]
        \begin{enumerate}
            \item 
            is essentially surjective whenever $m \ge d$.
            \item 
            induces an $(m-d-(r-1)e)$-connected map on multi-mapping spaces through arity $r$.
        \end{enumerate}
\end{prop}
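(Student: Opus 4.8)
\emph{The plan is to} read off both statements from the square‑zero obstruction theory of \cref{prop:obstruction-for-objects} and \cref{prop:obstructions-for-multi-morphisms} applied to the operadic Goerss--Hopkins tower, using completeness to know that the tower converges. Indeed, by \cref{lem:GH-tower-convergence} completeness gives an equivalence of \operads{} $\calE\tauinvbra^\otimes \iso \lim_n \calM_n^\otimes(\calE)$ under which $\Ctau{m+1}\otimes(-)$ becomes the projection to the $m$‑th stage, so both claims reduce to controlling the maps $\calM_{n+1}^\otimes(\calE)\to\calM_n^\otimes(\calE)$ for $n\ge m$. Two homological inputs do all the work, both consequences of the hypotheses on $\calE^\heart$ together with $0$‑compliciality: since $\ModCtau{}(\calE)$ is $0$‑complicial with heart $\ModCtau{}(\calE)^\heart\simeq\calE^\heart$ (potential $0$‑stages are exactly the discrete $\Ctau{}$‑modules, by \cref{defn:GH-tower}), one can compute mapping spectra and tensor products of objects of $\calE^\heart$ inside $\ModCtau{}(\calE)$ as in $\pcal D(\calE^\heart)$ \cite[\S C]{SAG}; concretely I will use (i) $\pi_{-j}\Map_{\ModCtau{}(\calE)}(A,B)\cong\Ext^j_{\calE^\heart}(A,B)$, which vanishes unless $0\le j\le d$, and (ii) the tensor product in $\ModCtau{}(\calE)$ of $s$ objects of $\calE^\heart$ is connective with homotopy concentrated in degrees $[0,(s-1)e]$, which follows by iterating the Künneth spectral sequence and using that $\calE^\heart$ has enough flats and tor dimension $e$.

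\emph{For part (1)}, since the colors of $\calM_n^\otimes(\calE)$ are tuples of potential $n$‑stages, essential surjectivity amounts to lifting a single potential $m$‑stage $X$ to a periodic object. Given a potential $n$‑stage $X_n$ with $n\ge m\ge d$, \cref{prop:obstruction-for-objects} produces an obstruction $\mathfrak o(X_n)\in\Ext^{n+3,n+1}_{\ModCtau{}(\calE)}(\pi_0 X_n,\pi_0 X_n)$ to the existence of a potential $(n+1)$‑stage refining it, and by (i) this group vanishes because its cohomological degree $n+3$ exceeds $d$. Choosing refinements inductively (the torsor of choices is irrelevant, though in fact $\Ext^{n+2,n+1}$ vanishes too so the refinement is unique) yields a compatible tower $\{X_n\}_{n\ge m}$, i.e.\ an object of $\lim_n\calM_n(\calE)\simeq\calE\tauinvbra$ mapping to $X$.

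\emph{For part (2)}, fix periodic objects $X^1,\dots,X^s,Y$ with $s\le r$; their images in $\calM_n(\calE)$ are the potential $n$‑stages $X^j_{\le n}$ (by \cref{prop:periodic-objects-characterizations}), and $\Mul_{\calE\tauinvbra^\otimes}(X^1,\dots;Y)\simeq\lim_n\Mul_{\calM_n^\otimes(\calE)}(X^1_{\le n},\dots;Y_{\le n})$. In the symmetric monoidal case $\Mul_{\bbE_\infty}$ is contractible, so \cref{prop:obstructions-for-multi-morphisms} identifies the fiber of $\Mul_{\calM_{n+1}^\otimes(\calE)}\to\Mul_{\calM_n^\otimes(\calE)}$ over any point, when non‑empty, with $\Omega$ of $\Map_{\ModCtau{}(\calE)}\big(X^1_{\le 0}\otimes\cdots\otimes X^s_{\le 0},\,\Sigma^{n+2}Y_{\le 0}[-n-1]\big)$, the tensor product formed in $\ModCtau{}(\calE)$. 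By (ii) the source is connective with homotopy in degrees $\le(s-1)e$ and the target is discrete up to an $(n+2)$‑fold suspension, so by (i) this mapping spectrum has homotopy in degrees $\ge n+2-(s-1)e-d$; hence the mapping space is $\big(n+1-(s-1)e-d\big)$‑connected, the fiber is $\big(n-(s-1)e-d\big)$‑connected, and $\Mul_{\calM_{n+1}^\otimes(\calE)}\to\Mul_{\calM_n^\otimes(\calE)}$ is $\big(n+1-(s-1)e-d\big)$‑connected. These connectivities increase with $n$, and in the only range where the assertion has content, $m\ge d+(s-1)e$, the obstruction spaces above are connected, so every multi‑morphism of $\calM_m(\calE)$ between truncations of periodic objects lifts and the relevant fibers are non‑empty; passing to the limit shows $\Mul_{\calE\tauinvbra^\otimes}(X^1,\dots;Y)\to\Mul_{\calM_m^\otimes(\calE)}(X^1_{\le m},\dots;Y_{\le m})$ is $\big(m+1-(s-1)e-d\big)$‑connected, which is at least $m-d-(r-1)e$ since $s\le r$.

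\emph{The main difficulty} is not the connectivity bookkeeping but the homological estimate (ii): one must genuinely know that $\ModCtau{}(\calE)$ computes derived tensor products in $\calE^\heart$ — this is exactly where $0$‑compliciality is used — and then bound the homotopy of an $s$‑fold such product by $(s-1)e$ via tor dimension and enough flats. This is the step that forces the arity‑dependent loss $(r-1)e$ in the final bound, and correspondingly the $(r-1)h$ term in the main theorems; everything else is formal manipulation of the obstruction‑theoretic pullback squares and of the convergent tower.
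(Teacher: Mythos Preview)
Your argument is essentially the same as the paper's: factor through the convergent tower via \cref{lem:GH-tower-convergence}, use \cref{prop:obstruction-for-objects} for essential surjectivity and \cref{prop:obstructions-for-multi-morphisms} for multi-mapping spaces, and bound the obstruction groups by identifying $\ModCtau{}(\calE^{\ge 0})$ with $\pcal{D}_{\ge 0}(\calE^\heart)$ via $0$-compliciality and Postnikov-completeness so that the ext and tor dimensions of $\calE^\heart$ control everything. The only cosmetic differences are that the paper phrases the reduction as induction on $m$ rather than passing to the limit of a tower of lifts, and it explicitly singles out the nullary case $r=0$ (where one needs the unit of $\ModCtau{}(\calE)$ to be discrete, i.e.\ $\unit_\calE$ periodic), which your estimate (ii) implicitly subsumes.
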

\begin{proof}
    The map in question factors through the limit of the Goerss-Hopkins tower
    \[\calE \tauinvbra^\otimes  \too \lim_k \calM^\otimes_k(\calE) \too \calM_m^\otimes(\calE).\]
    Since $\calE^{\ge 0}$ is complete, \cref{lem:GH-tower-convergence} shows the first map is an equivalence.
    By induction on $m$ it thus suffices to prove the claim for the map
    $C(\tau^{m+1})\otimes_{C(\tau^{n+2})}(-) \colon \calM_{m+1}^{\otimes}(\calE) \to \calM^{\otimes}_m(\calE)$.
    For $(1)$ we use \cref{prop:obstruction-for-objects} which tells us the obstruction to lifting 
	$X \in \calM_m^\otimes(\calE)$ 
	to a potential $(m+1)$-stage $\widetilde{X} \in \calM_{m+1}^\otimes(\calE)$
	lives in
     $\Ext^{m+3,m+1}_{\calE^\heartsuit}(\pi_0X,\pi_0X)$ which vanishes whenever $m\ge d$.
    For $(2)$ note that by  \cref{prop:obstructions-for-multi-morphisms},
	the connectivity of the map
	\[C(\tau^{m+1})\otimes_{C(\tau^{m+2})}(-) \colon \Mul_{\calM^{\otimes}_{m+1}(\calE)}(X^1,\dots,X^r;Y) \too \Mul_{\calM^{\otimes}_m(\mcal{C})}(X^1_{\le m},\dots,X^r_{\le m};Y_{\le m}),\]
    is exactly one less than the connectivity of the space
	\[\Map_{\ModCtau{}(\calE) }\left(X^1_{\le 0} \otimes \dots \otimes X^r_{\le 0},\Sigma^{m+2} Y_{\le 0}[-m-1]\right),\]
    hence it suffices to show it is at least $(m+1-d-(r-1)e)$-connected.
    To prove this for $r\ge 1$ it suffices to show that the ext and tor dimension of $\ModCtau{}(\calE)$ and $\pcal{D}_{\ge 0}(\calE^\heartsuit)$ agree.
    To deal with $r=0$ it suffices to show that the unit of $\ModCtau{}(\calE)$ is discrete (i.e.~the unit of $\calE$ is periodic).
    We shall now prove all of these claims.
    
    Since $\calE^{\ge 0}$ is Postnikov-complete the same holds for $\ModCtau{}(\calE^{\ge0})$, and thus by \cite[Proposition C.5.4.5]{SAG} we have $\ModCtau{}(\calE^{\ge 0}) \simeq \pcal{D}_{\ge 0}(\calE^\heartsuit)$ and in particular the ext dimension of $\calE^\heart$ and $\ModCtau{}(\calE^{\ge 0})$ agree.
    Unfortunately the equivalence constructed in loc.cit.~is not monoidal.
    Nevertheless, since $\calE^\heart$ has enough flats every object of $\ModCtau{}(\calE^{\ge 0})$ is equivalent to a chain complex of flat objects and thus the bifunctor 
    \[\otimes \colon \ModCtau{}(\calE^{\ge 0}) \times \ModCtau{}(\calE^{\ge 0})  \to \ModCtau{}(\calE^{\ge 0}),\]
    is determined on objects by its restriction to the heart. 
    In particular the tor dimensions of $\calE^\heart$ and $\ModCtau{}(\calE^{\ge 0})$ agree.
    Finally since $\pi_0 \unit_\calE$ is the unit of $\calE^\heart$ the same argument shows it is the unit of $\Mod_{\Ctau{}}(\calE^{\ge 0})$ and thus we have $\Ctau{} \otimes \unit_\calE \simeq \pi_0 \unit$ so that $\unit_{\calE}$ is periodic.
\end{proof}

\begin{war}
    It might happen that $\ModCtau{}(\calE^{\ge 0})$ is $0$-complicial and $\calE^\heart$ has enough flats, but nevertheless $\ModCtau{}(\calE^{\ge 0})$ and $\pcal{D}_{\ge 0}(\calE^\heart)$ are \textit{inequivalent} as symmetric monoidal \categories{}. 
    The proof of \cref{thm:general-algebraicity-theorem-actually} only uses the arity $2$ part of that monoidal structure.
\end{war}

\begin{rem}
    The proof of \cref{thm:general-algebraicity-theorem-actually} carries out verbatim with "$\calO$-monoidal" in place of "symmetric-monoidal", provided $\ho_1\calO \simeq \bbE_\infty$.
\end{rem}

\begin{obs}\label{obs:m-stage-dependency}
    Let $\calE$ be an $\calO$-monoidal Goerss-Hopkins deformation and recall that $\calM^{\otimes}_n(\calE^{\ge 0})$ was defined so that we have a pullback square of \operads:
	\[\begin{tikzcd}
		{\calM^{\otimes}_n(\calE)} & {\ModCtau{n+1}(\calE^{\ge 0})^\otimes}\\
		{(\calE^{\heart})^{\otimes}} & {\ModCtau{}(\calE^{\ge 0})^\otimes.}
		\arrow[hook, from=2-1, to=2-2]
		\arrow[hook, from=1-1, to=1-2]
		\arrow["{C(\tau) \otimes_{C(\tau^{n+1})} (-)}", from=1-2, to=2-2]
		\arrow["{\pi_0}"', from=1-1, to=2-1]
	\end{tikzcd}\]
	This shows that $\calM^{\otimes}_n(\calE)$ 
	depends only on $\ModCtau{n+1}(\calE^{\ge 0})$ as a $\ModCtau{n+1}(\Fil)$-linear symmetric monoidal \category{}.
    In particular, 
    if $\calD$ is another symmetric monoidal Goerss-Hopkins deformation, then any $t$-exact equivalence
    of commutative $\ModCtau{n+1}(\Filnc)$-algebras
     $\ModCtau{n+1}(\calE) \simeq \ModCtau{n+1}(\calD)$
    induces an equivalence of \operads{}
	$\calM^{\otimes}_n(\calE) \simeq  \calM^{\otimes}_n(\calD)$.
\end{obs}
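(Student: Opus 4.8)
The plan is to observe that the displayed pullback square, which (up to identifying its left-hand vertical map) reproduces the definition of $\calM_n^\otimes(\calE)$ from \cref{defn:GH-tower}, has every vertex and every arrow reconstructible from the single datum of $\ModCtau{n+1}(\calE)$, regarded as a $\ModCtau{n+1}(\Filnc)$-linear symmetric monoidal \category{} together with its induced $t$-structure. The final assertion of the observation then follows formally, by transporting the pullback diagram along the given equivalence.

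First I would recall from \cref{defn:GH-tower} that $\calM_n^\otimes(\calE)$ is by definition the pullback of the inclusion $(\calE^\heart)^\otimes \hookrightarrow \ModCtau{}(\calE^{\ge 0})^\otimes$ along the map $\Ctau{}\otimes_{\Ctau{n+1}}(-)\colon \ModCtau{n+1}(\calE^{\ge 0})^\otimes\to\ModCtau{}(\calE^{\ge 0})^\otimes$, and then check that the induced left vertical map $\calM_n^\otimes(\calE)\to(\calE^\heart)^\otimes$ is precisely the one labelled $\pi_0$ in the statement. For this it suffices to note that for a potential $n$-stage $X$ one has $\Ctau{}\otimes_{\Ctau{n+1}}X\simeq X_{\le 0}$ by \cref{lem:characterization-potential-stages}, and $X_{\le 0}$ lies in $\calE^\heart$ and agrees there with $\pi_0^\heart X$; hence the composite $\calM_n^\otimes(\calE)\hookrightarrow \ModCtau{n+1}(\calE^{\ge 0})^\otimes\to\ModCtau{}(\calE^{\ge 0})^\otimes$ factors through $(\calE^\heart)^\otimes$ and restricts there to $\pi_0$.

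Next I would unwind the four corners and confirm that each is intrinsic to $\ModCtau{n+1}(\calE^{\ge 0})$ with its $\ModCtau{n+1}(\Fil)$-linear symmetric monoidal structure and its $t$-structure. The top-right corner is $\ModCtau{n+1}(\calE^{\ge 0})$ itself, which is the connective part of $\ModCtau{n+1}(\calE)$ for the induced $t$-structure since $\Ctau{n+1}$ lies in $\Fil$ and is therefore a connective $\bbE_\infty$-algebra. As $\Ctau{n+1}\to\Ctau{}$ is a map of $\bbE_\infty$-algebras in $\Fil$, the algebra $\Ctau{}$ lifts canonically to $\CAlg(\ModCtau{n+1}(\Fil))$, and associativity of relative tensor products gives a canonical $\ModCtau{}(\Fil)$-linear symmetric monoidal identification $\ModCtau{}(\calE^{\ge 0})\simeq\Mod_{\Ctau{}}\big(\ModCtau{n+1}(\calE^{\ge 0})\big)$ under which the right-hand vertical arrow $\Ctau{}\otimes_{\Ctau{n+1}}(-)$ becomes the free-module functor; both the lower-right corner and that arrow are thus determined by the $\ModCtau{n+1}(\Fil)$-linear symmetric monoidal structure. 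The $t$-structure on $\ModCtau{}(\calE^{\ge 0})$ is the one inherited from $\ModCtau{n+1}(\calE^{\ge 0})$ along the forgetful functor, hence also determined, and the lower-left corner $(\calE^\heart)^\otimes$ together with its inclusion into $\ModCtau{}(\calE^{\ge 0})^\otimes$ is just the full suboperad on the discrete objects of $\ModCtau{}(\calE^{\ge 0})$, which depends only on that $t$-structure.

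Since passing to underlying operads and forming pullbacks in $\Op_\infty$ are functorial, the previous two paragraphs exhibit $\calM_n^\otimes(\calE)$ as a functor of $\ModCtau{n+1}(\calE)$ equipped with the structure named in the statement. Hence a $t$-exact equivalence of commutative $\ModCtau{n+1}(\Filnc)$-algebras $\ModCtau{n+1}(\calE)\simeq\ModCtau{n+1}(\calD)$ restricts to a $t$-exact, $\ModCtau{n+1}(\Fil)$-linear, symmetric monoidal equivalence of connective parts $\ModCtau{n+1}(\calE^{\ge 0})\simeq\ModCtau{n+1}(\calD^{\ge 0})$, which transports the defining pullback square of $\calM_n^\otimes(\calE)$ onto that of $\calM_n^\otimes(\calD)$ and so yields the claimed equivalence of \operads{}. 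I do not expect a genuine obstacle here: the content is essentially bookkeeping, and the only steps needing more than a line of justification are the identification of the left vertical map with $\pi_0$ and the (routine) transitivity $\ModCtau{}(\calE^{\ge 0})\simeq\Mod_{\Ctau{}}(\ModCtau{n+1}(\calE^{\ge 0}))$ of iterated module categories.
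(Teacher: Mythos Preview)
Your proposal is correct and follows the same approach as the paper. In fact, the paper treats this as a self-evident observation with no separate proof: it simply displays the pullback square and asserts ``This shows that\ldots''; your write-up is a careful unpacking of exactly the reasoning the paper leaves implicit, namely that each corner and each arrow of the square is determined by $\ModCtau{n+1}(\calE^{\ge 0})$ as a $\ModCtau{n+1}(\Fil)$-linear symmetric monoidal \category{} with its induced $t$-structure.
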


We are now in a position to prove the abstract algebraicity theorem (\cref{thmA:abstract-algebraicity}).
\begin{proof}[Proof of \cref{thmA:abstract-algebraicity}]
    We freely use notation and terminology from \cref{appendix:arity}.
    By \cref{obs:m-stage-dependency} the degeneracy structure gives an equivalence of \operads{} $\calM^\otimes_m(\calE) \simeq \calM^\otimes_m(\calE^\delta)$.
    Furthermore, by \cref{obs:modtau-same}, if $\calE$ satisfies the hypotheses of the theorem, then so does
    $\calE^\delta$ and for the same values of $d$ and $e$.
    It thus suffices to show that the map of \operads{}
    $\calE\tauinvbra^\otimes \to \calM^\otimes_m(\calE)$
    becomes an equivalence after applying $\ho_\alpha(-)$ for $\alpha=\lfloor\frac{m-d+4}{e+1} \rfloor-3$.
    Applying \cref{thm:general-algebraicity-theorem-actually} with $r= \alpha+4$ and using \cref{prop:arity-equivalences} we get:
    \[\ho_\alpha^{(\otimes \le \alpha+3)}\calE \tauinvbra^\otimes \simeq \ho_\alpha^{(\otimes \le \alpha+3)} \calM^\otimes_m(\calE) \quad \in \Op_\alpha^{\le \alpha +3}. \tag{$\star$}\]
    By \cref{lem:Segal-vs-fibrous-truncation} we have a canonical commutative square 
    \[\begin{tikzcd}
	{\Cat_\alpha^{\otimes}} && {\Cat^{\otimes \le \alpha +3}_\alpha} \\
	{\Op_\alpha} && {\Op_\alpha^{\le \alpha+3}.}
	\arrow[hook, from=1-3, to=2-3]
	\arrow[hook, from=1-1, to=2-1]
	\arrow[from=1-1, to=1-3]
	\arrow[from=2-1, to=2-3]
    \end{tikzcd}\]
    where the vertical arrows are replete inclusions \cite[Remark 4.2.7]{envelopes} and the top horizontal functor is an equivalence \cite[Corollary A]{ArityApprox}, hence $(\star)$ lifts uniquely to an equivalence
    $\ho_\alpha \calE \tauinvbra^\otimes \simeq \ho_\alpha\calM^\otimes_m(\calE)$.\qedhere
\end{proof}

\begin{prop}\label{thm:general-algebraicity-theorem-1cat}
    Let $\calE$ be a complete $\calO$-monoidal Goerss-Hopkins deformation such that
    \begin{enumerate}
        \item 
        $\ho_2\calO \simeq \bbE_\infty$. 
        \item 
        $\calE^{\ge 0}$ admits a $\max\!\big(d+1,d+2e\big)$-degeneracy structure
        \item 
        $\ModCtau{}(\calE^{\ge 0})$ is $0$-complicial, i.e.~generated under colimits by discrete objects.
        \item
        $\calE^\heart$ has ext dimension $d<\infty$, tor dimension $e<\infty$ and enough flats.
    \end{enumerate}
    Then there exists a tensor-triangulated equivalence:
    \[\ho_1 \calE \tauinvbra \simeq \ho_1 \calE \taunull{}.\]
\end{prop}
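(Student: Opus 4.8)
The plan is to run the machinery from the proof of \cref{thmA:abstract-algebraicity}, replacing the generic arity-$(\alpha{+}3)$ input of \cite{ArityApprox} by a Mac~Lane-coherence argument that is special to $\ho_1$; this is exactly what buys the sharper hypothesis $m=\max(d+1,d+2e)$, rather than the $m\ge d+4e$ that a naive specialization of \cref{thmA:abstract-algebraicity} to $k=1$ would require. First I would set up a span. Since $\ho_2\calO\simeq\bbE_\infty$ we in particular have $\ho_1\calO\simeq\bbE_\infty$, so \cref{thm:general-algebraicity-theorem-actually} applies (its proof goes through verbatim for $\calO$-monoidal deformations once $\ho_1\calO\simeq\bbE_\infty$). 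The chosen degeneracy structure together with \cref{obs:m-stage-dependency} gives an equivalence of operads $\calM^{\otimes}_m(\calE)\simeq\calM^{\otimes}_m(\calE^\delta)$, and $\calE^\delta\tauinvbra\simeq\calE\taunull{}$ as in \cref{thmA:abstract-algebraicity}; since $\calE$ is complete, \cref{lem:GH-tower-convergence} then produces a span of $\infty$-operads over $\calO$
\[\calE\tauinvbra^{\otimes}\ \xrightarrow{\ \phi\ }\ \calM^{\otimes}_m(\calE)\ \xleftarrow{\ \psi\ }\ \big(\calE\taunull{}\big)^{\otimes},\]
whose two legs are the canonical maps $C(\tau^{m+1})\otimes(-)$. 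It suffices to promote this span to a symmetric monoidal equivalence on $\ho_1$'s.

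Next comes the connectivity bookkeeping. By \cref{thm:general-algebraicity-theorem-actually}, both $\phi$ and $\psi$ are essentially surjective (as $m\ge d$) and $(m-d-(r-1)e)$-connected on multi-mapping spaces through arity $r$; for $m=\max(d+1,d+2e)$ this means bijective on $\pi_0$ in arities $\le 2$ and surjective on $\pi_0$ in arity $3$. (When $e=0$ the estimate is $\ge 1$-connected in every arity, so $\phi$ and $\psi$ are already levelwise $\pi_0$-equivalences of operads and the statement is immediate; so assume $e\ge 1$.) Restricting to arity $1$, $\ho_1\phi$ and $\ho_1\psi$ are equivalences of underlying triangulated categories, and composing them produces an equivalence $F\colon\ho_1\calE\taunull{}\xrightarrow{\ \sim\ }\ho_1\calE\tauinvbra$ with $\ho_1\phi\circ F\simeq\ho_1\psi$ in arities $\le 1$. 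Bijectivity on $\pi_0$ in arity $2$ (and the unit, from arity $0$) together with the homotopy-category Yoneda lemma then furnish $F$ with a natural structure isomorphism $FX\otimes FY\xrightarrow{\ \sim\ }F(X\otimes Y)$, exhibiting it as a not-yet-known-to-be-coherent monoidal functor.

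The crux is that this structure is coherent, i.e.\ that $F$ is strong symmetric monoidal. The coherence constraints of a symmetric monoidal \emph{functor} live entirely in arities $\le 3$ --- the associativity constraint being the only one in arity $3$, the unit and symmetry constraints in arities $\le 2$ --- while the pentagon, hexagon and triangle identities of a symmetric monoidal \emph{category} hold on $\ho_1\calE\tauinvbra$ and $\ho_1\calE\taunull{}$ for free, these being $\ho_1$'s of genuine symmetric monoidal $\infty$-categories; this Mac~Lane-coherence remark is what eliminates any need for arity-$\ge 4$ control. So it is enough to see that $\ho_1\phi$ and $\ho_1\psi$ become equivalences after truncation to arity-$\le 3$ operads. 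Only arity $3$ is in question, and there I would use that the outer terms $\calE\tauinvbra^{\otimes}$ and $\big(\calE\taunull{}\big)^{\otimes}$ are representable --- ternary multimorphisms being corepresented by the iterated binary tensor product --- so that essential surjectivity and bijectivity in arity $1$ force the image of the universal ternary operation to corepresent the ternary multimorphism functor of $\calM^{\otimes}_m(\calE)$ as well; $\pi_0$-surjectivity in arity $3$ is precisely what makes this corepresentability hold, whereupon the arity-$3$ comparison gets identified with a bijection on arity-$1$ hom-sets. Transporting through the resulting equivalence of arity-$\le 3$ operads yields the symmetric monoidal equivalence $\ho_1\calE\tauinvbra\simeq\ho_1\calE\taunull{}$, which is automatically tensor-triangulated, the tensor product being biexact on $\ho_1$ of a stable $\infty$-category.

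I expect the main obstacle to be this arity-$3$ step: extracting a genuine equivalence of arity-$\le 3$ operads from what is a priori only $\pi_0$-surjectivity, using the rigidity forced by representability of the two outer terms to pin down the associator and its compatibilities. (This is also the step that an improvement of \cite[Theorem~A]{ArityApprox} along the lines of \cref{rem:maclane-coherence} would streamline and, presumably, use to push the bound below $\max(d+1,d+2e)$.)
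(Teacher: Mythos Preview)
Your overall strategy matches the paper's: pass through the span $\calE\tauinvbra^\otimes \to \calM^\otimes_m(\calE) \leftarrow (\calE\taunull{})^\otimes$, use Mac~Lane coherence to reduce to arities $\le 3$, and invoke \cref{thm:general-algebraicity-theorem-actually} for the connectivity. But two steps do not go through as written.

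First, your connectivity bookkeeping is off by one. In the convention forced by \cref{thm:piotr-irakli} (arity-$1$ connectivity $m-d$ yields a $\ho_{m-d+1}$-equivalence), ``$c$-connected'' means isomorphism on $\pi_{\le c}$. In arity $3$ with $m=\max(d+1,d+2e)$ one has $m-d-2e \ge 0$, so the maps $\phi,\psi$ are already $\pi_0$-\emph{bijective} in all arities $\le 3$, and your corepresentability detour is unnecessary. This matters because that detour is also the genuine gap: from only $\pi_0$-surjectivity in arity $3$ you can show the composition-with-$\phi(u)$ map is surjective, but not injective (you would need $\phi$ to be $\pi_0$-injective on ternary maps to cancel $\phi(u)$), so corepresentability of ternary multimorphisms in $\ho_1\calM^\otimes_m(\calE)$ does not follow. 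Second, ``automatically tensor-triangulated'' is not justified: the category of potential $m$-stages is not closed under suspension inside $\ModCtau{m+1}(\calE)$, hence not stable, so the span does not transport the triangulation. The paper handles this separately by observing that the \emph{same} span induces a (non-monoidal) equivalence $\ho_2\calE\tauinvbra \simeq \ho_2\calE\taunull{}$ --- this needs $m-d\ge 1$, which is exactly why the bound is $\max(d+1,d+2e)$ rather than $d+2e$ --- and a $\ho_2$-equivalence restricts to a triangulated equivalence on $\ho_1$.

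The paper's execution also avoids your by-hand construction of $F$ and its structure isomorphisms: it phrases Mac~Lane coherence as full faithfulness of arity restriction $\Cat_1^\otimes \hookrightarrow \Cat_1^{\otimes \le 3}$, runs the argument of \cref{thmA:abstract-algebraicity} verbatim with arity bound $3$ in place of $\alpha+3$ to obtain an equivalence in $\Op_1^{\le 3}$, and then lifts along this fully faithful inclusion --- both outer terms of the span lie in the image of $\Cat_1^\otimes$ even though the middle term need not. No coherences are checked by hand.
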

\begin{proof}
    Maclane's coherence theorem \cite{mac2013categories}
    formulated in the language of \cite{ArityApprox},
    says that arity restriction gives rise to a fully faithful functor
    $\Cat_1^\otimes \hookrightarrow \Cat_1^{\otimes \le 3}$.
    The same argument as in \cref{thm:general-algebraicity-theorem-actually} then gives a symmetric monoidal equivalence equivalence $\ho_1 \calE \tauinvbra \simeq \ho_1 \calE \taunull{}$.
    To show the equivalence is triangulated it suffices extend it to a, possibly non-monoidal, equivalence of homotopy $2$-categories.
    This again follows from \cref{thm:general-algebraicity-theorem-actually}.\qedhere
\end{proof}

\begin{rem}\label{rem:maclane-coherence}
    Treating \cref{thmA:abstract-algebraicity} as a black box one can recover a somewhat similar result to \cref{thm:general-algebraicity-theorem-1cat} but with the worse bound $\max\!\big(d+1,d+3e\big)$.
    We expect that \cite[Theorem C]{ArityApprox} admits an extension which treats the cases in which arity restriction is fully faithful but not an equivalence.
    More precisely, in the notation of loc.cit.~we expect that for any complete $(\sigma_\calO(k)+1,1)$-category $\calC$ and any $\infty$-operad $\calO$, arity restriction defines a fully faithful functor $\Mon_\calO(\calC) \hookrightarrow \Mon_\calO^{\le k-1}(\calC)$.
    Such a hypothetical extension would (slightly) improve the bound in \cref{thmA:abstract-algebraicity} making \cref{thm:general-algebraicity-theorem-1cat} a strict consequence.
    In the special case of $\calC=\Cat_1$ and $\calO=\bbE_\infty$ such an extension is provided by Maclance's coherence theorem which we rely on in \cref{thm:general-algebraicity-theorem-1cat}.
\end{rem}

\cref{thmA:abstract-algebraicity} simplifies considerably if one is willing to ignore the monoidal structure. 
This results in the following non-monoidal variant.

\begin{thm}\label{thm:piotr-irakli}
    Let $\calE$ be a complete, Goerss-Hopkins deformation such that: 
    \begin{enumerate}
        \item 
        $\ModCtau{}(\calE)$ is $0$-complicial, i.e. generated under colimits by discrete objects.
        \item
        $\calE^\heart$ has ext dimension $d<\infty$.
    \end{enumerate}
    Then a (non-monoidal) $m$-degeneracy structure $\gamma$ on $\calE$ determines an equivalence of \categories{}:
	\[ \ho_{m-d+1}\calE\tauinvbra  \overset{(\gamma)}{\simeq} \ho_{m-d+1}\calE \taunull{} \]
\end{thm}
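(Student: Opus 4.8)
The plan is to run the proof scheme of \cref{thmA:abstract-algebraicity}, but only in arity $\le 1$: in that range there is no monoidal coherence to keep track of, so the arity-approximation machinery of \cite{ArityApprox} is not needed and the numerical bound can be read off directly from the two obstruction-theoretic fiber sequences. Throughout I would assume $m\ge d$, the remaining cases being degenerate.

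First I would reduce to the following single-deformation statement: if $\calE$ is a complete, $0$-complicial Goerss--Hopkins deformation whose heart has ext dimension $\le d$, then the comparison functor $\Ctau{m+1}\otimes(-)\colon\calE\tauinvbra\to\calM_m(\calE)$ induces an equivalence $\ho_{m-d+1}\calE\tauinvbra\simeq\ho_{m-d+1}\calM_m(\calE)$. Granting this, I would apply it both to $\calE$ and to $\calE^\delta$. The deformation $\calE^\delta$ satisfies the same hypotheses: $\ModCtau{}(\calE^\delta)\simeq\ModCtau{}(\calE)$ by \cref{obs:modtau-same}, so it is again $0$-complicial and of the same ext dimension, and it is complete as recorded in the proof of \cref{thmA:abstract-algebraicity}. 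The $m$-degeneracy structure $\gamma$ is a $t$-exact $\ModCtau{m+1}(\Filnc)$-linear equivalence $\ModCtau{m+1}(\calE^\delta)\simeq\ModCtau{m+1}(\calE)$, so \cref{obs:m-stage-dependency} turns it into an equivalence $\calM_m(\calE)\simeq\calM_m(\calE^\delta)$. Since $\calE^\delta\tauinvbra\simeq\calE\taunull{}$ --- because $(-)^{-\infty}\circ\delta\simeq\gr_\oplus$ --- the four equivalences chain together to give $\ho_{m-d+1}\calE\tauinvbra\simeq\ho_{m-d+1}\calE\taunull{}$.

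For the reduced statement, completeness together with \cref{lem:GH-tower-convergence} identifies $\calE\tauinvbra\simeq\lim_n\calM_n(\calE)$, so it is enough to control the tower $\cdots\to\calM_{n+1}(\calE)\to\calM_n(\calE)$ for $n\ge m$. The obstruction theory for objects (\cref{prop:obstruction-for-objects}) places the obstruction to lifting a potential $n$-stage to a potential $(n+1)$-stage in $\Ext^{n+3}_{\calE^\heart}(-,-)$, which vanishes once $n\ge d$; hence every transition functor, and so the projection $\lim_n\calM_n(\calE)\to\calM_m(\calE)$, is essentially surjective. For mapping spaces I would invoke the single-source case of \cref{prop:obstructions-for-multi-morphisms}, which produces a fiber sequence
\[
\Map_{\calM_{n+1}(\calE)}(X,Y)\too\Map_{\calM_n(\calE)}(X_{\le n},Y_{\le n})\too\Map_{\ModCtau{}(\calE)}(X_{\le 0},\Sigma^{n+2}Y_{\le 0}[-n-1]).
\]
As in the proof of \cref{thm:general-algebraicity-theorem-actually}, the $0$-complicial and completeness hypotheses give $\ModCtau{}(\calE)\simeq\pcal{D}(\calE^\heart)$, and the ext-dimension bound forces the right-hand term to be $(n+1-d)$-connective; hence $\calM_{n+1}(\calE)\to\calM_n(\calE)$ is $(n+1-d)$-connective on mapping spaces. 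Since these connectivities grow with $n$, the projection $\lim_n\calM_n(\calE)\to\calM_m(\calE)$ is $(m+1-d)$-connective on mapping spaces, and a functor that is essentially surjective and $c$-connective on mapping spaces becomes an equivalence after $(c-1)$-truncating mapping spaces, i.e.\ an equivalence of homotopy $c$-categories; with $c=m-d+1$ this completes the reduced statement.

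The main obstacle is not any single hard step --- the substantive inputs (the square-zero obstruction theory of \cref{prop:obstruction-for-objects} and \cref{prop:obstructions-for-multi-morphisms}, the convergence of \cref{lem:GH-tower-convergence}, and the identification $\ModCtau{}(\calE)\simeq\pcal{D}(\calE^\heart)$) are already in hand --- but rather the careful connectivity bookkeeping: tracking the internal grading shift $[-n-1]$ in the $\Ext$ term so as to pin down the constant $m-d+1$ exactly, and checking that the increasing connectivities along the Goerss--Hopkins tower survive passage to the limit. This is precisely the place where an off-by-one would creep in, but it is handled identically in the proof of \cref{thm:general-algebraicity-theorem-actually}, and the conceptual simplification over \cref{thmA:abstract-algebraicity} is exactly that arity $\le 1$ involves no Mac Lane--type coherence, so the argument is purely obstruction-theoretic.
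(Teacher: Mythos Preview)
Your proposal is correct and follows precisely the approach the paper intends: it is the non-monoidal specialization of the proof of \cref{thmA:abstract-algebraicity}, using \cref{thm:general-algebraicity-theorem-actually} at arity $r=1$ (so no tor-dimension term and no need for the arity-approximation machinery of \cite{ArityApprox}), together with \cref{obs:m-stage-dependency} and \cref{obs:modtau-same} to transport along the degeneracy structure. The paper itself states \cref{thm:piotr-irakli} without a separate proof, merely noting that \cref{thmA:abstract-algebraicity} ``simplifies considerably'' in the non-monoidal case, so your write-up is in fact more explicit than what appears there.
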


\begin{rem}
    With suitable modifications, we expect that \cref{thmA:abstract-algebraicity} can be adapted to $\calO$-monoidal Goerss-Hopkins deformations where $\calO$ is any \operad{}.
    For this one should replace the requirement for an $(\bbE_\infty,m)$-degeneracy structure with an $(\calO,m)$-degenaracy structure.
    The conclusion of the theorem would then be an $\calO$-monoidal equivalence $\ho_\alpha \calE \tauinvbra \simeq \ho_\alpha \calE \taunull{}$ for $\alpha$ a certain integer depending on $\calO$.
    The combination of \cref{thm:general-algebraicity-theorem-actually}
    and \cite[Theorem C]{ArityApprox} 
    should suggest a suitable value for $\alpha$. (Extra care should be taken when $\calO$ is not $0$-truncated in which case $k$-truncation in $\Op_{/\calO}$ does not correspond to $\ho_k(-)$ on $\calO$-monoidal categories.)
    An appropriate $\alpha$ would then be roughly a solution of $\alpha + \sigma^{-1}_\calO(\alpha)e
    \approx  m-d$
    where $\sigma^{-1}_\calO(\alpha)$ denotes the inverse of the partition connectivity function \cite[Definition 1.22]{ArityApprox}.
    For example, in the case $\calO=\bbE_n$, the partition connectivity is given by $\sigma_{\bbE_n}(\alpha)=\alpha - 3$ \cite[Corollary E]{ArityApprox}.
    In particular, 
    this $\bbE_n$-monoidal variant of \cref{{thmA:abstract-algebraicity}}
    should have the same bound possibly upto a constant factor.
\end{rem}

	\subsubsection{Algebraicity of module categories}

We now apply the tools developed in the paper to obtain the algebraicity results promised in the introduction.
We can immediately treat the case of $\bbE_\infty$-rings.

\begin{thm}\label{thm:proof-ring-application}
    Let $R$ be an $\bbE_\infty$-ring spectrum whose homotopy ring $\pi_\ast R$
    \begin{enumerate}
        \item 
        is concentrated in degrees divisible by $w$, and
        \item
        has graded global dimension $d < \frac{w}{4}$.
    \end{enumerate}
    Then there exists a canonical symmetric monoidal equivalence:
    \[\ho_{\left\lfloor\frac{w+4}{d+1}\right\rfloor-4}\Mod_R \simeq \ho_{\left\lfloor\frac{w+4}{d+1}\right\rfloor-4} \Mod_{H \pi_\ast R}.\]
\end{thm}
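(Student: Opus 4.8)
The strategy is to apply the abstract algebraicity theorem (\cref{thmA:abstract-algebraicity}) to the canonical deformation $\calE_R = \LMod_{R_{\ge \star}}(\Filnc)$ of $R$. Since $R$ is $\bbE_\infty$ this is a symmetric monoidal Goerss-Hopkins deformation, and by \cref{cor:fibers-of-module-deformation} its generic and special fibers are $\calE_R\tauinvbra \simeq \Mod_R$ and $\calE_R\taunull{} \simeq \Mod_{H\pi_\ast R}$. So everything comes down to checking the hypotheses of \cref{thmA:abstract-algebraicity} and producing an $(\bbE_\infty,m)$-degeneracy structure of the right height $m$.

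Applying \cref{prop:limits-of-affine-deformations} with $J = \pt$ shows that $\calE_R$ is a complete (hence Postnikov-complete) symmetric monoidal Goerss-Hopkins deformation with heart $\calE_R^\heartsuit \simeq \gr\Mod^\heartsuit_{\pi_\star R}$. This abelian category has enough flats (free graded modules), its ext dimension equals the graded global dimension $d$, and since a ring of finite global dimension $d$ admits projective resolutions of length $\le d$, its tor dimension satisfies $e \le d < \infty$. Finally, \cref{prop:filtered-deform-modules} gives $\ModCtau{}(\calE_R^{\ge 0}) \simeq \LMod_{R_{[\star,\star]}}(\Fil)$; as $R_{[\star,\star]}$ is an Eilenberg--MacLane (i.e.\ $H\bbZ$-linear) algebra, this is the connective derived category $\pcal{D}_{\ge 0}(\gr\Mod^\heartsuit_{\pi_\star R})$, which is generated under colimits by discrete objects and hence $0$-complicial.

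Since $\pi_\ast R$ is concentrated in degrees divisible by $w$, \cref{cor:weight-formality} shows that the space of $(\bbE_\infty, w-1)$-degeneracy structures on $\calE_R$ is contractible; there is thus an essentially unique one, making the resulting equivalence canonical. Feeding $m = w - 1$ together with the ext and tor dimensions into \cref{thmA:abstract-algebraicity} yields a symmetric monoidal equivalence
\[
\ho_{\lfloor\frac{w+3-d}{e+1}\rfloor - 3}\Mod_R \;\simeq\; \ho_{\lfloor\frac{w+3-d}{e+1}\rfloor - 3}\Mod_{H\pi_\ast R}.
\]
Because $e \le d$ and $w + 3 - d > 0$, we have $\lfloor\frac{w+3-d}{e+1}\rfloor \ge \lfloor\frac{w+3-d}{d+1}\rfloor = \lfloor\frac{w+4}{d+1}\rfloor - 1$, so the index above is at least $\lfloor\frac{w+4}{d+1}\rfloor - 4$; since applying a homotopy-category truncation $\ho_{k'}$ to a symmetric monoidal equivalence yields another (for any $k' \le k$), we obtain the asserted equivalence at level $\lfloor\frac{w+4}{d+1}\rfloor - 4$. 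The hypothesis $d < \frac{w}{4}$ gives $4(d+1) < w+4$, so this index is non-negative.

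All the homotopical difficulty is absorbed by \cref{thmA:abstract-algebraicity}; the only genuine work is the (largely routine) verification of its three hypotheses, the slightly delicate point being the $0$-compliciality of $\ModCtau{}(\calE_R^{\ge 0})$, which is why we pass to the derived-category description, together with the elementary arithmetic---via $e \le d$---converting the abstract bound $\lfloor\frac{m-d+4}{e+1}\rfloor-3$ into the stated $\lfloor\frac{w+4}{d+1}\rfloor-4$.
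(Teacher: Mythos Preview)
Your proof is correct and follows essentially the same approach as the paper: apply \cref{thmA:abstract-algebraicity} to $\calE_R$, using \cref{cor:weight-formality} for the degeneracy structure with $m=w-1$, the bound $e\le d$ from ``projectives are flat,'' and \cref{cor:fibers-of-module-deformation} to identify the fibers. You are simply more explicit than the paper, which leaves the verification of Postnikov-completeness, $0$-compliciality, and the arithmetic $\lfloor\tfrac{w+3-d}{d+1}\rfloor-3=\lfloor\tfrac{w+4}{d+1}\rfloor-4$ to the reader.
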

\begin{proof}
    By \cref{cor:weight-formality} the space of $(\bbE_\infty,m)$-degeneracy structures on $\calE_R$ is contractible and in particular non-empty.    
    Since graded projective modules are in particular graded flat, the tor dimension of $\calE_R^\heartsuit \simeq \gr\Mod_{\pi_\star R}^\heart$ is bounded above by the ext dimension which is precisely the graded global dimension of $\pi_\star R$.
    The result now follows by combining \cref{cor:fibers-of-module-deformation} and \cref{thmA:abstract-algebraicity} (with $m=w-1$).
\end{proof}

Using \cref{thm:general-algebraicity-theorem-1cat}, instead of \cref{thmA:abstract-algebraicity} in the above proof gives the following tensor triangulated variant.

\begin{cor}\label{cor:E4-tt}
    Let $R$ be an $\bbE_4$-ring spectrum whose homotopy ring $\pi_\ast R$
    \begin{enumerate}
        \item
        has graded global dimension $d < \infty$, and
        \item 
        is concentrated in degrees divisible by $w$ for some $w > \max\!\big(d+1,d+2e\big)$.
    \end{enumerate}
    Then there exists a tensor-triangulated equivalence:
    \[\ho_1 \Mod_R \simeq \ho_1 \Mod_{H \pi_\ast R}.\]
\end{cor}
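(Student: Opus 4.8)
The plan is to run the proof of \cref{thm:proof-ring-application} with the tensor-triangulated variant \cref{thm:general-algebraicity-theorem-1cat} in place of \cref{thmA:abstract-algebraicity}, applied to the canonical deformation $\calE = \calE_R = \LMod_{R_{\ge\star}}(\Filnc)$. Since $(-)_{\ge\star}\colon\Sp\to\Filnc$ is lax symmetric monoidal, the Whitehead tower $R_{\ge\star}$ is an $\bbE_4$-algebra in $\Filnc$, so $\calE_R$ is an $\bbE_3$-monoidal Goerss-Hopkins deformation, and by \cref{cor:fibers-of-module-deformation} its generic and special fibers are $\Mod_R$ and $\Mod_{H\pi_\ast R}$. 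One then verifies the hypotheses of \cref{thm:general-algebraicity-theorem-1cat} for $\calO = \bbE_3$. By \cref{prop:filtered-deform-modules}, $\ModCtau{}(\calE_R^{\ge 0})$ is the connective part of left modules over the discrete graded ring $\pi_\star R$, hence is $0$-complicial (condition $(3)$); $\calE_R$ is complete since $\Fil$ is Postnikov-complete and $R_{\ge\star}$ is connective; and $\calE_R^\heart \simeq \gr\Mod^\heart_{\pi_\star R}$ has enough flats, ext dimension equal to the graded global dimension $d<\infty$, and (graded projectives being flat) tor dimension $e\le d$ — this $e$ being the one appearing in the statement — which gives condition $(4)$.

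For condition $(1)$ with $\calO=\bbE_3$ one checks $\ho_2\bbE_3\simeq\bbE_\infty$: the multi-mapping spaces of $\bbE_3$ are the configuration spaces $\mathrm{Conf}_k(\mathbb{R}^3)$, which are $1$-connected, so their $1$-truncations are contractible, and a one-colour $\infty$-operad with contractible multi-mapping spaces is $\bbE_\infty$. This is precisely why the hypothesis must be $\bbE_4$ rather than $\bbE_3$: an $\bbE_3$-ring would give only an $\bbE_2$-monoidal deformation, and since $\mathrm{Conf}_k(\mathbb{R}^2)$ has nontrivial fundamental group, $\ho_2\bbE_2\not\simeq\bbE_\infty$. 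For condition $(2)$ I would invoke \cref{lem:section-space-contractibility} with $n=4$ and $J=\pt$: as $\pi_\ast R$ is concentrated in degrees divisible by $w$, this equips $\calE_R$ with an $(\bbE_4,w-1)$-degeneracy structure; restricting the operad to $\bbE_3$ and iterating the fact that an $(m+1)$-degeneracy structure determines an $m$-degeneracy structure — legitimate since $w-1\ge\max(d+1,d+2e)$ — produces the $\max(d+1,d+2e)$-degeneracy structure required by \cref{thm:general-algebraicity-theorem-1cat}. That theorem then delivers the tensor-triangulated equivalence $\ho_1\Mod_R\simeq\ho_1\Mod_{H\pi_\ast R}$.

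I expect the only real friction to be bookkeeping: keeping straight the shift from $\bbE_4$-algebras to $\bbE_3$-monoidal module categories and confirming that $\bbE_3$ is exactly large enough for condition $(1)$, and checking that the numerical hypothesis $w>\max(d+1,d+2e)$ is precisely what makes the square-zero obstruction groups in \cref{lem:section-space-contractibility} vanish in the required range. Beyond that, the argument is a transcription of the proof of \cref{thm:proof-ring-application}.
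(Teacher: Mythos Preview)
Your proposal is correct and follows exactly the approach the paper indicates: rerun the proof of \cref{thm:proof-ring-application} with \cref{thm:general-algebraicity-theorem-1cat} in place of \cref{thmA:abstract-algebraicity}, using \cref{lem:section-space-contractibility} (rather than the $\bbE_\infty$-specific \cref{cor:weight-formality}) to obtain the degeneracy structure. Your expansion is in fact more careful than the paper's one-line proof, since you explicitly verify $\ho_2\bbE_3\simeq\bbE_\infty$ and explain why $\bbE_4$ (rather than $\bbE_3$) on $R$ is needed; the only wrinkle is that an $(\bbE_4,w-1)$-degeneracy structure on the $\bbE_3$-monoidal $\calE_R$ is not literally sensible---what \cref{lem:section-space-contractibility} actually yields (via \cref{lem:degeneracy-structures} with $\calO=\bbE_3$, $\calO\otimes_{\BV}\bbE_1=\bbE_4$) is directly an $(\bbE_3,w-1)$-degeneracy structure, so the ``restrict to $\bbE_3$'' step is unnecessary.
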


Combining \cref{thm:piotr-irakli} with \cref{lem:section-space-contractibility} recovers a result due to Piotr-Irakli \cite[Theorem 8.2]{piotr-irakli}.

\begin{cor}\label{cor:piotr-irakli}
    Let $R$ be an $\bbE_1$-ring spectrum whose homotopy ring $\pi_\ast R$
    \begin{enumerate}
        \item
        has graded global dimension $d < \infty$, and
        \item 
        is concentrated in degrees divisible by $w$.
    \end{enumerate}
    Then there exists an equivalence of \categories{}:
    \[\ho_{w-d} \Mod_R \simeq \ho_{w-d} \pcal{D}(\pi_\ast R).\]
\end{cor}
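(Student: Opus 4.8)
The plan is to apply \cref{thm:piotr-irakli} to the left canonical deformation $\calE_R$, with the required degeneracy structure supplied by \cref{lem:section-space-contractibility}. First, invoke \cref{lem:section-space-contractibility} with $n = 1$, $J = \pt$ and $m = w-1$: hypothesis (1) holds because $\pi_\ast R$ is concentrated in degrees divisible by $w = m+1$, and hypothesis (2) is vacuous since $J$ is a point. This produces an $(\bbE_1, w-1)$-degeneracy structure on $\calE_R$; forgetting the monoidal data (and inverting) yields a non-monoidal $(w-1)$-degeneracy structure $\gamma$ on $\calE_R$, which is all that \cref{thm:piotr-irakli} requires.

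Next one checks the remaining hypotheses of \cref{thm:piotr-irakli} for $\calE = \calE_R$. Completeness of $\calE_R$ — Postnikov-completeness of $\calE_R^{\ge 0} = \LMod_{R_{\ge \star}}(\Fil)$ — follows from Postnikov-completeness of $\Fil$ for the diagonal $t$-structure together with the fact that module categories over a connective algebra inherit this property (this is the input used for canonical deformations in the proof of \cref{prop:limits-of-affine-deformations}). For $0$-compliciality of $\ModCtau{}(\calE_R^{\ge 0})$: Postnikov-completeness passes to $\ModCtau{}(\calE_R^{\ge 0})$, and by \cref{prop:filtered-deform-modules} this category is $\LMod_{R_{[\star,\star]}}(\Fil)$; since the associated-graded ring $R_{[\star,\star]}$ is discrete, this is the derived \category{} $\pcal{D}_{\ge 0}(\calE_R^\heart)$, which is generated under colimits by discrete objects (exactly the comparison used in the proof of \cref{thm:general-algebraicity-theorem-actually}). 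Finally, again by \cref{prop:filtered-deform-modules} the heart is $\calE_R^\heart \simeq \gr\LMod^\heart_{\pi_\star R}$, the category of graded $\pi_\ast R$-modules, whose ext dimension is precisely the graded global dimension $d < \infty$.

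With the hypotheses in place, \cref{thm:piotr-irakli} applied with $m = w-1$ produces an equivalence of \categories{}
\[ \ho_{(w-1)-d+1}\,\calE_R\tauinvbra \;\overset{(\gamma)}{\simeq}\; \ho_{(w-1)-d+1}\,\calE_R\taunull{}. \]
It remains to identify the two fibers. By \cref{prop:filtered-deform-modules} the generic fiber is $\calE_R\tauinvbra \simeq \LMod_R = \Mod_R$, while the special fiber is $\calE_R\taunull{} \simeq \LMod_{H(\pi_\star R)}$, which is the derived \category{} $\pcal{D}(\pi_\ast R)$ of differential graded $\pi_\ast R$-modules via the standard comparison between modules over a generalized Eilenberg--MacLane spectrum and the derived category of its homotopy ring. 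Since $(w-1)-d+1 = w-d$, this is exactly $\ho_{w-d}\Mod_R \simeq \ho_{w-d}\pcal{D}(\pi_\ast R)$.

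I do not expect a serious obstacle: all the analytic content is encapsulated in the two cited results — \cref{thm:piotr-irakli} carries the obstruction-theoretic connectivity estimate for the Goerss-Hopkins tower of $\calE_R$ (the non-monoidal shadow of \cref{thmA:abstract-algebraicity}), and \cref{lem:section-space-contractibility} produces the degeneracy structure by a dimension count showing the relevant cotangent-complex obstruction groups vanish for weight reasons. The only points needing genuine care are the verification that the canonical deformation $\calE_R$ is complete and $0$-complicial, and the bookkeeping identification of $\pcal{D}(\pi_\ast R)$ with the special fiber $\calE_R\taunull{}$.
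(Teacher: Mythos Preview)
Your proposal is correct and follows exactly the approach the paper indicates: the paper's proof is the single sentence ``Combining \cref{thm:piotr-irakli} with \cref{lem:section-space-contractibility} recovers \ldots,'' and you have spelled out precisely this combination, together with the routine verification of the hypotheses of \cref{thm:piotr-irakli} for $\calE_R$ and the identification of the generic and special fibers via \cref{prop:filtered-deform-modules}.
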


\subsubsection{Algebraicity in chromatic homotopy}
Fix a prime $p$, a chromatic height $h$ and let $E$ denote Morava $E$-theory associated to the height $h$ honda formal group defined over $\mbb{F}_{p^n}$.
Let $A \coloneq E^{h \mbb{F}_p^\times}$,\footnote{Any $(2p-2)$-periodic version of Lubin-Tate theory will do the job.}
and note that $A_\ast A$ is $A_\ast$-flat.
This lets us define $\hl{\calE_{p,h}} \coloneq \lim_{[k] \in \Delta_\inj}  \calE_{A^{\otimes [k]}}$,
which by \cref{prop:limits-of-affine-deformations}, is a complete Goerss-Hopkins deformation.

\begin{lem}\label{lem:fibers-of-chromatic}
    \begin{enumerate}
        \item 
        There is a canonical symmetric monoidal equivalence:
        \[\calE_{p,h}\tauinvbra \simeq \Sp_{p,h}\]
        \item 
        If $p >h+1$, there is a canonical symmetric monoidal equivalence:
        \[\calE_{p,h} \taunull{}\simeq \Fr_{p,h}\]
        \item 
        If $p >h+1$, there is a canonical $t$-exact, graded, symmetric monoidal equivalence:
        \[ \ModCtau{}(\calE_{p,h}) \simeq \pcal{D}\big(\gr\Comod^\heart_{E(p,h)_\ast E(p,h)}\big)\]
    \end{enumerate}
\end{lem}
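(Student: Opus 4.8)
The plan is to treat part (1), a purely spectral descent statement that is insensitive to $p$, separately from parts (2)--(3), which are algebraic descent statements and where the hypothesis $p>h+1$ genuinely enters.

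\emph{Part (1).} This does not use $p>h+1$. Equation \hyperref[eq:descent-Eh]{($\mbf{I}$)} identifies $\Sp_{p,h}$ with $\Tot\big(\Mod_{E(p,h)^{\otimes[\bullet]}}(\Sp)\big)$, while \cref{prop:limits-of-affine-deformations}(2) gives $\calE_{p,h}\tauinvbra\simeq\lim_{[k]\in\Delta_\inj}\Mod_{A^{\otimes[k]}}$, so it remains to compare the $A$-based cobar construction over $\Delta_\inj$ with the $E(p,h)$-based one over $\Delta$. First, $A$ and $E(p,h)$ have the same Bousfield class: by \cref{obs:picard-recover-2-periodic} the ring $E(p,h)$ is a finite sum of invertible $A$-modules, hence perfect over $A$, whereas conversely $A=E(p,h)^{h\mbb{F}_p^\times}$ is a retract of $E(p,h)$ since $|\mbb{F}_p^\times|=p-1$ is invertible in the $p$-complete ring $\pi_0E(p,h)$. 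Thus $\Sp_{p,h}=L_A\Sp$, and $A$ is descendable there: as $E(p,h)$ is a perfect $A$-module, the thick $\otimes$-ideal generated by $A$ contains $E(p,h)$, hence by the smash product theorem also the unit. Descent for a descendable algebra already holds over $\Delta_\inj$, and since every $A$-module is $A$-local one has $\Mod_{A^{\otimes[k]}}(L_A\Sp)=\Mod_{A^{\otimes[k]}}(\Sp)$; all of this is compatible with the symmetric monoidal structures, which proves (1).

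\emph{Parts (2) and (3).} The functors $\taunull{}$ and $\ModCtau{}(-)$ preserve arbitrary limits by items (2) and (3) of \cref{cor:generic-special-limits}, so
\[\calE_{p,h}\taunull{}\simeq\lim_{[k]\in\Delta_\inj}\calE_{A^{\otimes[k]}}\taunull{},\qquad\ModCtau{}(\calE_{p,h})\simeq\lim_{[k]\in\Delta_\inj}\ModCtau{}(\calE_{A^{\otimes[k]}}),\]
and by \cref{prop:filtered-deform-modules} the $k$-th terms are $\Mod_{H(\pi_\star A^{\otimes[k]})}$ and $\pcal{D}\big(\gr\Mod^\heart_{\pi_\star A^{\otimes[k]}}\big)$ respectively. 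Since $A_\star A$ is $A_\star$-flat, $[k]\mapsto\pi_\star A^{\otimes[k]}$ is the cobar complex of the flat Hopf algebroid $(A_\star,A_\star A)$, all of whose structure maps are flat; faithfully-flat descent for Hopf algebroids therefore identifies $\lim_{\Delta_\inj}\gr\Mod^\heart_{\pi_\star A^{\otimes[\bullet]}}$ with $\gr\Comod^\heart_{A_\star A}$, and Galois descent along $A\to E(p,h)$ identifies $\gr\Comod^\heart_{A_\star A}$ with $\gr\Comod^\heart_{E(p,h)_\ast E(p,h)}$ (both present quasi-coherent sheaves on the height-$\le h$ locus, $\mbb{F}_p^\times$ sitting centrally inside the Morava stabilizer group). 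Here the hypothesis $p>h+1$ is used: it guarantees that $\gr\Comod^\heart_{E(p,h)_\ast E(p,h)}$ has finite global dimension (the Morava stabilizer group then being $p$-torsion-free), which is exactly what makes $\pcal{D}(-)$ commute with the totalizations displayed above and the resulting limits $0$-complicial and Postnikov-complete. This gives the $t$-exact, graded, symmetric monoidal equivalence $\ModCtau{}(\calE_{p,h})\simeq\pcal{D}\big(\gr\Comod^\heart_{E(p,h)_\ast E(p,h)}\big)$ of (3); running the same argument for $\taunull{}$ --- whose $k$-th term $\Mod_{H(\pi_\star A^{\otimes[k]})}$ is the periodification of $\pcal{D}(\gr\Mod^\heart_{\pi_\star A^{\otimes[k]}})$ --- matches $\calE_{p,h}\taunull{}$ with $\Tot\big(\Mod_{\Hpi_\ast(E(p,h)^{\otimes[\bullet]})}\big)=\Fr_{p,h}$, which is (2).

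The main obstacle is the last stretch of parts (2)--(3): upgrading the heart-level faithfully-flat-descent statements to statements about derived $\infty$-categories, i.e.\ verifying that for $p>h+1$ the totalizations above are $0$-complicial and Postnikov-complete and coincide with Franke's differential-graded-sheaf model of $\Fr_{p,h}$. This is precisely the point at which the finiteness of the global dimension of $\gr\Comod^\heart_{E(p,h)_\ast E(p,h)}$ is needed, and it fails for smaller $p$. Everything preceding it is formal manipulation of limits together with classical faithfully-flat descent for Hopf algebroids and, for part (1), the smash product theorem.
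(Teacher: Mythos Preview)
Your proposal is correct and takes essentially the same approach as the paper: both express the fibers of $\calE_{p,h}$ as limits via \cref{prop:limits-of-affine-deformations}, invoke descendability of $A$ (from the smash product theorem) for part (1), and for parts (2)--(3) use that $p>h+1$ gives finite cohomological dimension of $E_\ast E$-comodules, hence descendability of the relevant algebra in the derived comodule category. The paper is somewhat more explicit about the $A$-to-$E$ conversion---writing out the Hopf algebroid factorization $(E_\ast,E_\ast E)\simeq (A_\ast,A_\ast A)\otimes_{A_\ast}(E_\ast,E_\ast\otimes_{A_\ast}E_\ast)$, using descendability of $H(E_\ast)$ over $H(A_\ast)$ together with cosiftedness of $\Delta$, and citing \cite{BSS} and \cite{akhil-galois} for the key identifications---whereas you package this as ``Galois descent''; and your phrase ``makes $\pcal{D}(-)$ commute with the totalizations'' is exactly what the paper states as descendability of $E_\ast E$ in $\pcal{D}(\gr\Comod^\heart_{E_\ast E})$.
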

\begin{proof}
    We freely rely on \cite[Corollary 3.42.]{akhil-galois} throughout the proof.
    For $(1)$ we have by \cref{prop:limits-of-affine-deformations} a canonical symmetric monoidal equivalence
    \[\calE_{p,h} \tauinvbra \simeq \lim_{[k] \in \Delta_{\inj}} \Mod_{A^{\otimes[k]}} \simeq \Sp_{p,h}\]
    where the second equivalence holds because $A \in \CAlg(\Sp_{p,h})$ is descendable. (Indeed the smashing theorem gives $L_h\bbS_{(p)} \in \Thick^\otimes(E)$ but $E$ is a sum of shifts of $A$ hence $L_h\bbS_{(p)} \in \Thick^\otimes(A)$.)
    For $(2)$ we similarly have by \cref{prop:limits-of-affine-deformations} a canonical symmetric monoidal equivalence:
    \[
    \calE_{p,h} \taunull{} \simeq \lim_{[k] \in \Delta_{\inj}} \Mod_{H\big(A_\ast A^{\otimes_{E_\ast}[k]}\big)} \simeq \lim_{[k] \in \Delta} \Mod_{H\big(A_\ast A^{\otimes_{E_\ast}[k]}\big)}, \]
    (Here we used that the inclusion $\Delta_\inj \hookrightarrow \Delta$ is initial.)
    Note that we have an isomorphism of Hopf algebroids 
    \[(E_\ast,E_\ast E) \simeq (A, A_\ast A) \otimes_{A_\ast} (E_\ast,E_\ast \otimes_{A_\ast} E_\ast).\footnotemark\]\footnotetext{In fact $(E_\ast,E_\ast \otimes_{A_\ast} E_\ast) \simeq (E_\ast,C(\mbb{F}_p^\times;E_\ast))$ where $C(E_\ast;\mbb{F}_p^\times)$ is denotes the $E_\ast$-valued functions on $\mbb{F}_p^\times$, whose coaction on $E_\ast$ is given by dualizing the $\mbb{F}_p^\times$-action on $E_\ast$.}%
    Using that $H(E_\ast) \in \CAlg(\Mod_{H(A_\ast)})$ is descendable and $\Delta$ is cosifted, we have symmetric monoidal equivalences:
    \begin{align*}
        \lim_{[k] \in \Delta} \Mod_{H\big(A_\ast A^{\otimes_{A_\ast}[k]}\big)}& \simeq \lim_{[k] \in \Delta} \lim_{[n] \in \Delta} \Mod_{H\big(A_\ast A^{\otimes_{A_\ast}[k]} \otimes_{A_\ast} (E_\ast \otimes_{A_\ast}E_\ast)^{\otimes_{E_\ast}[n]} \big)}\\
        &\simeq \lim_{[k] \in \Delta} \Mod_{H\big(E_\ast E^{\otimes_{E_\ast}[k]}\big)}\\
        & \simeq \Fr_{p,h}
    \end{align*}
    where the last equivalence follows from \cite[Corollary 5.35]{BSS}.
    Finally, for $(3)$ we have, again by \cref{prop:limits-of-affine-deformations}, a graded, $t$-exact, symmetric monoidal equivalence
    \[
    \ModCtau{}(\calE_{p,h}) \simeq \lim_{[k] \in \Delta_{\inj}} \pcal{D}(\gr\Mod^\heart_{E_\ast E^{\otimes_{E_\ast}[k]}}) \simeq \lim_{[k] \in \Delta_{\inj}} \Mod_{E_\ast E^{\otimes_{E_\ast}[k]}}\big(\pcal{D}(\gr\Mod^\heart_{E_\ast})\big) \simeq \pcal{D}(\gr\Comod^\heart_{E_\ast E})
    \]
    
    where the last equivalence holds because $E_\ast E \in \CAlg(\pcal{D}(\gr\Comod^\heart_{E_\ast E}))$ is descendable whenever $p>h+1$.
    (Here we used that for $p>h+1$, the derived category $\pcal{D}(\gr\Comod^\heart_{E_\ast E})$ is monogenic and has finite cohomological dimension see \cite[Corollary 6.7]{hovey} and \cite[Theorem 4.11]{drew-toby}.)
\end{proof}

\begin{thm}\label{thm:proof-main-theorem}
    Whenever 
    $p>\frac{1}{2} h^2 +\frac{k+3}{2} h + \frac{k+1}{2}$ 
    there exists a symmetric monoidal equivalence:
    \[\ho_{k}\Sp_{p,h}	 \simeq \ho_{k} \Fr_{p,h}.\]
\end{thm}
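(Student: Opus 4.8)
The plan is to apply the abstract algebraicity theorem \cref{thmA:abstract-algebraicity} to the Goerss--Hopkins deformation $\calE_{p,h} = \lim_{[k] \in \Delta_\inj} \calE_{A^{\otimes [k]}}$, where $A = E^{h\mbb{F}_p^\times}$ is the $(2p-2)$-periodic Lubin--Tate theory fixed above. First I would observe that the hypothesis forces $p > h+1$, so \cref{lem:fibers-of-chromatic} applies and identifies $\calE_{p,h}\tauinvbra \simeq \Sp_{p,h}$, $\calE_{p,h}\taunull{} \simeq \Fr_{p,h}$, $\calE_{p,h}^\heart \simeq \gr\Comod^\heart_{E_\ast E}$, and $\ModCtau{}(\calE_{p,h}) \simeq \pcal{D}(\gr\Comod^\heart_{E_\ast E})$. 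By \cref{prop:limits-of-affine-deformations} the deformation $\calE_{p,h}$ is complete, which is hypothesis (1) of \cref{thmA:abstract-algebraicity}; hypothesis (2) holds because $\ModCtau{}(\calE_{p,h})^{\ge 0} \simeq \pcal{D}_{\ge 0}(\gr\Comod^\heart_{E_\ast E})$ is generated under colimits by its heart; and $\gr\Comod^\heart_{E_\ast E}$ has enough flat objects (the cofree comodules on flat $E_\ast$-modules are flat, and $\gr\Mod^\heart_{E_\ast}$ has enough flats), so hypothesis (3) comes down to finiteness of the ext and tor dimensions.

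Next I would pin down these two invariants: $d = h^2$ and $e = h$. The ext dimension of $\gr\Comod^\heart_{E_\ast E}$ equals $h^2$ for $p > h+1$, by Morava's change-of-rings theorem together with the vanishing of the continuous cohomology of the Morava stabilizer group above degree $h^2$ in this range -- this is the finiteness input already invoked in the proof of \cref{lem:fibers-of-chromatic}. The tor dimension of $\gr\Comod^\heart_{E_\ast E}$, with its tensor product over $E_\ast$, equals that of the heart of $\ModCtau{}(\calE_{p,h})$ in the sense of \cref{thm:general-algebraicity-theorem-actually}; since the forgetful functor to $\gr\Mod^\heart_{E_\ast}$ is exact, conservative and symmetric monoidal, and $E_\ast$ is a regular ring of Krull dimension $h$, this tor dimension is $e = h$.

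Now I would produce the degeneracy structure. Since $\pi_\ast A = \pi_0(E)[u^{\pm(p-1)}]$ is concentrated in degrees divisible by $2p-2$ and $A_\ast A$ is $A_\ast$-flat, each $\pi_\ast A^{\otimes [k]}$ is concentrated in degrees divisible by $2p-2$ and each coface and codegeneracy map is $\pi_\ast$-flat; hence \cref{lem:section-space-contractibility}, applied with $J = \Delta_\inj$, $\calO = \bbE_\infty$ and $m = 2p-3$, equips $\calE_{p,h}$ with an $(\bbE_\infty, 2p-3)$-degeneracy structure. Feeding $m = 2p-3$, $d = h^2$, $e = h$ into \cref{thmA:abstract-algebraicity} produces a symmetric monoidal equivalence
\[
\ho_{\alpha}\calE_{p,h}\tauinvbra \simeq \ho_{\alpha}\calE_{p,h}\taunull{}, \qquad \alpha = \left\lfloor \frac{(2p-3) - h^2 + 4}{h+1} \right\rfloor - 3 = \left\lfloor \frac{2p+1-h^2}{h+1} \right\rfloor - 3 ,
\]
and hence, through the identifications of \cref{lem:fibers-of-chromatic}, a symmetric monoidal equivalence $\ho_{\alpha}\Sp_{p,h} \simeq \ho_{\alpha}\Fr_{p,h}$.

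Finally I would check the numerology. Because $\lfloor x \rfloor \ge N$ is equivalent to $x \ge N$ for integers $N$, we get $\alpha \ge k$ if and only if $\frac{2p+1-h^2}{h+1} \ge k+3$, i.e.\ $2p+1-h^2 \ge (k+3)(h+1)$, i.e.\ $2p \ge h^2 + (k+3)h + (k+2)$; since $2p$ is an integer this is equivalent in turn to $p > \frac12 h^2 + \frac{k+3}{2}h + \frac{k+1}{2}$. Under the hypothesis of the theorem $\alpha \ge k$, and applying $\ho_{k}$ to the displayed equivalence -- using that $\ho_{k}(\calC) \simeq \ho_{k}(\ho_{\alpha}\calC)$ as symmetric monoidal categories whenever $k \le \alpha$ -- yields $\ho_{k}\Sp_{p,h} \simeq \ho_{k}\Fr_{p,h}$. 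Given that the hard analytic and operadic work is already contained in \cref{thmA:abstract-algebraicity} and \cref{lem:fibers-of-chromatic}, the only genuinely delicate points here are the exact computation $d = h^2$, $e = h$ of the homological invariants of $\gr\Comod^\heart_{E_\ast E}$ and the nonemptiness of the space of degeneracy structures (which crucially uses the $(2p-2)$-periodicity of $A$); beyond that I expect no real obstacle past careful bookkeeping of the constants.
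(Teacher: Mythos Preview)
Your strategy is the same as the paper's: feed $\calE_{p,h}$ into \cref{thmA:abstract-algebraicity} after checking its hypotheses via \cref{prop:limits-of-affine-deformations}, \cref{lem:fibers-of-chromatic}, and \cref{lem:section-space-contractibility}, then read off the bound. The paper's proof is a short paragraph doing exactly this, with $m = 2p-3$, $e = h$, and $d = h^2+h$.

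The substantive discrepancy is your value $d = h^2$. The ext dimension of $\gr\Comod^\heart_{E_\ast E}$ at large primes is $h^2 + h$, not $h^2$; this is the value the paper records and the one used throughout the literature (it is precisely what makes Franke's threshold $2(p-1) > h^2+h$ the natural one). Your justification conflates two invariants: Morava's change-of-rings identifies $\Ext^s_{E_\ast E}(E_\ast,-)$ with continuous cohomology of the Morava stabilizer group, whose cohomological dimension is indeed $h^2$, but the ext dimension of the abelian category requires $\Ext^s(M,N)=0$ for \emph{all} comodules $M$, and for general $M$ this is not controlled by group cohomology alone---the extra $h$ is the global dimension of $E_\ast$. (It is true that plugging $d=h^2$ into the formula reproduces the stated bound exactly, whereas $d=h^2+h$ yields the slightly weaker $p > \tfrac{1}{2}h^2 + \tfrac{k+4}{2}h + \tfrac{k+1}{2}$; this appears to be a small internal inconsistency in the paper, but its proof explicitly uses $h^2+h$.) A smaller gap: your ``enough flats'' argument via cofree comodules does not give epimorphisms---the coaction map $N \to E_\ast E \otimes_{E_\ast} N$ is a split \emph{monomorphism}, so extended comodules supply injectives rather than flat covers. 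The paper instead cites Hovey's result that $\gr\Comod^\heart_{E_\ast E}$ is generated by dualizable (hence flat) objects.
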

\begin{proof}
    The abelian category $\gr\Mod_{E(p,h)_\ast E(p,h)}^\heart$ is generated by dualizables \cite[Proposition 1.4.4]{hovey} and thus has enough flats.
    To conclude we observe that \cref{lem:section-space-contractibility} and \cref{lem:fibers-of-chromatic} together imply $\calE_{p,h}$ satisfies the hypotheses of \cref{thmA:abstract-algebraicity} with $m=2p-3$, $d=h^2+h$ and $e=h$.
\end{proof}

We finish with the tensor-triangulated variant of \cref{intro:main-theorem}.

\begin{cor}\label{cor:tt-chromatic}
    Whenever $p > \frac{1}{2}h^2+\frac{3}{2}h+1$ there exists a tensor-triangulated equivalence:
    \[\ho_1(\Sp_{p,h}) \simeq \ho_1(\Fr_{p,h}).\]
\end{cor}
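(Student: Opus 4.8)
The plan is to deduce the corollary from the tensor-triangulated algebraicity criterion \cref{thm:general-algebraicity-theorem-1cat}, applied to the Goerss-Hopkins deformation $\calE_{p,h}$, in exact parallel with the way \cref{thm:proof-main-theorem} invokes \cref{thmA:abstract-algebraicity}. We take $\calO = \bbE_\infty$, so hypothesis $(1)$ of \cref{thm:general-algebraicity-theorem-1cat} (that $\ho_2\calO \simeq \bbE_\infty$) is automatic, and $\calE_{p,h}$ is complete by \cref{prop:limits-of-affine-deformations}. It remains to verify hypotheses $(2)$--$(4)$ and to check that the degeneracy-structure hypothesis $(2)$ is exactly what produces the stated bound.

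As in the proof of \cref{thm:proof-main-theorem}, the heart $\calE_{p,h}^\heart \simeq \gr\Comod^\heart_{E_\ast E}$ has enough flats (it is generated by dualizables, \cite[Proposition 1.4.4]{hovey}), ext dimension $d = h^2 + h$, and tor dimension $e = h$ (graded projectives are graded flat); this is hypothesis $(4)$. For $h \ge 1$ one has $\frac{1}{2}h^2 + \frac{3}{2}h + 1 > h + 1$, and the case $h = 0$ is trivial, so the assumed inequality $p > \frac{1}{2}h^2 + \frac{3}{2}h + 1$ forces $p > h+1$; hence part $(3)$ of \cref{lem:fibers-of-chromatic} identifies $\ModCtau{}(\calE_{p,h})$ with $\pcal{D}\big(\gr\Comod^\heart_{E_\ast E}\big)$, which is $0$-complicial --- this is hypothesis $(3)$. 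For hypothesis $(2)$, just as in the proof of \cref{thm:proof-main-theorem}, \cref{lem:section-space-contractibility} applied to the cosimplicial diagram $[k] \mapsto A^{\otimes[k]}$ of $\bbE_\infty$-rings (with $A = E^{h\mbb{F}_p^\times}$), whose homotopy rings are concentrated in degrees divisible by $2p-2$ and whose coface maps are $\pi_\ast$-flat, equips $\calE_{p,h} = \lim_{[k] \in \Delta_{\inj}} \calE_{A^{\otimes[k]}}$ with an $(\bbE_\infty, 2p-3)$-degeneracy structure. Thus hypothesis $(2)$ holds once $2p - 3 \ge \max(d+1, d+2e) = \max(h^2+h+1,\, h^2+3h)$; for $h \ge 1$ the right-hand side equals $h^2+3h$ and this rearranges to $p > \frac{1}{2}h^2 + \frac{3}{2}h + 1$, while for $h = 0$ it holds for every prime. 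Now \cref{thm:general-algebraicity-theorem-1cat} yields a tensor-triangulated equivalence $\ho_1 \calE_{p,h}\tauinvbra \simeq \ho_1 \calE_{p,h}\taunull{}$, and parts $(1)$--$(2)$ of \cref{lem:fibers-of-chromatic} identify $\calE_{p,h}\tauinvbra \simeq \Sp_{p,h}$ and $\calE_{p,h}\taunull{} \simeq \Fr_{p,h}$ symmetric monoidally, hence as tt-categories, completing the proof.

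The work here is bookkeeping rather than new input: the one point requiring care is the numerology, namely that the threshold $2p - 3 \ge \max(d+1, d+2e)$ of the $1$-categorical theorem collapses exactly to $p > \frac{1}{2}h^2 + \frac{3}{2}h + 1$ when $(d,e) = (h^2+h,\,h)$, and that this range already forces $p > h+1$ so that the identifications in \cref{lem:fibers-of-chromatic} and the $0$-compliciality input are genuinely available. The improvement over what a naive application of \cref{thmA:abstract-algebraicity} would give is precisely the passage from \cref{thmA:abstract-algebraicity} to \cref{thm:general-algebraicity-theorem-1cat}, i.e.\ Maclane coherence, cf.\ \cref{rem:maclane-coherence}.
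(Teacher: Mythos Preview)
Your proof is correct and follows exactly the same approach as the paper: apply \cref{thm:general-algebraicity-theorem-1cat} to $\calE_{p,h}$, citing the verifications already carried out in the proof of \cref{thm:proof-main-theorem}, and check that the degeneracy bound $2p-3 \ge \max(d+1,d+2e)=h^2+3h$ rearranges to the stated inequality. Your write-up is in fact more careful than the paper's in that you explicitly verify $p>h+1$ (needed for parts (2)--(3) of \cref{lem:fibers-of-chromatic}) and treat the $h=0$ case separately.
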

\begin{proof}
    The hypotheses of \cref{thm:general-algebraicity-theorem-1cat} for $\calE_{p,h}$ were verified in the proof of \cref{thmA:abstract-algebraicity} and the required degeneracy structure exists since
    $p > \frac{1}{2}h^2+\frac{3}{2}h+1$ implies $2p-3 \ge h^2+3h=\max(h^2+h,h^2+3h)$.\qedhere
\end{proof}

 	\appendix
\section{Arity restriction of \texorpdfstring{\operads{}}{infinity operads}}\label{appendix:arity}

In this short appendix we prove some basic facts about arity restriction of \operads{}.
We shall use the language \textit{Fibrous patterns} introduced in \cite{envelopes}.
Fibrous patterns are generalizations of Lurie's model for \operads{} in which $\Fin_\ast$ is replaced by an arbitrary algebraic pattern.
Indeed if $\calO \to \Fin_\ast$ is an \operad{} then fibrous $\calO$-patterns are simply \operads{} over $\calO$, i.e.~there is a canonical equivalence $\Fbrs(\calO) \simeq \Op_{\infty/\calO}$ \cite[Example 4.1.18.]{envelopes}.

\begin{defn}\label{defn:arity-rest-operads}
    We define the \category{} of \textit{\hl{arity $k$-restricted symmetric monoidal \categories{}}} as 
    \[\hl{\Cat_\infty^{\otimes \le k}} \coloneq \Seg_{\Fin_\ast^{\le k}}(\Cat_\infty).\]
    We define the \category{} of \textit{\hl{arity $k$-restricted $\infty$-operads}} as
    \[\hl{\Op_\infty^{\le k}} \coloneq  \Fbrs(\Fin_\ast^{\le k}).\]
    We let $\hl{\Cat_n^{\otimes \le k}} \subseteq \Cat_\infty^{\otimes \le k}$ denote the full subcategory spanned by \categories{} with $(n-1)$-truncated mapping spaces.
    We similarly write $\hl{\Op_{n}^{\le k}} \subseteq \Op^{\le k}$ for the full subcategory of $k$-restricted \operads{} all of whose multi-mapping spaces are $(n-1)$-truncated.
\end{defn}

\begin{rem}
    A non-unital variant of arity restricted \operads{} was used by Heuts in \cite{heuts-goodwillie}.
\end{rem}

\begin{defn}
    We define \hl{\textit{arity $k$-restriction}} as the functor $\hl{\artrun^{\le k}} \coloneq  \Fin_\ast^{\le k} \times_\Fin(-) \colon \Op_\infty \to \Op^{\le k}_\infty$.
\end{defn}
\begin{rem}
    Note that $\artrun^{\le k}$ preserves limits and filtered colimits.
    Since both its source and target are presentable \cite[Corollary 4.2.2]{envelopes}, it is in fact a right adjoint.
\end{rem}

\begin{lem}\label{lem:truncation-restriction-commutes-analytic}
	The following diagram canonically commutes:
	\[\begin{tikzcd}
		{\Op_\infty} && {\Op^{\le k}} \\
		{\Op_{n}} && {\Op_{n}^{\le k}}
		\arrow["{\ho_{n}}"', from=1-1, to=2-1]
		\arrow["{\artrun^{\le k}}", from=1-1, to=1-3]
		\arrow["{\artrun^{\le k}}", from=2-1, to=2-3]
		\arrow["{\ho_{n}}", from=1-3, to=2-3]
	\end{tikzcd}\]
\end{lem}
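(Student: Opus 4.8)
The plan is to construct an explicit natural transformation comparing the two composites $\ho_n\circ\artrun^{\le k}$ and $\artrun^{\le k}\circ\ho_n$ as functors $\Op_\infty\to\Op^{\le k}_n$, and then verify it is an equivalence by checking it on colours and on all multi-mapping spaces. The two inputs I would use are: (i) arity $k$-restriction $\artrun^{\le k}=\Fin_\ast^{\le k}\times_{\Fin_\ast}(-)$ leaves the space of colours of a (fibrous) pattern unchanged and, for $r\le k$, identifies the arity-$r$ multi-mapping spaces of $\artrun^{\le k}\calO$ with those of $\calO$, there being no operations of arity $>k$ after restriction; and (ii) for either base $\Fin_\ast$ or $\Fin_\ast^{\le k}$, the functor $\ho_n$ is the reflector onto the full subcategory of $n$-truncated patterns, leaving colours unchanged and acting as $(n-1)$-truncation $\tau_{\le n-1}$ on each multi-mapping space, with unit $\calO\to\ho_n\calO$ the corresponding truncation map.

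First I would note that combining (i) and (ii) shows $\artrun^{\le k}$ sends $n$-truncated $\infty$-operads to $n$-truncated arity-$k$-restricted $\infty$-operads, so the lower horizontal functor of the square makes sense and $\artrun^{\le k}(\ho_n\calO)\in\Op^{\le k}_n$ for every $\calO$. Applying $\artrun^{\le k}$ to the unit $\calO\to\ho_n\calO$ gives a map $\artrun^{\le k}\calO\to\artrun^{\le k}(\ho_n\calO)$ whose target is $n$-truncated, so by the universal property of $\ho_n\colon\Op^{\le k}_\infty\to\Op^{\le k}_n$ it factors uniquely as
\[ \artrun^{\le k}\calO \too \ho_n(\artrun^{\le k}\calO) \xrightarrow{\;c_\calO\;} \artrun^{\le k}(\ho_n\calO). \]
Naturality of the unit assembles the $c_\calO$ into a natural transformation $\ho_n\circ\artrun^{\le k}\Rightarrow\artrun^{\le k}\circ\ho_n$ witnessing the square; it remains to see every $c_\calO$ is an equivalence.

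Since a map of ($\Fin_\ast^{\le k}$-)fibrous patterns is an equivalence as soon as it is essentially surjective on colours and induces equivalences on all multi-mapping spaces, I would evaluate $c_\calO$ there. On colours all four functors leave the space of colours unchanged and the relevant unit maps are the identity on colours, so $c_\calO$ is the identity on colours. On a multi-mapping space of arity $r>k$ both sides are empty; for $r\le k$, using (i) and (ii) both $\ho_n(\artrun^{\le k}\calO)$ and $\artrun^{\le k}(\ho_n\calO)$ compute the arity-$r$ multi-mapping space $\Mul_\calO(X_1,\dots,X_r;Y)$ after $(n-1)$-truncation, one by restricting arity then truncating and the other the other way round, and unwinding the construction of $c_\calO$ from the unit $\calO\to\ho_n\calO$ shows it induces exactly $\tau_{\le n-1}$ applied to the identity of $\Mul_\calO(X_1,\dots,X_r;Y)$, hence an equivalence. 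Thus $c_\calO$ is an equivalence and the square commutes.

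The only ingredient that is not formal bookkeeping is fact (ii): that $\ho_n$ on $\Fin_\ast^{\le k}$- and $\Fin_\ast$-fibrous patterns is computed levelwise by $(n-1)$-truncating multi-mapping spaces — in particular that levelwise truncation preserves the fibrousness/Segal conditions and yields a reflective subcategory. This is where the real work sits; it should follow from $\tau_{\le n-1}$ on spaces preserving finite products (hence Segal objects and fibrous patterns), together with the standard description of localisations of presheaf-type categories, and I would invoke whichever precise form of this is already available from \cite{envelopes} and \cite{ArityApprox}. Granting (ii), the remainder is the routine diagram chase above.
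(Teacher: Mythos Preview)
Your proposal is correct and takes essentially the same approach as the paper: both observe that $\artrun^{\le k}$ preserves $n$-truncatedness, obtain the comparison map as the Beck--Chevalley transformation (your construction of $c_\calO$ via the unit and the universal property of $\ho_n$ \emph{is} the Beck--Chevalley map for the square of right adjoints), and then verify it is an equivalence. The only cosmetic difference is in the verification step: the paper argues that $\ho_n$ on the total category commutes with the pullback along $\Fin_\ast^{\le k}\hookrightarrow\Fin_\ast$ and writes the Beck--Chevalley map as a string of identifications, whereas you unwind the same fact at the level of individual multi-mapping spaces via your ingredient (ii).
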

\begin{proof}
	We begin by observing that if $\calP \in  \Op_{n}$ then 
	$\artrun^{\le k}(\calP) \in \Op_{n}^{\le k}$.
	This is captured by the following commuting square of right adjoints:
	\[\begin{tikzcd}
		{\Op_{n}} && {\Op_{n}^{\le k}} \\
		{\Op} && {\Op^{\le k}}
		\arrow["{\artrun^{\le k}}", from=2-1, to=2-3]
		\arrow["{\artrun^{\le k}}", from=1-1, to=1-3]
		\arrow[hook, from=1-1, to=2-1]
		\arrow[hook, from=1-3, to=2-3]
	\end{tikzcd}\]
	Passing to vertical left adjoints gives a lax commuting square whose Beck-Chevalley natural transformation evaluated on $\calP \in \Op$ is the equivalence:
	\[ \Fin_\ast^{\le k}\times_{\Fin_\ast}  \ho_{n}(\calP) \simeq \ho_{n}(\Fin_\ast^{\le k}\times_{\Fin_\ast} \calP) \simeq \ho_{n}(\calP^{\le k}) \iso \ho_{n}(\calP)^{\le k} \simeq \Fin_\ast^{\le k}\times_{\Fin_\ast} \ho_{n}(\calP) \]
\end{proof}

\begin{defn}
    We denote by $\hl{\ho_{n}^{(\otimes \le k)}}\colon \Op_\infty \to \Op_n^{\le k}$ the diagonal composite in \cref{lem:truncation-restriction-commutes-analytic}.
\end{defn}

\begin{prop}\label{prop:arity-equivalences}
	Let $f: \calO \to \calP$ 
	be a morphism of \operads{}.
	The following are equivalent:
	\begin{enumerate}
		\item 
		$f$ induces an equivalence on $(n-1)$-truncation of multi-mapping spaces through arity $k$.
		\item 
		$\ho_{n}^{(\otimes \le k)}(f)\colon \ho_{n}^{(\otimes \le k)}(\calO) \to \ho_{n}^{(\otimes \le k)}(\calP)$ is fully faithful.
	\end{enumerate}
\end{prop}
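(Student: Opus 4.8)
The plan is to reduce the statement to the corresponding statement about $(n-1)$-truncated multi-mapping \emph{spaces}, using the fact that $\artrun^{\le k}$ and $\ho_n$ both act as pointwise operations on multi-mapping spaces. First I would recall that a morphism of $\infty$-operads $g \colon \calQ \to \calR$ (or of arity-restricted operads) is \emph{fully faithful} precisely when it induces an equivalence on all multi-mapping spaces $\Mul_\calQ(c_1,\dots,c_s;c) \iso \Mul_\calR(g c_1,\dots,g c_s; g c)$ for every tuple of objects and is (objectwise) essentially surjective is \emph{not} required — "fully faithful" here just means the multi-mapping space condition. The key point is that for $\calP \in \Op_n^{\le k}$, the multi-mapping space $\Mul_{\ho_n^{(\otimes \le k)}(\calP)}(-;-)$ is, by construction of $\artrun^{\le k}$ (pullback along $\Fin_\ast^{\le k}\hookrightarrow \Fin_\ast$, which on fibers over the active arrows $\langle s\rangle \to \langle 1\rangle$ with $s \le k$ is the identity) and of $\ho_n$ (fiberwise $(n-1)$-truncation, as in \cref{lem:truncation-restriction-commutes-analytic}), canonically identified with $\tau_{\le n-1}\Mul_\calP(-;-)$ for tuples of arity $\le k$, and the arity restriction discards all multi-morphisms of arity $>k$.

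Granting this, the equivalence $(1) \Leftrightarrow (2)$ is essentially a formal unwinding. For $(2) \Rightarrow (1)$: if $\ho_n^{(\otimes \le k)}(f)$ is fully faithful, then for every tuple $(c_1,\dots,c_s;c)$ with $s \le k$ it induces an equivalence
\[
\tau_{\le n-1}\Mul_\calO(c_1,\dots,c_s;c) \iso \tau_{\le n-1}\Mul_\calP(fc_1,\dots,fc_s;fc),
\]
which is exactly condition $(1)$. For $(1) \Rightarrow (2)$: conversely, if $f$ induces an equivalence on $(n-1)$-truncated multi-mapping spaces through arity $k$, then by the identification above $\ho_n^{(\otimes \le k)}(f)$ induces an equivalence on all multi-mapping spaces of $\ho_n^{(\otimes \le k)}(\calO)$ (all of which have arity $\le k$ by definition of $\Op^{\le k}$), hence is fully faithful. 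One small subtlety to address: "fully faithful" for a map of fibrous patterns / arity-restricted operads should be spelled out — I would point to the definition in \cite{envelopes} and check that it reduces to the multi-mapping-space criterion, using that the underlying map of spaces of colors need not be monic (fully faithful, not an equivalence).

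The main obstacle I expect is bookkeeping rather than conceptual: namely, carefully justifying that the composite $\ho_n^{(\otimes \le k)}$ computes multi-mapping spaces as the $(n-1)$-truncation of the original ones in arities $\le k$, and that no multi-morphisms of arity exactly $k$ get accidentally modified. This is already implicitly contained in \cref{lem:truncation-restriction-commutes-analytic} and its proof (the Beck--Chevalley equivalence $\Fin_\ast^{\le k}\times_{\Fin_\ast}\ho_n(\calP) \simeq \ho_n(\calP)^{\le k}$), together with the description of multi-mapping spaces in a fibrous pattern as fibers of the structure map over active arrows; so the proof is short. I would phrase it as: combine the pointwise description of $\artrun^{\le k}$ on multi-mapping spaces with the pointwise description of $\ho_n$ (fiberwise $(n-1)$-truncation on a complete Segal object, cf.~the construction of homotopy $n$-categories), and then $(1) \Leftrightarrow (2)$ is immediate from the definition of fully faithfulness.
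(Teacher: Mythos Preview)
Your proposal is correct and lands on the same reduction as the paper, but you defer precisely the step that the paper's proof carries out. In the paper, ``fully faithful'' is taken in the ordinary sense for the total categories $\ho_n(\calO^{\le k}) \to \ho_n(\calP^{\le k})$ over $\Fin_\ast^{\le k}$, and the content of the argument is the decomposition
\[
\Map_{\calO^{\le k}}\!\Big(\bigoplus_{a\in A} X_a,\; \bigoplus_{b\in B} Y_b\Big)
\;\simeq\;
\coprod_{\mu\colon A_+\to B_+}\;\prod_{b\in B}\;\Mul_{\calO}\big(\{X_a : a\in\mu^{-1}(b)\};\,Y_b\big),
\]
together with the observation that $|\mu^{-1}(b)|\le |A|\le k$, so only multi-mapping spaces of arity $\le k$ appear. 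This is exactly your ``subtlety to address'': the equivalence between fully faithfulness on the total category and equivalence on multi-mapping spaces of arity $\le k$. You correctly flag it and point to \cite{envelopes}, but the paper simply writes it down in two lines rather than citing it.

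So: same approach, with the difference that the paper makes the mapping-space decomposition explicit (that decomposition \emph{is} the proof), whereas you treat the identification ``fully faithful $\Leftrightarrow$ equivalence on multi-mapping spaces'' as a black box to be looked up. Your description of $\ho_n^{(\otimes\le k)}$ as fiberwise $(n-1)$-truncation on multi-mapping spaces in arities $\le k$ is accurate and matches what \cref{lem:truncation-restriction-commutes-analytic} provides.
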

\begin{proof}
	Let $X,Y \in \calO^{\le k}$ be objects whose images in $\Fin_\ast^{\le k}$ are given respectively by $A_+$ and $B_+$.
	We have canonical decompositions 
	\[X \simeq \bigoplus_{a \in A} X_a \quad\quad\quad Y \simeq \bigoplus_{b \in B} Y_b\]
	where $X_a$ and $Y_b$ lie over $\langle 1 \rangle \in \Fin_\ast$ for all $a\in A$ and $b \in B$
	and where $\bigoplus$ denotes the relative product with respect to the projection 
	$\calO^{\le k} \to \Fin_\ast^{\le k}$.
	Associated to this decomposition of the objects there is a decomposition of mapping spaces. 
	For $\calO^{\le k}$ it takes the following form:
	\begin{align*}
		\Map_{\calO^{\le k}}\left(\bigoplus_{a \in A}X_a,\bigoplus_{b \in B} Y_b\right) 
		& \simeq 
		\coprod_{\mu \colon A_+ \to B_+}\, \prod_{b\in B} \Map^{\act}_{\calO^{\le k}}\left(\bigoplus_{a \in \mu^{-1}(b)} X_a,Y_b\right) \\
		& \simeq  \coprod_{\mu \colon A_+ \to B_+}\, \prod_{b\in B} \Mul_{\calO}\left(\left\{X_a: a\in \mu^{-1}(b)\right\};Y_b\right)
	\end{align*}
	where the second equivalence follows from $|\mu^{-1}(b)| \le |B| \le k$ 
	since $\calO^{\le k} \subseteq \calO$ is a full subcategory.
	To conclude the result it remains to note that the same decompositions exist for $\calP$ and that $f$ respects these decompositions.
\end{proof}

\begin{lem}\label{lem:Segal-vs-fibrous-truncation}
	The following diagram canonically commutes:
        \[\begin{tikzcd}
		{\Cat^\otimes_\infty} && {\Cat^\otimes_{n}} && {\Cat^{\otimes \le k}_{n}} \\
		{\Op_\infty} &&&& {\Op_{n}^{\le k}.}
		\arrow["{(-)^{\otimes}}"', from=1-1, to=2-1]
		\arrow["{\ho_{n}^{(\otimes \le k)}}", from=2-1, to=2-5]
		\arrow["{(-)^{\otimes}}", from=1-5, to=2-5]
		\arrow["{\ho_{n}}", from=1-1, to=1-3]
		\arrow["{(-)|_{\Fin_\ast^{\le k}}}", from=1-3, to=1-5]
	\end{tikzcd}\]
\end{lem}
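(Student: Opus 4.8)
The plan is to factor the asserted identification into two elementary commuting squares and paste them using \cref{lem:truncation-restriction-commutes-analytic}, which records that $\ho_n^{(\otimes \le k)} \simeq \artrun^{\le k}\circ \ho_n$ as functors $\Op_\infty \to \Op_n^{\le k}$. Under this, the bottom composite of the lemma becomes $\artrun^{\le k}\circ \ho_n \circ (-)^\otimes$, so it suffices to produce canonical equivalences $\ho_n \circ (-)^\otimes \simeq (-)^\otimes \circ \ho_n$ of functors $\Cat_\infty^\otimes \to \Op_n$ (call this Square A) and $\artrun^{\le k}\circ (-)^\otimes \simeq (-)^\otimes \circ (-)|_{\Fin_\ast^{\le k}}$ of functors $\Cat_\infty^\otimes \to \Op_\infty^{\le k}$ (Square B). Square B will automatically restrict to the $(n-1)$-truncated subcategories, since both arity restriction of Segal objects and the unstraightening functor preserve $(n-1)$-truncatedness of all multi-mapping spaces; combining, the bottom composite $\artrun^{\le k}\circ \ho_n \circ (-)^\otimes$ becomes $\artrun^{\le k}\circ (-)^\otimes \circ \ho_n$ by Square A and then $(-)^\otimes \circ (-)|_{\Fin_\ast^{\le k}} \circ \ho_n$ by Square B, which is exactly the top composite.

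For Square B I would appeal to the naturality of the underlying-fibrous-pattern functor in the algebraic pattern. For any morphism of algebraic patterns $\varphi \colon \calP' \to \calP$, the restriction $\varphi^\ast \colon \Seg_\calP(\Cat_\infty) \to \Seg_{\calP'}(\Cat_\infty)$ and the base-change $\calP' \times_\calP (-) \colon \Fbrs(\calP) \to \Fbrs(\calP')$ should sit in a canonically commuting square with the two copies of $(-)^\otimes$: this is because $\Un$ commutes with base change along $\varphi$, so that $\Un_{\calP'}(F \circ \varphi) \simeq \calP' \times_\calP \Un_\calP(F)$, and because the Segal and fibrousness conditions are local along pattern morphisms \cite{envelopes}. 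Taking $\varphi$ to be the inclusion $\Fin_\ast^{\le k} \hookrightarrow \Fin_\ast$ --- for which $\varphi^\ast = (-)|_{\Fin_\ast^{\le k}}$ and $\calP'\times_\calP(-) = \artrun^{\le k}$ by the very definitions of the appendix --- yields Square B. This is the only step that reaches beyond bookkeeping with multi-mapping spaces, and I expect pinning down the precise pattern-naturality statement in \cite{envelopes} to be the main obstacle.

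For Square A I would argue directly. The functor $\ho_n \colon \Cat_\infty \to \Cat_n$ preserves finite products, hence induces $\ho_n \colon \Cat_\infty^\otimes = \CAlg(\Cat_\infty) \to \CAlg(\Cat_n) = \Cat_n^\otimes$ by postcomposition, and $(-)^\otimes$ carries $\Cat_n^\otimes$ into $\Op_n$ since $\Mul_{\calC^\otimes}(X_1,\dots,X_r;Y) \simeq \Map_\calC(X_1 \otimes \cdots \otimes X_r; Y)$. The unit $\calC \to \ho_n\calC$ is strong monoidal, so it sends $X_1 \otimes \cdots \otimes X_r$ to the tensor product of the images; consequently the induced morphism of \operads{} $\calC^\otimes \to (\ho_n\calC)^\otimes$ is, in arity $r$, precisely the $(n-1)$-truncation map of $\Map_\calC(X_1 \otimes \cdots \otimes X_r; Y)$, and it is the identity on colors. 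Thus every component of the natural transformation $(-)^\otimes \Rightarrow (\ho_n(-))^\otimes$ exhibits its target as the reflection of its source in $\Op_n$, so by the universal property of the localization $\ho_n \colon \Op_\infty \to \Op_n$ (already invoked in \cref{lem:truncation-restriction-commutes-analytic}) this transformation is canonically identified with the localization unit, giving the natural equivalence $\ho_n \circ (-)^\otimes \simeq (-)^\otimes \circ \ho_n$ of Square A and completing the plan.
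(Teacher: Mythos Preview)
Your proposal is correct and follows essentially the same route as the paper: both decompose the pentagon into two squares---one expressing that $\ho_n$ commutes with $(-)^\otimes=\Un_{\Fin_\ast}$ (your Square A, the paper's left square), and one expressing that arity restriction commutes with unstraightening via $\Un_{\Fin_\ast^{\le k}}(\calC|_{\Fin_\ast^{\le k}}) \simeq \Fin_\ast^{\le k}\times_{\Fin_\ast}\Un_{\Fin_\ast}(\calC)$ (your Square B, the paper's right square). The only cosmetic difference is that the paper handles both squares uniformly via the Beck--Chevalley formalism of \cref{lem:truncation-restriction-commutes-analytic}, whereas you verify Square A by a direct multi-mapping-space check; both are valid and equally short.
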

\begin{proof}
	Unwinding definitions, we must show that the following diagram commutes:
	\[\begin{tikzcd}
		{\Seg_{\Fin_\ast}(\Cat_\infty)} && {\Seg_{\Fin_\ast}(\Cat_{n})} && {\Seg_{\Fin_\ast^{\le k}}(\Cat_{n})} \\
		{\Op_\infty} && {\Op^{\le k}} && {\Op_{n}^{\le k}.}
		\arrow["{\Un_{\Fin_\ast}}"', from=1-1, to=2-1]
		\arrow["{\ho_{n}}", from=1-1, to=1-3]
		\arrow["{\Un_{\Fin_\ast}}"', from=1-3, to=2-3]
		\arrow["{\ho_{n}}", from=2-1, to=2-3]
		\arrow["{(-)|_{\Fin_\ast^{\le k}}}", from=1-3, to=1-5]
		\arrow["{\artrun^{\le k}}", from=2-3, to=2-5]
		\arrow["{\Un_{\Fin_\ast^{\le k}}}"', from=1-5, to=2-5]
	\end{tikzcd}\]
	The left and right square canonically commute by the same argument as in \cref{lem:truncation-restriction-commutes-analytic}
	with the Beck-Chevalley isomorphisms given respectively as follows:
	\[\ho_{n}\left( \Un_{\Fin_\ast}(\calC)\right)  \iso \Un_{\Fin_\ast}\left(\ho_{n}(\calC)\right), \qquad  \Un_{\Fin_\ast^{\le k}}(\calC|_{\Fin_\ast^{\le k}}) \iso \Fin_\ast^{\le k} \times_{\Fin_\ast} \Un_{\Fin_\ast}(\calC).\qedhere\]
\end{proof}
    \printbibliography[heading=bibintoc]
\end{document}